\newtheorem{theo}{Theorem}[subsection]
\newtheorem{defi}[theo]{Definition}
\newtheorem{lem}[theo]{Lemma}
\newtheorem{rem}[theo]{Remark}
\newtheorem{prop}[theo]{Proposition}
\newtheorem{cor}[theo]{Corollary}
\newtheorem{ex}[theo]{Example}
\newtheorem{Thm}{Theorem}
\newtheorem{Cor}[Thm]{Corollary}
\newenvironment{proof of filtration}{\paragraph{\it{Proof of Theorem \ref{filtration}.}}}{\hfill$\square$}
\newcommand{\nc}{\newcommand}
\nc{\on}{\operatorname}
\nc{\III}{I\hspace{-.1em}I\hspace{-.1em}I }
\nc{\C}{\mathbb{C}}
\nc{\R}{\mathbb{R}}
\nc{\Q}{\mathbb{Q}}
\nc{\Z}{\mathbb{Z}}
\nc{\N}{\mathbb{N}}
\nc{\bbB}{\mathbb{B}}
\nc{\bbI}{\mathbb{I}}
\nc{\bbS}{\mathbb{S}}
\nc{\bbT}{\mathbb{T}}
\nc{\bfa}{\mathbf{a}}
\nc{\bfb}{\mathbf{b}}
\nc{\bfB}{\mathbf{B}}
\nc{\bfC}{\mathbf{C}}
\nc{\bfD}{\mathbf{D}}
\nc{\bfH}{\mathbf{H}}
\nc{\bfi}{\mathbf{i}}
\nc{\bfL}{\mathbf{L}}
\nc{\bfs}{\mathbf{s}}
\nc{\bfS}{\mathbf{S}}
\nc{\bfT}{\mathbf{T}}
\nc{\bfU}{\mathbf{U}}
\nc{\bfV}{\mathbf{V}}
\nc{\V}{\mathbf{V}}
\nc{\bflm}{{\bm \lambda}}
\nc{\bfmu}{{\bm \mu}}
\nc{\bfnu}{{\bm \nu}}
\nc{\clB}{\mathcal{B}}
\nc{\clF}{\mathcal{F}}
\nc{\clFj}{\clF^\jmath}
\nc{\clL}{\mathcal{L}}
\nc{\clO}{\mathcal{O}}
\nc{\clH}{\mathcal{H}}
\nc{\Hc}{\mathcal{H}}
\nc{\Ss}{\mathfrak{S}}
\nc{\frb}{\mathfrak{b}}
\nc{\g}{\mathfrak{g}}
\nc{\frh}{\mathfrak{h}}
\nc{\frl}{\mathfrak{l}}
\nc{\frn}{\mathfrak{n}}
\nc{\frp}{\mathfrak{p}}
\nc{\frgl}{\mathfrak{gl}}
\nc{\frsl}{\mathfrak{sl}}
\nc{\hf}{\frac{1}{2}}
\nc{\inv}{^{-1}}
\nc{\qu}{\quad}
\nc{\qqu}{\qquad}
\nc{\la}{\langle}
\nc{\ra}{\rangle}
\nc{\ol}{\overline}
\nc{\ul}{\underline}
\nc{\vphi}{\varphi}
\nc{\vpi}{\varpi}
\nc{\vep}{\varepsilon}
\nc{\lm}{\lambda}
\nc{\Lm}{\Lambda}
\nc{\Lmj}{\Lm^{\jmath}}
\nc{\Ker}{\on{Ker}}
\nc{\im}{\on{Im}}
\nc{\Hom}{\on{Hom}}
\nc{\End}{\on{End}}
\nc{\Span}{\on{Span}}
\nc{\Lie}{\on{Lie}}
\nc{\Gr}{\on{Gr}}
\nc{\op}{\on{op}}
\nc{\tr}{\on{tr}}
\nc{\id}{\on{id}}
\nc{\rk}{\on{rk}}
\nc{\ad}{\on{ad}}
\nc{\sr}{\on{sr}}
\nc{\ME}{\on{ME}}
\nc{\EM}{\on{EM}}
\nc{\para}{\on{par}}
\nc{\sgn}{\on{sgn}}
\nc{\gr}{\on{gr}}
\nc{\ef}{\on{ef}}
\nc{\up}{\on{up}}
\nc{\low}{\on{low}}
\nc{\IF}{\text{ if }}
\nc{\AND}{\text{ and }}
\nc{\OR}{\text{ or }}
\nc{\OW}{\text{ otherwise}}
\nc{\Forall}{\text{ for all }}
\nc{\Forsome}{\text{ for some }}
\nc{\lowerterms}{\text{(lower terms)}}
\nc{\higherterms}{\text{(higher terms)}}
\nc{\otherterms}{\text{(other terms)}}
\nc{\IFF}{\text{ if and only if }}
\nc{\Lcell}{\underset{L}{\sim}}
\nc{\Rcell}{\underset{R}{\sim}}
\nc{\LRcell}{\underset{LR}{\sim}}
\nc{\Ij}{\bbI^{\jmath}}
\nc{\psij}{\psi^{\jmath}}
\nc{\sigmaj}{\sigma^{\jmath}}
\nc{\ej}{e^{\jmath}}
\nc{\Oj}{\Oint^{\jmath}}
\nc{\Oint}{\mathcal{O}_{\mathrm{int}}}
\nc{\A}{\mathbf{A}}
\nc{\AZ}{\A_{\Z}}
\nc{\Ao}{\A_0}
\nc{\Ai}{\A_{\infty}}
\nc{\Lo}{\clL_0}
\nc{\Li}{\clL_\infty}
\nc{\Gj}{G^{\jmath}}
\nc{\Gjlow}{\Gj_{\mathrm{low}}}
\nc{\Gjup}{\Gj_{\mathrm{up}}}
\nc{\U}{\mathbf{U}}
\nc{\UA}{\U_{\A}}
\nc{\Uj}{\bfU^{\jmath}}
\nc{\Ujo}{\Uj_0}
\nc{\UjA}{\Uj_{\A}}
\nc{\UjLevi}{\Uj_{\on{Levi}}}
\nc{\Ujprime}{\U^{\jmath \prime}}
\nc{\wt}{\on{wt}}
\nc{\wtj}{\wt^{\jmath}}
\nc{\Wtj}{\on{Wt}^{\jmath}}
\nc{\wtjprime}{\wt^{\jmath \prime}}
\nc{\Pj}{P^{\jmath}}
\nc{\Pjprime}{P^{\jmath \prime}}
\nc{\Par}{\on{Par}}
\nc{\DP}{\on{DP}}
\nc{\DPj}{\DP^{\jmath}}
\nc{\SST}{\on{SST}}
\nc{\Cj}{C^{\jmath}}
\nc{\bbBj}{\bbB^{\jmath}}
\nc{\Tj}{T^{\jmath}}
\nc{\pij}{\pi^{\jmath}}
\nc{\IIj}{\ol{\Ij}}
\nc{\Qj}{Q^{\jmath}}
\nc{\etil}{\widetilde{e}}
\nc{\ftil}{\widetilde{f}}
\nc{\xtil}{\widetilde{x}}
\nc{\Etil}{\widetilde{E}}
\nc{\Ftil}{\widetilde{F}}
\nc{\Xtil}{\widetilde{X}}
\nc{\btil}{\widetilde{b}}
\nc{\etilmax}{\etil^{\max}}
\nc{\ftilmax}{\ftil^{\max}}
\nc{\Lambdaj}{\Lambda^{\jmath}}
\nc{\leqj}{\leq^{\jmath}}
\nc{\geqj}{\geq^{\jmath}}
\nc{\simR}{\underset{R}{\sim}}
\nc{\simL}{\underset{L}{\sim}}
\nc{\ch}{\on{ch}}
\nc{\tauj}{\tau^{\jmath}}
\nc{\Psij}{\Psi^{\jmath}}
\nc{\trilefteq}{\trianglelefteq}
\nc{\trileft}{\triangleleft}
\nc{\tririghteq}{\trianglerighteq}
\nc{\triright}{\triangleright}
\nc{\bhat}{\widehat{b}}
\nc{\TBA}{\textcolor{red}{TBA}}
\title[Global crystal bases for modules over a QSP]{Global crystal bases for integrable modules over a quantum symmetric pair of type AIII}
\author[H. Watanabe]{Hideya Watanabe}
\address{(H. Watanabe) Department of Mathematics, Tokyo Institute of Technology, 2-12-1 Oh-okayama, Meguro-ku, Tokyo 152-8550, Japan}
\email{watanabe.h.at@m.titech.ac.jp}
\subjclass[2010]{Primary~17B10}
\keywords{quantum symmetric pair, Hecke algebra, global crystal basis}
\date{\today}
\begin{document}
\maketitle

\begin{abstract}
In this paper, we study basic properties of global $\jmath$-crystal bases for integrable modules over a quantum symmetric pair coideal subalgebra $\Uj$ associated to the Satake diagram of type AIII with even white nodes and no black nodes. Also, we obtain an intrinsic characterization of the $\jmath$-crystal bases, whose original definition is artificial.
\end{abstract}


\section{Introduction}
Let $\U = U_q(\mathfrak{sl}_{2r+1})$ be the quantum group over the field $\Q(q)$ of rational functions in one variable $q$, and $\Uj$ its coideal subalgebra such that $(\U,\Uj)$ forms a quantum symmetric pair of type AIII in the sense of \cite{Le99}. Bao and Wang \cite{BW13} introduced the notion of $\jmath$-canonical bases for the based $\U$-modules. A based $\U$-module is a $\U$-module $M$ with a bar-involution $\psi_M$ and a distinguished basis $\bfB$ satisfying some conditions (see \cite{L94} for the precise definition). One of the key ingredients for the construction of the $\jmath$-canonical bases is the intertwiner (also known as the quasi-$K$-matrix) $\Upsilon$. Using $\Upsilon$, Bao and Wang defined a new involution $\psij_M := \Upsilon \circ \psi_M$ on $M$ that is compatible with the bar-involution $\psij$ on $\Uj$. Then, for each $b \in \bfB$, there exists a unique $b^{\jmath} \in M$ such that $\psij_M(b^{\jmath}) = b^{\jmath}$ and $b^{\jmath} - b \in \bigoplus_{\substack{b' \in \bfB \\ b' \prec^{\jmath} b}} q\Q[q] b'$, where $\preceq^{\jmath}$ is a partial order on $\bfB$. Clearly, $\{ b^{\jmath} \mid b \in \bfB \}$ is a basis of $M$, which is called the $\jmath$-canonical basis of $(M,\bfB)$.

The multi-parameter version of $\Uj$ was considered in \cite{BWW16}. Thanks to the integrality of the intertwiner $\Upsilon$, the notion of $\jmath$-canonical bases can be defined analogously. The condition $b^{\jmath} -  b \in \bigoplus_{\substack{b' \in \bfB \\ b' \prec^{\jmath} b}} q\Q[q] b'$ is replaced by $b^{\jmath} - b \in \bigoplus_{\substack{b' \in \bfB \\ b' \prec^{\jmath} b}} (p \Q[p,q,q\inv] \oplus q\Q[q]) b'$. The general theory of the $\jmath$-canonical bases (usually called $\imath$-canonical bases) for the general quantum symmetric pairs was developed in \cite{BW18}.

In \cite{W17}, the author classified all irreducible $\Uj$-modules in a category $\Oj$, which is an analog of the category $\Oint$ of integrable $\U$-modules, and proved that $\Oj$ is semisimple; the isomorphism classes of irreducible modules in $\Oj$ are classified by the set $\Pj$ of bipartitions of length $(r;r+1)$. When the parameters are in the asymptotic case, to each irreducible module in $\Oj$, the author associated a local basis, $\jmath$-crystal basis, which is an analog of Kashiwara's crystal basis. By the complete reducibility, every object in $\Oj$ admits a $\jmath$-crystal basis. In particular, each $\U$-module in $\Oint$, regarded as a $\Uj$-module, has a $\jmath$-crystal basis.

It should be noted that the $\jmath$-crystal basis of a $\U$-module in $\Oint$ is the localized $\jmath$-canonical basis (\cite[Section 1.3]{W17}). To be precise, let $M \in \Oint$ with a canonical basis (or global crystal basis) $\bfB$. Since $(M,\bfB)$ is a based module, it has a $\jmath$-canonical basis $\{ b^\jmath \mid b \in \bfB \}$. Set $\clL := \Span_{\Ao} \bfB$, where $\Ao := \{ f/g \in \Q(p,q) \mid f,g \in p\Q[p,q,q\inv] \oplus \Q[q],\ \lim_{q \rightarrow 0}(\lim_{p\rightarrow 0} g) \neq 0 \}$. Then, $\clB := \{ b^{\jmath} + q\clL \mid b \in \bfB \}$ is a $\Q$-basis of $\clL/q\clL$, and $(\clL,\clB)$ forms a $\jmath$-crystal basis of $M$. Hence, $b^{\jmath} + q\clL$ can be thought of as the localization of $b^{\jmath}$ at $p = q = 0$. Conversely, we may say that the $\jmath$-canonical basis of a based $\U$-module is a globalization of its $\jmath$-crystal basis.

Here arises a natural question: Does a $\Uj$-module in $\Oj$ that is not a based $\U$-module admit a globalization of its $\jmath$-crystal basis? One of the main result of this paper gives the affirmative answer to this question.

In our strategy, the multi-parameter $q$-Schur duality between $\Uj$ and the Hecke algebra of type $B$ (\cite{BWW16}), and the irreducibility of the Kazhdan-Lusztig cell representations of the asymptotic multi-parameter Hecke algebra of type $B$ (\cite{BI03}) play key roles. Let us recall the latter objects briefly. Kazhdan and Lusztig \cite{KL79} gave a partition $W = \sqcup_{X \in L(W)} X$ of a Coxeter group $W$ into the left cells; here, $L(W)$ denotes the set of left cells. To each left cell $X \in L(W)$, they associated an $\Hc$-module $C^L_X$ which is called the left cell representation corresponding to $X$. The left cell representation $C^L_X$ is defined to be the quotient of a left ideal $C_{\leq_L X}$ of $\Hc$ spanned by some Kazhdan-Lusztig basis elements by its subspace $C_{<_L X}$, which is also spanned by some Kazhdan-Lusztig basis elements. Therefore, $C^L_X$ has a basis consisting of the images of some Kazhdan-Lusztig basis elements under the canonical map $C_{\leq_L X} \twoheadrightarrow C^L_X$. It is known that each left cell representation is irreducible if $W$ is of type $A$. When $W$ is of type $B$, the irreducibility of the left cell representations depend on the choice of the parameters $p,q$. According to \cite{BI03}, the left cell representations are irreducible when the parameters are asymptotic.

By the multi-parameter $q$-Schur duality for type $B$, the tensor power $\V^{\otimes d}$ of the vector representation of $\U$ is equipped with a $(\Uj,\bfH)$-bimodule structure whose irreducible decomposition is multiplicity free, where $\bfH$ denotes the multi-parameter Hecke algebra of type $B$ over the field $\Q(p,q)$ of rational functions in two variables $p,q$. Then, for each $X \in L(W)$, the left $\Uj$-module $\V^{\otimes d} \otimes_{\bfH} \bfC^L_X$ is irreducible, where $\bfC^L_X := \Q(p,q) \otimes_{\Z[p^{\pm1},q^{\pm1}]} C^L_X$. Every irreducible $\Uj$-module can be obtained in this way as $d \geq 1$ varies. The main result of this paper states that the basis of $\V^{\otimes d} \otimes_{\bfH} \bfC^L_X$ induced from the Kazhdan-Lusztig basis of $C^L_X$ is a globalization of the $\jmath$-crystal basis.

Our approach provides the following characterization of the $\jmath$-crystal bases and its globalization of the finite-dimensional irreducible $\Uj$-modules. Let $L \in \Oj$ be irreducible and $v \in L$ a highest weight vector. Define two symmetric bilinear forms $(\cdot, \cdot)_1$ and $(\cdot,\cdot)_2$ on $L$ and an involutive anti-linear automorphism $\psij_L$ on $L$ by
\begin{align}
\begin{split}
&(v,v)_1 = 1,\ (xm,n)_1 = (m,\sigmaj(x)n)_1 \qu \Forall x \in \Uj,\ m,n \in L, \\
&(v,v)_2 = 1,\ (xm,n)_2 = (m,\tauj(x)n)_2 \qu \Forall x \in \Uj,\ m,n \in L, \\
&\psij_L(v) = v,\ \psij_L(xm) = \psij(x) \psij_L(m) \qu \Forall x \in \Uj, m \in L,
\end{split} \nonumber
\end{align}
where $\sigmaj$, $\tauj$, and $\psij$ are automorphisms of $\Uj$ defined in Proposition \ref{automorphisms}.

\begin{Thm}\label{main}
Let $\bflm \in \Pj$, $L(\bflm)$ the corresponding irreducible $\Uj$-module, $(\clL(\bflm),\clB(\bflm))$ the $\jmath$-crystal basis of $L(\bflm)$ such that $v + q\clL(\bflm) \in \clB(\bflm)$. Then, there exist $\Gj(b)$, $b \in \clB(\bflm)$ satisfying the following hold.
\begin{enumerate}
\item $\clL = \{ m \in L \mid (m,m)_2 \in \Ao \}$.
\item $\clB$ forms an orthonormal basis of $\clL/q\clL$ with respect to the symmetric bilinear form induced from $(\cdot,\cdot)_2$.
\item Set $L_{\A} := \Span_{\A}\{ \Gj(b) \mid b \in \clB \}$, where $\A := \Q[p,p\inv,q,q\inv]$. Then, $(\clL,L_{\A},\psij_L(\clL))$ forms a balanced triple with the global basis $\{ \Gj(b) \mid b \in \clB \}$.
\item $L$ has the basis dual to $\Gj(\clB)$ with respect to $(\cdot,\cdot)_1$.
\end{enumerate}
\end{Thm}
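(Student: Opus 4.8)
The plan is to bootstrap everything from the $q$-Schur duality and the asymptotic cell theory, rather than to argue directly with $L(\bflm)$. First I would fix a bipartition $\bflm \in \Pj$ and choose $d$ large enough so that $L(\bflm)$ appears as $\V^{\otimes d} \otimes_{\bfH} \bfC^L_X$ for an appropriate left cell $X \in L(W_d)$ of the Weyl group of type $B_d$; this is possible by the discussion in the introduction and the classification in \cite{W17}. On $\V^{\otimes d}$ we have the canonical basis $\bfB$ coming from Lusztig's construction, together with the bar-involution $\psi$, the quasi-$K$-matrix $\Upsilon$, and hence $\psij_M = \Upsilon \circ \psi$. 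The induced basis $\Gj(b)$ of $\V^{\otimes d} \otimes_{\bfH} \bfC^L_X$ is then by definition the image of those $b \in \bfB$ that survive the quotient $C_{\leq_L X} \twoheadrightarrow C^L_X$ under the duality. The crux is that the Kazhdan--Lusztig basis of $C^L_X$ is bar-invariant and ``almost triangular'' with respect to the cell preorder, so its image is bar-invariant for $\psij_L$ and upper-triangular with respect to $\preceq^{\jmath}$ over $p\Q[p,q,q\inv] \oplus q\Q[q]$; this is exactly the defining property of the $\jmath$-canonical basis of the based module $(\V^{\otimes d}, \bfB)$ restricted to the cell quotient.

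Next I would establish (3) at the level of $\V^{\otimes d}$ and then descend. On $\V^{\otimes d}$ the triple $(\Li(\V^{\otimes d}), \V^{\otimes d}_{\A}, \psij(\Li))$ is balanced with global basis $\{ b^{\jmath} \}$ — this is the integrality of $\Upsilon$ together with the standard balanced-triple argument of \cite{L94,BW13}, already used in \cite{W17} to identify the $\jmath$-crystal basis with the localized $\jmath$-canonical basis. The point is that tensoring over $\bfH$ with the cell module $\bfC^L_X$ is compatible with all three lattices: the Kazhdan--Lusztig basis of $C^L_X$ is defined over $\A$ (indeed over $\Z[p^{\pm 1},q^{\pm 1}]$), it is $\Ao$-integral in the obvious sense, and it is bar-invariant, so the three lattices $\clL$, $L_{\A}$, $\psij_L(\clL)$ of $L(\bflm)$ are the images of the corresponding lattices of $\V^{\otimes d}$. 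Balancedness then passes to the quotient because $C_{<_L X}$ is spanned by KL basis elements and hence is a direct summand with respect to each lattice. This simultaneously gives that $\{ \Gj(b) \}$ is the global $\jmath$-crystal basis and that $\clB = \{ \Gj(b) + q\clL \}$ is the $\jmath$-crystal basis — matching the hypothesis $v + q\clL(\bflm) \in \clB(\bflm)$ once we normalize the highest weight vector correctly.

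For (1), (2), (4) I would use the contravariant forms. By Proposition \ref{automorphisms} the forms $(\cdot,\cdot)_1$ and $(\cdot,\cdot)_2$ are the $\sigmaj$- and $\tauj$-contravariant forms normalized at $v$; on $\V^{\otimes d}$ these are induced from the corresponding forms on $\V$ and are known to make the canonical basis almost-orthonormal mod $q\clL$ (this is the analogue for QSPs of Kashiwara's polarization argument, cf.\ \cite{W17}). The key computation is that the $\jmath$-canonical basis $\{ b^{\jmath} \}$ is orthonormal modulo $q\clL$ with respect to $(\cdot,\cdot)_2$: one shows $(b^{\jmath}, b'^{\jmath})_2 \in \delta_{b,b'} + q\clL$ by combining bar-invariance of both $b^{\jmath}$ and the form with the upper-triangularity, a standard ``Gram--Schmidt is already done'' argument. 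Then $\clL = \{ m \mid (m,m)_2 \in \Ao \}$ follows because the $\Ao$-span of an orthonormal-mod-$q\clL$ basis is intrinsically characterized this way (the inclusion $\subseteq$ is immediate; for $\supseteq$ expand $m$ in the basis and use $\Ao$ is a valuation ring). Part (4) is then formal: the basis dual to $\{ \Gj(b) \}$ under $(\cdot,\cdot)_1$ exists because $(\cdot,\cdot)_1$ is nondegenerate (it is the $\sigmaj$-contravariant form on an irreducible module), and one identifies it explicitly, if desired, via the interplay between $\sigmaj$ and $\tauj$.

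The main obstacle I anticipate is \emph{descent of the balanced triple along $\otimes_{\bfH}$}: one must check carefully that $C_{<_L X}$, which is only a submodule spanned by KL elements, intersects each of the three lattices of $\V^{\otimes d}$ in the ``expected'' sublattice, so that the quotient lattices are free of the right rank and the balancedness property ($\clL \cap L_{\A} \cap \psij_L(\clL) \to L_{\A}/$ etc.\ is an isomorphism) is preserved. This is where the asymptotic hypothesis enters essentially — it is precisely the irreducibility of $C^L_X$ from \cite{BI03} that guarantees $\V^{\otimes d} \otimes_{\bfH} \bfC^L_X$ is irreducible and hence equals $L(\bflm)$ on the nose, with no correction terms — and it is also where the two-variable localization $\Ao$ (rather than the one-variable $q\Q[[q]]$) must be handled with care. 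Everything else is a recombination of known integrality and orthonormality statements.
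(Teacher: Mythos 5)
Your proposal is correct in substance and runs on the same engine as the paper: realize $L(\bflm)$ via the multiparameter Schur duality as a cell subquotient of $\V^{\otimes d} \cong \bfT(\pij)$, exploit bar-invariance and $\A$-, $\Ao$-integrality of the Kazhdan--Lusztig basis, and pass balancedness to the subquotient because the cell submodules are spanned by global basis elements (this is exactly Lemma \ref{sub and quotient of bt}, which disposes of the ``descent along $\otimes_{\bfH}$'' obstacle you flag: the paper never tensors lattices over $\bfH$, it works directly with $C_{\leq_L X}(\pij)/C_{<_L X}(\pij)$ inside $\bfT(\pij)$). The genuine differences are two. First, you phrase the basis as the $\jmath$-canonical basis of $(\V^{\otimes d},\bfB)$ built from $\Upsilon$ and then identified with the type $B$ KL basis; the paper bypasses $\Upsilon$ entirely in this proof and works on the Hecke side with the parabolic KL basis $\{{}^\lambda C_w\}$, whose bar-invariance is compatible with $\psij$ via $\xi(\psij(x))=\ol{\xi(x)}$ — your route is fine but leans on the canonical-basis identification of \cite{BW13}, \cite{BWW16} as an extra input. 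Second, for item (4) you argue abstractly: the radical of $(\cdot,\cdot)_1$ is a submodule, so irreducibility plus $(v,v)_1=1$ gives nondegeneracy and hence a dual basis; this is valid for the bare statement, whereas the paper builds the dual basis explicitly as $\Gjup(b)=\vphi_D\inv([{}^\mu D_{w_\mu y w_0}]'_{Xw_0})$ using the dual KL basis, the pairing $\la\cdot\mid\cdot\ra$ attached to $h_{w_0}$, and $D^L_{Xw_0}\simeq C^L_X$ (Proposition \ref{(,)_1 is nondegenerate}); the paper's extra machinery buys the concrete identification of the dual basis with dual KL elements, which your argument does not produce. Two small cautions: your justification of (1) via ``$\Ao$ is a valuation ring'' is not literally true (neither $p/q$ nor $q/p$ lies in $\Ao$), so the lattice characterization really does have to be taken from the almost-orthonormality argument in the proof of \cite[Proposition 7.4.4]{W17}, which is also what the paper cites; and the irreducibility of $\V^{\otimes d}\otimes_{\bfH}\bfC^L_X$ is a result of \cite{W17} resting on \cite{BI03}, not a formal consequence of \cite{BI03} alone.
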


Next, we investigate basic properties of global $\jmath$-crystal basis for not necessarily irreducible $\Uj$-modules. Especially, we roughly describe the matrix coefficients of the actions of the generators of $\Uj$ with respect to a given global $\jmath$-crystal basis.

We end this paper by proving that the global $\jmath$-crystal basis for a $\U$-module (not $\Uj$-module) is compatible with the filtration coming from the dominance order of the bipartitions (see subsection \ref{subsec: global basis} for the definition of this filtration). A similar result is well-known for ordinary global crystal bases (\cite{K93}, \cite{L94}).

\begin{Thm}
Let $M$ be a $\U$-module with a global $\jmath$-crystal basis $\Gj(\clB)$. Then, for each $\bflm \in \Pj$, the subquotient $W_\bflm(M)$ of $M$ has $\{ \Gj(b) + W_{\succ \bflm} \mid I(b) = \bflm  \}$ as a global $\jmath$-crystal basis. Moreover, there exists an isomorphism $L(\bflm)^{\oplus m_\bflm} \rightarrow W_\bflm(M)$ which restricts to a bijection $\{ \Gj(b) \mid b \in \clB(\bflm) \}^{\oplus m_\bflm} \rightarrow \{ \Gj(b) + W_{\succ \bflm} \mid I(b) = \bflm  \}$, where $m_\bflm$ denotes the multiplicity of $L(\bflm)$ in $M$.
\end{Thm}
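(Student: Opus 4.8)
The plan is to deduce the statement from one compatibility fact and then run a chain of formal descent arguments. Write $W_{\succeq \bflm}(M) \subseteq M$ for the submodule appearing in the dominance filtration, so that $W_\bflm(M) = W_{\succeq \bflm}(M)/W_{\succ \bflm}(M)$; since $\Oj$ is semisimple (\cite{W17}), $W_\bflm(M)$ is the $L(\bflm)$-isotypic part of $M$, of multiplicity $m_\bflm$. The crux is the \emph{compatibility of $\Gj(\clB)$ with the dominance filtration}:
\[ W_{\succeq \bflm}(M) = \Span_{\Q(p,q)}\{\Gj(b) \mid I(b) \succeq \bflm\}, \qquad W_{\succ \bflm}(M) = \Span_{\Q(p,q)}\{\Gj(b) \mid I(b) \succ \bflm\}. \]
I expect this to be the main obstacle, and I treat it last.

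Granting it, the first assertion follows by descending each ingredient of a global $\jmath$-crystal basis from $M$ to $W_\bflm(M)$. First, $W_{\succ \bflm}(M)$ is the span of a subset of the bar-invariant basis $\Gj(\clB)$, hence $\psij_M$-stable, so $\psij_M$ induces on $W_\bflm(M)$ an anti-linear involution compatible with $\psij$ on $\Uj$. Next, $\Gj(\clB)$ is simultaneously an $\Ao$-basis of $\clL$ (as $\Gj(b) \equiv b \bmod q\clL$) and an $\A$-basis of $L_\A$, so each of $\clL$, $L_\A$ and $\psij_M(\clL)$ is free over its ring on the basis $\Gj(\clB)$; intersecting with $W_{\succeq \bflm}(M)$ and then quotienting by $W_{\succ \bflm}(M)$ yields three lattices on $W_\bflm(M)$, each free on $\{\Gj(b) + W_{\succ \bflm}(M) \mid I(b) = \bflm\}$, and the resulting triple is balanced with that set as global basis. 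Finally the residual crystal lattice is a $\jmath$-crystal lattice with residual basis $\{b \mid I(b) = \bflm\}$, since the $\jmath$-Kashiwara operators preserve connected components of $\clB$ (hence the value of $I$) and preserve $\Uj$-submodules. Thus $\{\Gj(b) + W_{\succ \bflm}(M) \mid I(b) = \bflm\}$ is a global $\jmath$-crystal basis of $W_\bflm(M)$ in the sense of subsection \ref{subsec: global basis}.

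For the isomorphism, let $\clB^{\up}_\bflm := \{b \in \clB \mid \etil_i b = 0 \text{ for all } i,\ I(b) = \bflm\}$ be the set of tops of the connected components of $\clB$ of crystal type $\bflm$, so $\#\clB^{\up}_\bflm = m_\bflm$. Each $b \in \clB^{\up}_\bflm$ has $\jmath$-weight equal to the highest $\jmath$-weight of $L(\bflm)$; since the corresponding weight space of $W_\bflm(M) \cong L(\bflm)^{\oplus m_\bflm}$ is exactly its $m_\bflm$-dimensional highest weight space, and the elements $\Gj(b) + W_{\succ \bflm}(M)$, $b \in \clB^{\up}_\bflm$, are linearly independent there (being distinct modulo $q$ times the residual crystal lattice), they form a basis of it. Enumerate $\clB^{\up}_\bflm = \{b_1, \dots, b_{m_\bflm}\}$ and define $\phi \colon L(\bflm)^{\oplus m_\bflm} \to W_\bflm(M)$ by sending the highest weight vector of the $k$-th summand to $\Gj(b_k) + W_{\succ \bflm}(M)$; by cyclicity of $L(\bflm)$ this is a well-defined $\Uj$-homomorphism, it restricts to a bijection between the highest weight spaces, and hence by semisimplicity of $\Oj$ it is an isomorphism. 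After rescaling $\phi$ (using the orthonormality in Theorem \ref{main}(2) and the bar-invariance of the $\Gj(b_k)$) so that it intertwines the relevant bilinear forms and bar-involutions, the uniqueness of the global $\jmath$-crystal basis determined by a prescribed $\jmath$-crystal basis, bar-involution and $\A$-lattice (Theorem \ref{main}) shows that $\phi$ carries the global $\jmath$-crystal basis $\bigsqcup_{k=1}^{m_\bflm}\{\Gj(b) \mid b \in \clB(\bflm)\}$ of $L(\bflm)^{\oplus m_\bflm}$ bijectively onto $\{\Gj(b) + W_{\succ \bflm}(M) \mid I(b) = \bflm\}$.

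It remains to prove the compatibility of $\Gj(\clB)$ with the dominance filtration, which I regard as the genuine difficulty. My plan is to reduce to $M = \V^{\otimes d}$, since every $\U$-module carrying a global $\jmath$-crystal basis occurs as a direct summand of a tensor power of $\V$ compatibly with canonical bases, hence --- through the intertwiner $\Upsilon$ --- with global $\jmath$-crystal bases and with the dominance filtration. For $\V^{\otimes d}$ one invokes the multi-parameter $q$-Schur duality of \cite{BWW16}: under $\V^{\otimes d} = \bigoplus_X \V^{\otimes d} \otimes_{\bfH} \bfC^L_X$ the dominance filtration corresponds to the two-sided Kazhdan--Lusztig cell filtration of $\bfH$, while the global $\jmath$-crystal basis of $\V^{\otimes d}$ corresponds to the basis induced from the Kazhdan--Lusztig basis of $\bfH$. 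Compatibility then reduces to the tautological compatibility of the Kazhdan--Lusztig basis with the cell filtration, together with \cite{BI03}, which ensures that in the asymptotic regime the cell subquotients are the irreducible modules $\bfC^L_X$, so that the $\Uj$-subquotients are precisely $L(\bflm)^{\oplus m_\bflm}$. (Alternatively, one identifies $\Gj(\clB)$ with the $\jmath$-canonical basis of the based module $(M,\bfB)$ and appeals to the cell-theoretic behaviour of $\jmath$-canonical bases in the Bao--Wang theory; the analogue for ordinary global crystal bases is \cite{K93, L94}.)
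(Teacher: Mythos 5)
Your route is genuinely different from the paper's: the paper proves this statement as Theorem \ref{filtration} by a direct double induction (on $r$, and descending on $\bflm$ with respect to $\preceq$), killing the unwanted coefficients in $\xi_t(\Gjlow(\Tj_b))-\Gj(b)$ by hand via the property $(\ast)$, bar-invariance, integrality, and the gcd estimate of Lemma \ref{lemma}, rather than by passing through $\V^{\otimes d}$ and cell theory. Your formal descent of the balanced triple to $W_{\bflm}(M)$, granting the spanning statement, is fine (it is essentially Lemma \ref{sub and quotient of bt}); but the two points you defer are exactly where the content lies, and as written they are gaps. The claim that on $\V^{\otimes d}$ the dominance filtration ``corresponds to'' the Kazhdan--Lusztig cell filtration, so that compatibility becomes ``tautological'', is not justified: the cell filtration is by construction spanned by KL ($=\jmath$-canonical) basis vectors, but $W_{\succeq\bflm}$ is defined by isotypic components for the specific order $\preceq$ on $\Pj$, and identifying the two requires (a) knowing which left cell yields which $L(\bflm)$ and (b) that the Bonnaf\'e--Iancu preorder $\leq_L$ in the asymptotic regime is compatible with precisely this dominance order (if $X'\leq_L X$ then $\bflm_{X'}\succeq\bflm_X$). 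Point (b) is a nontrivial combinatorial statement not supplied by \cite{BI03} as you cite it, and the paper never uses it. Likewise, the reduction of a general $\U$-module $M$ with a given global $\jmath$-crystal basis to a based direct summand of $\V^{\otimes d}$ ``compatibly with canonical bases, hence with global $\jmath$-crystal bases and the dominance filtration'' is asserted, not proved; a global $\jmath$-crystal basis of $M$ is not a priori the $\jmath$-canonical basis of such a summand (the paper only obtains statements of this kind under property $(\ast)$, Lemma \ref{condition ast}, and via a uniqueness that is itself a corollary of the theorem).

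The second, and more serious, problem is that your proof of the ``Moreover'' part is circular. The uniqueness you invoke is not contained in Theorem \ref{main}: that theorem constructs one particular balanced triple for $L(\bflm)$, while uniqueness of its global $\jmath$-crystal basis is the Corollary deduced from Theorem \ref{filtration}, i.e.\ from the statement you are proving. Concretely, your map $\phi$ sends highest weight vectors to the classes of the $\Gj(b_k)$ and, after rescaling, intertwines the $\psij$-involutions; but you have not shown that $\phi$ carries $(L(\bflm)_{\A})^{\oplus m_\bflm}$ onto $W_{\bflm}(M_{\A})$ (nor $\clL(\bflm)^{\oplus m_\bflm}$ onto $W_{\bflm}(\clL)$). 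Matching these lattices is exactly what forces $\phi(\Gjlow(\Tj_b))=\Gj(b)+W_{\succ\bflm}(M)$ for all $b$ with $I^\jmath(b)=\bflm$, and it is what the paper's Lemmas \ref{weak lemma}--\ref{lemma 3} establish by explicit coefficient analysis. Without an independent proof of this matching, or of the cell-versus-dominance compatibility above carried out in full, your argument does not close.
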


In particular, if we take $M$ to be an irreducible $\Uj$-module, we obtain the following.

\begin{Cor}
Let $\bflm \in \Pj$. Then, $\{ \Gj(b) \mid b \in \clB(\bflm) \}$ is a unique global $\jmath$-crystal basis for $L(\bflm)$.
\end{Cor}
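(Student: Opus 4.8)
The plan is to read the statement off the preceding Theorem applied to $M = L(\bflm)$, combined with Schur's lemma and the rigidity of global $\jmath$-crystal bases. Existence of a global $\jmath$-crystal basis for $L(\bflm)$ is already furnished by Theorem \ref{main}, so what must be proved is uniqueness: that any global $\jmath$-crystal basis $\Gj'(\clB')$ of $L(\bflm)$ coincides with the distinguished one $\{ \Gj(b) \mid b \in \clB(\bflm) \}$.

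First I would specialize the preceding Theorem. As $L(\bflm)$ is irreducible, $I(b) = \bflm$ for every $b \in \clB'$ and the isotypic filtration degenerates: $W_\bflm(L(\bflm)) = L(\bflm)$, $W_{\succ \bflm} = 0$, and $m_\bflm = 1$. Hence the Theorem hands us a $\Uj$-module isomorphism $\phi \colon L(\bflm) \to W_\bflm(L(\bflm)) = L(\bflm)$ taking $\{ \Gj(b) \mid b \in \clB(\bflm) \}$ bijectively onto $\{ \Gj'(b) \mid b \in \clB' \}$. Thus any global $\jmath$-crystal basis of $L(\bflm)$ is the image of the distinguished one under a $\Uj$-module endomorphism of $L(\bflm)$; by irreducibility that endomorphism is a scalar $c \in \Q(p,q)^{\times}$, so $\phi = c \cdot \id$.

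Next I would pin $c$ down. The map $\phi$ carries the crystal lattice $\Span_{\Ao} \{ \Gj(b) \} = \clL(\bflm)$ onto $\Span_{\Ao} \{ \Gj'(b) \}$, and the latter again equals $\clL(\bflm)$ by the uniqueness of the $\jmath$-crystal lattice (Theorem \ref{main}(1)); hence $c \in \Ao^{\times}$. Likewise $\phi$ preserves the integral $\A$-form of $L(\bflm)$, so $c \in \A^{\times}$. Since $\Ao^{\times} \cap \A^{\times} = \Q^{\times}$, we get $c \in \Q^{\times}$. Reducing $\phi$ modulo $q\clL(\bflm)$ yields multiplication by $c$ on $\clL(\bflm)/q\clL(\bflm)$, which must send $\clB(\bflm)$ onto $\clB'$; but the $\jmath$-crystal basis of $L(\bflm)$ containing $v + q\clL(\bflm)$ is unique, so $\clB' = \clB(\bflm)$ and $c$ permutes the linearly independent set $\clB(\bflm)$ setwise. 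A nonzero scalar doing this has a fixed vector, whence $c = 1$ and $\Gj' = \Gj$.

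The step I expect to be the genuine obstacle is the rigidity in the last paragraph, i.e. forcing $c = 1$. It relies on two facts that need to be in hand: that $(\clL(\bflm), \clB(\bflm))$ is the unique $\jmath$-crystal basis of $L(\bflm)$ with $v + q\clL(\bflm) \in \clB(\bflm)$ (which follows from the intrinsic descriptions in Theorem \ref{main}(1),(2) and the connectedness of the $\jmath$-crystal graph of $L(\bflm)$), and that a global $\jmath$-crystal basis really is an $\A$-basis of a fixed, intrinsic integral form of $L(\bflm)$, so that $\phi$ is forced to preserve it. Once both are granted, the endgame is the usual balanced-triple rigidity and the remainder of the argument is just bookkeeping about the degenerate filtration coming from the preceding Theorem.
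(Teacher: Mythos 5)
Your main step is the same as the paper's: the corollary is obtained simply by specializing the preceding theorem (Theorem \ref{filtration}) to $M=L(\bflm)$, where $\Pj(M)=\{\bflm\}$, $W_{\succ\bflm}(M)=0$ and $m_\bflm=1$; the paper gives no argument beyond this specialization. One point your reduction silently drops is that Theorem \ref{filtration} is proved under the hypothesis that $M$ has the property $(\ast)$ (automatic only for $r=1$ or for $\jmath$-canonical bases, Lemma \ref{condition ast}); accordingly the body version of the corollary reads ``a unique global $\jmath$-crystal basis satisfying the property $(\ast)$'', and any careful proof has to carry this hypothesis for $r\geq 2$.

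The extra rigidity argument you add to force $c=1$ breaks down exactly at the step you flagged, namely ``$\phi$ preserves the integral $\A$-form, so $c\in\A^\times$''. In this paper the $\A$-lattice $M_\A$ and the $\psij$-involution are auxiliary \emph{choices} entering the definition of a global $\jmath$-crystal basis, not intrinsic invariants of $L(\bflm)$, and they genuinely vary: if $c\in\Ao^\times$ has constant term $1$ (e.g.\ $c=1+q$), then $(c/\ol{c})\,\psij_\bflm$ is again a $\psij$-involution, $c\,L(\bflm)_\A$ is again a $\UjA$-stable $\A$-lattice, the triple $(\clL(\bflm),\,c\,L(\bflm)_\A,\,(c/\ol{c})\psij_\bflm(\clL(\bflm)))$ is balanced, and its associated global basis is $\{c\,\Gjlow(b)\mid b\in\clB(\bflm)\}$, which moreover still satisfies $(\ast)$ since the relations in that condition are linear in the basis vectors. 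So $c\in\A^\times$ cannot be deduced, and literal set-theoretic uniqueness fails without a normalization; what the theorem actually yields (and all the paper uses) is that any global $\jmath$-crystal basis is carried onto $\Gjlow(\bflm)$ by an automorphism of $L(\bflm)$, i.e.\ equals $c\,\Gjlow(\bflm)$ for a scalar $c\in 1+q\Ao$ --- and up to that point your computations (Schur's lemma, $c\in\Ao^\times$ from uniqueness of the $\jmath$-crystal lattice, constant term $1$ from reduction mod $q\clL$, $\Ao^\times\cap\A^\times=\Q^\times$) are all correct. To get equality on the nose you must impose the normalization that is built into the construction of $\Gjlow(\bflm)$, e.g.\ that the $\psij$-involution fixes the chosen highest weight vector $v_\bflm$ (equivalently, that the highest-weight global basis element is $v_\bflm$ itself), or fix the $\A$-form to be $\UjA v_\bflm$; with such a normalization your argument closes, and this is implicitly how the paper reads its own statement.
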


This paper is organized as follows. In Section \ref{notations}, we prepare necessary notations concerning (bi)partitions and Young (bi)tableaux. In Section \ref{RT for Uj} and \ref{crystal}, we give a brief review of \cite{W17}. In Section \ref{global bases}, we introduce the notion of global $\jmath$-crystal bases, and show that the $\jmath$-canonical bases are examples of them. Sections \ref{KL bases}--\ref{existence of global jcry bases} are devoted to prove the existence theorem for the global $\jmath$-crystal bases of the finite-dimensional irreducible $\Uj$-modules. After studying basic properties of the global $\jmath$-crystal bases in Section \ref{basic properties}, we finally prove the compatibility of the $\jmath$-crystal bases and the filtration associated to the dominance order of the bipartitions in Section \ref{sec:proof of main theorem}.

\subsection*{Acknowledgement}
This work is supported by JSPS KAKENHI grant number 17J00172.

\section{Notations}\label{notations}
Throughout this paper, we fix a positive integer $r$. For $n \in \hf \Z$, set $\ul{n} := n - \hf$. Note that $-\ul{n} = -n + \hf \neq \ul{-n}$. We set
$$
I := \{ -r,\ldots,-1,0,1,\ldots,r \}, \qu \bbI := \{ -\ul{r}, \ldots, -\ul{1}, \ul{1}, \ldots, \ul{r} \}, \qu \Ij := \{ 1,\ldots,r \}.
$$

A partition of $n \in \N$ of length $l \in \N$ is a nonincreasing sequence $\lm = (\lm_1,\ldots,\lm_l)$ of nonnegative integers satisfying $\sum_{i=1}^l \lm_i = n$. Let $|\lm| := n$ and $\ell(\lm) := l$, and call them the size and the length of $\lm$, respectively. We denote by $\Par_l(n)$ the set of partitions of $n$ of length $l$.

We often identify a partition with a Young diagram in a usual way. Let $(L,\preceq)$ be a totally ordered set. A semistandard tableau of shape $\lm \in \Par_l(n)$ in letters $L$ is a filling of the Young diagram $\lm$ with elements of $L$, which weakly increases (with respect to the total order $\preceq$) from left to right along the rows, and strictly increases from the top to the bottom along the columns.

A bipartition of $n \in \N$ of length $(l;m) \in \N^2$ is an ordered pair $\bflm := (\bflm^-;\bflm^+)$ of partitions such that $\ell(\bflm^-) = l$, $\ell(\bflm^+) = m$, and $|\bflm| := |\bflm^-|+|\bflm^+| = n$. We denote by $P_{(l;m)}(n)$ the set of bipartitions of $n$ of length $(l;m)$. For totally ordered sets $(L^-,\preceq^-)$ and $(L^+,\preceq^+)$, a semistandard tableau of shape $\bflm \in P_{(l;m)}(n)$ in letters $(L^-;L^+)$ is an ordered pair $\bfT = (T^-;T^+)$, where $T^\pm$ is a semistandard tableau of shape $\bflm^\pm$ in letters $L^\pm$.

For partitions $\mu \subset \lm$, define the skew partition $\lm/\mu$ in a usual way. For bipartitions $\bfmu \subset \bflm$ (i.e., $\bfmu^- \subset \bflm^-$ and $\bfmu^+ \subset \bflm^+$), define the skew bipartition $\bflm/\bfmu$ to be $(\bflm^-/\bfmu^-;\bflm^+/\bfmu^+)$. A skew partition $\lm/\mu$ is said to be a horizontal strip if each column of $\lm/\mu$ contains at most one box. We say that a skew bipartition $\bflm/\bfmu$ is a horizontal strip if $\bflm^\pm/\bfmu^\pm$ are.

Set
\begin{itemize}
\item $P(n) = P_r(n) := \Par_{2r+1}(n)$: the set of partitions of $n$ of length $2r+1$.
\item $P = P_r := \bigsqcup_{n \in \N} P(n)$: the set of partitions of length $2r+1$.
\item $\Par_l := \bigsqcup_{n \in \N} \Par_l(n)$: the set of partitions of length $l$.
\item $\Pj(n) = \Pj_r(n) := P_{(r+1;r)}(n)$: the set of bipartitions of $n$ of length $(r+1;r)$.
\item $\Pj = \Pj_r := \bigsqcup_{n \in \N} \Pj(n)$: the set of bipartitions of length $(r+1;r)$.
\item $\SST(\lm)$: the set of semistandard tableaux of shape $\lm \in P(n)$ in letters $I$.
\item $\SST(\bflm)$: the set of semistandard tableaux of shape $\bflm \in \Pj(n)$ in letters $(I \setminus \Ij;\Ij)$ with total orders $0 \prec^- -1 \prec^- \cdots \prec^- -r$ and $1 \prec^+ \cdots \prec^+ r$.
\end{itemize}

For $\bflm \in \Pj$, we refer the $i$-th row of $\bflm^-$ to as the $-(i-1)$-th row of $\bflm$, and the $j$-th row of $\bflm^+$ to as the $j$-th row of $\bflm$. Also, for $i \in I$, set $\bflm_i$ to be the length of the $i$-th row of $\bflm$, i.e.,
$$
\bflm_i := \begin{cases}
\bflm^-_{-i+1} \qu & \IF i \leq 0, \\
\bflm^+_i \qu & \IF i > 0.
\end{cases}
$$

For $i \in \Ij$, set $\bflm \downarrow_i := (\bflm_0,\bflm_{-1},\ldots,\bflm_{-i};\bflm_1,\ldots,\bflm_i) \in \Pj_i$.

For $\bfT \in \SST(\bflm)$ and $i \in \Ij$, set $\bfT \downarrow_i$ to be the semistandard tableau obtained from $\bfT$ by deleting the boxes whose entries are less than $-i$ or greater than $i$.

For each $\bflm \in \Pj$, let $\bfT_\bflm \in \SST(\bflm)$ be the unique semistandard tableau of shape $\bflm$ whose entries in the $i$-th row are $i$. Note that we have $\bfT_\bflm \downarrow_i = \bfT_{\bflm \downarrow_i}$. For $\bfT \in \SST(\bflm)$ and $i \in I$, set $\bfT(i)$ to be the number of boxes of $\bfT$ whose entries are $i$.

\begin{defi}\normalfont
\ 
\begin{enumerate}
\item $\preceq$ is a partial order (called the dominance order) on $\Par_l$ defined as follows. For $\lm, \mu \in \Par_l$, we have $\lm \preceq \mu$ if
\begin{enumerate}
\item $|\lm| = |\mu|$ and
\item $\sum_{i=1}^j \lm_i \leq \sum_{i=1}^j \mu_i$ for all $1 \leq j \leq l$.
\end{enumerate}
\item $\preceq$ is a partial order (also called the dominance order) on $\Pj$ defined as follows. For $\bflm, \bfmu \in \Pj$, we have $\bflm \preceq \bfmu$ if
\begin{enumerate}
\item $|\bflm| = |\bfmu|$,
\item $\sum_{i=0}^j \bflm_{-i} \leq \sum_{i=0}^j \bfmu_{-i}$ for all $0 \leq j \leq r$, and
\item $|\bflm^-| + \sum_{i=1}^j \bflm_i \leq |\bfmu^-| + \sum_{i=1}^j \bfmu_i$ for all $1 \leq j \leq r$.
\end{enumerate}
\item $\trilefteq$ is a partial order on $\Pj$ defined as follows. For $\bflm,\bfmu \in \Pj$, we have $\bflm \trilefteq \bfmu$ if $\bflm^- \preceq \bfmu^-$ (dominance order on $\Par_{r+1}$) and $\bflm^+ \preceq \bfmu^+$ (dominance order on $\Par_r$).
\end{enumerate}
\end{defi}

Clearly, $\bflm \trilefteq \bfmu$ implies $\bflm \preceq \bfmu$.

\section{Representation theory of $\Uj$}\label{RT for Uj}
Let $p$ and $q$ be independent indeterminates.
\subsection{Definition of $\Uj$}
Let $\Lm$ be the free $\Z$-module with a free basis $\{ \epsilon_i \mid i \in I \}$, and with a symmetric bilinear form $(\cdot,\cdot)$ defined by $(\epsilon_i,\epsilon_j) = \delta_{i,j}$. For $i \in \bbI$, set
$$
\alpha_i := \epsilon_{\ul{i}}-\epsilon_{\ul{i+1}}, \qu \qu Q := \sum_{i \in \bbI} \Z \alpha_i, \qu \qu Q_+ := \sum_{i \in \bbI} \Z_{\geq 0} \alpha_i.
$$
For $\lm,\mu \in \Lm$, we write $\mu \leq \lm$ if $\lm - \mu \in Q_+$. This defines a partial order on $\Lm$.

The quantum group $\bfU = \bfU_{2r+1} = U_q(\mathfrak{sl}_{2r+1})$ of type $A_{2r}$ is an associative algebra over $\Q(p,q)$ with generators $E_i,F_i,K_i^{\pm 1}$, $i \in \bbI$ subject to the following relations: For $i,j \in \bbI$,
\begin{align}
\begin{split}
&K_i K_i\inv = K_i\inv K_i = 1, \\
&K_i K_j = K_j K_i, \\
&K_i E_j K_i\inv = q^{(\alpha_i,\alpha_j)} E_i, \\
&K_i F_j K_i\inv = q^{-(\alpha_i,\alpha_j)} F_i, \\
&E_i F_j - F_j E_i = \delta_{i,j} \frac{K_i - K_i\inv}{q - q\inv}, \\
&E_i^2 E_j - (q+q\inv)E_iE_jE_i + E_jE_i^2 = 0 \qu \IF |i-j| = 1, \\
&F_i^2 F_j - (q+q\inv)F_iF_jF_i + F_jF_i^2 = 0 \qu \IF |i-j| = 1, \\
&E_i E_j - E_j E_i = 0 \qu \IF |i-j| > 1, \\
&F_i F_j - F_j F_i = 0 \qu \IF|i-j| > 1.
\end{split} \nonumber
\end{align}

In this paper, we use the comultiplication $\Delta$ of $\bfU$ given by
\begin{align}
\begin{split}
&\Delta(K_i^{\pm1}) = K_i^{\pm1} \otimes K_i^{\pm1}, \qu \Delta(E_i) = 1 \otimes E_i + E_i \otimes K_i\inv, \\
&\Delta(F_i) = F_i \otimes 1 + K_i \otimes F_i \qu i \in \bbI.
\end{split} \nonumber
\end{align}

Let $(\bfU,\Uj)$ denote the quantum symmetric pair over $\Q(p,q)$ of type AIII, that is, $\Uj = \Uj_r$ is the subalgebra of $\bfU$ generated by
\begin{align}
&k_i^{\pm1} := (K_{\ul{i}}K_{-\ul{i}})^{\pm1}, \nonumber\\
&e_i := E_{\ul{i}} + p^{-\delta_{i,1}} F_{-\ul{i}} K_{\ul{i}}\inv, \nonumber\\
&f_i := E_{-\ul{i}} + p^{\delta_{i,1}} K_{-\ul{i}}\inv F_{\ul{i}}, \qu i \in \Ij. \nonumber
\end{align}

The $\Uj$ has the following defining relations (\cite{Le99}, see also \cite{BW13}, \cite{BWW16}): For $i,j \in \Ij$,
\begin{align}
\begin{split}
&k_i k_i\inv = k_i\inv k_i = 1, \\
&k_i k_j = k_j k_i, \\
&k_i e_j k_i\inv = q^{(\alpha_{\ul{i}}-\alpha_{-\ul{i}},\alpha_{\ul{j}})} e_j, \\
&k_i f_j k_i\inv = q^{-(\alpha_{\ul{i}}-\alpha_{-\ul{i}},\alpha_{\ul{j}})} f_j, \\
&e_i f_j - f_j e_i = \delta_{i,j} \frac{k_i - k_i\inv}{q - q\inv} \qu \IF (i,j) \neq (1,1), \\
&e_i^2 e_j - (q+q\inv)e_ie_je_i + e_je_i^2 = 0 \qu \IF |i-j| = 1, \\
&f_i^2 f_j - (q+q\inv)f_if_jf_i + f_jf_i^2 = 0 \qu \IF |i-j| = 1, \\
&e_i e_j - e_j e_i = 0 \qu \IF |i-j| > 1, \\
&f_i f_j - f_j f_i = 0 \qu \IF |i-j| > 1, \\
&e_1^2 f_1 - (q+q\inv)e_1f_1e_1 + f_1e_1^2 = -(q+q\inv) e_1 (pqk_1 + p\inv q\inv k_1\inv), \\
&f_1^2 e_1 - (q+q\inv)f_1e_1f_1 + e_1f_1^2 = -(q+q\inv) (pqk_1 + p\inv q\inv k_1\inv) f_1.
\end{split} \nonumber
\end{align}

\begin{prop}\label{automorphisms}
\begin{enumerate}
\item {\cite[Lemma 6.1 (3)]{BW13}} There exists a unique $\Q$-algebra automorphism $\psij$ of $\Uj$ which maps $e_i,f_i,k_i,p,q$ to $e_i,f_i,k_i\inv,p\inv,q\inv$, respectively.
\item There exists a unique $\Q(p,q)$-algebra anti-automorphism $\sigmaj$ of $\Uj$ which maps $e_i,f_i,k_i$ to $f_i,e_i,k_i$, respectively.
\item {\cite[Proposition 4.6]{BW18}} There exists a unique $\Q(p,q)$-algebra anti-automorphism $\tauj$ of $\Uj$ which maps $e_i,f_i,k_i$ to $p^{-\delta_{i,1}}q\inv k_i\inv f_i$, $p^{\delta_{i,1}}q e_i k_i$, $k_i$, respectively.
\end{enumerate}
\end{prop}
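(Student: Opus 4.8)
The plan is to derive all three statements from the presentation of $\Uj$ recorded above, reducing each to a finite relation-check. \emph{Uniqueness} is immediate: since $\Uj$ is generated as an algebra by $e_i,f_i,k_i^{\pm1}$ ($i\in\Ij$), a $\Q(p,q)$-algebra anti-automorphism is determined by its values on these elements, which settles (2) and (3); for (1), the requirements $\psij(p)=p\inv$ and $\psij(q)=q\inv$ force $\psij$ to restrict on the central subfield $\Q(p,q)\subseteq\Uj$ to the field automorphism $p\mapsto p\inv$, $q\mapsto q\inv$, after which $\psij$ is determined on all of $\Uj$ by $\Q(p,q)$-semilinearity together with its values on the monomials in $e_i,f_i,k_i^{\pm1}$.

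For \emph{existence} I would use two reductions coming from the presentation. First, a $\Q(p,q)$-algebra anti-endomorphism of $\Uj$ prescribed on the generators exists precisely when the prescribed images satisfy the defining relations of $\Uj$ \emph{with every product reversed} (equivalently, they define a homomorphism $\Uj\to{\Uj}^{\op}$). Second, a $\Q$-algebra endomorphism $\psij$ with $\psij(p)=p\inv$, $\psij(q)=q\inv$ and prescribed images of $e_i,f_i,k_i^{\pm1}$ exists precisely when those images satisfy the defining relations after applying $p\mapsto p\inv$, $q\mapsto q\inv$ to all coefficients. For $\psij$ the latter condition is evident by inspection: the defining relation list is stable under the substitution $(e_i,f_i,k_i,p,q)\mapsto(e_i,f_i,k_i\inv,p\inv,q\inv)$ --- for instance $q+q\inv$ and the central element $pqk_1+p\inv q\inv k_1\inv$ are invariant, and the two modified relations are individually preserved --- which is \cite[Lemma 6.1 (3)]{BW13}, the argument there being unaffected by the extra parameter $p$. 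For $\sigmaj$ I would check that $f_i,e_i,k_i$ satisfy the reversed relations of $\Uj$: the relations among the $k_i$ and the two Serre relations are respected (the $e$- and $f$-versions being interchanged); the relation $e_if_j-f_je_i=\delta_{i,j}(k_i-k_i\inv)/(q-q\inv)$ for $(i,j)\neq(1,1)$ is sent to the same relation with $i$ and $j$ swapped, its right-hand side involving only the $k_i$; and the two modified relations at $(i,j)=(1,1)$ are interchanged with each other. For $\tauj$ the corresponding verification --- pushing the $k$-factors of $p^{-\delta_{i,1}}q\inv k_i\inv f_i$ and $p^{\delta_{i,1}}q\,e_ik_i$ past the other generators by the $k$-commutation relations and then invoking the $[e_i,f_j]$-relations and the two modified relations --- is \cite[Proposition 4.6]{BW18}, again insensitive to $p$, so I would cite it rather than redo it.

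It then remains to upgrade these (anti-)endomorphisms to (anti-)automorphisms, which is easy: each of $\psij,\sigmaj,\tauj$ squares to the identity on the generators of $\Uj$, hence is an involution and in particular bijective. For instance $\tauj(\tauj(e_i))=\tauj(p^{-\delta_{i,1}}q\inv k_i\inv f_i)=p^{-\delta_{i,1}}q\inv\,\tauj(f_i)\,\tauj(k_i\inv)=p^{-\delta_{i,1}}q\inv\cdot p^{\delta_{i,1}}q\,e_ik_i\cdot k_i\inv=e_i$, and similarly on $f_i$ and $k_i$, while the corresponding checks for $\psij$ and $\sigmaj$ are trivial. The step I expect to be the real obstacle, were one to carry out the argument from scratch rather than cite, is the reversed-relation check for $\tauj$: because the images of $e_i$ and $f_i$ are length-three monomials in the generators, the two modified $(1,1)$-relations have to be manipulated substantially before they reappear, in contrast to $\sigmaj$, where they simply swap. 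Since this is precisely the computation already carried out in \cite{BW18}, I would not reproduce it.
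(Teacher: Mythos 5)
Your proposal is correct and follows essentially the same route as the paper, whose entire proof is the remark that it suffices to check the defining relations of $\Uj$ on the images of the generators (a straightforward computation), with parts (1) and (3) additionally covered by the cited results of \cite{BW13} and \cite{BW18}. Your added observations --- uniqueness from the generating set, the reversed-relation criterion for anti-homomorphisms, and bijectivity via the fact that each map squares to the identity --- are just a more explicit spelling-out of the same verification.
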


\begin{proof}
It suffices to show that the images of the generators of $\Uj$ satisfy the defining relations of $\Uj$; it is straightforward.
\end{proof}

\begin{rem}\normalfont
We have similar automorphisms on $\U$:
\begin{enumerate}
\item There exists a unique $\Q$-algebra automorphism $\psi$ of $\U$ which maps $E_i,F_i,K_i,p,q$ to $E_i,F_i,K_i\inv,p\inv,q\inv$, respectively.
\item There exists a unique $\Q(p,q)$-algebra anti-automorphism $\sigma$ of $\Uj$ which maps $E_i,F_i,K_i$ to $F_i,E_i,K_i$, respectively.
\item There exists a unique $\Q(p,q)$-algebra anti-automorphism $\tau$ of $\Uj$ which maps $E_i,F_i,K_i$ to $q F_i K_i\inv$, $q\inv K_i E_i$, $K_i$, respectively.
\end{enumerate}
Note that $\tauj$ is the restriction of $\tau$ {\cite[Proposition 4.6]{BW18}}, while the others are not.
\end{rem}

Let $\bfU(\frl)$ denote the subalgebra of $\U$ generated by $E_i,F_i,K_j^{\pm1}$, $i \in \bbI \setminus \{ \ul{1} \}$, $j \in \bbI$. Note that we have $e_i,f_i,k_j \in \bfU(\frl)$ for all $i \in \Ij \setminus \{1\}$, $j \in \Ij$. Note that $\bfU(\frl)$ is the quantum group of type $A_r \times A_{r-1}$ with weight lattice $\Lm$.

\subsection{Category $\Oj$}
Let us extend the bilinear form $(\cdot,\cdot)$ on $\Lm$ to $\Lm_\R := \R \otimes_\Z \Lm$. Set $\beta_i := \alpha_{\ul{i}}-\alpha_{-\ul{i}}$, $i \in \Ij$, and $J := \{ \lm \in \Lm_\R \mid (\beta_i,\lm) = 0 \Forall i \in \Ij \}$. Then, the induced bilinear form $(\cdot, \cdot) :(\sum_{i \in \Ij} \R \beta_i) \times (\Lm_\R/J) \rightarrow \R$ denoted by the same symbol is nondegenerate. Let $\delta_j \in \Lm_\R/J$ be such that $(\beta_i,\delta_j) = \delta_{i,j}$ for all $i,j \in \Ij$. Set $\Lmj := \sum_{i \in \Ij} \Z \delta_i$. Let $\gamma_i := \alpha_{\ul{i}} + J \in \Lmj$, $i \in \Ij$, and $\Qj_+ := \sum_{i \in \Ij} \Z_{\geq 0} \gamma_i \subset \Lmj$. For $\lambda,\mu \in \Lambdaj$, we write $\mu \leqj \lambda$ if $\lambda - \mu \in \Qj_+$. This defines a partial order on $\Lmj$.

For a $\Uj$-module $M$ and $\lambda \in \Lambdaj$, we call $M_\lambda := \{ m \in M \mid k_i m = q^{(\beta_i,\lambda)} m \Forall i \in \Ij \}$ the weight space of $M$ of weight $\lambda$. The category $\Oj$ is the full subcategory of the category of all $\Uj$-modules consisting of $\Uj$-modules $M$ satisfying the following:
\begin{itemize}
\item $M$ has a weight space decomposition, i.e., $M = \bigoplus_{\lambda \in \Lambdaj} M_\lambda$.
\item Each weight space of $M$ is finite-dimensional.
\item There exist $\mu_1,\ldots,\mu_l \in \Lmj$ such that if $M_\lm \neq 0$, then $\lm \leqj \mu_i$ for some $i = 1,\ldots,l$.
\item The $f_i$'s act on $M$ locally nilpotently.
\end{itemize}

\begin{theo}[\cite{W17}]
The following hold:
\begin{enumerate}
\item {\cite[Theorem 4.4.3]{W17}} $\Oj$ is semisimple.
\item {\cite[Corollary 7.6.3, 7.6.4]{W17}} Each irreducible $\Uj$-module in $\Oj$ is isomorphic to the irreducible highest weight module $L(\bflm)$ with highest weight $\bflm$ (in the sense of \cite{W17}) for some $\bflm \in \Pj$.
\item For $\bflm,\bfmu \in \Pj$, we have $L(\bflm) \simeq L(\bfmu)$ if and only if $\bflm_i - \bfmu_i$ is constant as $i$ runs through $-r,\ldots,r$.
\end{enumerate}
\end{theo}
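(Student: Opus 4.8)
Statements (1) and (2) are quoted verbatim from \cite{W17}, so the plan is to prove only the equivalence in (3). Write $\mathbf{1}:=((1^{r+1});(1^r))\in\Pj_r$ for the bipartition all of whose rows have length $1$, and for $c\in\Z_{\geq0}$ let $\bflm+c\mathbf{1}$ denote the bipartition with $i$-th row $\bflm_i+c$; thus ``$\bflm_i-\bfmu_i$ is constant'' means precisely that $\bflm=\bfmu+c\mathbf{1}$ or $\bfmu=\bflm+c\mathbf{1}$ for some $c\in\Z_{\geq0}$. Both implications will be reduced to the multi-parameter $q$-Schur duality of \cite{BWW16}, under which, for each $d\geq0$, $\V^{\otimes d}$ decomposes as a multiplicity-free $(\Uj,\bfH)$-bimodule $\bigoplus_{\bfnu\in\Pj_r(d)}L(\bfnu)\boxtimes S^{\bfnu}$ with the irreducible $\bfH$-modules $S^{\bfnu}$ pairwise non-isomorphic; in particular, by the double-centralizer property, the $L(\bfnu)$ with $\bfnu\in\Pj_r(d)$ are pairwise non-isomorphic.

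For $(\Leftarrow)$ the plan is to tensor with the determinant representation. The top exterior power $\bigwedge^{2r+1}\V$ is one-dimensional with $\U$-weight $\sum_{i\in I}\epsilon_i$; since $(\alpha_i,\sum_l\epsilon_l)=0$ for all $i\in\bbI$, it is the trivial $\U$-module, hence also the trivial $\Uj$-module. Let $v_\bflm$ be the highest weight vector of $L(\bflm)$ and $v_0$ a generator of $\bigwedge^{2r+1}\V$. Using $\Delta(\Uj)\subseteq\Uj\otimes\U$ together with $E_iv_0=F_iv_0=0$ and $K_iv_0=v_0$ ($i\in\bbI$), I would check that $\Delta(e_i)(v_\bflm\otimes v_0)=(e_iv_\bflm)\otimes v_0=0$ for all $i\in\Ij$, so that $v_\bflm\otimes v_0$ is a highest weight vector of $L(\bflm)\otimes\bigwedge^{2r+1}\V$, of $\U$-weight $\sum_i(\bflm_i+1)\epsilon_i$. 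Realizing $L(\bflm)\subseteq\V^{\otimes|\bflm|}$ and $\bigwedge^{2r+1}\V\subseteq\V^{\otimes(2r+1)}$ inside $\V^{\otimes(|\bflm|+2r+1)}$ and matching highest weight vectors with Young bitableaux (concatenating $\bfT_\bflm$ with $\bfT_{\mathbf1}$ gives $\bfT_{\bflm+\mathbf1}$) then identifies $L(\bflm)\otimes\bigwedge^{2r+1}\V\simeq L(\bflm+\mathbf1)$. As the left-hand side is also $\simeq L(\bflm)$ (tensoring with a trivial module), this gives $L(\bflm+\mathbf1)\simeq L(\bflm)$; iterating and using symmetry yields $(\Leftarrow)$.

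For $(\Rightarrow)$ the plan is to combine the highest $\Uj$-weight with the non-isomorphism statement above. An isomorphism $L(\bflm)\simeq L(\bfmu)$ carries the highest weight vector of $L(\bflm)$ to that of $L(\bfmu)$ and preserves $\Uj$-weights, so $L(\bflm)$ and $L(\bfmu)$ have the same highest $\Uj$-weight. Recall from \cite{W17} that the highest $\Uj$-weight of $L(\bfnu)$ is the image of the weight $\sum_{i\in I}\bfnu_i\epsilon_i$ of $\bfT_\bfnu$ under $\Lm\to\Lmj$, namely $\sum_{j\in\Ij}\big(s_{j-1}(\bfnu)-s_j(\bfnu)\big)\delta_j$ with $s_0(\bfnu):=2\bfnu_0$ and $s_j(\bfnu):=\bfnu_j+\bfnu_{-j}$ for $1\le j\le r$. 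Hence equality of highest $\Uj$-weights means $s_j(\bflm)-s_j(\bfmu)$ is independent of $j\in\{0,\dots,r\}$; evaluating at $j=0$ shows this constant equals $2c$ with $c:=\bflm_0-\bfmu_0$, and summing over $j$ (using $\sum_{j=0}^r s_j(\bfnu)=\bfnu_0+|\bfnu|$) gives $|\bflm|-|\bfmu|=(2r+1)c$. Assuming $c\ge0$ (swap $\bflm,\bfmu$ otherwise), $\bfmu+c\mathbf1$ lies in $\Pj_r(|\bflm|)$, and by the direction just proved $L(\bfmu+c\mathbf1)\simeq L(\bfmu)\simeq L(\bflm)$; since $\bflm$ and $\bfmu+c\mathbf1$ both lie in $\Pj_r(|\bflm|)$, the non-isomorphism statement forces $\bflm=\bfmu+c\mathbf1$, i.e.\ $\bflm_i-\bfmu_i=c$ for all $i$.

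The step I expect to be the main obstacle is the identification $L(\bflm)\otimes\bigwedge^{2r+1}\V\simeq L(\bflm+\mathbf1)$: because $\Uj$ has no triangular decomposition, an irreducible object of $\Oj$ is \emph{not} determined by its highest $\Uj$-weight --- indeed $L(\bflm)$ and $L(\bflm+\mathbf1)$ already have equal highest $\Uj$-weight --- so one must genuinely track the bipartition attached to the highest weight vector, which I would do via the Young-bitableau bookkeeping inside $\V^{\otimes\bullet}$ sketched above, or directly from the construction of $L(\bflm)$ in \cite{W17}. The remaining ingredients --- the weight formula, and the pairwise non-isomorphism of the $L(\bfnu)$ within a fixed $\V^{\otimes d}$ --- are routine once the $q$-Schur duality of \cite{BWW16} is invoked.
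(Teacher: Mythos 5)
The central step of your $(\Leftarrow)$ direction --- the identification $L(\bfmu)\otimes\bigwedge^{2r+1}\V\simeq L(\bfmu+\mathbf{1})$ --- is not actually established, and the matching criterion you offer cannot establish it. The only invariants your bookkeeping tracks are the size of the ambient tensor power and the highest $\Uj$-weight of $v_\bfmu\otimes v_0$, and these do not determine the bipartition label of a constituent: already for $r=1$ the bipartitions $((1,1);(1))$ and $((1,0);(2))$ have the same size $3$ and the same highest weight $\wtj=0$, yet they are not constant shifts of each other, so by the very statement under proof they label non-isomorphic constituents of $\V^{\otimes 3}$. Hence ``concatenating $\bfT_\bfmu$ with $\bfT_{\mathbf{1}}$ gives $\bfT_{\bfmu+\mathbf{1}}$'' is so far only a statement about tableaux; to transfer it to modules you would need the $\jmath$-crystal tensor-product/insertion machinery of \cite{W17} together with the compatibility of that combinatorial labeling with the Schur-duality labeling of the constituents of $\V^{\otimes(|\bfmu|+2r+1)}$, none of which is supplied. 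Your declared fallback, to read the identification off ``directly from the construction of $L(\bflm)$ in \cite{W17}'', is not an independent argument: it is exactly the paper's proof, which disposes of item (3) with the remark that it follows from the definition of $L(\bflm)$ in \cite{W17}. So, as written, the proposal has a genuine gap precisely at its load-bearing step.

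The surrounding architecture is sound and several pieces are correct: the top quantum exterior power is trivial as a $\U$-module and hence as a $\Uj$-module, and $M\otimes(\mathrm{trivial})\simeq M$ follows from $\Delta(\Uj)\subseteq\Uj\otimes\U$ by the computation you indicate; the formula $\wtj(\bfnu)=\sum_{j\in\Ij}\bigl(s_{j-1}(\bfnu)-s_j(\bfnu)\bigr)\delta_j$ agrees with the paper's $\wtj(\bflm)$; and the reduction of $(\Rightarrow)$ to $(\Leftarrow)$ via the arithmetic with $s_j$ and the pairwise non-isomorphism of the $L(\bfnu)$, $\bfnu\in\Pj(d)$, inside a fixed $\V^{\otimes d}$ is legitimate, granting (as the paper does via \cite{BWW16} and \cite{BI03}) the multiplicity-free double-centralizer decomposition with distinct irreducible $\bfH$-modules attached to distinct bipartitions of $d$. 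But note the contrast in cost: the paper obtains (3) for free from the way $L(\bflm)$ is defined in \cite{W17}, whereas your route must independently re-derive how the bipartition label is attached to a constituent, and that is the part that is missing.
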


\begin{rem}\normalfont
The last statement follows from the definition of $L(\bflm)$.
\end{rem}

For each $\bflm \in \Pj$, let $\wtj(\bflm) \in \Lmj$ denote the weight of a highest weight vector of $L(\bflm)$, namely,
$$
\wtj(\bflm) := \sum_{i \in \Ij} (\bflm_{i-1}-\bflm_i + \bflm_{-(i-1)}-\bflm_{-i}) \delta_i.
$$

\section{Crystal basis theory}\label{crystal}
\subsection{Crystal bases}\label{sec:crystal bases}
The notion of crystal bases (or local bases at $q = 0$) for integrable modules over quantum groups was introduced independently by Kashiwara and Lusztig in different ways (\cite{K90}, \cite{L90a}). Although we will not review the detail, we formulate here some notations concerning the crystal bases. Let $\Oint$ denote the full subcategory of the BGG-category $\clO$ for $\bfU$ consisting of the integrable modules. Let $\Etil_i, \Ftil_i$, $i \in \bbI$ denote the Kashiwara operators. Let $M \in \Oint$, $(\clL,\clB)$ be its crystal basis. For $b \in \clB$ and $i \in \bbI$, set
$$
\vep_i(b) := \max\{ n \mid \Etil_i^nb \neq 0 \}, \qu \vphi_i(b) := \max\{ n \mid \Ftil_i^nb \neq 0 \}.
$$
Also, $\wt(b) \in \Lm$ denotes the weight of $b$.

Recall that, for each $\lm \in P$, the irreducible module $L(\lm)$ has a unique crystal basis $(\clL(\lm),\clB(\lm))$, which is identical to $\SST(\lm)$. For each $M \in \Oint$ with a crystal basis $(\clL,\clB)$, we have a unique irreducible decomposition $\clB = \bigsqcup_{i=1}^l \clB_i$, where $\clB_i \simeq \clB(\lm_i)$ for some $\lm_i \in P$. By retaking $\lm_i$'s if necessary, we may assume that $|\lm_i| - |\lm_j| < 2r+1$ for all $i,j \in \{1,\ldots,l\}$, and that there exists $i$ such that $(\lm_i)_{2r+1} = 0$. Then, $\lm_i$'s are uniquely determined; we set $P(M) = P_r(M) := \{ \lm_1,\ldots,\lm_l \}$. For $b \in \clB$, we define $I(b) = I_r(b) \in P(M)$ to be $\lm_i$ if $b \in \clB_i$. Also let $C(b) = C_r(b) \subset \clB$ denote the connected component of $\clB$ containing $b$. Furthermore, if we write $b = \Ftil_{i_1} \cdots \Ftil_{i_l}b_0$ for some $i_1,\ldots,i_l \in \bbI$, where $b_0$ denotes the highest weight vector in $C(b)$, then define $T_b \in \SST(I(b))$ by $T_b := \Ftil_{i_1} \cdots \Ftil_{i_l} T_0$, where $T_0 \in \SST(I(b))$ corresponding to $b_0 \in C(b) = \clB(I(b))$.

\subsection{$\jmath$-crystal bases}
In \cite{W17}, the notion of $\jmath$-crystal bases was introduced. Let us recall some properties briefly.

Set $\A := \Q[p,p\inv,q,q\inv]$. We denote by $\A_0$ the subring of $\Q(p,q)$ consisting of all elements of the form $f/g$ with $f,g \in p\Q[p,q,q\inv] \oplus \Q[q]$, $\lim_{q \rightarrow 0} (\lim_{p \rightarrow 0} g) \neq 0$. Let $\IIj := \Ij \sqcup \{ 2',\ldots,r' \}$. The Kashiwara operators are denoted by $\etil_i$ and $\ftil_i$, $i \in \IIj$.

The following are basic results for the crystal basis theory of $\Uj$.

\begin{theo}[{\cite[Theorem 7.7.3]{W17}}]
Let $\bflm \in \Pj$, $v_\bflm \in L(\bflm)$ be a highest weight vector. Set
\begin{align}
\begin{split}
&\clL(\bflm) := \Span_{\Ao} \{ \ftil_{i_1} \cdots \ftil_{i_l} v_\bflm \mid l \in \Z_{\geq 0},\ i_1,\ldots,i_l \in \IIj \}, \\
&\clB(\bflm) := \{ \ftil_{i_1} \cdots \ftil_{i_l} v_\bflm + q\clL(\bflm) \mid l \in \Z_{\geq 0},\ i_1,\ldots,i_l \in \IIj \} \setminus \{ 0 \}.
\end{split} \nonumber
\end{align}
Then, $(\clL(\bflm),\clB(\bflm))$ is a unique $\jmath$-crystal basis of $L(\bflm)$. Moreover, $\clB(\bflm)$ is identical to $\SST(\bflm)$; $v_\bflm + q\clL(\bflm) \in \clB(\bflm)$ corresponds to $\bfT_\bflm \in \SST(\bflm)$.
\end{theo}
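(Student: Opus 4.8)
The plan is to obtain a $\jmath$-crystal basis of $L(\bflm)$ by restriction from a tensor power of the vector representation, to identify the resulting crystal with $\SST(\bflm)$ by induction on the rank $r$, and to deduce uniqueness from a contravariant bilinear form; the combinatorial identification is where the real work lies.

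\emph{Existence.} By the multi-parameter $q$-Schur duality of \cite{BWW16} and the semisimplicity of $\Oj$, for a suitable $d$ the module $L(\bflm)$ occurs as a $\Uj$-direct summand of $\V^{\otimes d}$. Since $\V^{\otimes d}$ is a based $\U$-module, it carries a $\jmath$-canonical basis (\cite{BW13}, \cite{BWW16}), and, as recalled in the introduction, localizing it at $p = q = 0$ yields a $\jmath$-crystal basis $(\clL_d,\clB_d)$ of $\V^{\otimes d}$. I would first check that $(\clL_d,\clB_d)$ is compatible with the $\Uj$-isotypic decomposition: the crystal graph of $\clB_d$ splits into connected components, $\clL_d$ splits accordingly, and grouping components by the isomorphism type of the $\Uj$-module they generate refines this to a splitting matching $\V^{\otimes d} \simeq \bigoplus_{\bfmu} L(\bfmu)^{\oplus m_\bfmu}$. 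Restricting to one copy of $L(\bflm)$ and taking the component through a highest weight vector $v_\bflm$ of weight $\wtj(\bflm)$ gives a $\jmath$-crystal lattice $\clL$ on $L(\bflm)$ with crystal $\clB$; that $\clL$ and $\clB$ coincide with the $\ftil$-generated $\clL(\bflm)$ and $\clB(\bflm)$ of the statement is deferred to the next step.

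\emph{Identification with $\SST(\bflm)$.} I would argue by induction on $r$, the base case $r = 1$ (where $\IIj = \{1\}$) being a direct rank-one analysis of $\Uj_1 \subset U_q(\frsl_3)$: one computes the action of $e_1, f_1$ on $L(\bflm)$ modulo $q\clL$ and $p$ using the two $\jmath$-Serre relations and reads off that $\clL(\bflm)$ is $\ftil_1$-stable with $\clB(\bflm) = \SST(\bflm)$. For the inductive step, restrict $\clB$ along $\Uj_{r-1} \subset \Uj_r$: by the inductive hypothesis and semisimplicity this is a disjoint union of crystals $\SST(\bfmu)$, $\bfmu \in \Pj_{r-1}$, and comparing weights with the highest-weight description of $L(\bflm)$ from \cite{W17} identifies the branching set with the shapes obtained from $\bflm$ by removing a horizontal strip from each of $\bflm^-$ and $\bflm^+$ --- a $\jmath$-analogue of the Pieri rule, realized combinatorially by $\bfT \mapsto \bfT\downarrow_{r-1}$. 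The remaining Kashiwara operators ($\ftil_r$, $\ftil_{r'}$, and the appropriate renormalization of the others) are of classical $\frsl_2$-type, so an $\frsl_2$-string analysis in the spirit of Kashiwara's treatment of $\frgl_n$ shows that they knit the pieces $\SST(\bfmu)$ together into $\SST(\bflm)$; in particular the $\IIj$-crystal generated from $\bfT_\bflm$ is connected, equals $\SST(\bflm)$, and has $\dim L(\bflm)$ elements. Hence $\clL = \clL(\bflm)$, $\clB = \clB(\bflm)$, and $\clB(\bflm) \cong \SST(\bflm)$ with $v_\bflm + q\clL(\bflm) \leftrightarrow \bfT_\bflm$.

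\emph{Uniqueness, and the main obstacle.} Equip $L(\bflm)$ with the contravariant symmetric form $(\cdot,\cdot)_2$ of the introduction (or Kashiwara's polarization adapted to $\tauj$), for which $\etil_i$ and $\ftil_i$ are almost adjoint. The standard argument then forces any $\jmath$-crystal lattice to equal $\{ m \in L(\bflm) \mid (m,m)_2 \in \Ao \}$, so $\clL(\bflm)$ is unique; the induced form on $\clL(\bflm)/q\clL(\bflm)$ is positive definite and $\clB(\bflm)$ is orthonormal, hence determined up to signs by the crystal graph, and the sign ambiguity is removed by the normalization $v_\bflm + q\clL(\bflm) \in \clB(\bflm)$ propagated along the (connected) crystal. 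The delicate points throughout are the $r = 1$ computation --- where the two $\jmath$-Serre relations make the rank-one strings non-standard --- and the $\jmath$-Pieri branching rule for $\Uj_r \downarrow \Uj_{r-1}$ used to feed the induction; once these are secured, existence is essentially bookkeeping on top of the $\jmath$-canonical basis of $\V^{\otimes d}$, and uniqueness is the usual contravariant-form argument.
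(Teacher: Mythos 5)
First, note that this paper does not prove the statement at all: it is recalled verbatim from \cite[Theorem 7.7.3]{W17}, so there is no internal proof to compare against. Your sketch does broadly parallel the strategy that \cite{W17} follows and that this paper recalls in later sections (realization of $L(\bflm)$ inside $\V^{\otimes d}$ via the multi-parameter $q$-Schur duality and Kazhdan--Lusztig cell modules, induction on the rank $r$ with a multiplicity-free $\Uj_{r-1}$-branching over horizontal strips, and almost-orthonormality with respect to $(\cdot,\cdot)_2$ for the lattice characterization), so the overall architecture is sound.

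However, there is a genuine gap in how you handle the primed operators $\etil_{i'},\ftil_{i'}$. You assert that ``the remaining Kashiwara operators ($\ftil_r$, $\ftil_{r'}$, and the appropriate renormalization of the others) are of classical $\frsl_2$-type, so an $\frsl_2$-string analysis \dots knits the pieces together,'' and you later invoke a ``standard'' polarization argument in which $\etil_i,\ftil_i$ for all $i\in\IIj$ are almost adjoint for $(\cdot,\cdot)_2$. Neither is available as stated: as this paper stresses at the start of its subsection on $\etil_{i^+}$ and $\ftil_{i^+}$, the operators $\etil_{i'},\ftil_{i'}$ of \cite{W17} are \emph{not} given by an $\frsl_2$-triple in $\Uj$ --- they are defined artificially by means of the distinguished basis $\Gjlow(\bflm)$ (equivalently, via the KL-cell realization), and their intrinsic substitute $\etil_{i^+},\ftil_{i^+}$ constructed here requires the multiplicity-free $\Uj_{r-1}$-branching, projections onto specific one-dimensional weight spaces, and the already-established crystal structure. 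Consequently your inductive ``knitting'' step (connectedness of the $\IIj$-crystal on $\SST(\bflm)$) and your uniqueness step both presuppose control of operators whose very definition is the delicate point; the uniqueness of the $\jmath$-crystal lattice cannot be reduced to the usual contravariant-form argument without first proving an adjointness-type or lattice-preservation statement for the primed operators (or first constructing the distinguished basis, which is essentially the content of \cite{W17} and of Sections \ref{KL bases}--\ref{existence of global jcry bases} here). There is also a mild circularity risk in your existence step: decomposing the $\jmath$-crystal of $\V^{\otimes d}$ into connected components matching the $\Uj$-isotypic decomposition already uses the crystal-basis theory for the irreducibles that you are trying to establish, so this must be organized through the cell-module realization rather than assumed.
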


\begin{theo}\label{restriction of crystal}
Suppose that $M \in \Oint$ has a crystal basis $(\clL,\clB)$. Then, as a $\Uj$-module, $M$ has a $\jmath$-crystal basis whose underlying sets are equal to $(\clL,\clB)$.
\end{theo}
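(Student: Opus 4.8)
The plan is to decompose $M \in \Oint$ into irreducible $\U$-modules and reduce to the case $M = L(\lm)$ for a single partition $\lm \in P$; then to identify, inside $L(\lm)$ regarded as a $\Uj$-module, the $\jmath$-crystal structure carried by the existing crystal lattice and crystal basis. By complete reducibility in $\Oint$ and the compatibility of crystal bases with direct sums, it suffices to treat $M = L(\lm)$; here $\clL = \clL(\lm)$ is the Kashiwara lattice $\Span_{\Ao}\clB(\lm)$ (note $\Ao$ as defined in this paper localizes $\Q(p,q)$ at $p=q=0$, and $\Q[q]_{(q)} \subset \Ao$, so the ordinary crystal lattice over $\Q[q]_{(q)}$ extends to an $\Ao$-lattice in the obvious way), and $\clB(\lm) = \SST(\lm)$.

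First I would recall, from \cite{W17}, the explicit $\Uj$-module structure on $L(\lm)$: the generators $e_i, f_i$ ($i \in \Ij$) act via the formulas $e_i = E_{\ul i} + p^{-\delta_{i,1}}F_{-\ul i}K_{\ul i}\inv$ and $f_i = E_{-\ul i} + p^{\delta_{i,1}}K_{-\ul i}\inv F_{\ul i}$, and the $\jmath$-Kashiwara operators $\etil_i, \ftil_i$ ($i \in \IIj$) are defined in \cite{W17} in terms of these. The key point is that each of $E_{\pm\ul i}, F_{\pm\ul i}, K_{\ul i}^{\pm1}$ preserves the crystal lattice $\clL$ of $M$ as a $\U$-module — this is the defining property of the crystal lattice — and the parameters $p^{\pm1}$ lie in $\Ao$. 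Hence $e_i, f_i$ and therefore all the $\jmath$-Kashiwara operators $\etil_i, \ftil_i$ preserve $\clL$, so $\clL$ is a $\jmath$-crystal lattice. Next I would check that $\clB = \clB(\lm) + q\clL$ is stable (up to $0$) under the induced operators $\etil_i, \ftil_i$ on $\clL/q\clL$, and that the resulting $\jmath$-crystal axioms of \cite{W17} hold. For $i \in \{2',\ldots,r'\}$ the operators are (as in \cite{W17}) built from the Levi part $\bfU(\frl)$, for which the statement is the ordinary branching of crystal bases of type $A$; for $i \in \Ij$ the new input is the rank-one analysis at the distinguished node, where modulo $q\clL$ the term involving $p$ in $e_i$ (resp. $f_i$) must either vanish or survive in a controlled way according to the $\Ao$-valuation — this is exactly the computation already performed in \cite{W17} to show that $L(\bflm)$ has a $\jmath$-crystal basis, now applied to the $\U$-module $L(\lm)$ whose crystal is the full $\SST(\lm)$.

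The main obstacle is the compatibility at the distinguished node $i=1$: one must verify that the operators $E_{\ul 1}, F_{-\ul 1}K_{\ul 1}\inv$ (and their $f$-counterparts) interact with the $p$-adic and $q$-adic filtrations so that, modulo $q\clL$, the $\jmath$-Kashiwara operators are well defined on $\clB$ and satisfy the axioms — in particular that no cancellation occurs that would push a basis vector out of $\clL/q\clL$ or collapse two of them. This is precisely the content of the rank-one (i.e. $r=1$, quantum $\mathfrak{sl}_3$) computation underlying \cite[Theorem 7.7.3]{W17}; the present theorem follows by transporting that local computation along the embedding of the $i$-th rank-one subalgebra and invoking the type-$A$ branching rule for the remaining nodes. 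I would organize the write-up so that the $M = L(\lm)$ case is reduced, node by node, to results already established in \cite{W17}, and then conclude the general case by semisimplicity of $\Oint$ together with the compatibility of both the $\U$-crystal and the $\jmath$-crystal structures with finite direct sums.
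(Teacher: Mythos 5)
Your central step is incorrect as stated. You write that ``each of $E_{\pm\ul i}, F_{\pm\ul i}, K_{\ul i}^{\pm1}$ preserves the crystal lattice $\clL$ \dots this is the defining property of the crystal lattice,'' and you use this to conclude that $e_i,f_i$, and hence the $\jmath$-Kashiwara operators, preserve $\clL$. The defining property of a crystal lattice concerns the Kashiwara operators $\Etil_i,\Ftil_i$, not the Chevalley generators: already for rank one, $F\cdot F^{(k)}v_\lm=[k+1]F^{(k+1)}v_\lm$ and $[k+1]=q^{-k}+\cdots+q^{k}\notin\Ao$, so $F_i\clL\not\subset\clL$ in general. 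Consequently $e_i,f_i$ do not preserve $\clL$, and even if they did, the $\jmath$-Kashiwara operators $\etil_i,\ftil_i$ of \cite{W17} are not obtained by simply applying $e_i,f_i$; the fact that they preserve $\clL$ and induce a well-defined structure on $\clL/q\clL$ is precisely the nontrivial content that has to be proved (compare the work needed in Lemma \ref{modified ftil preserves lattice} of this paper for the modified operators). So the ``key point'' of your argument is a non sequitur, and what remains of the proposal is essentially a deferral to the rank-one computation and branching analysis of \cite{W17}.

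Once that deferral is made explicit, your argument collapses to what the paper actually does: the statement is an immediate consequence of \cite[Corollary 7.7.4]{W17} (after the standard, and correct, reduction via semisimplicity and compatibility of crystal bases with direct sums, which you do carry out properly). If you want a self-contained proof rather than a citation, you would need to verify directly that the operators $\etil_i,\ftil_i$ for $i\in\Ij$ and $i\in\{2',\ldots,r'\}$, as defined in \cite{W17}, preserve $\clL$ and permute $\clB\sqcup\{0\}$ on the restriction of $L(\lm)$; asserting lattice stability for the generators $E_i,F_i$ is not a substitute for that verification.
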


\begin{proof}
This is an easy consequence of \cite[Corollary 7.7.4]{W17}.
\end{proof}

Let $M \in \Oj$ with a $\jmath$-crystal basis $(\clL,\clB)$. For each $b \in \clB$ and $i \in \IIj$, define $\vep_i(b)$, $\vphi_i(b)$, $\wtj(b) \in \Lmj$, $\Pj(M) = \Pj_r(M) \subset \Pj$, $I^\jmath(b) = I^\jmath_r(b) \in \Pj(M)$, $\Cj(b) = \Cj_r(b) \subset \clB$, and $T^\jmath_b \in \SST(I^\jmath(b))$ in a similar way to Section \ref{sec:crystal bases}.

%
%

\section{Global bases}\label{global bases}
\subsection{Balanced triples}
Let $\ol{\ \cdot \ }$ be the $\Q$-linear automorphism of $\Q(p,q)$ sending $p$ and $q$ to $p\inv$ and $q\inv$, respectively. Set $\Ai := \ol{\A_0}$.

\begin{defi}\normalfont
Let $V$ be a $\Q(p,q)$-vector space and $x \in \{ 0,\emptyset,\infty \}$. An $\A_x$-lattice of $V$ is a free $\A_x$-submodule $U_x$ of $V$ of rank $\dim_{\Q(p,q)} V$ such that $\Q(p,q) \otimes_{\A_x} U_x = V$.
\end{defi}

\begin{defi}[{\cite[Definition 2.1.2]{K93}}]\normalfont
Let $V$ be a $\Q(p,q)$-vector space, $U_x$ an $\A_x$-lattice of $V$ for $x \in \{ 0,\emptyset,\infty \}$. The triple $(U_0,U,U_\infty)$ is said to be balanced if the canonical map
\begin{align}
U_0 \cap U \cap U_\infty \rightarrow U_0/qU_0 \nonumber
\end{align}
is an isomorhism of $\Q$-vector spaces.
\end{defi}

Let $V$ be a $\Q(p,q)$-vector space with a balanced triple $(U_0,U,U_\infty)$. Take a $\Q$-basis $\clB$ of $U_0/qU_0$. Since we have an isomorphism $G:U_0/qU_0 \rightarrow U_0 \cap U \cap U_\infty$ of $\Q$-vector spaces, which is the inverse of the canonical map $U_0 \cap U \cap U_\infty \rightarrow U_0 / qU_0$, we obtain an $\A_x$-basis $G(\clB) = \{ G(b) \mid b \in \clB \}$ of $U_x$ for each $x \in \{ 0,\emptyset,\infty \}$. We call $G(\clB)$ the global basis of $V$ associated to the balanced triple $(U_0,U,U_\infty)$ and the basis $\clB$.

\begin{lem}\label{sub and quotient of bt}
Let $V,U_0,U,U_\infty,\clB,G$ be as above. Take a subset $\clB' \subset \clB$ and set $U'_x$ to be the $\A_x$-span of $G(\clB') := \{ G(b) \mid b \in \clB' \}$ for each $x \in \{ 0,\emptyset,\infty \}$. Also, let $V'$ be the $\Q(p,q)$-span of $G(\clB')$. Then, the following hold:
\begin{enumerate}
\item $(U'_0,U',U'_\infty)$ is a balanced triple with the global basis $G(\clB')$.
\item $(U_0/U'_0,U/U',U_\infty/U'_\infty)$ is a balanced triple with the global basis $\{ G(b) + V' \mid b \in \clB \setminus \clB' \}$.
\end{enumerate}
\end{lem}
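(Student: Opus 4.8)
The plan is to reduce both statements to the fact that a balanced triple respects the direct sum decomposition coming from a partition of the global basis. Set $\clB'' := \clB \setminus \clB'$, $U''_x := \Span_{\A_x} G(\clB'')$ for $x \in \{0,\emptyset,\infty\}$, and $V'' := \Span_{\Q(p,q)} G(\clB'')$. First I would record the two structural facts that drive everything. Since $G(\clB)$ is an $\A_x$-basis of $U_x$ for each $x$, the subsets $G(\clB')$ and $G(\clB'')$ are $\A_x$-bases of $U'_x$ and $U''_x$, and $U_x = U'_x \oplus U''_x$; in particular $U'_x$ is a free $\A_x$-module of rank $|\clB'| = \dim_{\Q(p,q)} V'$ with $\Q(p,q) \otimes_{\A_x} U'_x = V'$, hence an $\A_x$-lattice of $V'$ (and similarly for $U''_x$ in $V''$). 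Second, since $G$ is a $\Q$-linear isomorphism $U_0/qU_0 \xrightarrow{\sim} U_0 \cap U \cap U_\infty$ carrying the basis $\clB$ to $G(\clB)$, we have $U_0 \cap U \cap U_\infty = \bigoplus_{b \in \clB} \Q\, G(b)$.

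To prove (1), the key step is the intersection identity $U'_0 \cap U' \cap U'_\infty = \bigoplus_{b \in \clB'} \Q\, G(b)$. The inclusion $\supseteq$ is clear since $\Q \subset \A_x$. For $\subseteq$, any element of $U'_0 \cap U' \cap U'_\infty$ lies in $U_0 \cap U \cap U_\infty$, so it is $\sum_{b \in \clB} c_b G(b)$ with $c_b \in \Q$; as it also lies in $V' = \Span_{\Q(p,q)} G(\clB')$ and $G(\clB)$ is $\Q(p,q)$-linearly independent, we get $c_b = 0$ for $b \notin \clB'$. Since $\{G(b) + qU'_0 \mid b \in \clB'\}$ is a $\Q$-basis of $U'_0/qU'_0$, the canonical map $U'_0 \cap U' \cap U'_\infty \to U'_0/qU'_0$ then sends $G(b) \mapsto G(b) + qU'_0$ and is an isomorphism, whose inverse is the restriction of $G$; hence $(U'_0,U',U'_\infty)$ is balanced with global basis $G(\clB')$.

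For (2), I would first note $U_x \cap V' = U'_x$: if $v \in U_x \cap V'$ is written $v = v' + v''$ with $v' \in U'_x$, $v'' \in U''_x$, then $v'' = v - v' \in V' \cap V'' = 0$. Therefore the inclusion $U_x \hookrightarrow V$ descends to an injection $U_x/U'_x \hookrightarrow V/V'$ identifying $U_x/U'_x$ with $\Span_{\A_x}\{G(b) + V' \mid b \in \clB''\}$ and $V/V'$ with $\Span_{\Q(p,q)}\{G(b) + V' \mid b \in \clB''\}$; under these identifications the triple $(U_0/U'_0, U/U', U_\infty/U'_\infty)$ corresponds to $(U''_0, U'', U''_\infty)$ and the candidate basis to $G(\clB'')$. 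Applying part (1) with $\clB''$ in place of $\clB'$ then gives that this triple is balanced with global basis $G(\clB'')$, i.e. $\{G(b) + V' \mid b \in \clB \setminus \clB'\}$, which is the assertion.

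The only genuinely non-formal point — and the step I expect to be the main obstacle — is the intersection identity in the proof of (1): one must rule out the creation of new elements in $U'_0 \cap U' \cap U'_\infty$ upon passing to sublattices, and this is precisely where the dual role of $G(\clB)$ (an $\A_x$-basis of all three lattices $U_x$ \emph{and} a $\Q(p,q)$-basis of $V$) is essential. Everything else is bookkeeping with the direct sum decompositions $U_x = U'_x \oplus U''_x$ and $V = V' \oplus V''$.
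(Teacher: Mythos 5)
Your proof is correct. The paper states this lemma without any proof, treating it as routine, and your argument --- reducing everything to the decompositions $U_x = U'_x \oplus U''_x$ and $V = V' \oplus V''$ induced by the partition $\clB = \clB' \sqcup (\clB \setminus \clB')$, together with the fact that $G(\clB)$ is simultaneously an $\A_x$-basis of each $U_x$, a $\Q(p,q)$-basis of $V$, and a $\Q$-basis of $U_0 \cap U \cap U_\infty$ --- is exactly the expected verification, with the key intersection identity $U'_0 \cap U' \cap U'_\infty = \bigoplus_{b \in \clB'} \Q\, G(b)$ handled correctly.
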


\subsection{Global crystal bases and global $\jmath$-crystal bases}
Let $\UA$ denote the $\A$-subalgebra of $\U$ generated by $E_i^{(n)}, F_i^{(n)}, K_i^{\pm1}$, $i \in \bbI$, $n \in \Z_{> 0}$. Similaly, define $\UjA$ to be the $\A$-subalgebra of $\Uj$ generated by $e_i^{(n)}, f_i^{(n)}, k_i^{\pm1}$, $i \in \Ij$, $n \in \Z_{> 0}$.

\begin{lem}[{\cite[1.3.5]{L94}}]\label{q-binomial formula}
Let $A$ be a $\Q(q)$-algebra, $x,y \in A$ such that $xy = q^2yx$. Then, for each $n \in \Z_{> 0}$, we have
$$
(x+y)^n = \sum_{t=0}^n q^{t(n-t)} {n \brack t} y^t x^{n-t}.
$$
\end{lem}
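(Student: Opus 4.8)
The plan is to argue by induction on $n$, reducing the identity to the $q$-Pascal recurrence for the Gaussian binomial coefficients ${n \brack t} = [n]!/([t]!\,[n-t]!)$, where $[m] = (q^m - q^{-m})/(q - q^{-1})$, $[m]! = [m][m-1]\cdots[1]$, and ${n \brack t} := 0$ whenever $t < 0$ or $t > n$. As a preliminary step I would record the commutation relation $x^k y = q^{2k} y x^k$, valid for all $k \in \Z_{\geq 0}$, which follows by a one-line induction on $k$ from the hypothesis $xy = q^2 yx$. The base case $n = 1$ of the main induction is immediate, both sides being $x + y$.

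For the inductive step, assuming the formula for $n$, I would write $(x+y)^{n+1} = (x+y)^n(x+y)$ and expand using the inductive hypothesis into two sums, $\sum_{t=0}^n q^{t(n-t)}{n \brack t} y^t x^{n+1-t}$ and $\sum_{t=0}^n q^{t(n-t)}{n \brack t} y^t x^{n-t} y$. In the second sum I would move the trailing $y$ past $x^{n-t}$ via $x^{n-t}y = q^{2(n-t)}yx^{n-t}$ and then reindex $t \mapsto t-1$; a short computation collapses the exponent of $q$ to $(n-t+1)(t+1)$, turning that sum into $\sum_{t=1}^{n+1} q^{(n-t+1)(t+1)}{n \brack t-1} y^t x^{n+1-t}$. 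Collecting terms, the coefficient of $y^t x^{n+1-t}$ in $(x+y)^{n+1}$ equals $q^{t(n-t)}{n \brack t} + q^{(n-t+1)(t+1)}{n \brack t-1}$, so the claim reduces to the identity $q^{t(n-t)}{n \brack t} + q^{(n-t+1)(t+1)}{n \brack t-1} = q^{t(n+1-t)}{n+1 \brack t}$ for all $t$ (the extreme cases $t = 0$ and $t = n+1$ being covered by the vanishing convention). This in turn follows by inserting the $q$-Pascal recurrence ${n+1 \brack t} = q^{n+1-t}{n \brack t-1} + q^{-t}{n \brack t}$ on the right-hand side and simplifying the exponents, using $t(n+1-t) - t = t(n-t)$ and $t(n+1-t) + (n+1-t) = (t+1)(n+1-t)$.

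I do not expect any genuine obstacle: the argument is routine, and the only point requiring attention is the bookkeeping of the powers of $q$ in the reindexing step, together with the choice of the correct one of the two $q$-Pascal recurrences — the companion identity ${n+1 \brack t} = q^{-(n+1-t)}{n \brack t-1} + q^t{n \brack t}$ leads to a mismatch of exponents, so one must use the version above. (One could of course simply cite \cite[1.3.5]{L94}, but this short induction is self-contained.)
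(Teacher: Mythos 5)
Your induction is correct: the preliminary relation $x^k y = q^{2k} y x^k$, the reindexing that produces the exponent $(n-t+1)(t+1)$, and the reduction to the recurrence ${n+1 \brack t} = q^{n+1-t}{n \brack t-1} + q^{-t}{n \brack t}$ (indeed the right one of the two $q$-Pascal identities, since $t(n+1-t)-t = t(n-t)$ and $t(n+1-t)+(n+1-t) = (t+1)(n+1-t)$) all check out. The paper gives no proof of its own — it simply cites \cite[1.3.5]{L94} — and your argument is exactly the standard induction behind that reference, so there is nothing to add.
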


\begin{lem}\label{Uja in UA}
We have $\UjA \subset \UA$.
\end{lem}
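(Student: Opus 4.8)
The plan is to check that each of the algebra generators $e_i^{(n)}$, $f_i^{(n)}$, $k_i^{\pm 1}$ of $\UjA$ lies in $\UA$. For the $k_i^{\pm 1} = (K_{\ul i} K_{-\ul i})^{\pm 1}$ there is nothing to do: these are manifestly in $\UA$. So the content is in the divided powers $e_i^{(n)}$ and $f_i^{(n)}$, and by the symmetry $e_i \leftrightarrow f_i$ (applying the anti-automorphism $\sigma$, or simply repeating the argument) it suffices to treat $e_i^{(n)} = e_i^n / [n]_q!$. Recall $e_i = E_{\ul i} + p^{-\delta_{i,1}} F_{-\ul i} K_{\ul i}\inv$, a sum of two terms $x := E_{\ul i}$ and $y := p^{-\delta_{i,1}} F_{-\ul i}K_{\ul i}\inv$.

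First I would compute the $q$-commutation relation between $x$ and $y$. Since $\ul i$ and $-\ul i$ are distinct nodes of the Dynkin diagram (here $i \in \Ij$, so $\ul i > 0 > -\ul i$), and $|\ul i - (-\ul i)| = 2i - 1 \geq 1$, the elements $E_{\ul i}$ and $F_{-\ul i}$ commute (they commute for $|\ul i + \ul i| > 1$, and for $|\ul i + \ul i| = 1$, i.e. $i=1$, one still has $E_{\ul 1}F_{-\ul 1} = F_{-\ul 1}E_{\ul 1}$ by the relation $E_aF_b - F_bE_a = \delta_{a,b}(\cdots)$ with $a = \ul 1 \neq -\ul 1 = b$). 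Then using $K_{\ul i}\inv E_{\ul i} = q^{-(\alpha_{\ul i},\alpha_{\ul i})} E_{\ul i} K_{\ul i}\inv = q^{-2} E_{\ul i} K_{\ul i}\inv$, one gets $yx = q^{-2} xy$, equivalently $xy = q^2 yx$. This is exactly the hypothesis of Lemma \ref{q-binomial formula}.

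Applying the $q$-binomial formula gives
\begin{align}
e_i^n = (x+y)^n = \sum_{t=0}^n q^{t(n-t)} {n \brack t} y^t x^{n-t},
\end{align}
and dividing by $[n]_q!$ and using $\dfrac{1}{[n]_q!}{n \brack t} = \dfrac{1}{[t]_q![n-t]_q!}$ yields
\begin{align}
e_i^{(n)} = \sum_{t=0}^n q^{t(n-t)} \, y^{(t)} x^{(n-t)},
\end{align}
where $x^{(m)} = E_{\ul i}^{(m)}$ and $y^{(m)} = \big(p^{-\delta_{i,1}} F_{-\ul i} K_{\ul i}\inv\big)^{(m)}$. Now I need to see each $y^{(m)}$ lies in $\UA$: expanding, $\big(F_{-\ul i}K_{\ul i}\inv\big)^m$ reorders (using $K_{\ul i}\inv F_{-\ul i} = q^{-(\alpha_{\ul i},\alpha_{-\ul i})}F_{-\ul i}K_{\ul i}\inv$, a power of $q$) into $q^{\bullet} F_{-\ul i}^m K_{\ul i}^{-m}$, so $y^{(m)} = p^{-m\delta_{i,1}} q^\bullet F_{-\ul i}^{(m)} K_{\ul i}^{-m} \in \UA$ since $p^{\pm 1}, q^\bullet \in \A$. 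Hence $e_i^{(n)}$ is an $\A$-linear combination of products of elements of $\UA$, so $e_i^{(n)} \in \UA$; likewise $f_i^{(n)} \in \UA$. Since $\UA$ is an $\A$-subalgebra containing all the generators of $\UjA$, we conclude $\UjA \subseteq \UA$.

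There is no serious obstacle here; the only point requiring a little care is verifying $xy = q^2 yx$ (in particular handling the $i=1$ node correctly, where the defining relation of $\Uj$ is deformed but the relevant relation in $\U$ between $E_{\ul 1}$ and $F_{-\ul 1}$ is the undeformed one), after which Lemma \ref{q-binomial formula} does all the work.
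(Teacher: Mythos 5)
Your proof is correct and follows essentially the same route as the paper: write $e_i = x + y$ with $x = E_{\ul{i}}$, $y = p^{-\delta_{i,1}}F_{-\ul{i}}K_{\ul{i}}\inv$, verify $xy = q^2yx$, apply Lemma \ref{q-binomial formula} to get $e_i^{(n)} = \sum_t q^{t(n-t)}y^{(t)}x^{(n-t)}$, and observe $y^{(t)} \in \UA$ (with the $f_i^{(n)}$ case handled symmetrically). Your explicit verification of the $q$-commutation, including the $i=1$ node, is exactly the step the paper leaves implicit, so there is nothing to correct.
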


\begin{proof}
It suffices to show that $e_i^{(n)},f_i^{(n)} \in \UA$ for all $i \in \Ij$, $n \in \Z_{>0}$. We prove $e_i^{(n)} \in \UA$; the proof for $f_i^{(n)} \in \UA$ is similar. Setting $x := E_{\ul{i}}$ and $y := p^{-\delta_{i,1}}F_{-\ul{i}} K_{\ul{i}}\inv$, we see that
$$
e_i = x+y, \qu xy = q^2yx.
$$
Then, we can apply Lemma \ref{q-binomial formula}, and obtain
$$
e_i^{(n)} = \sum_{t=0}^n q^{t(n-t)} y^{(t)} x^{(n-t)}.
$$
It is easy to see that $y^{(t)} = p^{-\delta_{i,1}t} q^{-\delta_{i,1} \frac{t(t-1)}{2}} F_{-\ul{i}}^{(t)} K_{\ul{i}}^t \in \UA$. Hence, the assertion follows.
\end{proof}

Let $V$ be a $\U$-module in $\Oint$ (resp., $\Uj$-module in $\Oj$) with a crystal basis $(\clL,\clB)$ (resp., $\jmath$-crystal basis $(\clL,\clB)$). Assume that $V$ admits a $\Q$-linear involution $\ol{\ \cdot \ }$ satisfying the following:
\begin{align}
\begin{split}
&\ol{xv} = \psi(x) \ol{v}, \qu \Forall x \in \U,\ v \in V \\
(\text{resp., }&\ol{xv} = \psij(x) \ol{v}, \qu \Forall x \in \Uj,\ v \in V).
\end{split} \nonumber
\end{align}
We call such an involution a $\psi$-involution (resp., $\psij$-involution) on $V$. Since $\clL$ is an $\Ao$-lattice of $V$, $\ol{\clL}$ is an $\Ai$-lattice of $V$.

\begin{defi}\normalfont
Let $V,\clL,\clB,\ol{\ \cdot \ }$ be as above. $V$ is said to have a global crystal basis (resp., global $\jmath$-crystal basis) if there exists a $\UA$-submodule (resp., $\UjA$-submodule) $V_\A$ of $V$ which is an $\A$-lattice forming a balanced triple $(\clL,V_\A,\ol{\clL})$. The associated global basis $G(\clB)$ (resp., $\Gj(\clB)$) is called a global crystal basis (resp., global $\jmath$-crystal basis) of $V$.
\end{defi}

\begin{ex}\normalfont
Let $\bflm \in \Pj_1$ and consider the irreducible $\Uj_1$-module $L(\bflm)$. Recall that $L(\bflm)$ is $(\bflm_0-\bflm_{-1}+1)$-dimensional with a basis $\Gj(\bflm) := \{ f_1^{(n)}v \mid 0 \leq n \leq \bflm_0-\bflm_{-1} \}$, where $v$ denotes a highest weight vector. Also, $L(\bflm)$ has a $\jmath$-crystal basis $(\clL(\bflm),\clB(\bflm))$, where $\clL(\bflm)$ is the $\Ao$-span of $\Gj(\bflm)$, and $\clB(\bflm) = \{ f_1^{(n)}v + q\clL(\bflm) \mid 0 \leq n \leq \bflm_0-\bflm_{-1} \}$. Set $L(\bflm)_\A$ to be the $\A$-span of $\Gj(\bflm)$. Note that there exists a unique $\psij$-involution $\psij_{\bflm}$ on $L(\bflm)$ fixing $v$. Then, $(\clL(\bflm), L(\bflm)_\A, \psij_\bflm(\clL(\bflm)))$ is a balanced triple, and $\Gj(\bflm)$ is a global $\jmath$-crystal basis of $L(\bflm)$.
\end{ex}

\begin{prop}
Let $M \in \Oj$ with a global crystal $\jmath$-crystal basis $\Gj(\clB_M)$, and $N \in \Oint$ with a global crystal basis $G(\clB_N)$ Then, $M \otimes N$ has a global $\jmath$-crystal basis of the form
\begin{align}
\begin{split}
&\{ \Gj(b_1) \diamondsuit G(b_2) \mid b_1 \in \clB_M,\ b_2 \in \clB_N \}. \\
&\Gj(b_1) \diamondsuit G(b_2) \in \Gj(b_1) \otimes G(b_2) + \sum_{\substack{b'_1 \in \clB_M,\ b'_2 \in \clB_N \\ \wt(b'_2) < \wt(b_2)}} a_{b'1,b'_2;b_1,b_2} \Gj(b'_1) \otimes G(b'_2), \qu a_{b'1,b'_2;b_1,b_2} \in \A.
\end{split} \nonumber
\end{align}
\end{prop}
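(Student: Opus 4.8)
The plan is to mimic Kashiwara's construction of global bases for tensor products (\cite{K93}, \cite{L94}), adapted to the mixed setting where one factor is a $\Uj$-module and the other a $\U$-module. First I would set up the three lattices on $M \otimes N$: take $\clL := \clL_M \otimes_{\Ao} \clL_N$ (the $\jmath$-crystal lattice of the tensor product, which exists by the general crystal basis theory of \cite{W17}), take $(M\otimes N)_\A := M_\A \otimes_\A N_\A$, and take $\ol{\clL} := \ol{\clL_M} \otimes_{\Ai} \ol{\clL_N}$ where the bar-involution on $M \otimes N$ is the one induced by $\psij$ on $\Uj$ via the coideal property $\Delta(\Uj) \subset \Uj \otimes \U$; concretely $\ol{m \otimes n} := \sum \ol{m_{(1)}} \otimes \ol{m_{(2)} n}$ using $\Delta$, which is well-defined because $\ol{\ \cdot\ }$ on $M$ is a $\psij$-involution and on $N$ a $\psi$-involution. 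By Lemma \ref{Uja in UA} the module $M_\A \otimes_\A N_\A$ is stable under $\UjA$. One checks that $\{ \Gj(b_1)\otimes G(b_2) \}$ is an $\A$-basis of $(M\otimes N)_\A$, an $\Ao$-basis of $\clL$, and reduces mod $q\clL$ to the crystal basis of the tensor product.

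Next I would construct the vectors $\Gj(b_1)\diamondsuit G(b_2)$ by the standard bar-invariance-plus-triangularity argument. Order the pairs $(b_1,b_2)$ so that $\wt(b_2)$ is weakly decreasing, the key point being that the "error terms" produced by applying $\ol{\ \cdot\ }$ to $\Gj(b_1)\otimes G(b_2)$ involve only indices $(b_1',b_2')$ with $\wt(b_2') < \wt(b_2)$ strictly; this is exactly the input one extracts from the coideal formula for $\Delta$ together with the fact that $\Gj(b_1)$ and $G(b_2)$ are individually bar-invariant. One then runs the usual recursive solution of $\psij_{M\otimes N}(v_{b_1,b_2}) = v_{b_1,b_2}$ with $v_{b_1,b_2} - \Gj(b_1)\otimes G(b_2) \in \sum q\Ao\,(\Gj(b_1')\otimes G(b_2'))$ over the strictly lower-weight indices; the resulting $v_{b_1,b_2}$ has integral ($\A$-) coefficients by the usual Kashiwara argument (the transition matrix is bar-invariant, unitriangular, and its off-diagonal entries lie in $q\Ao \cap \A$, hence in $q\Q[q]$), so $v_{b_1,b_2} \in (M\otimes N)_\A$. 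Setting $\Gj(b_1)\diamondsuit G(b_2) := v_{b_1,b_2}$ then gives an $\A$-basis of $(M\otimes N)_\A$ lying in $\clL \cap (M\otimes N)_\A \cap \ol{\clL}$ and congruent to $\Gj(b_1)\otimes G(b_2) \bmod q\clL$, which is precisely the statement that $(\clL, (M\otimes N)_\A, \ol{\clL})$ is balanced with global basis $\{\Gj(b_1)\diamondsuit G(b_2)\}$. The displayed membership $\Gj(b_1)\diamondsuit G(b_2) \in \Gj(b_1)\otimes G(b_2) + \sum_{\wt(b_2')<\wt(b_2)} \A\,\Gj(b_1')\otimes G(b_2')$ follows from the construction once one re-expands $\Gj(b_1')\otimes G(b_2')$ (lower weight) back in terms of $\Gj(b_1'')\otimes G(b_2'')$; note that the order on the first factor is irrelevant here, so the sum is over all $b_1'$ with only the second-factor weight constrained.

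The main obstacle I expect is verifying the strict weight-drop in the error terms, i.e.\ that $\ol{\Gj(b_1)\otimes G(b_2)} - \Gj(b_1)\otimes G(b_2)$ expands over indices with $\wt(b_2')<\wt(b_2)$ only. Because the bar-involution on $M\otimes N$ is built from the full comultiplication $\Delta(E_i) = 1\otimes E_i + E_i \otimes K_i\inv$ etc.\ (and from the quasi-$K$-matrix implicit in the coideal structure of $\psij$), applying it to $\Gj(b_1)\otimes G(b_2)$ produces, besides $\Gj(b_1)\otimes G(b_2)$ itself, corrections coming from the $E_i\otimes K_i\inv$-type terms acting on the second factor; each such correction strictly lowers $\wt(\ \cdot\ )$ on the $N$-factor by a positive root. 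Making this precise requires a careful bookkeeping of how $\psij_{M\otimes N}$ interacts with the weight filtration of $N$, using that $N \in \Oint$ is integrable so the relevant operators act locally nilpotently and the expansions are finite. Once that weight-drop lemma is in hand, the rest is the routine Kashiwara recursion together with an appeal to Lemma \ref{sub and quotient of bt} if one wants to pass to summands; the integrality statement ($a_{b_1',b_2';b_1,b_2}\in\A$) is then automatic from the $\UjA$-stability of $M_\A\otimes_\A N_\A$ and Lemma \ref{Uja in UA}.
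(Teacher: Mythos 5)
The paper's own proof of this proposition is essentially a citation: the statement that $\clB_M\otimes\clB_N$ is a $\jmath$-crystal basis is taken from \cite{W17}, and the global basis is then constructed ``in the same way as the proof of \cite[Theorem 4]{BWW18}''. Your recursion is the same Kashiwara-style triangularity argument that that proof runs, so in outline you are on the intended route; but as written your proposal has a genuine gap at the first step, namely the construction of the $\psij$-involution on $M\otimes N$. The formula $\ol{m\otimes n}:=\sum\ol{m_{(1)}}\otimes\ol{m_{(2)}n}$ is not meaningful: $m$ is a vector in the module $M$, not an element of a coalgebra, so it has no Sweedler components. Nor does the naive candidate $\psij_M\otimes\psi_N$ work, since $\Delta(\psij(x))\neq(\psij\otimes\psi)(\Delta(x))$ in general, so $\psij_M\otimes\psi_N$ is not a $\psij$-involution for the $\Uj$-action on $M\otimes N$. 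The correct involution is $\Theta^\jmath\circ(\psij_M\otimes\psi_N)$, where $\Theta^\jmath$ is the quasi-R-matrix of the quantum symmetric pair (built from the intertwiner $\Upsilon$ and Lusztig's quasi-R-matrix $\Theta$). Its existence, its integrality over $\A$, and the fact that its components other than $1\otimes 1$ lie in $\Uj$ tensored with nonzero weight spaces of $\U$ of one sign --- which is exactly what makes the correction terms constrain only the second-factor weight, strictly and in one direction, with no condition on the first factor --- are substantive theorems of \cite{BW18} and \cite{BWW18}. Your proposal correctly identifies this ``weight-drop'' as the main obstacle, but then treats it as bookkeeping to be extracted from the coproduct formulas and ``the quasi-$K$-matrix implicit in the coideal structure''; that is precisely the input that cannot be obtained this way and must be imported (or reproved) from the cited works.

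Once $\Theta^\jmath$ and its properties are granted, the remainder of your argument --- the bar-invariance-plus-triangularity recursion, integrality of the coefficients via the $\UjA$-stability of $M_\A\otimes_\A N_\A$ and Lemma \ref{Uja in UA}, and the identification of the three lattices into a balanced triple --- is the standard argument and agrees with the proof the paper points to. One smaller caution: in the paper's definition of a global $\jmath$-crystal basis the upper lattice is the image of $\clL$ under the chosen $\psij$-involution, so you should not simply decree $\ol{\clL}=\ol{\clL_M}\otimes\ol{\clL_N}$; with the involution $\Theta^\jmath\circ(\psij_M\otimes\psi_N)$ this equality is a consequence to be checked (it follows once the basis $\{\Gj(b_1)\diamondsuit G(b_2)\}$ is in hand), not a definition one is free to make at the outset.
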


\begin{proof}
The fact that $\clB_M \otimes \clB_N$ forms a $\jmath$-crystal basis of $M \otimes N$ is proved in \cite{W17}. Now, one can construct a global $\jmath$-crystal basis of $M \otimes N$ with the desired property in the same way as the proof of \cite[Theorem 4]{BWW18}.
\end{proof}

\subsection{$\jmath$-canonical bases}
In this subsection, we recall the notion of $\jmath$-canonical bases, which was introduced by H. Bao and W. Wang in \cite{BW13}, and explain that $\jmath$-canonical bases are global $\jmath$-crystal bases. One of the key ingredients for a construction of $\jmath$-canonical bases is the intertwiner $\Upsilon$:

\begin{defi}[{\cite[Theorem 6.4]{BW13}}]
Let $\U^-$ denote the subalgebra of $\U$ generated by $F_i$, $i \in \bbI$. For each $\lm \in Q_+$, there exists a unique $\Upsilon_\lm \in \U^-_{-\lm}$ satisfying the following:
\begin{itemize}
\item $\Upsilon_0 = 1$, \\
\item $\Upsilon := \sum_{\lm \in Q_+} \Upsilon_\lm$ satisfies $\psij(x)\Upsilon = \Upsilon \psi(x)$ for all $x \in \Uj$.
\end{itemize}
\end{defi}

\begin{lem}[{\cite[Proposition 6.12]{BW13}}]
Let $M \in \Oint$ with a $\psi$-involution $\psi_M$. Then, the composite $\Upsilon \circ \psi_M$ is a $\psij$-involution of $M$.
\end{lem}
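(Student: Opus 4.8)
The plan is to verify the two defining requirements of a $\psij$-involution: that $\Upsilon\circ\psi_M$ is $\Q$-linear and intertwines the $\Uj$-action through $\psij$, and that it squares to the identity. First I would note that $\Upsilon\circ\psi_M$ is a well-defined $\Q$-linear endomorphism of $M$: since $\U$ is of finite type every object of $\Oint$ is a direct sum of finite-dimensional irreducibles, so on each summand the sum $\Upsilon=\sum_{\lambda\in Q_+}\Upsilon_\lambda$ (with $\Upsilon_\lambda\in\U^-_{-\lambda}$) is finite and defines a $\Q(p,q)$-linear operator on $M$; composing with the $\Q$-linear $\psi_M$ gives a $\Q$-linear map $M\to M$. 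The intertwining property is then immediate from the defining property of $\Upsilon$: for $x\in\Uj$ and $m\in M$,
\[
(\Upsilon\circ\psi_M)(xm)=\Upsilon\,\psi(x)\,\psi_M(m)=\psij(x)\,\Upsilon\,\psi_M(m)=\psij(x)\,(\Upsilon\circ\psi_M)(m),
\]
where the first equality uses that $\psi_M$ is a $\psi$-involution and the second uses $\psij(x)\Upsilon=\Upsilon\psi(x)$.

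For the involution property, I would first rewrite $\psi_M\circ\Upsilon\circ\psi_M$ as an operator on $M$. Writing $\Upsilon=\sum_\lambda\Upsilon_\lambda$ and using $\psi_M(ym)=\psi(y)\psi_M(m)$ for $y\in\U^-$ together with $\psi_M^2=\id$, one obtains $\psi_M\circ\Upsilon\circ\psi_M=\psi(\Upsilon)$ as operators on $M$, where $\psi(\Upsilon):=\sum_\lambda\psi(\Upsilon_\lambda)$ lies in the completion $\widehat{\U^-}=\prod_\lambda\U^-_{-\lambda}$ and again acts on $M$ by local finiteness. Hence $(\Upsilon\circ\psi_M)^2=\Upsilon\,\psi(\Upsilon)$ as operators on $M$, so it suffices to prove the identity $\Upsilon\,\psi(\Upsilon)=1$ in $\widehat{\U^-}$.

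This last identity is the crux of the argument, and I would deduce it from the uniqueness clause in the definition of $\Upsilon$. Because $\psi(\Upsilon_0)=1$, the element $\psi(\Upsilon)$ has constant term $1$ and is therefore invertible in $\widehat{\U^-}$; set $\Upsilon'':=\psi(\Upsilon)^{-1}$, which likewise has constant term $1$ and lies in $\prod_\lambda\U^-_{-\lambda}$. Applying the $\Q$-algebra automorphism $\psi$ of $\U$ (extended degreewise to $\widehat{\U^-}$) to the relation $\psij(y)\Upsilon=\Upsilon\psi(y)$ taken at $y=\psij(x)$, and using that $\psi$ and $\psij$ are involutions, gives $\psi(\Upsilon)\,\psij(x)=\psi(x)\,\psi(\Upsilon)$ for all $x\in\Uj$, which rearranges to $\psij(x)\Upsilon''=\Upsilon''\psi(x)$. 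By the uniqueness of $\Upsilon$, $\Upsilon''=\Upsilon$, i.e. $\Upsilon\,\psi(\Upsilon)=1$. Therefore $(\Upsilon\circ\psi_M)^2=\id_M$, which completes the proof. (Alternatively, the identity $\Upsilon\,\psi(\Upsilon)=1$ may simply be quoted from \cite{BW13}.) The only genuine obstacle is this invertibility/uniqueness step; everything else is a formal manipulation once $\Upsilon$ is known to act on $M$.
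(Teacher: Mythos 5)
Your argument is correct: the intertwining property is immediate from $\psij(x)\Upsilon=\Upsilon\psi(x)$, and the involutivity reduces, exactly as you say, to $\Upsilon\,\psi(\Upsilon)=1$, which your uniqueness argument (noting $\psi(\Upsilon)^{-1}$ has components in $\U^-_{-\lm}$, constant term $1$, and satisfies the same intertwining relation) establishes. The paper itself gives no proof of this lemma, quoting it directly from \cite[Proposition 6.12]{BW13}; your write-up is a faithful reconstruction of the argument there, whose key input is precisely the identity $\Upsilon\overline{\Upsilon}=1$ that you rederive.
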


\begin{theo}[{\cite[Theorem 6.24]{BW13}}]\label{j-canonical}
Let $M \in \Oint$ have a global crystal basis $G(\clB)$ with a crystal basis $(\clL,\clB)$, a $\psi$-involution $\psi_M$, and an $\A$-lattice $M_\A$. Set $\psij_M := \Upsilon \circ \psi_M$. Then, for each $b \in \clB$, there exists a unique $\Gj(b) \in M$ satisfying the following.
\begin{enumerate}
\item $\psij_M(\Gj(b)) = \Gj(b)$.
\item $\Gj(b) = G(b) + \sum_{b' \in B} c_{b',b} G(b')$ for some $c_{b',b} \in q\Ao \cap \A$. Moreover, $c_{b',b} = 0$ unless $\wtj(b') = \wtj(b)$ and $\wt(b') < \wt(b)$.
\end{enumerate}
\end{theo}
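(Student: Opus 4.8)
The plan is to run Lusztig's standard unitriangularity argument adapted to the quantum symmetric pair setting. The two inputs that go beyond pure linear algebra are the integrality of the intertwiner $\Upsilon$ and the fact that $\psij_M$ respects $\jmath$-weights; everything else is formal.

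First I would apply $\psij_M = \Upsilon \circ \psi_M$ to $G(b)$. Since $G(\clB)$ is a global crystal basis, $\psi_M(G(b)) = G(b)$, so $\psij_M(G(b)) = \Upsilon \cdot G(b)$. Writing $\Upsilon = 1 + \sum_{\lm \in Q_+ \setminus \{0\}} \Upsilon_\lm$ with $\Upsilon_\lm \in \U^-_{-\lm}$, I would use the integrality of $\Upsilon$ (each $\Upsilon_\lm$ lies in the $\A$-form of $\U^-$, hence in $\UA$) together with the fact that $M_\A$ is a $\UA$-submodule with $\A$-basis $G(\clB)$, to conclude $\psij_M(G(b)) \in M_\A$, so that
\[
\psij_M(G(b)) = \sum_{b' \in \clB} r_{b',b}\, G(b'), \qquad r_{b',b} \in \A .
\]
Because $\Upsilon_0 = 1$ and every $\Upsilon_\lm$ with $\lm \neq 0$ strictly lowers the $\U$-weight, one reads off $r_{b,b} = 1$ and $r_{b',b} = 0$ unless $\wt(b') < \wt(b)$. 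Since $\psij$ sends $k_i \mapsto k_i\inv$ and $q \mapsto q\inv$, the map $\psij_M$ preserves each $\jmath$-weight space, so also $r_{b',b} = 0$ unless $\wtj(b') = \wtj(b)$. Thus $R = (r_{b',b})$ is unitriangular with respect to the finite partial order that $\wt$ induces on $\{ b' : \wtj(b') = \wtj(b),\ \wt(b') \le \wt(b)\}$, and $\psij_M^2 = \id$ forces $\ol{R}\,R = I$.

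Next I would invoke the standard Lusztig lemma (see \cite{L94}): given such an $R$ over $\A$, unitriangular along a finite partial order with $\ol{R}\,R = I$, there is a unique unitriangular matrix $C = (c_{b',b})$ whose off-diagonal entries lie in the ``positive part'' $p\Q[p,q,q\inv] \oplus q\Q[q]$ of $\A$ and which satisfies $C = R\,\ol{C}$; then $\Gj(b) := G(b) + \sum_{b' \neq b} c_{b',b} G(b')$ is $\psij_M$-fixed, giving (1), and the uniqueness of $C$ gives the uniqueness of $\Gj(b)$. The lemma itself is a descending induction along the partial order: with all $c_{b'',b}$ for $\wt(b'') > \wt(b')$ already fixed, $C = R\,\ol{C}$ forces $c_{b',b} - \ol{c_{b',b}} = \sum_{\wt(b'') > \wt(b')} r_{b',b''}\,\ol{c_{b'',b}}$, whose right-hand side is $\ol{\ \cdot\ }$-anti-invariant by $\ol{R}\,R = I$, and such an equation has a unique solution $c_{b',b}$ in the positive part of $\A$, which is contained in $q\Ao \cap \A$. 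Tracking the support of $R$ through this induction yields $c_{b',b} = 0$ unless $\wtj(b') = \wtj(b)$ and $\wt(b') < \wt(b)$, which is (2).

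I expect the main obstacle to be the very first step, namely establishing that $\psij_M(G(b))$ genuinely lies in $M_\A$ --- i.e.\ the integrality of $\Upsilon$ with respect to the two-parameter integral form $\UA$. In the single-parameter case this is part of \cite{BW13}; in the present multi-parameter setting one must check that the recursion determining the $\Upsilon_\lm$ stays inside $\U^-_\A$ even with the parameter $p$ present, controlling the powers of $p$ much as in the proof of Lemma~\ref{Uja in UA}. Once integrality and $\jmath$-weight compatibility are in hand, the remaining weight bookkeeping and the Lusztig lemma are routine.
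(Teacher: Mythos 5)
Your argument is essentially the paper's: the paper does not reprove this statement but quotes it from \cite[Theorem 6.24]{BW13} (with the two-parameter case resting on the integrality of $\Upsilon$ from \cite{BWW16}, \cite{BW18}), and the proof there is exactly your scheme --- $\psi_M$-invariance of $G(b)$, integrality and strict weight-lowering of $\Upsilon$ giving a unitriangular bar-matrix over $\A$ preserving $\jmath$-weights, then Lusztig's lemma with positive part $p\Q[p,q,q\inv]\oplus q\Q[q]=q\Ao\cap\A$. You correctly identify the integrality of the multi-parameter intertwiner as the one non-formal input; it is imported from the cited references rather than re-derived, and your finiteness claim for the relevant poset is harmless here since objects of $\Oint$ (finite-dimensional weight spaces, weights bounded above) are in fact finite-dimensional.
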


The new basis $\Gj(\clB) := \{ \Gj(b) \mid b \in \clB \}$ thus constructed is called the $\jmath$-canonical basis of $(M,G(\clB))$.

\begin{prop}
We keep the notation in Theorem \ref{j-canonical}. Then, $(\clL,\clB)$ is a $\jmath$-crystal basis, $(\clL,M_{\A},\psij_M(\clL))$ is a balanced triple, and $\Gj(\clB)$ is the global $\jmath$-crystal basis associated to the balanced triple $(\clL,M_{\A},\psij_M(\clL))$ and the basis $\clB$.
\end{prop}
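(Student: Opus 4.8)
The plan is to show that the three defining properties of the $\jmath$-canonical basis in Theorem \ref{j-canonical} say exactly that $(\clL, M_\A, \psij_M(\clL))$ is a balanced triple with global basis $\Gj(\clB)$, and that this forces $(\clL,\clB)$ to be a $\jmath$-crystal basis in the sense of \cite{W17}. First I would observe that $\psij_M = \Upsilon \circ \psi_M$ is a $\psij$-involution on $M$ by the lemma quoted before Theorem \ref{j-canonical}, so $\psij_M(\clL)$ is an $\Ai$-lattice of $M$; together with the $\Ao$-lattice $\clL$ and the $\A$-lattice $M_\A$ (which is a $\UjA$-submodule by Lemma \ref{Uja in UA}, since $\UjA \subset \UA$ and $M_\A$ is a $\UA$-submodule), these form the candidate triple.

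Next I would check balancedness directly. The elements $\Gj(b)$, $b \in \clB$, are fixed by $\psij_M$, hence lie in $\psij_M(\clL)$ provided they lie in $\clL$; and property (2) of Theorem \ref{j-canonical}, $\Gj(b) \in G(b) + \sum_{b'} q\Ao \cap \A\, G(b')$, shows simultaneously that $\Gj(b) \in \clL$ (since $G(\clB)$ is an $\Ao$-basis of $\clL$ and $q\Ao \subset \Ao$) and that $\Gj(b) \in M_\A$ (since $c_{b',b} \in \A$ and $G(\clB)$ is an $\A$-basis of $M_\A$ by the global crystal basis hypothesis). Thus $\Gj(b) \in \clL \cap M_\A \cap \psij_M(\clL)$ for every $b$. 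Moreover, again by property (2), the transition matrix $(c_{b',b} + \delta_{b',b})$ from $G(\clB)$ to $\Gj(\clB)$ is unitriangular with off-diagonal entries in $q\Ao$, so $\Gj(\clB)$ is another $\Ao$-basis of $\clL$ and its image in $\clL/q\clL$ coincides with $\clB$. Therefore the composite $\bigoplus_b \A \Gj(b) = M_\A$, and more importantly $\clL \cap M_\A \cap \psij_M(\clL) \supseteq \bigoplus_b \Z\text{-span}$... — more precisely, $\{\Gj(b)\}$ is an $\Ao$-basis of $\clL$ lying in the intersection, so the canonical map $\clL \cap M_\A \cap \psij_M(\clL) \to \clL/q\clL$ is surjective; injectivity follows from a standard lattice argument (an element of the intersection expanded in $\Gj(\clB)$ must have coefficients in $\Ao \cap \A \cap \Ai = \Q$, because $\Gj(\clB)$ is simultaneously a basis of $\clL$, $M_\A$, and $\psij_M(\clL)$). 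Hence the triple is balanced and $\Gj(\clB)$ is its global basis by the uniqueness part of the construction following the definition of balanced triples.

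It remains to see that $(\clL,\clB)$ is a $\jmath$-crystal basis. Here I would invoke Theorem \ref{restriction of crystal}: $(\clL,\clB)$ is already a crystal basis of $M \in \Oint$, hence by that theorem it is a $\jmath$-crystal basis of $M$ viewed as a $\Uj$-module. This is the cleanest route and requires no new computation. Finally, combining these facts with the definition of ``global $\jmath$-crystal basis'' in the previous subsection — a $\UjA$-lattice $V_\A$ making $(\clL,V_\A,\ol{\clL})$ balanced, with $\ol{\ \cdot\ } = \psij_M$ the $\psij$-involution — we conclude $M_\A$ witnesses this and $\Gj(\clB)$ is the associated global $\jmath$-crystal basis.

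I expect the main obstacle to be the injectivity half of the balancedness check, i.e. verifying $\Ao \cap \A \cap \Ai = \Q$ in the two-variable setting: one must confirm that an element of $\Q(p,q)$ lying in all three rings — regular with nonzero ``limit'' at $p=q=0$, a Laurent polynomial in $p,q$, and the bar-image of such — is constant. This is the genuine two-parameter analog of the classical $\Z[q]$-type argument and should follow from comparing leading/trailing behavior in $p$ and $q$, but it is the one place where the multi-parameter nature of $\A_0$ genuinely enters; everything else is bookkeeping transported from the single-variable theory of \cite{K93} via the properties already packaged in Theorem \ref{j-canonical} and Theorem \ref{restriction of crystal}.
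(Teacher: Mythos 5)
Your proposal is correct and follows essentially the same route as the paper: $(\clL,\clB)$ is a $\jmath$-crystal basis by Theorem \ref{restriction of crystal}, properties (1) and (2) of Theorem \ref{j-canonical} show that $\Gj(\clB)$ spans $\clL$, $M_\A$, and $\psij_M(\clL)$ over $\Ao$, $\A$, and $\Ai$ respectively (giving balancedness), and Lemma \ref{Uja in UA} gives the $\UjA$-module structure on $M_\A$. The point you flag as the main obstacle, $\Ao \cap \A \cap \Ai = \Q$, is indeed true and is a routine check on powers of $p$ and $q$ (the paper leaves it implicit), so there is no gap.
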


\begin{proof}
That $(\clL,\clB)$ is a $\jmath$-crystal basis has already been stated in Theorem \ref{restriction of crystal}. Let us prove the rest. By the property $(2)$ of Theorem \ref{j-canonical}, it is clear that $\clL$ (resp., $M_\A$) is spanned by $\Gj(\clB)$ over $\Ao$ (resp., $\A$). Also, by $(1)$ of Theorem \ref{j-canonical}, $\psij_M(\clL)$ is spanned by $\Gj(\clB)$ over $\Ai$. Hence, the canonical homomorphism $\clL \cap M_{\A} \cap \psij_M(\clL) \rightarrow \clL/q\clL$ is an isomorphism, and therefore, $(\clL,M_{\A},\psij_M(\clL))$ is balanced. Finally, by Lemma \ref{Uja in UA}, the $\UA$-module $M_\A$ is also a $\UjA$-module. This proves the proposition.
\end{proof}

%

\section{Kazhdan-Lusztig bases}\label{KL bases}
The subsequent three sections are dedicated to prove the existence of a global $\jmath$-crystal basis and its ``dual'' basis for $L(\bflm)$, $\bflm \in \Pj$. In this section, we formulate variants of the Kazhdan-Lusztig bases following \cite{KL79}, \cite{Deo87}, and \cite{L}.

\subsection{Hecke algebra of type $B$}
Fix $d \in \Z_{> 0}$. Let $W = W_d$ be the Weyl group of type $B_d$ with simple reflections $S = \{ s_0,s_1,\ldots,s_{d-1} \}$ such that
$$
s_0s_1s_0s_1 = s_1s_0s_1s_0, \qu s_is_{i+1}s_i = s_{i+1}s_is_{i+1} \IF i \geq 1,\qu s_is_j = s_js_i \IF |i-j| > 1.
$$

\begin{defi}\normalfont
The Hecke algebra $\clH = \clH(W)$ associated to $W$ with unequal parameters $p,q$ is the associative algebra over $\AZ := \Z[p,p\inv,q,q\inv]$ generated by $\{ H_s \mid s \in S \}$ subject to the following relations:
\begin{itemize}
\item $(H_s - q_s\inv)(H_s + q_s) = 0$ for all $s \in S$, where $q_s = p$ if $s = s_0$ and $q_s = q$ otherwise.
\item $H_{s_0}H_{s_1}H_{s_0}H_{s_1} = H_{s_1}H_{s_0}H_{s_1}H_{s_0}$.
\item $H_{s_i}H_{s_{i+1}}H_{s_i} = H_{s_{i+1}}H_{s_i}H_{s_{i+1}}$ if $i \geq 1$.
\item $H_{s_i}H_{s_j} = H_{s_j}H_{s_i}$ if $|i-j| > 1$.
\end{itemize}
\end{defi}

We often write $H_i = H_{s_i}$. For each $w \in W$ with a reduced expression $w = s_{i_1} \cdots s_{i_l}$, the product $H_{i_1} \cdots H_{i_l}$ is independent of the choice of a reduced expression of $w$; we denote it by $H_w$. Similarly, $q_w := q_{s_{i_1}} \cdots q_{s_{i_l}}$ is well-defined.

Let $U,V$ be modules over $\AZ$. We say a $\Z$-linear map $f:U \rightarrow V$ is anti-linear if it satisfies $f(gu) = \ol{g} f(u)$ for all $g \in \AZ$ and $u \in U$. In the sequel, we will often use the following automorphisms, all of which are involutions, of $\clH$.
\begin{lem}\label{automorphisms on H}
\ 
\begin{enumerate}
\item There exists a unique anti-linear algebra automorphism $\ol{\ \cdot \ }$ of $\clH$ such that $\ol{H_w} = H_{w\inv}\inv$.
\item There exists a unique anti-linear algebra automorphism $\sgn$ of $\clH$ such that $\sgn(H_w) = (-1)^{\ell(w)}H_{w}$. Here, $\ell:W \rightarrow \Z_{\geq 0}$ denotes the length function on $W$.
\item There exists a unique $\AZ$-algebra anti-automorphism $(\cdot)^\flat$ of $\clH$ such that $H_w^\flat = H_{w\inv}$.
\end{enumerate}
Moreover, all of these automorphisms commute with each other.
\end{lem}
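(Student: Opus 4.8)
The plan is to construct each of the three maps by declaring its values on the generators $H_s$, $s\in S$, and then checking that the defining relations of $\clH$ are preserved; uniqueness is automatic in each case since the $H_s$ generate $\clH$ as an algebra over $\AZ$ (in the anti-linear cases, as a $\Z$-algebra together with the action of $\AZ$). Thus the entire content of the lemma reduces to a finite collection of relation checks, none of which is difficult.

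First I would treat $\ol{\ \cdot\ }$. Define it on generators by $\ol{H_s} := H_s\inv = H_s + (q_s - q_s\inv)$, the last equality coming from the quadratic relation $(H_s - q_s\inv)(H_s + q_s) = 0$, and extend anti-linearly over $\AZ$. For the quadratic relation one checks that $\ol{H_s}$ satisfies $(\ol{H_s} - \ol{q_s\inv})(\ol{H_s} + \ol{q_s}) = (H_s\inv - q_s)(H_s\inv + q_s\inv) = 0$, which follows by inverting the original quadratic relation (here one uses that $\ol{q_s} = q_s\inv$). For the braid relations: since each $H_s\inv$ is a $\Z$-linear combination of $1$ and $H_s$ with coefficients in $\AZ$, and the braid relations are symmetric in the relevant generators, one verifies that the images satisfy the same braid relations; alternatively, and more cleanly, one notes that $w \mapsto H_{w\inv}\inv$ is forced on reduced words and that $\overline{H_wH_{w'}} = \overline{H_{w'}}\,\overline{H_w}$ reversed appropriately — but the quickest route is to observe that the assignment $H_s \mapsto H_s\inv$ visibly respects all the braid and commutation relations because those relations, read in $\clH$, remain valid after replacing every generator appearing in them by its inverse (conjugate the relation and use $\ol{H_w H_{w'}} = H_{(w'w)\inv}\inv = H_{w'\inv}\inv H_{w\inv}\inv$ to identify the result). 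This is the standard bar-involution and its existence is classical (\cite{KL79}), so I would simply cite that and note the unequal-parameter case is identical. For $\sgn$, define $\sgn(H_s) := -H_s\inv = -q_s\inv \cdot q_s H_s\inv$; one checks the quadratic relation becomes $(\sgn(H_s) - q_s)(\sgn(H_s)+q_s\inv)=0$ after accounting for anti-linearity — equivalently $\sgn(H_s)$ satisfies $(X+q_s\inv)(X+q_s)=0$ wait, more carefully: $\sgn(H_s) = -H_s\inv$ has $(-H_s\inv - q_s)(-H_s\inv + q_s\inv) = (H_s\inv + q_s)(H_s\inv - q_s\inv) = 0$ by the inverted quadratic relation. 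The braid and commutation relations are preserved because on a product of $k$ generators $\sgn$ contributes $(-1)^k$ and the braid relations have equal length on both sides. One then verifies directly that $\sgn(H_w) = (-1)^{\ell(w)} H_w$ by expanding $H_w = H_{i_1}\cdots H_{i_l}$ over a reduced word and using $-H_{i_j}\inv$ together with $\overline{(\cdot)}$, giving $(-1)^l \overline{H_{w\inv}\inv} = (-1)^l H_w$. Finally for $(\cdot)^\flat$: define $(H_s)^\flat := H_s$ and extend $\AZ$-linearly but anti-multiplicatively. The quadratic relation is obviously preserved (it involves a single generator). For the braid relation $H_{s_i}H_{s_{i+1}}H_{s_i} = H_{s_{i+1}}H_{s_i}H_{s_{i+1}}$, applying $(\cdot)^\flat$ reverses the product, giving $H_{s_i}H_{s_{i+1}}H_{s_i} = H_{s_{i+1}}H_{s_i}H_{s_{i+1}}$ again — the same relation, since these braid words are palindromes. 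Likewise the type-$B$ relation $H_0H_1H_0H_1 = H_1H_0H_1H_0$ reverses to $H_1H_0H_1H_0 = H_0H_1H_0H_1$, again the same, and commutations are trivially preserved. That $(H_w)^\flat = H_{w\inv}$ follows by writing $w = s_{i_1}\cdots s_{i_l}$ reduced, so $w\inv = s_{i_l}\cdots s_{i_1}$ is also reduced, and $(H_{i_1}\cdots H_{i_l})^\flat = H_{i_l}\cdots H_{i_1} = H_{w\inv}$.

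For the last sentence, that all three commute with each other, I would compute the composites on generators. On $H_s$: $\sgn$ and $\ol{\ \cdot\ }$ both fix $\AZ$-scalars in the conjugate sense and send $H_s$ to $\pm H_s\inv$; since $\overline{H_s\inv} = H_s$ and $\overline{\,-H_s\inv\,} = -H_s$ we get $\overline{\sgn(H_s)} = -H_s = \sgn(\overline{H_s})$, so $\sgn$ and $\ol{\ \cdot\ }$ commute. Since $(\cdot)^\flat$ fixes every $H_s$ and is $\AZ$-linear while $\sgn$, $\ol{\ \cdot\ }$ each send $H_s$ into $\AZ\cdot 1 + \AZ\cdot H_s$ — a subspace stable under $(\cdot)^\flat$ — and $(\cdot)^\flat$ acts as the identity there, the composites in either order agree on generators; hence they commute. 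Because $\clH$ is generated by the $H_s$ (over $\Z$ with the $\AZ$-action), agreement on generators forces agreement everywhere.

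The only mild subtlety — and the place I would be most careful — is bookkeeping the interaction of anti-linearity with the order-reversal of $(\cdot)^\flat$ when composing, i.e. making sure that "commute with each other" is interpreted correctly for maps of mixed variance (two anti-automorphisms compose to an automorphism, etc.); but since $(\cdot)^\flat$ is $\AZ$-linear and the other two are anti-linear, the composites $\sgn\circ\ol{\ \cdot\ }$ and $\ol{\ \cdot\ }\circ\sgn$ are both $\AZ$-linear algebra automorphisms, and the verification on generators is immediate. I expect no real obstacle here; this lemma is a routine "construct-by-generators-and-relations" exercise, and the existence of the bar-involution $\ol{\ \cdot\ }$ itself can simply be quoted from \cite{KL79}.

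\begin{proof}
Existence and uniqueness of the bar-involution $\ol{\ \cdot\ }$ is standard; see \cite{KL79}. For $\sgn$ and $(\cdot)^\flat$, we define the maps on the generators $H_s$ by $\sgn(H_s) := -H_s\inv$ and $H_s^\flat := H_s$, and extend anti-linearly (resp.\ $\AZ$-linearly and anti-multiplicatively); in each case one checks directly that the images of $H_s$, $s \in S$, satisfy the defining relations of $\clH$: the quadratic relations follow by inverting $(H_s - q_s\inv)(H_s + q_s) = 0$, while the braid and commutation relations are preserved because the braid words involved are palindromes and $\sgn$ contributes a sign $(-1)^{\ell}$ that is equal on both sides. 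Uniqueness follows since the $H_s$ generate $\clH$. Expanding $H_w$ over a reduced word yields the stated formulas $\sgn(H_w) = (-1)^{\ell(w)}H_w$ and $H_w^\flat = H_{w\inv}$. Finally, the three maps commute: on generators, $\overline{\sgn(H_s)} = \overline{-H_s\inv} = -H_s = \sgn(\overline{H_s})$, and since $\sgn(H_s), \overline{H_s} \in \AZ \cdot 1 + \AZ \cdot H_s$, which is fixed pointwise by $(\cdot)^\flat$, the composites of $(\cdot)^\flat$ with each of the other two agree on generators. As $\clH$ is generated by the $H_s$, the maps commute.
\end{proof}
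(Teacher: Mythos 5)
Your overall architecture is fine (and the paper itself offers no proof of this lemma, treating it as standard): define each map on the generators $H_s$, check the defining relations, get uniqueness because the $H_s$ generate, and verify commutativity on generators while keeping track of which composites are linear/anti-linear and multiplicative/anti-multiplicative. Parts (1) and (3) are correct as you argue them, modulo one harmless slip in your planning discussion: the bar involution is multiplicative, not anti-multiplicative, so $\ol{H_wH_{w'}}=\ol{H_w}\,\ol{H_{w'}}$ (your parenthetical reversal is wrong, but you cite \cite{KL79} anyway, which is acceptable).

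The genuine error is your construction of $\sgn$. If $\sgn$ is anti-linear and multiplicative, applying it to $(H_s-q_s\inv)(H_s+q_s)=0$ forces $(\sgn(H_s)-q_s)(\sgn(H_s)+q_s\inv)=0$, and the element $-H_s\inv$ does \emph{not} satisfy this: the inverted quadratic relation is $(H_s\inv-q_s)(H_s\inv+q_s\inv)=0$, whereas the identity you invoke, $(H_s\inv+q_s)(H_s\inv-q_s\inv)=0$, is false — using $H_s^{-2}=(q_s-q_s\inv)H_s\inv+1$ one gets $(H_s\inv+q_s)(H_s\inv-q_s\inv)=2(q_s-q_s\inv)H_s\inv\neq 0$. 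So the anti-linear multiplicative extension of $H_s\mapsto -H_s\inv$ does not exist; and even if it did, on a reduced word it would give $\sgn(H_w)=(-1)^{\ell(w)}H_{i_1}\inv\cdots H_{i_l}\inv=(-1)^{\ell(w)}H_{w\inv}\inv=(-1)^{\ell(w)}\ol{H_w}$, not $(-1)^{\ell(w)}H_w$ — the extra bar you insert at the end of that computation has no justification (it would only be correct if you defined $\sgn$ as the composite of $\ol{\ \cdot\ }$ with the $\AZ$-linear automorphism $H_s\mapsto -H_s\inv$, which is not what you wrote). The correct generator value is the one the statement itself dictates at $w=s$, namely $\sgn(H_s):=-H_s$: then $(-H_s-q_s)(-H_s+q_s\inv)=(H_s+q_s)(H_s-q_s\inv)=0$, the braid and commutation relations are preserved because both sides involve the same number of generators, and the reduced-word expansion gives $\sgn(H_w)=(-1)^{\ell(w)}H_w$ directly. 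With this correction the commutativity checks go through on generators ($\ol{\sgn(H_s)}=-H_s\inv=\sgn(\ol{H_s})$ and $\sgn(H_s)^\flat=-H_s=\sgn(H_s^\flat)$), together with your correct remark that in each comparison the two composites have the same variance, so agreement on generators suffices.
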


For $y,w \in W$, define $r_{y,w} \in \AZ$ by
\begin{align}
\ol{H_w} = \sum_{y \in W} r_{y,w} H_y. \nonumber
\end{align}
It is well-known and easily proved that $r_{w,w} = 1$ for all $w \in W$ and $r_{y,w} = 0$ unless $y \leq w$.

\subsection{Kazhdan-Lusztig bases}
Let us formulate the Kazhdan-Lusztig basis and the dual Kazhdan-Lusztig basis. Set
\begin{align}
\begin{split}
\AZ^+ &:= \AZ \cap \Ao = p\Z[p,q,q\inv] \oplus q\Z[q], \\
\AZ^- &:= \ol{\AZ^+} = p\inv\Z[p\inv,q,q\inv] \oplus q\inv \Z[q\inv].
\end{split} \nonumber
\end{align}

\begin{theo}[{\cite[Theorem 1.1]{KL79}}, {\cite[Theorem 5.2]{L}}]\label{KL-bases}
For each $w \in W$, there exists a unique $C_w \in \clH$ such that
\begin{enumerate}
\item $\overline{C_w} = C_w$.
\item $C_w = H_w + \sum_{y < w} c_{y,w} H_y$ for some $c_{y,w} \in \AZ^+$. Here, $<$ denotes the Bruhat order on $W$.
\end{enumerate}
\end{theo}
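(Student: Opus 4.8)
The plan is to run the classical Kazhdan--Lusztig argument, which uses the quadratic and braid relations of $\clH$ only through the bar involution $\ol{\ \cdot\ }$ and the triangularity facts $r_{w,w}=1$ and $r_{y,w}=0$ unless $y\le w$; hence it will be insensitive to the fact that $s_0$ and the $s_i$ ($i\ge 1$) carry the unequal parameters $p$ and $q$. For uniqueness I would take two candidates $C_w,C_w'$, write their difference as $\sum_{y<w}d_yH_y$ with $d_y\in\AZ^+$, and, assuming it is nonzero, pick $y_0$ maximal in the Bruhat order with $d_{y_0}\ne 0$. Comparing the coefficient of $H_{y_0}$ on the two sides of the bar-invariance equation $\ol D=D$ and using $\ol{H_y}=\sum_{z\le y}r_{z,y}H_z$ with $r_{y,y}=1$, the maximality of $y_0$ forces $\ol{d_{y_0}}=d_{y_0}$; but $d_{y_0}\in\AZ^+$ while $\ol{d_{y_0}}\in\AZ^-$, and $\AZ^+\cap\AZ^-=0$, so $d_{y_0}=0$, a contradiction. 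This gives uniqueness.

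For existence I would write $C_w=\sum_y p_{y,w}H_y$ and translate the two conditions: bar-invariance becomes, via $\ol{H_y}=\sum_z r_{z,y}H_z$, the linear system $p_{z,w}=\sum_{z\le y\le w}r_{z,y}\,\ol{p_{y,w}}$ for $z\in W$, while condition (2) asks for $p_{w,w}=1$, $p_{y,w}\in\AZ^+$ when $y<w$, and $p_{y,w}=0$ otherwise (the last being automatic since $r_{z,y}=0$ unless $z\le y$). I would solve this by downward induction on $\{y:y\le w\}$ along a linear extension of the opposite Bruhat order: set $p_{w,w}:=1$; and for $z<w$, given $p_{y,w}\in\AZ^+$ for all $z<y\le w$, put $h_z:=\sum_{z<y\le w}r_{z,y}\,\ol{p_{y,w}}\in\AZ$, so that the one remaining equation reads $p_{z,w}-\ol{p_{z,w}}=h_z$. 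The tool here is the elementary decomposition $\AZ=\AZ^-\oplus\Z\oplus\AZ^+$, on which $\ol{\ \cdot\ }$ interchanges $\AZ^+$ and $\AZ^-$ and fixes $\Z$: for $h\in\AZ$, the equation $\pi-\ol\pi=h$ is solvable with $\pi\in\AZ^+$ precisely when $\ol h=-h$, and then $\pi$ (namely the $\AZ^+$-component of $h$) is unique. Thus the induction closes as soon as one knows $\ol{h_z}=-h_z$.

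The main obstacle is exactly this last point: showing that each $h_z$ is bar-anti-invariant. I expect to establish it from the involutivity of $\ol{\ \cdot\ }$ on $\clH$, which is equivalent to the identity $\sum_y r_{z,y}\,\ol{r_{y,w}}=\delta_{z,w}$, combined with the relations $\ol{p_{y,w}}=p_{y,w}-h_y$ (for $z<y<w$) and $\ol{p_{w,w}}=p_{w,w}$ already produced by the induction: substituting these into $h_z+\ol{h_z}$ and rearranging, everything cancels. This is precisely the computation carried out in \cite{KL79} for equal parameters and in \cite{L} for unequal ones, and since it invokes nothing about $\clH$ beyond the triangularity and involutivity of the coefficients $r_{y,w}$ and the bar-swap property of $\AZ^\pm$, it transfers verbatim to the present type-$B$ two-parameter setting. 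An equivalent route would be induction on $\ell(w)$: pick $s\in S$ with $sw<w$, form $H_sC_{sw}$, and subtract a correction $\sum_{sz<z<w}\mu_z C_z$ with $\mu_z\in\AZ$ chosen to restore both (1) and (2); constructing the $\mu_z$ is the same obstacle in another guise.
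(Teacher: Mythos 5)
Your argument is correct: the uniqueness step via a Bruhat-maximal coefficient and $\AZ^+\cap\AZ^-=0$, and the existence step via the triangular system $p_{z,w}-\ol{p_{z,w}}=h_z$ solved downward using the decomposition $\AZ=\AZ^-\oplus\Z\oplus\AZ^+$ (the consistency $\ol{h_z}=-h_z$ indeed closes, since the relations $\ol{p_{y,w}}=p_{y,w}-h_y$ and the involutivity identity for the $r_{y,w}$ reduce $h_z+\ol{h_z}$ to $-\sum_{z<a\le w}r_{z,a}(h_a+\ol{h_a})=0$), are exactly the classical Kazhdan--Lusztig/Lusztig argument. The paper offers no proof of its own, citing \cite{KL79} and \cite{L}, whose unequal-parameter proof is precisely the route you describe, so there is nothing further to reconcile.
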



Replacing $\AZ^+$ with $\AZ^-$, we see the following: For each $w \in W$, there exists a unique $D_w \in \clH$ such that
\begin{enumerate}
\item $\overline{D_w} = D_w$.
\item $D_w = H_w + \sum_{y < w} d_{y,w} H_y$ for some $d_{y,w} \in \AZ^-$.
\end{enumerate}

\begin{rem}\label{C-basis and D-basis}\normalfont
Noting that the automorphisms $\ol{\ \cdot \ }$ and $\sgn$ commute with each other, it is easy to verify that $D_w = (-1)^{\ell(w)} \sgn(C_w)$.
\end{rem}

It is obvious from the definitions that both $\{ C_w \mid w \in W \}$ and $\{ D_w \mid w \in W \}$ form $\AZ$-bases of $\clH$. We call the former the Kazhdan-Lusztig basis, and the latter the dual Kazhdan-Lusztig basis of $\clH$.

\subsection{Left cell representations}
Let us recall from \cite{KL79} the notion of left cells of $W$ and the associated left cell representations.
\begin{defi}\normalfont
Let $y,w \in W$.
\begin{enumerate}
\item $y \rightarrow_L w$ if the coefficient of $C_y$ in $C_s C_w$ expanded in the Kazhdan-Lusztig basis is nonzero for some $s \in S$.
\item $y \leq_L w$ if there exist $y = y_0,y_1,\ldots,y_l = w \in W$ such that $y_{i-1} \rightarrow_L y_i$.
\item $y \Lcell w$ if $y \leq_L w$ and $w \leq_L y$.
\item $y <_L w$ if $y \leq_L w$ and $y \not\Lcell w$.
\item Each equivalence class of $W/\Lcell$ is called a left cell of $W$. We denote by $L(W)$ the set of left cells of $W$.
\end{enumerate}
\end{defi}

\begin{rem}\normalfont
By Remark \ref{C-basis and D-basis}, we obtain the same equivalence relation as $\Lcell$ if we replace $C_w$'s by $D_w$'s.
\end{rem}

For each $X \in L(W)$ and $x \in X$, set
\begin{align}
&C_{\leq_L X} = \bigoplus_{y \leq_L x} \AZ C_y, \qu C_{<_L X} = \bigoplus_{y <_L x} \AZ C_y, \qu C^L_X = C_{\leq_L X} / C_{<_L X}, \nonumber\\
&D_{\leq_L X} = \bigoplus_{y \leq_L x} \AZ D_y, \qu D_{<_L X} = \bigoplus_{y <_L x} \AZ D_y, \qu D^L_X = D_{\leq_L X} / D_{<_L X}. \nonumber
\end{align}
Note that these are independent of the choice of $x \in X$. We denote the image of $m \in C_{\leq_L X}$ (resp., $m \in D_{\leq_L X}$) under the quotient map $C_{\leq_L X} \rightarrow C^L_X$ (resp., $D_{\leq_L X} \rightarrow D^L_X$) by $[m]_X$ (resp., $[m]'_X$).

\begin{lem}
Let $X \in L(W)$. Then, $C_{\leq_L X}$, $C_{<_L X}$, $D_{\leq_L X}$, and $D_{<_L X}$ are left ideals of $\clH$, and consequently, $C^L_{X}$ and $D^L_{X}$ are left $\clH$-modules. Moreover, $C^L_{X}$ has a basis $\{ [C_x]_X \mid x \in X \}$, while $D^L_{X}$ has a basis $\{ [D_x]'_X \mid x \in X \}$.
\end{lem}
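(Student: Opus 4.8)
The plan is to deduce everything from the multiplication rule in the Kazhdan--Lusztig basis, $C_s C_w = \sum_{y \in W} m^s_{y,w} C_y$ (with $m^s_{y,w} \in \AZ$, well defined because $\{C_y \mid y \in W\}$ is a free $\AZ$-basis of $\clH$), together with the bare definitions of $\rightarrow_L$ and $\leq_L$. First I would record that $\clH$ is generated as an $\AZ$-algebra by $\{C_s \mid s \in S\}$: by Theorem \ref{KL-bases}, the only $y \in W$ with $y < s$ is $y = e$, so $C_s = H_s + c_{e,s}$ with $c_{e,s} \in \AZ^+ \subset \AZ$; hence $H_s = C_s - c_{e,s}$, and $\{C_s\}$ generates the same $\AZ$-subalgebra as $\{H_s\}$, namely $\clH$.

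Now fix $x \in X$. If $w \leq_L x$ and $s \in S$, then whenever $m^s_{y,w} \neq 0$ we have $y \rightarrow_L w$ by definition, hence $y \leq_L w \leq_L x$, so $C_y \in C_{\leq_L X}$; therefore $C_s C_w \in C_{\leq_L X}$. Since the $C_s$ generate $\clH$ and $C_{\leq_L X}$ is an $\AZ$-submodule, $C_{\leq_L X}$ is a left ideal. If instead $w <_L x$, the same computation gives $y \leq_L w$ whenever $m^s_{y,w} \neq 0$, and in fact $y <_L x$: surely $y \leq_L w <_L x$ yields $y \leq_L x$, while $y \Lcell x$ would force $x \leq_L y \leq_L w \leq_L x$, i.e. $w \Lcell x$, contradicting $w <_L x$. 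Thus $C_s C_w \in C_{<_L X}$, so $C_{<_L X}$ is a left ideal, and consequently $C^L_X = C_{\leq_L X}/C_{<_L X}$ is a left $\clH$-module.

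For the dual side I would invoke Remark \ref{C-basis and D-basis}. Since $\sgn$ restricts to the bar involution on $\AZ$ (because $\sgn(g) = \sgn(g H_e) = \ol g$) and $D_w = (-1)^{\ell(w)} \sgn(C_w)$, the anti-linear algebra automorphism $\sgn$ maps the $\AZ$-module $C_{\leq_L X} = \bigoplus_{y \leq_L x} \AZ C_y$ onto $\bigoplus_{y \leq_L x} \AZ D_y = D_{\leq_L X}$, and likewise $C_{<_L X}$ onto $D_{<_L X}$. An algebra automorphism carries left ideals to left ideals, so $D_{\leq_L X}$ and $D_{<_L X}$ are left ideals and $D^L_X$ is a left $\clH$-module. (Alternatively, by the same remark $\rightarrow_L$ and $\leq_L$ are literally unchanged when the $C$-basis is replaced by the $D$-basis, so the previous paragraph applies verbatim.)

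Finally, freeness of $\clH$ over $\AZ$ on $\{C_y \mid y \in W\}$ makes $C_{\leq_L X}$ and $C_{<_L X}$ free on $\{C_y \mid y \leq_L x\}$ and $\{C_y \mid y <_L x\}$ respectively, the latter a subset of the former; since $\{y \mid y \leq_L x\} \setminus \{y \mid y <_L x\} = \{y \mid y \Lcell x\} = X$, the quotient $C^L_X$ is free over $\AZ$ with basis $\{[C_x]_X \mid x \in X\}$, and the same reasoning with the $D$-basis gives the basis $\{[D_x]'_X \mid x \in X\}$ of $D^L_X$. I do not expect a genuine obstacle: the only point needing a little care is the strict inequality $y <_L x$ in the computation for $C_{<_L X}$ (and, on the dual side, the fact that the preorder is literally the same, which Remark \ref{C-basis and D-basis} supplies); the rest is a direct unwinding of definitions.
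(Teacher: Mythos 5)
Your argument is correct and matches the paper's approach: the paper simply declares the assertions obvious from the definitions, and your proof is exactly that unwinding (closure of the $\AZ$-spans under left multiplication by the generating $C_s$, the strict-inequality check for $C_{<_L X}$, transfer to the $D$-side via $\sgn$, and freeness giving the stated bases). No gaps.
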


\begin{proof}
The assertions are obvious from the definitions.
\end{proof}

We call $C^L_X$ the left cell representation of $\clH(W)$ associated to $X \in L(W)$.

\subsection{Bilinear form on $\clH$}
Let $\clH^* := \Hom_{\AZ}(\clH,\AZ)$. $\clH^*$ has a left $\clH$-module structure given by
\begin{align}
(Hf)(H') = f(H^\flat H'), \qu \Forall f \in \clH^*,\ H,H' \in \clH. \nonumber
\end{align}
Let $\{ h_w \mid w \in W \} \subset \clH^*$ be the dual basis of $\{ H_w \mid w \in W \}$, that is, they are characterized by $h_y(H_w) = \delta_{y,w}$ for all $y,w \in W$.

\begin{lem}\label{module structure of V*}
For each $w \in W$ and $s \in S$, the following holds.
\begin{align}
H_s h_w = \begin{cases}
h_{sw} \qu & \IF w < sw, \\
h_{sw} + (q_s\inv - q_s)h_w \qu & \IF sw < w.
\end{cases} \nonumber
\end{align}
\end{lem}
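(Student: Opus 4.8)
The plan is to unwind the definition of the left $\clH$-module structure on $\clH^*$ and evaluate both sides of the claimed identity against the $\AZ$-basis $\{ H_{w'} \mid w' \in W \}$ of $\clH$. Since every $s \in S$ is an involution, $H_s^\flat = H_{s\inv} = H_s$, so by the very definition of the action we have $(H_s h_w)(H_{w'}) = h_w(H_s^\flat H_{w'}) = h_w(H_s H_{w'})$ for every $w' \in W$. Thus the coefficient of $h_{w'}$ in $H_s h_w$ is exactly the coefficient of $H_w$ in the product $H_s H_{w'}$, and the whole lemma reduces to the elementary multiplication rule in $\clH$.

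First I would record that rule: for $s \in S$ and $w' \in W$,
\begin{align}
H_s H_{w'} = \begin{cases} H_{sw'} & \IF w' < sw', \\ H_{sw'} + (q_s\inv - q_s) H_{w'} & \IF sw' < w'. \end{cases} \nonumber
\end{align}
When $w' < sw'$ this is just the definition of $H_{sw'}$, since prepending $s$ to a reduced word for $w'$ gives one for $sw'$. When $sw' < w'$, write $w' = s w''$ with $w'' := sw'$ and $\ell(w') = \ell(w'') + 1$, so $H_{w'} = H_s H_{w''}$; then the quadratic relation $H_s^2 = 1 + (q_s\inv - q_s)H_s$ gives $H_s H_{w'} = H_s^2 H_{w''} = H_{w''} + (q_s\inv - q_s) H_s H_{w''} = H_{sw'} + (q_s\inv - q_s) H_{w'}$.

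Then I would substitute this into $(H_s h_w)(H_{w'}) = h_w(H_s H_{w'})$ and use $h_w(H_y) = \delta_{w,y}$: the coefficient of $H_w$ in $H_s H_{w'}$ is nonzero only when $w' \in \{ sw, w \}$, and a short case split according to whether $w < sw$ or $sw < w$ pins down the coefficients. If $w < sw$, then taking $w' = sw$ gives the term $h_{sw}$ with coefficient $1$ and $w' = w$ contributes nothing, so $H_s h_w = h_{sw}$; if $sw < w$, then $w' = sw$ again gives $h_{sw}$ with coefficient $1$ while $w' = w$ now contributes $(q_s\inv - q_s) h_w$, so $H_s h_w = h_{sw} + (q_s\inv - q_s) h_w$. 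This is exactly the asserted formula.

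There is no genuine obstacle here; the argument is routine. The only points needing a moment's care are noting that $H_s^\flat = H_s$ (so the $\flat$-twist in the module structure is invisible on the generators $H_s$), and keeping straight which of $w'$ and $sw'$ is the longer element when applying the multiplication rule to the two relevant values $w' = sw$ and $w' = w$.
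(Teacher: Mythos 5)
Your proof is correct and follows essentially the same route as the paper: evaluate $H_s h_w$ against the basis $\{H_y\}$, reduce via $H_s^\flat = H_s$ to the standard multiplication rule for $H_s H_y$, and split into the cases $w < sw$ and $sw < w$. The only difference is that you additionally spell out the derivation of the rule $H_sH_{w'}$ from the quadratic relation, which the paper takes for granted.
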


\begin{proof}
For each $y \in W$, we compute as
\begin{align}
\begin{split}
(H_sh_w)(H_y) &= h_w(H_sH_y) \\
&= \begin{cases}
h_w(H_{sy}) \qu & \IF sy > y, \\
h_w(H_{sy} + (q_s\inv-q_s)H_y) \qu & \IF sy < y
\end{cases} \\
&= \begin{cases}
1 \qu & \IF sy > y \AND sy = w, \\
1 \qu & \IF sy < y \AND sy = w, \\
q_s\inv - q_s \qu & \IF sy < y \AND y = w, \\
0 \qu & \OW
\end{cases} \\
&= \begin{cases}
h_{sw}(H_y) \qu & \IF sw > w, \\
(h_{sw} + (q_s\inv - q_s)h_w)(H_y) \qu & \IF sw < w.
\end{cases}
\end{split} \nonumber
\end{align}
This implies
\begin{align}
H_sh_w = \begin{cases}
h_{sw} \qu & \IF sw > w, \\
h_{sw} + (q_s\inv - q_s)h_w \qu & \IF sw < w.
\end{cases} \nonumber
\end{align}
Thus, the proof completes.
\end{proof}

There exists an anti-linear automorphism $\ol{\ \cdot \ }$ of $\clH^*$ defined by $\ol{f}(H) = \ol{f(\ol{H})}$ for $f \in \clH^*$, $H \in \clH$.

\begin{lem}\label{bar on V*}
For each $w \in W$, we have
\begin{align}
\ol{h_w} = \sum_{y \geq w} \ol{r_{w,y}} h_y. \nonumber
\end{align}
In particular, $\ol{h_{w_0}} = h_{w_0}$, where $w_0 \in W$ denotes the longest element.
\end{lem}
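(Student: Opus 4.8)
The statement is a formal consequence of the defining relation $\ol{H_w} = \sum_{y \in W} r_{y,w} H_y$ together with the properties $r_{w,w} = 1$ and $r_{y,w} = 0$ unless $y \leq w$, all recorded just above. The plan is to compute $\ol{h_w}$ by evaluating it against the basis $\{ H_y \mid y \in W \}$ of $\clH$ and reading off its expansion in the dual basis $\{ h_y \mid y \in W \}$ of $\clH^*$.

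First I would fix $w, y \in W$ and use the definition $\ol{f}(H) = \ol{f(\ol{H})}$ of the anti-linear automorphism on $\clH^*$ to write
\begin{align}
\ol{h_w}(H_y) = \ol{h_w(\ol{H_y})} = \ol{h_w\Bigl( \sum_{z \in W} r_{z,y} H_z \Bigr)} = \ol{r_{w,y}}, \nonumber
\end{align}
where the last equality is by the characterizing property $h_w(H_z) = \delta_{w,z}$. Since $r_{w,y} = 0$ unless $w \leq y$, the scalar $\ol{h_w}(H_y)$ vanishes unless $y \geq w$. Because $\{ h_y \}$ is the dual basis, the expansion of any element $f \in \clH^*$ is $f = \sum_{y \in W} f(H_y) h_y$; applying this to $f = \ol{h_w}$ yields $\ol{h_w} = \sum_{y \geq w} \ol{r_{w,y}}\, h_y$, which is the claim. (I should also note at the outset that $\ol{\ \cdot\ }$ on $\clH^*$ is a well-defined anti-linear involution, since $\ol{\ \cdot\ }$ on $\clH$ is, and that $\ol{r_{w,w}} = \ol{1} = 1$ so the leading term is $h_w$.)

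For the final sentence, I would specialize to $w = w_0$: since $w_0$ is the longest element, the only $y \in W$ with $y \geq w_0$ is $y = w_0$ itself, so the sum collapses to $\ol{h_{w_0}} = \ol{r_{w_0,w_0}}\, h_{w_0} = h_{w_0}$. There is no real obstacle here; the only point requiring a little care is being precise that the expansion coefficients of $\ol{h_w}$ in the dual basis are obtained by evaluation on $H_y$, and invoking the triangularity $r_{w,y} = 0$ for $w \not\leq y$ to restrict the index set to $\{ y \geq w \}$.
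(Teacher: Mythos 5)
Your proof is correct and follows essentially the same route as the paper: evaluate $\ol{h_w}$ on the basis $\{H_y\}$ to get $\ol{h_w}(H_y)=\ol{r_{w,y}}$, expand in the dual basis, and use the triangularity $r_{w,y}=0$ unless $w\leq y$ (with $r_{w_0,w_0}=1$ giving the $w_0$ statement). Nothing is missing.
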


\begin{proof}
Let $y \in W$. Then, we have
\begin{align}
\ol{h_w}(H_y) = \ol{h_w(\ol{H_y})} = \ol{h_w(\sum_{z \leq y} r_{z,y} H_z)} = \ol{r_{w,y}}. \nonumber
\end{align}
Since $\ol{h_w} = \sum_{y \in W} \ol{h_w}(H_y) h_y$, the assertion follows.
\end{proof}

Let $\{ C_w^* \mid w \in W \} \subset \clH^*$ denote the dual basis of $\{C_w \mid w \in W \}$.

\begin{prop}\label{characterization of C^*}
$C_w^*$ is characterized by the following two conditions:
\begin{enumerate}
\item $\ol{C_w^*} = C_w^*$.
\item $C_w^* = h_w + \sum_{z > w} c_{w,z}^* h_z$ for some $c_{w,z}^* \in \AZ^+$.
\end{enumerate}
\end{prop}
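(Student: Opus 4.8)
The plan is to run the usual Kazhdan--Lusztig uniqueness argument inside the dual module $\clH^*$, after recording two elementary facts about the coefficient ring: first, that $\AZ^+ = p\Z[p,q,q\inv] \oplus q\Z[q]$ is closed under addition and multiplication, which is a one-line check on monomials; and second, that $\AZ^+ \cap \AZ^- = \{0\}$. For the latter, if $a$ lies in both, then comparing the parts of $a$ of $p$-degree $0$ forces its $q\Z[q]$-summand to coincide with its $q\inv\Z[q\inv]$-summand, so both vanish, and then the remainder has only positive powers of $p$ when read inside $\AZ^+$ and only negative powers of $p$ when read inside $\AZ^-$, so it vanishes too.

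Next I would verify that the dual basis element $C_w^*$ itself satisfies (1) and (2). Condition (1) is immediate from the definition of the bar involution on $\clH^*$ together with $\ol{C_v} = C_v$ for all $v$ (Theorem \ref{KL-bases}): one computes $\ol{C_w^*}(C_v) = \ol{C_w^*(\ol{C_v})} = \ol{C_w^*(C_v)} = \ol{\delta_{w,v}} = \delta_{w,v}$, so $\ol{C_w^*}$ and $C_w^*$ agree on the basis $\{C_v\}$. For condition (2), I would expand $h_w$ in the basis $\{C_v^*\}$: evaluating both sides at $C_u$ shows that the coefficient of $C_v^*$ in $h_w$ is $h_w(C_v)$, which by Theorem \ref{KL-bases}(2) equals $1$ if $v=w$, lies in $\AZ^+$ if $v>w$, and is $0$ otherwise. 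Thus $h_w = C_w^* + \sum_{v > w} c_{w,v} C_v^*$ with $c_{w,v} \in \AZ^+$; this is a finite system, unitriangular for the Bruhat order, so inverting it (and using that $\AZ^+$ is closed under sums and products) gives $C_w^* = h_w + \sum_{z > w} c_{w,z}^* h_z$ with $c_{w,z}^* \in \AZ^+$, which is (2).

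For uniqueness, suppose $f \in \clH^*$ also satisfies (1) and (2), and put $g := f - C_w^*$. Then $\ol{g} = g$, and $g = \sum_{z > w} a_z h_z$ with each $a_z \in \AZ^+$ (a difference of two elements of $\AZ^+$). By Lemma \ref{bar on V*}, $\ol{g} = \sum_y \big( \sum_{w < z \le y} \ol{a_z}\,\ol{r_{z,y}} \big) h_y$, so comparing the coefficient of each $h_y$ with that in $g$ yields $a_y = \ol{a_y} + \sum_{w < z < y} \ol{a_z}\,\ol{r_{z,y}}$ (with $a_y := 0$ for $y \not> w$). If $g \ne 0$, choose $y_0$ minimal in the Bruhat order among $\{ z : a_z \ne 0 \}$; all terms with $z < y_0$ then vanish, forcing $a_{y_0} = \ol{a_{y_0}}$, so $a_{y_0} \in \AZ^+ \cap \AZ^- = \{0\}$, contradicting $a_{y_0} \ne 0$. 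Hence $g = 0$ and $f = C_w^*$.

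The argument is routine once the triangularity directions are set up correctly, so the main point requiring attention is bookkeeping: the bar involution on $\clH^*$ is \emph{upward}-triangular for the Bruhat order by Lemma \ref{bar on V*}, opposite to the situation in $\clH$, which is exactly why condition (2) is phrased with $z > w$; and the only genuinely numerical input is the two-variable identity $\AZ^+ \cap \AZ^- = \{0\}$, whose proof uses the $p$-grading in an essential way (its one-variable analogue being the trivial $q\Z[q] \cap q\inv\Z[q\inv] = \{0\}$).
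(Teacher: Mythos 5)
Your proof is correct and follows essentially the same route as the paper: the bar-invariance check by evaluating on the basis $\{C_y\}$ is identical, your derivation of (2) by inverting the unitriangular system $h_w = C_w^* + \sum_{v>w} c_{w,v}C_v^*$ is equivalent to the paper's pairing of $C_w^*$ against $H_y = C_y + \sum_{z<y}b_{z,y}C_z$, and the uniqueness argument via Lemma \ref{bar on V*} and $\AZ^+ \cap \AZ^- = \{0\}$ is exactly the standard argument the paper invokes with the phrase ``in a similar way to Theorem \ref{KL-bases}.'' You merely spell out the uniqueness step and the elementary closure properties of $\AZ^+$ that the paper leaves implicit.
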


\begin{proof}
Thanks to Lemma \ref{bar on V*}, one can prove that there exists a unique $C'_w \in \clH^*$ such that $\ol{C'_w} = C'_w$ and $C'_w - h_w \in \bigoplus_{y > w} \AZ^+ h_y$ in a similar way to Theorem \ref{KL-bases}. Hence, it suffices to show that $C_w^*$ satisfies the two conditions.

The first condition is verified as follows. For each $y \in W$, we have
\begin{align}
\ol{C_w^*}(C_y) = \ol{C_w^*(\ol{C_y})} = \ol{C_w^*(C_y)} = \ol{\delta_{y,w}} = \delta_{y,w} = C_w^*(C_y). \nonumber
\end{align}
Since $\{ C_y \mid y \in W \}$ is a basis of $\clH$, we obtain $\ol{C_w^*} = C_w^*$.

Next, we prove the second condition. For each $y \in W$, we can write $H_y = C_y + \sum_{z < y} b_{z,y}C_z$ for some $b_{z,y} \in \AZ^+$. Then, we have
\begin{align}
C_w^* = \sum_{y \in W} C_w^*(H_y)h_y = h_w + \sum_{y > w} b_{w,y} h_y. \nonumber
\end{align}
This completes the proof.
\end{proof}

\begin{lem}\label{Duality of Hecke}
The linear map $d\index{dHeckealgebra@$d$ (Hecke algebra)}:\clH \rightarrow \clH^*;\ H \mapsto H \cdot h_{w_0}$ gives an isomorphism of left $\clH$-modules. Moreover, we have
\begin{enumerate}
\item $d(\ol{H_y}) = h_{yw_0}$ for all $y \in W$.
\item $d(\ol{H}) = \ol{d(H)}$ for all $H \in \clH$.
\end{enumerate}
\end{lem}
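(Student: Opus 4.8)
The plan is to establish the three assertions in the order stated: first that $d$ is a homomorphism of left $\clH$-modules, then the explicit formula (1), from which bijectivity is immediate, and finally the bar-compatibility (2). That $d$ is $\AZ$-linear is clear, and it is a left $\clH$-module homomorphism by associativity of the $\clH$-action on $\clH^*$: for $H_1,H_2 \in \clH$ we have $d(H_1 H_2) = (H_1 H_2)\cdot h_{w_0} = H_1\cdot(H_2\cdot h_{w_0}) = H_1\cdot d(H_2)$.

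For (1), I would induct on $\ell(y)$. The base case $y = e$ is the defining equality $d(1) = h_{w_0} = h_{ew_0}$. For the inductive step, choose $s \in S$ with $sy < y$ and put $y' := sy$, so that $y = sy'$ and $\ell(y') < \ell(sy')$. Since $\ol{\ \cdot\ }$ is an algebra automorphism and $H_y = H_s H_{y'}$, we get $\ol{H_y} = \ol{H_s}\,\ol{H_{y'}}$; combining this with the module-homomorphism property and the inductive hypothesis gives $d(\ol{H_y}) = \ol{H_s}\cdot h_{y'w_0}$. From the quadratic relation $(H_s - q_s\inv)(H_s + q_s) = 0$ one has $\ol{H_s} = H_s\inv = H_s + (q_s - q_s\inv)$, so it remains to evaluate $H_s\cdot h_{y'w_0} + (q_s - q_s\inv)h_{y'w_0}$. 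The key point is that $\ell(y') < \ell(sy')$ forces, via $\ell(xw_0) = \ell(w_0) - \ell(x)$, the inequality $\ell(sy'w_0) = \ell(y'w_0) - 1 < \ell(y'w_0)$; hence Lemma \ref{module structure of V*} applies in the case $sw < w$ (with $w = y'w_0$) and yields $H_s h_{y'w_0} = h_{sy'w_0} + (q_s\inv - q_s)h_{y'w_0}$. The two correction terms cancel, leaving $h_{sy'w_0} = h_{yw_0}$, which completes the induction. Because $\ol{\ \cdot\ }$ is an automorphism of $\clH$, $\{\ol{H_y}\mid y \in W\}$ is an $\AZ$-basis of $\clH$; since $y \mapsto yw_0$ is a bijection of $W$, the family $\{h_{yw_0}\mid y \in W\}$ is the full dual basis of $\clH^*$. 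Thus $d$ carries an $\AZ$-basis to an $\AZ$-basis and is an isomorphism.

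For (2), the cleanest route is to prove the compatibility $\ol{Hf} = \ol{H}\,\ol{f}$ for all $H \in \clH$, $f \in \clH^*$; granting this, $d(\ol{H}) = \ol{H}\cdot h_{w_0} = \ol{H}\cdot\ol{h_{w_0}} = \ol{H\cdot h_{w_0}} = \ol{d(H)}$, where $\ol{h_{w_0}} = h_{w_0}$ comes from Lemma \ref{bar on V*}. To check $\ol{Hf} = \ol{H}\,\ol{f}$, I would evaluate both sides at an arbitrary $H' \in \clH$ and unwind the definitions of the bar involution on $\clH^*$ (namely $\ol{f}(H'') = \ol{f(\ol{H''})}$) and of the left action through $(\cdot)^\flat$: both sides become $\ol{f(\,\cdot\,)}$ applied to an element of $\clH$, and, using that $\ol{\ \cdot\ }$ is an algebra automorphism of $\clH$, equality reduces to the identity $H^\flat = \ol{(\ol{H})^\flat}$. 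This last identity holds because $\ol{\ \cdot\ }$ and $(\cdot)^\flat$ commute and $\ol{\ \cdot\ }$ is an involution (Lemma \ref{automorphisms on H}).

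The only delicate point, and it is minor, is the length bookkeeping when passing from $y'$ to $y'w_0$ in the inductive step of (1): one must make sure that "$y'$ is shorter than $sy'$" translates to "$sy'w_0$ is shorter than $y'w_0$", so that Lemma \ref{module structure of V*} is invoked in the correct branch and the $\pm(q_s\inv - q_s)$ terms cancel. Once that is pinned down, both (1) and (2) are direct unwindings of the definitions.
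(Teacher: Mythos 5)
Your proof is correct, but it reaches the lemma by a different route than the paper in two places. For the isomorphism and part (1), the paper factors $d = \vphi \circ \psi$, where $\vphi:\clH \rightarrow \clH^*,\ H_w \mapsto h_w$, is a left $\clH$-module isomorphism (immediate from Lemma \ref{module structure of V*}, since the action on the $h_w$ mirrors the regular action on the $H_w$) and $\psi$ is right multiplication by the invertible element $H_{w_0}$; part (1) is then a one-line computation, $d(\ol{H_y}) = \vphi(\ol{H_y}H_{w_0}) = \vphi(H_{y\inv}\inv H_{y\inv}H_{yw_0}) = h_{yw_0}$, using $\ol{H_y} = H_{y\inv}\inv$ and the length additivity $\ell(w_0) = \ell(y\inv) + \ell(yw_0)$. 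You instead get the module-map property directly from the axioms of the action on $\clH^*$ and prove (1) by induction on $\ell(y)$, with the cancellation of the $(q_s - q_s\inv)$ terms coming from $\ol{H_s} = H_s + (q_s - q_s\inv)$ against the lower branch of Lemma \ref{module structure of V*}; bijectivity then follows since $d$ sends the basis $\{\ol{H_y}\}$ (a basis, e.g.\ by unitriangularity of $r_{y,w}$) to the dual basis $\{h_{yw_0}\}$. Both arguments are sound; the paper's is shorter and makes invertibility structurally obvious, while yours trades that for explicit but routine length bookkeeping, which you handle correctly via $\ell(xw_0) = \ell(w_0) - \ell(x)$. For part (2) your argument is essentially the paper's: you prove the general compatibility $\ol{Hf} = \ol{H}\,\ol{f}$ and specialize to $f = h_{w_0}$, whereas the paper evaluates both sides on an arbitrary $H'$ directly; both rest on $\ol{h_{w_0}} = h_{w_0}$ (Lemma \ref{bar on V*}) and the commutation of $\ol{\ \cdot\ }$ with $(\cdot)^\flat$ (Lemma \ref{automorphisms on H}).
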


\begin{proof}
By Lemma \ref{module structure of V*}, the linear map $\vphi:\clH \rightarrow \clH^*;\ H_w \mapsto h_w$ is an isomorphism of left $\clH$-modules. On the other hand, the map $\psi:\clH \rightarrow \clH;\ H \mapsto H \cdot H_{w_0}$ is clearly an isomorphism of left $\clH$-modules. Thus, the composite map $d:= \vphi \circ \psi: \clH \rightarrow \clH^*$ is an isomorphism of left $\clH$-modules satisfying
\begin{align}
d(H) = \vphi(H \cdot H_{w_0}) = H \cdot \vphi(H_{w_0}) = H \cdot h_{w_0} \qu \Forall H \in \clH. \nonumber
\end{align}
Also, we have, for all $y \in W$,
\begin{align}
d(\ol{H_y}) = \vphi(\ol{H_y} \cdot H_{w_0}) = \vphi(H_{y\inv}\inv \cdot H_{y\inv} H_{yw_0}) = \vphi(H_{yw_0}) = h_{yw_0}. \nonumber
\end{align}
Finally, for each $H,H' \in \clH$, we have
\begin{align}
\begin{split}
&d(\ol{H})(H') = (\ol{H}\cdot h_{w_0})(H') = h_{w_0}\left( \left( \ol{H} \right)^\flat H' \right), \\
&\ol{d(H)}(H') = \ol{d(H)(\ol{H'})} = \ol{h_{w_0}(H^\flat \ol{H'})} = \ol{h_{w_0}} \left( \ol{H^\flat} H' \right).
\end{split} \nonumber
\end{align}
Then, the equality $d(\ol{H}) = \ol{d(H)}$ follows from the facts that $\ol{h_{w_0}} = h_{w_0}$ and $\left( \ol{H} \right)^\flat = \ol{H^\flat}$; the former is proved in Lemma \ref{bar on V*}, and the latter is in Lemma \ref{automorphisms on H}.
\end{proof}

Using this isomorphism, we define a bilinear form $\la \cdot \mid \cdot \ra$ on $\clH$ by
\begin{align}
\la H \mid H' \ra := d(H')(H), \qu (H,H' \in \clH). \nonumber
\end{align}
Clearly, this bilinear form satisfies $\la H' \mid HH'' \ra = \la H^\flat H' \mid H'' \ra$ for all $H,H',H'' \in \clH$.

\begin{lem}
The bilinear from $\la \cdot \mid \cdot \ra$ is symmetric.
\end{lem}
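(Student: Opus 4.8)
The plan is to reduce the assertion to a purely formal statement about the functional $h_{w_0}$ and the anti-automorphism $(\cdot)^\flat$, and then close it using that the longest element $w_0 \in W$ is an involution. First I would unwind the definitions. For $X \in \clH$, the scalar $h_{w_0}(X) \in \AZ$ is nothing but the coefficient of $H_{w_0}$ when $X$ is expanded in the standard basis $\{H_w \mid w \in W\}$. Using the left $\clH$-module structure on $\clH^*$ (namely $(Hf)(H') = f(H^\flat H')$) and the definition $d(H') = H' h_{w_0}$, one gets, for all $H, H' \in \clH$,
\begin{align}
\la H \mid H' \ra = d(H')(H) = (H' h_{w_0})(H) = h_{w_0}\bigl((H')^\flat H\bigr), \nonumber
\end{align}
and symmetrically $\la H' \mid H \ra = h_{w_0}(H^\flat H')$. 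So the lemma becomes the identity $h_{w_0}\bigl((H')^\flat H\bigr) = h_{w_0}(H^\flat H')$.

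The key step is then to use that $(\cdot)^\flat$ is an $\AZ$-algebra anti-automorphism and an involution (Lemma \ref{automorphisms on H}): applying it gives $\bigl((H')^\flat H\bigr)^\flat = H^\flat H'$, so it is enough to check that $h_{w_0}$ is $(\cdot)^\flat$-invariant, i.e. $h_{w_0}(X^\flat) = h_{w_0}(X)$ for all $X$. Writing $X = \sum_{w} a_w H_w$ with $a_w \in \AZ$ and using $H_w^\flat = H_{w\inv}$ together with the $\AZ$-linearity of $\flat$, one finds $X^\flat = \sum_w a_{w\inv} H_w$, whose $H_{w_0}$-coefficient is $a_{w_0\inv} = a_{w_0}$ since $w_0\inv = w_0$. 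Thus $h_{w_0}\circ(\cdot)^\flat = h_{w_0}$, and combining with the displayed identities yields $\la H \mid H' \ra = h_{w_0}\bigl(((H')^\flat H)^\flat\bigr) = h_{w_0}(H^\flat H') = \la H' \mid H \ra$.

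I do not anticipate a real obstacle: the computation is short and formal once the reformulation $\la H \mid H' \ra = h_{w_0}((H')^\flat H)$ is made, and the only ingredient outside the quoted lemmas is the standard fact $w_0^2 = e$ for the longest element of a finite Coxeter group. As a consistency check, $\ol{h_{w_0}} = h_{w_0}$ from Lemma \ref{bar on V*} and $d(\ol H) = \ol{d(H)}$ from Lemma \ref{Duality of Hecke} could be used to give an alternative, slightly longer argument, but the direct route above via the $H_{w_0}$-coefficient is cleanest.
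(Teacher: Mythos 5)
Your argument is correct and is essentially the paper's proof: both reduce the claim to $h_{w_0}\bigl((H')^\flat H\bigr) = h_{w_0}(H^\flat H')$, observe that $h_{w_0}\circ(\cdot)^\flat = h_{w_0}$ because $\flat$ permutes the standard basis and fixes $H_{w_0}$ (as $w_0\inv = w_0$), and conclude via $\bigl((H')^\flat H\bigr)^\flat = H^\flat H'$. No gaps.
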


\begin{proof}
Let $H_1,H_2 \in \clH$. It suffices to show that $h_{w_0}(H_2^\flat H_1) = h_{w_0}(H_1^\flat H_2)$. Since $H_{w_0}^\flat = H_{w_0}$, it holds that $h_{w_0}(H^\flat) = h_{w_0}(H)$ for all $H \in \clH$. Then, the assertion follows if one notes $(H_2^\flat H_1)^\flat = H_1^\flat H_2$.
\end{proof}

\begin{prop}\label{duality of KL-bases}
The bases $\{ C_w \mid w \in W \}$ and $\{ D_{ww_0} \mid w \in W \}$ are dual to each other with respect to $\la \cdot \mid \cdot \ra$, that is, we have $\la C_y \mid D_w \ra = \delta_{y,ww_0}$ for all $y,w \in W$.
\end{prop}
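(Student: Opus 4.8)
The plan is to identify, via the left $\clH$-module isomorphism $d:\clH \to \clH^*$ of Lemma \ref{Duality of Hecke}, the element $D_w$ with the dual-basis vector $C_{ww_0}^* \in \clH^*$. Granting this, the proposition is immediate: $\la C_y \mid D_w \ra = d(D_w)(C_y) = C_{ww_0}^*(C_y) = \delta_{y,ww_0}$, since $\{ C_w^* \mid w \in W\}$ is by definition the basis of $\clH^*$ dual to $\{ C_w \mid w \in W\}$. Thus the whole task reduces to checking that $d(D_w)$ satisfies the two conditions of Proposition \ref{characterization of C^*} for the index $ww_0$: bar-invariance, and an upper-triangular expansion $d(D_w) = h_{ww_0} + \sum_{z > ww_0}(\text{coefficient in } \AZ^+) h_z$.

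Bar-invariance is immediate from Lemma \ref{Duality of Hecke}(2) together with $\ol{D_w} = D_w$, since $\ol{d(D_w)} = d(\ol{D_w}) = d(D_w)$. For the triangular expansion I would first pass from the basis $\{ H_y \}$ to the basis $\{ \ol{H_y} \}$ of $\clH$, which is legitimate because $\ol{H_y} = H_y + \sum_{z<y} r_{z,y} H_z$ is unitriangular for the Bruhat order. Writing $D_w = \sum_y b_{y,w} \ol{H_y}$ and applying the anti-linear involution $\ol{\ \cdot\ }$ to this identity while using $\ol{D_w} = D_w$ gives $D_w = \sum_y \ol{b_{y,w}} H_y$; comparing with the defining expansion $D_w = H_w + \sum_{y<w} d_{y,w} H_y$ yields $b_{w,w} = 1$, $b_{y,w} = \ol{d_{y,w}}$ for $y < w$, and $b_{y,w} = 0$ otherwise, so in particular $b_{y,w} \in \ol{\AZ^-} = \AZ^+$ for $y < w$. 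Now apply $d$, using $d(\ol{H_y}) = h_{yw_0}$ from Lemma \ref{Duality of Hecke}(1): $d(D_w) = h_{ww_0} + \sum_{y<w} b_{y,w} h_{yw_0}$. Reindexing by $z := yw_0$ and using that right multiplication by $w_0$ reverses the Bruhat order (so $y < w$ iff $z > ww_0$), this becomes $d(D_w) = h_{ww_0} + \sum_{z > ww_0} b_{zw_0,w} h_z$ with all coefficients $b_{zw_0,w} = \ol{d_{zw_0,w}} \in \AZ^+$. By the uniqueness part of Proposition \ref{characterization of C^*} we conclude $d(D_w) = C_{ww_0}^*$, which finishes the argument.

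I do not anticipate a genuine difficulty here; the only point requiring care is the bookkeeping that transports the integrality/positivity condition back and forth between the $\{ H_y \}$-basis and the $\{ \ol{H_y} \}$-basis and then through $d$. Concretely, one must check that $\ol{\AZ^-} = \AZ^+$ and that the substitution $y \mapsto yw_0$ turns the condition $y < w$ into $z > ww_0$, so that the hypotheses of Proposition \ref{characterization of C^*} hold verbatim for $d(D_w)$ at the index $ww_0$. Everything else is formal manipulation with the already-established module isomorphism $d$ and the defining properties of $D_w$.
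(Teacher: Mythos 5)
Your proposal is correct and follows essentially the same route as the paper: both identify $d(D_w)$ with $C^*_{ww_0}$ by checking bar-invariance and the triangular expansion with $\AZ^+$-coefficients against Proposition \ref{characterization of C^*}, using $d(\ol{H_y}) = h_{yw_0}$ and the Bruhat-order reversal under right multiplication by $w_0$. The only cosmetic difference is that you first rewrite $D_w$ in the basis $\{ \ol{H_y} \}$, whereas the paper applies $d$ directly to $\ol{D_w} = \sum_{y \leq w} \ol{d_{y,w}}\, \ol{H_y}$; the content is identical.
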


\begin{proof}
Recall that $D_w = \sum_{y \leq w} d_{y,w} H_y$ with $d_{w,w} = 1$ and $d_{y,w} \in \AZ^-$ for all $y < w$. Then, we have
\begin{align}
\begin{split}
&\ol{d(D_w)} = d(\ol{D_w}) = d(D_w), \\
&d(D_w) = d(\ol{D_w}) = d(\sum_{y \leq w} \ol{d_{y,w}} \ol{H_y}) = \sum_{y \leq w} \ol{d_{y,w}} h_{yw_0} = \sum_{z \geq ww_0} \ol{d_{zw_0,w}} h_{z}.
\end{split} \nonumber
\end{align}
This and Proposition \ref{characterization of C^*} show that $d(D_w) = C^*_{ww_0}$. Hence, it holds that $\la C_y \mid D_w \ra = C^*_{ww_0}(C_y) = \delta_{y,ww_0}$, which proves the proposition.
\end{proof}

Here, we describe the duality between $C^L_X$'s and $D^L_X$'s.

\begin{lem}\label{left cell and w_0}
Let $y,w \in W$, $X \in L(W)$. Then, the following hold.
\begin{enumerate}
\item $y \rightarrow_L w$ if and only if $ww_0 \rightarrow_L yw_0$.
\item $y \leq_L w$ if and only if $ww_0 \leq_L yw_0$.
\item $Xw_0 := \{ xw_0 \mid x \in X \} \in L(W)$.
\end{enumerate}
\end{lem}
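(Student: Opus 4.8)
The plan is to establish (1) first, from which (2) and (3) follow by purely formal manipulations. The tools needed are all from the previous subsection: the symmetric pairing $\la\cdot\mid\cdot\ra$ on $\clH$ for which $\{C_w\mid w\in W\}$ and $\{D_{ww_0}\mid w\in W\}$ are dual bases (Proposition \ref{duality of KL-bases}) and which satisfies $\la H'\mid HH''\ra=\la H^\flat H'\mid H''\ra$; the anti-automorphism $(\cdot)^\flat$, which fixes $1$ and each $H_s$ and hence fixes $C_s$ and $D_s$; the fact that $C_s$ and $D_s$ each equal $H_s$ plus a scalar, so that $C_s-D_s=\lambda_s\cdot 1$ for some $\lambda_s\in\AZ$; and the fact that the relation $\rightarrow_L$ may equally be computed using the dual Kazhdan--Lusztig basis. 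The last point I would justify as follows: since $\sgn$ is an algebra automorphism with $\sgn(C_u)=(-1)^{\ell(u)}D_u$ (Remark \ref{C-basis and D-basis}), applying $\sgn$ to an expansion $C_sC_w=\sum_y\gamma_yC_y$ yields $D_sD_w=\sum_y\gamma'_yD_y$ with each $\gamma'_y$ equal to a unit times $\overline{\gamma_y}$, so $\gamma'_y\ne 0 \iff \gamma_y\ne 0$. I would also use that $\rightarrow_L$ is reflexive, which is standard.

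To prove (1), I would fix $y,w$; if $y=w$ both sides hold by reflexivity, so assume $y\ne w$. Duality of bases says the coefficient of $C_y$ in the expansion of any $H\in\clH$ is $\la H\mid D_{yw_0}\ra$, so $y\rightarrow_L w$ means $\la C_sC_w\mid D_{yw_0}\ra\ne 0$ for some $s$. Since $C_sC_w-D_sC_w=\lambda_sC_w$ and $\la C_w\mid D_{yw_0}\ra=\delta_{y,w}=0$, this is equivalent to $\la D_sC_w\mid D_{yw_0}\ra\ne 0$ for some $s$. Now the adjunction together with $D_s^\flat=D_s$ gives $\la D_sC_w\mid D_{yw_0}\ra=\la C_w\mid D_sD_{yw_0}\ra$, and expanding $D_sD_{yw_0}$ in the basis $\{D_{zw_0}\}$ and pairing against $C_w$ identifies this quantity with the coefficient of $D_{ww_0}$ in $D_sD_{yw_0}$. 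Hence $y\rightarrow_L w$ holds iff $D_sD_{yw_0}$ has nonzero $D_{ww_0}$-coefficient for some $s$, which by the preliminary observation is precisely $ww_0\rightarrow_L yw_0$.

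Given (1), statement (2) follows by chaining: a chain $y=y_0\rightarrow_L y_1\rightarrow_L\cdots\rightarrow_L y_l=w$ is sent by (1), read in reverse order, to $ww_0=y_lw_0\rightarrow_L y_{l-1}w_0\rightarrow_L\cdots\rightarrow_L y_0w_0=yw_0$, so $y\leq_L w$ implies $ww_0\leq_L yw_0$; applying this implication to the pair $(ww_0,yw_0)$ and using $w_0^2=e$ gives the converse. Combining the two directions of (2) yields $x'\Lcell x''\iff x'w_0\Lcell x''w_0$ for all $x',x''\in W$. For (3) I would fix $x\in X$: this equivalence applied with $x',x''\in X$ shows $Xw_0$ lies in the $\Lcell$-class of $xw_0$, while if $u\Lcell xw_0$ then $uw_0\Lcell x$, hence $uw_0\in X$ and $u\in Xw_0$; thus $Xw_0$ is exactly the $\Lcell$-class of $xw_0$, and in particular $Xw_0\in L(W)$.

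The step I expect to require the most care is the passage through the pairing in (1): one must split off the diagonal contribution (the case $y=w$) before interchanging $C_s$ and $D_s$ freely, since these differ by a scalar multiple of the identity. After that, (1) is essentially a one-line computation and (2)--(3) are formal, so I do not anticipate any genuine difficulty.
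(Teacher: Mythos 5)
Your proposal is correct and follows essentially the same route as the paper: the duality $\la C_y \mid D_w \ra = \delta_{y,ww_0}$, adjointness with respect to $\flat$, and the $\sgn$-symmetry between the two Kazhdan--Lusztig bases, with (2) and (3) deduced formally from (1) exactly as the paper does. Your splitting off of the diagonal term and the replacement of $C_s$ by $D_s$ is just a more explicit rendering of the paper's one-line computation $\la C_sC_w \mid D_{yw_0} \ra = \la C_w \mid C_s D_{yw_0} \ra$, so the two arguments coincide in substance.
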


\begin{proof}
We first prove part $(1)$. Suppose that $y \rightarrow_L w$. Then, there exists $s \in S$ such that $\la C_s C_w \mid D_{yw_0} \ra \neq 0$. This implies that $\la C_w \mid C_s D_{yw_0} \ra \neq 0$, and hence, we obtain $ww_0 \rightarrow_L yw_0$. Replacing $y,w$ by $yw_0,ww_0$, we also have the opposite indication. This proves part $(1)$. Assertion $(2)$ is an immediate consequence of $(1)$. We prove part $(3)$. Let $x \in X$. Then, $X = \{ y \in W \mid x \leq_L y \leq_L x \}$. By part $(2)$, we have $x \leq_L y \leq_L x$ if and only if $xw_0 \leq_L yw_0 \leq_L xw_0$. This implies that  $Xw_0 = \{ z \in W \mid xw_0 \leq_L z \leq_L xw_0 \}$, and it is a unique left cell of $W$ containing $xw_0$. Thus, the proof completes.
\end{proof}

\begin{lem}
The bilinear from $\la \cdot \mid \cdot \ra$ induces a non-degenerate bilinear form on $C^L_X \times D^L_{Xw_0}$. Moreover, $\{ [C_x]_X \mid x \in X \}$ and $\{ [D_{xw_0}]_{Xw_0}' \mid x \in X \}$ form bases which are dual to each  other.
\end{lem}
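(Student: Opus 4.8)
The plan is to deduce this from the duality results already established at the level of the full Hecke algebra, namely Proposition \ref{duality of KL-bases}, together with the compatibility of $\la\cdot\mid\cdot\ra$ with the left ideal filtrations given by the (dual) Kazhdan--Lusztig bases. First I would fix $x\in X$, so that $C_{\leq_L X}=\bigoplus_{y\leq_L x}\AZ C_y$ and $C_{<_L X}=\bigoplus_{y<_L x}\AZ C_y$, and similarly $D_{\leq_L Xw_0}=\bigoplus_{z\leq_L xw_0}\AZ D_z$, $D_{<_L Xw_0}=\bigoplus_{z<_L xw_0}\AZ D_z$; here I use Lemma \ref{left cell and w_0}(3) to know $Xw_0$ really is a left cell with representative $xw_0$, and Lemma \ref{left cell and w_0}(2) to translate $\leq_L$ between the two sides via right multiplication by $w_0$.

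The key observation is a ``block-triangularity'' of the pairing. By Proposition \ref{duality of KL-bases}, $\la C_y\mid D_z\ra=\delta_{y,zw_0}$ for all $y,z\in W$. Thus for $y\leq_L x$ and $z\leq_L xw_0$ the pairing $\la C_y\mid D_z\ra$ is nonzero only when $y=zw_0$, which (using Lemma \ref{left cell and w_0}(2)) forces $y\leq_L x$ and $x\leq_L y$, i.e. $y\in X$ and $z=yw_0\in Xw_0$. In particular $\la C_{\leq_L X}\mid D_{<_L Xw_0}\ra=0$ and $\la C_{<_L X}\mid D_{\leq_L Xw_0}\ra=0$: if $y<_L x$ then the only $z$ with $\la C_y\mid D_z\ra\neq 0$ is $z=yw_0$, which satisfies $yw_0<_L xw_0$, hence lies in $D_{<_L Xw_0}$ and not in $D_{\leq_L Xw_0}$ modulo $D_{<_L Xw_0}$; symmetrically for the other vanishing. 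Consequently $\la\cdot\mid\cdot\ra$ descends to a well-defined bilinear form on the quotients $C^L_X\times D^L_{Xw_0}$.

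Next I would evaluate this induced form on the distinguished bases. For $x',x''\in X$ we get
\begin{align}
\la [C_{x'}]_X\mid [D_{x''w_0}]'_{Xw_0}\ra = \la C_{x'}\mid D_{x''w_0}\ra = \delta_{x',\,x''w_0w_0} = \delta_{x',x''}, \nonumber
\end{align}
using $w_0^2=1$. This shows simultaneously that $\{[C_x]_X\mid x\in X\}$ and $\{[D_{xw_0}]'_{Xw_0}\mid x\in X\}$ are dual bases and that the induced form is non-degenerate (its Gram matrix in these bases is the identity). The only genuinely delicate point is the well-definedness on the quotient, i.e. checking that the two ``off-diagonal'' pairings vanish; everything else is a direct consequence of Proposition \ref{duality of KL-bases} and Lemma \ref{left cell and w_0}, so I expect that cross-vanishing argument (carefully tracking $\leq_L$ versus $<_L$ under $w\mapsto ww_0$) to be the main, though still routine, obstacle.
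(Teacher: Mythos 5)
Your overall route is the same as the paper's: combine Proposition \ref{duality of KL-bases} ($\la C_y \mid D_z \ra = \delta_{y,zw_0}$) with Lemma \ref{left cell and w_0}(2) to get the two cross-vanishings $\la C_{<_L X} \mid D_{\leq_L Xw_0} \ra = 0$ and $\la C_{\leq_L X} \mid D_{<_L Xw_0} \ra = 0$, after which duality of the bases and non-degeneracy are immediate. However, your detailed justification of exactly the step you flag as delicate is wrong. Lemma \ref{left cell and w_0}(2) says $y \leq_L w$ if and only if $ww_0 \leq_L yw_0$, i.e.\ right multiplication by $w_0$ \emph{reverses} the preorder. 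So from $y <_L x$ you get $xw_0 <_L yw_0$, not $yw_0 <_L xw_0$ as you write. Moreover, the conclusion you draw from your (incorrect) inequality --- that $D_{yw_0}$ ``lies in $D_{<_L Xw_0}$ and not in $D_{\leq_L Xw_0}$ modulo $D_{<_L Xw_0}$,'' hence can be ignored --- is not a valid argument for descent of a bilinear form. For the pairing to be well defined on the quotients you need $C_{<_L X}$ to pair to zero against \emph{all} of $D_{\leq_L Xw_0}$, including $D_{<_L Xw_0}$; you cannot discard pairings against elements whose class is zero. Indeed, if $yw_0$ really satisfied $yw_0 <_L xw_0$, then $\la C_y \mid D_{yw_0} \ra = 1$ with $C_y \in C_{<_L X}$ and $D_{yw_0} \in D_{<_L Xw_0} \subset D_{\leq_L Xw_0}$ would contradict the very vanishing you claim, and the induced form would not exist.

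The fix is already contained in the first half of your ``key observation'': for $y \leq_L x$ and $z \leq_L xw_0$, non-vanishing forces $y = zw_0$, and then $z \leq_L xw_0$ gives $x \leq_L zw_0 = y$, so $y \in X$ and $z \in Xw_0$. Equivalently, if $y <_L x$ then the unique dual partner $yw_0$ satisfies $xw_0 <_L yw_0$, hence does not occur among the indices $z \leq_L xw_0$ at all, so $\la C_y \mid D_z \ra = 0$ for every $z \leq_L xw_0$; symmetrically, if $z <_L xw_0$ then $zw_0 \not\leq_L x$ and $\la C_y \mid D_z \ra = 0$ for every $y \leq_L x$. With this correction (which is precisely the paper's argument), the rest of your computation $\la [C_{x'}]_X \mid [D_{x''w_0}]'_{Xw_0} \ra = \delta_{x',x''}$ and the conclusions on duality and non-degeneracy go through.
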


\begin{proof}
Let $x \in X$, $y,w \in W$ be such that $y <_L x$ and $ww_0 <_L xw_0$. It suffices to show that $\la C_y \mid D_u \ra = 0$ for all $u \leq_L xw_0$ and $\la C_v \mid D_{ww_0} \ra = 0$ for all $v \leq_L x$. Both are obvious from Lemma \ref{left cell and w_0} $(2)$.
\end{proof}

\begin{prop}
Let $X \in L(W)$. Then, we have an isomorphism $D^L_{Xw_0} \simeq C^L_X$ of $\clH$-modules.
\end{prop}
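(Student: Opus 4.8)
The plan is to read off the isomorphism from the perfect pairing constructed in the preceding lemma, twisted by the anti-automorphism $(\cdot)^\flat$, and then to account for the remaining discrepancy using the self-duality of $C^L_X$.

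First I would invoke the preceding lemma: $\la \cdot \mid \cdot \ra$ restricts to a non-degenerate $\AZ$-bilinear pairing $C^L_X \times D^L_{Xw_0} \to \AZ$ for which $\{[C_x]_X \mid x \in X\}$ and $\{[D_{xw_0}]'_{Xw_0} \mid x \in X\}$ are dual bases; hence $\Theta\colon D^L_{Xw_0} \to \Hom_{\AZ}(C^L_X, \AZ)$, $n \mapsto \la \cdot \mid n \ra$, is an isomorphism of $\AZ$-modules. Now give $\Hom_{\AZ}(C^L_X,\AZ)$ the left $\clH$-module structure $(Hf)(m) := f(H^\flat m)$ --- well defined because $(\cdot)^\flat$ is an anti-automorphism (Lemma \ref{automorphisms on H}) --- and note that the identity $\la H^\flat H' \mid H''\ra = \la H' \mid HH''\ra$ valid on $\clH$ descends to the subquotients $C^L_X$ and $D^L_{Xw_0}$. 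Then $\Theta(Hn)(m) = \la m \mid Hn\ra = \la H^\flat m \mid n\ra = \Theta(n)(H^\flat m) = (H\Theta(n))(m)$, so $\Theta$ is $\clH$-linear. Thus $D^L_{Xw_0} \cong \Hom_{\AZ}(C^L_X,\AZ)$, the $(\cdot)^\flat$-contragredient of $C^L_X$, as left $\clH$-modules.

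It remains to identify the $(\cdot)^\flat$-contragredient of $C^L_X$ with $C^L_X$ itself --- equivalently, to produce a non-degenerate $(\cdot)^\flat$-contravariant $\AZ$-bilinear form on $C^L_X$, i.e.\ one with $(Hv,w) = (v, H^\flat w)$ for all $H \in \clH$ (note $H_s^\flat = H_s$, so it suffices to check this on the generators). Such a form is the Kazhdan--Lusztig contravariant (cell) form attached to the cellular structure of $\clH$ with cellular anti-involution $(\cdot)^\flat$, under which $C_w^\flat = C_{w\inv}$; its $(\cdot)^\flat$-contravariance is formal, while its non-degeneracy is precisely the statement that $C^L_X$ coincides with its own simple head. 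In the asymptotic regime for $(p,q)$ this holds, since the left cell modules are then irreducible by \cite{BI03}; composing the resulting isomorphism $C^L_X \xrightarrow{\ \sim\ } \Hom_{\AZ}(C^L_X,\AZ)$ with $\Theta\inv$ yields $D^L_{Xw_0} \cong C^L_X$.

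The main obstacle is exactly this last step: exhibiting the contravariant form and, above all, establishing its non-degeneracy over the coefficient ring --- everything before it is a formal manipulation of the pairing already in hand, but this is where the asymptotic hypothesis on the parameters is genuinely used. (Alternatively one can bypass the form and argue through the $\AZ$-linear algebra automorphism $\theta := \sgn \circ \overline{(\cdot)}$, which by Remark \ref{C-basis and D-basis} satisfies $\theta(C_w) = (-1)^{\ell(w)} D_w$ and hence carries $C^L_{Xw_0}$ onto $D^L_{Xw_0}$ $\theta$-equivariantly; identifying the $\theta$-twist of $C^L_{Xw_0}$ with $C^L_X$ again rests on the asymptotic case, this time via the fact that all left cell modules lying in a single two-sided cell are mutually isomorphic.)
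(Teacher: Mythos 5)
Your first step is fine, but it is essentially a repackaging of the preceding lemma: the dual bases make $n \mapsto \la \cdot \mid n \ra$ an $\AZ$-isomorphism $D^L_{Xw_0} \rightarrow \Hom_{\AZ}(C^L_X,\AZ)$, and the contravariance $\la H'\mid HH''\ra = \la H^\flat H'\mid H''\ra$ descends to the subquotients, so $D^L_{Xw_0}$ is the $\flat$-contragredient of $C^L_X$. The genuine gap is in your second step. The ``cell form'' you invoke does not exist formally: to have a $\flat$-contravariant form on the left cell module built from the Kazhdan--Lusztig basis you need that basis to be cellular with respect to $\flat$, and for the unequal-parameter Hecke algebra of type $B$ this is a deep structural statement (in the asymptotic case it rests on Lusztig's conjectures P1--P15, due to Bonnaf\'e, Geck--Iancu, et al.), not something contained in this paper or in \cite{BI03}; so the claim that ``its $(\cdot)^\flat$-contravariance is formal'' is unjustified. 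Moreover, even granting the form, irreducibility of $\Q(p,q)\otimes_{\AZ}C^L_X$ only makes the Gram determinant nonzero in $\Q(p,q)$; it does not make it a unit in $\AZ$, so the non-degeneracy ``over the coefficient ring'' that your own formulation requires does not follow from \cite{BI03}, and your route produces at best an isomorphism after extension of scalars. The parenthetical variant via $\theta = \sgn\circ\ol{\ \cdot\ }$ has the same status: that $\theta$ carries the $C$-side filtration to the $D$-side is fine, but identifying the $\theta$-twist of $C^L_{Xw_0}$ with $C^L_X$ via ``all left cell modules in one two-sided cell are isomorphic'' is again an unproved external input of comparable depth.

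For comparison, the paper's proof sidesteps all of this with a one-line character computation that uses both dual bases simultaneously: $\ch_{C^L_X}(H_w) = \sum_{x\in X}\la H_w[C_x]_X\mid[D_{xw_0}]'_{Xw_0}\ra = \sum_{x\in X}\la [C_x]_X\mid H_{w\inv}[D_{xw_0}]'_{Xw_0}\ra = \ch_{D^L_{Xw_0}}(H_{w\inv}) = \ch_{D^L_{Xw_0}}(H_w)$, and equality of characters gives the isomorphism (over $\Q(p,q)$, where $\bfH$ is split semisimple for the generic parameters used here; this is also all that is used later, since the proposition is applied after tensoring with $\Q(p,q)$). Note that this trace identity is precisely the self-duality your second step tries to manufacture: in the semisimple setting $M$ is isomorphic to its $\flat$-contragredient if and only if $\ch_M(H_w) = \ch_M(H_{w\inv})$. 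So the repair is to replace the appeal to a cellular contravariant form by this character identity (or simply by the paper's computation) and to state the conclusion after base change to $\Q(p,q)$; as written, the proposal has a gap.
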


\begin{proof}
It suffices to show that the characters $\ch_{D^L_{Xw_0}}$ of $D^L_{Xw_0}$ and $\ch_{C^L_X}$ of $C^L_X$ coincide with each other. For each $w \in W$, we compute as
\begin{align}
\begin{split}
\ch_{C^L_X}(H_w) &= \sum_{x \in X} \la H_w [C_x]_X \mid [D_{xw_0}]'_{Xw_0} \ra \\
&= \sum_{x \in X} \la [C_x]_X \mid H_{w\inv} [D_{xw_0}]_{Xw_0}' \ra \\
&= \ch_{D^L_{Xw_0}}(H_{w\inv}) = \ch_{D^L_{Xw_0}}(H_w).
\end{split} \nonumber
\end{align}
Thus, the proof completes.
\end{proof}

\subsection{Parabolic Kazhdan-Lusztig bases}\label{Parabolic Kazhdan-Lusztig bases}
Throughout this subsection, we fix a subset $J \subset \{ 0,1,\ldots,d-1 \}$ arbitrarily. Let $W_J$ denote the parabolic subgroup of $W$ generated by $\{ s_j \mid j \in J \}$, ${}^JW$ the set of minimal length coset representatives for $W_J \backslash W$, and $w_J \in W_J$ the longest element. Also, we set
\begin{align}
x_J := q_{w_J} \sum_{w \in W_J} q_w\inv H_w \in \clH. \nonumber
\end{align}

\begin{lem}\label{property of x_J}
Let $j \in J$. Then, the following hold.
\begin{enumerate}
\item $x_J H_j = q_{s_j}\inv x_J$.
\item $x_J^\flat = x_J$.
\item $x_J = C_{w_J}$. In particular, $\ol{x_J} = x_J$.
\end{enumerate}
\end{lem}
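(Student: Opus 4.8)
The plan is to verify the three assertions about $x_J$ directly from its definition $x_J = q_{w_J}\sum_{w \in W_J} q_w^{-1} H_w$, working entirely inside the parabolic Hecke subalgebra $\clH(W_J) \subset \clH$.

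\textbf{Part (1).} First I would reduce to the one-generator case. For $j \in J$ and $w \in W_J$, the quadratic relation gives $H_w H_j = H_{wj}$ if $\ell(wj) > \ell(w)$ and $H_w H_j = H_{wj} + (q_{s_j}^{-1} - q_{s_j}) H_w$ if $\ell(wj) < \ell(w)$. Pairing each $w$ with $wj$, the map $w \mapsto wj$ is an involution on $W_J$ swapping the two cases, and $q_{wj} = q_w q_{s_j}$ when $\ell(wj)>\ell(w)$. Summing $q_{w_J} q_w^{-1} H_w H_j$ over all $w \in W_J$ and collecting the contributions from each such pair, the terms recombine so that the total equals $q_{s_j}^{-1} x_J$; this is the standard computation showing $x_J$ is an eigenvector for right multiplication by each $H_j$, $j \in J$, with eigenvalue $q_{s_j}^{-1}$. (Equivalently: $x_J = \sum_{w\in W_J} q_{w_J} q_w^{-1} H_w$ and one checks $x_J(H_j + q_{s_j}) = q_{s_j}^{-1} x_J (1 + q_{s_j} q_{s_j}) $ — but the cleaner route is the pairing argument above.)

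\textbf{Part (2).} Since $(\cdot)^\flat$ is an $\AZ$-algebra anti-automorphism with $H_w^\flat = H_{w^{-1}}$ (Lemma \ref{automorphisms on H}(3)), and since $w \mapsto w^{-1}$ is a bijection of $W_J$ preserving length (hence preserving $q_w$) and fixing $w_J$, applying $(\cdot)^\flat$ to $x_J = q_{w_J} \sum_{w \in W_J} q_w^{-1} H_w$ gives $x_J^\flat = q_{w_J} \sum_{w \in W_J} q_w^{-1} H_{w^{-1}} = q_{w_J}\sum_{w \in W_J} q_{w^{-1}}^{-1} H_{w^{-1}} = x_J$ after reindexing.

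\textbf{Part (3).} To identify $x_J$ with $C_{w_J}$, I would invoke the characterization in Theorem \ref{KL-bases}: it suffices to show (i) $\ol{x_J} = x_J$ and (ii) $x_J = H_{w_J} + \sum_{y < w_J} c_y H_y$ with $c_y \in \AZ^+$, all sums over $y \in W_J$. For (ii), the coefficient of $H_{w_J}$ in $x_J$ is $q_{w_J} q_{w_J}^{-1} = 1$, and for $w < w_J$ in $W_J$ the coefficient is $q_{w_J} q_w^{-1}$, which is a monomial in $p, q$ with strictly positive total degree (since $\ell(w) < \ell(w_J)$ forces at least one more factor of $p$ or $q$), hence lies in $p\Z[p,q,q^{-1}] \oplus q\Z[q] = \AZ^+$. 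For (i), the bar-invariance is the only substantive point: one way is to use part (1) together with $x_J^\flat = x_J$ to show $x_J$ spans the line $\{h \in \clH(W_J) : h H_j = q_{s_j}^{-1} h \ \forall j \in J\}$, note $\ol{x_J}$ also lies in this line (applying the bar involution to the eigenvector equation and using $\ol{H_j} = H_j^{-1} = q_{s_j}^{-1} - q_{s_j} + q_{s_j}^{-1}(\cdots)$ — more directly, $\ol{x_J H_j} = \ol{x_J}\,\ol{H_j}$ and $\ol{H_j}$ is again a unit with the right eigenvalue behavior), so $\ol{x_J} = f x_J$ for some scalar $f \in \AZ$; comparing the coefficient of $H_{w_J}$, which is bar-invariant equal to $1$ on both sides, forces $f = 1$. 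Then Theorem \ref{KL-bases} yields $x_J = C_{w_J}$, and $\ol{x_J} = x_J$ is immediate. The main obstacle is organizing the eigenvector/uniqueness argument for bar-invariance cleanly; the combinatorial pairing in part (1) is routine but must be set up carefully so that the length-change bookkeeping is correct.
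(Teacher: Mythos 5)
Your parts (1) and (2) coincide with the paper's own argument: (1) is the direct computation obtained by pairing $w$ with $ws_j$ (the paper phrases this via the decomposition of $W_J$ into the two length cases), and (2) is the reindexing $w \mapsto w^{-1}$, which preserves $W_J$ and $q_w$. For (3) you take a genuinely different route: the paper simply cites \cite[Proposition 1.17 (2)]{X94}, whereas you verify the two characterizing conditions of Theorem \ref{KL-bases} directly, obtaining triangularity from the coefficients $q_{w_J}q_y^{-1}$ and bar-invariance by showing that $x_J$ spans the eigenline $\{h \in \clH(W_J) \mid hH_j = q_{s_j}^{-1}h \ \forall j \in J\}$, that $\ol{x_J}$ lies on the same line (since $\ol{H_j}=H_j+q_{s_j}-q_{s_j}^{-1}$), and then forcing the scalar to be $1$ by comparing $H_{w_J}$-coefficients via $r_{w_J,w_J}=1$ and $r_{y,w}=0$ unless $y\le w$. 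This is correct and is essentially the standard self-contained proof; it buys independence from the external reference at the cost of the eigenline computation (which, as you note in passing, needs only part (1) and not $\flat$-invariance). One point to state more carefully: for the triangularity condition you need that $q_y$ divides $q_{w_J}$ as an honest monomial in $p,q$ (a reduced word for $y\le w_J$ is a subword of one for $w_J$, and the number of occurrences of $s_0$ in a reduced word is an invariant of the element), so that $q_{w_J}q_y^{-1}$ has nonnegative exponents and hence lies in $\AZ^+$; "strictly positive total degree" alone would not exclude Laurent monomials such as $p^{-1}q^{2}$, which are not in $\AZ^+$.
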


\begin{proof}
The assertion $(1)$ follows from a direct calculation and the fact that $W_J = \{ w \in W_J \mid w < s_jw \} \sqcup \{ w \in W_J \mid s_jw < w \}$. The assertion $(2)$ follows from the definition of $x_J$ and the facts that $W_J = \{ w\inv \mid w \in W_J \}$, and $q_{w\inv} = q_w$ for all $w \in W$. The proof of $(3)$ can be found in \cite[Proposition 1.17 (2)]{X94}.
\end{proof}

By Lemma \ref{property of x_J} $(1)$, the right ideal $x_J \clH$ of $\clH$ has a basis $\{ x_J H_w \mid w \in {}^JW \}$. Also, by Lemma \ref{property of x_J} $(3)$, $x_J \clH$ is closed under the involution $\ol{\ \cdot \ }$. Hence, we can construct analogs of the Kazhdan-Lusztig basis and the dual Kazhdan-Lusztig basis of $\clH$ in the ideal $x_J \clH$:

\begin{theo}{\cite[Proposition 3.2]{Deo87}}\label{parabolic KL basis}
\ 
\begin{enumerate}
\item For each $w \in {}^JW$, there exists a unique ${}^JC_w \in x_J \clH$ such that
\begin{enumerate}
\item $\ol{{}^JC_w} = {}^JC_w$.
\item ${}^JC_w = x_J(H_w + \sum_{\substack{y \in {}^JW \\ y < w}} {}^Jc_{y,w} H_y)$ for some ${}^Jc_{y,w} \in \AZ^+$.
\end{enumerate}
\item For each $w \in {}^JW$, there exists a unique ${}^JD_w \in x_J \clH$ such that
\begin{enumerate}
\item $\ol{{}^JD_w} = {}^JD_w$.
\item ${}^JD_w = x_J(H_w + \sum_{\substack{ y \in {}^JW \\ y < w}} {}^Jd_{y,w} H_y)$ for some ${}^Jd_{y,w} \in \AZ^-$.
\end{enumerate}
\end{enumerate}
\end{theo}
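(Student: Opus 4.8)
The plan is to transport the inductive construction behind the Kazhdan--Lusztig basis (Theorem~\ref{KL-bases}) to the $\ol{\ \cdot\ }$-stable right ideal $x_J\clH$. By Lemma~\ref{property of x_J}~(1), $x_J\clH$ is free over $\AZ$ on $\{ x_J H_w \mid w \in {}^JW \}$, and by Lemma~\ref{property of x_J}~(3) it is preserved by $\ol{\ \cdot\ }$. So the whole problem reduces to understanding the transition matrix of $\ol{\ \cdot\ }$ in this basis and then feeding it into the standard recursion.

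First I would compute $\ol{x_J H_w}$ for $w \in {}^JW$. Since $\ol{x_J} = x_J$, we get $\ol{x_J H_w} = x_J\,\ol{H_w} = \sum_{z \in W} r_{z,w}\, x_J H_z$. Each $z \in W$ has a unique factorization $z = u\cdot{}^Jz$ with $u \in W_J$, ${}^Jz \in {}^JW$ and $\ell(z) = \ell(u) + \ell({}^Jz)$, hence $H_z = H_u H_{{}^Jz}$; applying Lemma~\ref{property of x_J}~(1) along a reduced word for $u$ gives $x_J H_z = q_u\inv\, x_J H_{{}^Jz}$. Collecting terms yields
\begin{align}
\ol{x_J H_w} = \sum_{y \in {}^JW} {}^Jr_{y,w}\, x_J H_y, \qquad {}^Jr_{y,w} = \sum_{\substack{u \in W_J \\ uy \leq w}} q_u\inv\, r_{uy,w}. \nonumber
\end{align}
The key point is that $({}^Jr_{y,w})$ is lower unitriangular for the Bruhat order on ${}^JW$: if $u \neq e$ and $w \in {}^JW$ then $\ell(uw) = \ell(u)+\ell(w) > \ell(w)$, so $uw \not\leq w$ and $r_{uw,w} = 0$, whence ${}^Jr_{w,w} = r_{w,w} = 1$; and if ${}^Jr_{y,w} \neq 0$ then $uy \leq w$ for some $u \in W_J$, while a reduced word for $y$ occurs as a terminal subword of a reduced word for $uy$ (lengths add), so $y \leq uy \leq w$. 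Together with the fact that $\ol{\ \cdot\ }$ is an involution, this places $(x_J\clH,\ \{x_J H_w\}_{w\in{}^JW},\ \ol{\ \cdot\ })$ in exactly the axiomatic situation underlying the Kazhdan--Lusztig lemma.

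From here the existence and uniqueness of ${}^JC_w$ follow by induction on $\ell(w)$, just as for Theorem~\ref{KL-bases}. Uniqueness: a bar-invariant element of $\bigoplus_{y<w, y\in{}^JW}\AZ^+ (x_J H_y)$ is $0$, since by unitriangularity its $\leq$-maximal nonzero coefficient would have to lie in $\AZ^+ \cap \AZ^- = 0$. Existence: write the defect $\ol{x_J H_w} - x_J H_w$, which lies in $\bigoplus_{y<w, y\in{}^JW}\AZ\, x_J H_y$, in the $\AZ$-basis $\{ {}^JC_y \mid y < w,\ y \in {}^JW \}$ constructed so far, say $\ol{x_J H_w} - x_J H_w = \sum_y \gamma_y\, {}^JC_y$; applying $\ol{\ \cdot\ }$ again forces $\ol{\gamma_y} = -\gamma_y$, so using $\AZ = \AZ^+ \oplus \Z \oplus \AZ^-$ with $\ol{\AZ^+} = \AZ^-$ we may write $\gamma_y = \delta_y - \ol{\delta_y}$ with $\delta_y \in \AZ^+$, and then ${}^JC_w := x_J H_w + \sum_y \delta_y\, {}^JC_y$ is bar-invariant; re-expanding in $\{x_J H_y\}$ and using that $\AZ^+$ is closed under multiplication shows the correction coefficients lie in $\AZ^+$. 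Part~(2) is obtained by rerunning the identical argument with $\AZ^-$ in place of $\AZ^+$ throughout (the $\sgn$-twisting trick of Remark~\ref{C-basis and D-basis} is not available here because $\sgn(x_J)$ is not a scalar multiple of $x_J$). The one place where care is genuinely needed is the Bruhat-order bookkeeping in the middle paragraph --- the additivity $\ell(z) = \ell(u)+\ell({}^Jz)$ and the implication $uy \leq w \Rightarrow y \leq w$ for $w \in {}^JW$; once these are in hand, everything else is the classical Kazhdan--Lusztig recursion and can be cited from \cite{KL79}, \cite{Deo87}.
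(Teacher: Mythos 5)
Your proposal is correct, and it is worth noting that the paper itself offers no argument here: Theorem \ref{parabolic KL basis} is simply quoted from Deodhar \cite[Proposition 3.2]{Deo87} (just as Theorem \ref{KL-bases} is quoted from \cite{KL79}, \cite{L}). What you supply is the standard self-contained route via a bar-invariant triangular basis, and all the points that actually need checking are handled: $x_J\clH$ is $\AZ$-free on $\{x_JH_w \mid w\in {}^JW\}$ and bar-stable by Lemma \ref{property of x_J}; your computation $x_JH_z = q_u\inv x_JH_{{}^Jz}$ for $z=u\,{}^Jz$ and the resulting ${}^Jr_{y,w}=\sum_{u\in W_J,\,uy\leq w} q_u\inv r_{uy,w}$ are right, and the unitriangularity argument (for $u\neq e$ one has $\ell(uw)>\ell(w)$ so $r_{uw,w}=0$, and $uy\leq w$ forces $y\leq uy\leq w$ by the subword property) is exactly the needed input; uniqueness via the maximal term of a bar-invariant element of $\bigoplus_{y<w}\AZ^{+}x_JH_y$ and $\AZ^{+}\cap\AZ^{-}=0$ is sound; and the existence step works because $\AZ=\AZ^{+}\oplus\Z\oplus\AZ^{-}$ with $\ol{\AZ^{+}}=\AZ^{-}$, the degree-zero part of a skew-invariant $\gamma_y$ vanishes, and $\AZ^{\pm}$ are closed under addition and multiplication, so re-expanding ${}^JC_w=x_JH_w+\sum_y\delta_y\,{}^JC_y$ keeps the coefficients in $\AZ^{+}$. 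Your observation that part (2) cannot be deduced from part (1) by the $\sgn$-twist of Remark \ref{C-basis and D-basis} (since $\sgn(x_J)$ is not proportional to $x_J$) and must instead be proved by rerunning the argument with $\AZ^{-}$ is also correct. The only difference from Deodhar's original treatment is cosmetic: he constructs the parabolic basis by a recursion on multiplication by generators in the spirit of \cite{KL79}, whereas you invoke the abstract ``triangular bar-involution'' lemma; both are complete proofs, and yours has the advantage of isolating the single new computation (the ${}^Jr$-polynomials) needed beyond the classical case.
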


Clearly, $\{ {}^JC_w \mid w \in {}^JW \}$ and $\{ {}^JD_w \mid w \in {}^JW \}$ are linear bases of $x_J \clH$. We call them the parabolic Kazhdan-Lusztig basis and the dual parabolic Kazhdan-Lusztig basis of $x_J \clH$, respectively.

\begin{prop}{\cite[Proposition 3.4]{Deo87}}\label{JCw}
Let $w \in {}^JW$. Then, ${}^JC_w = C_{w_Jw}$.
\end{prop}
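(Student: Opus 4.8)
The plan is to identify $C_{w_Jw}$ by comparing the two bar-invariant triangularity characterizations available to us: the one from Theorem \ref{KL-bases} that pins down $C_{w_Jw}$ inside $\clH$, and the one from Theorem \ref{parabolic KL basis}(1) that pins down ${}^JC_w$ inside $x_J\clH$. Rather than verifying directly that $C_{w_Jw}$ lies in $x_J\clH$ (which would itself need an argument), I would simply expand the element ${}^JC_w$ in the standard basis $\{H_z\}_{z\in W}$ and check that it satisfies the two conditions of Theorem \ref{KL-bases}; the uniqueness there then forces ${}^JC_w=C_{w_Jw}$.

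The computational input is an explicit formula for $x_J$ in the $H$-basis. Since $w_J$ is the longest element of $W_J$, one has $\ell(u\inv w_J)=\ell(w_J)-\ell(u)$, hence $q_{w_J}q_u\inv=q_{u\inv w_J}$, for every $u\in W_J$; and for $u\in W_J$, $v\in{}^JW$ one has $\ell(uv)=\ell(u)+\ell(v)$, so $H_uH_v=H_{uv}$. From the definition of $x_J$ this gives
\begin{align}
x_J=\sum_{u\in W_J}q_{u\inv w_J}H_u,\qquad x_JH_v=H_{w_Jv}+\sum_{u\in W_J\setminus\{w_J\}}q_{u\inv w_J}H_{uv}\qquad(v\in{}^JW). \nonumber
\end{align}
Here $q_{u\inv w_J}$ with $u\ne w_J$ is a monomial $p^aq^b$ with $a+b\ge1$, hence lies in $\AZ^+=p\Z[p,q,q\inv]\oplus q\Z[q]$; and, using the subword characterization of the Bruhat order together with $\ell(w_Jv)=\ell(w_J)+\ell(v)$ and $\ell(w_Jw)=\ell(w_J)+\ell(w)$, each index $uv$ occurring above (for $v\le w$) satisfies $uv\le w_Jw$, with equality if and only if $(u,v)=(w_J,w)$; also the factorization $z=uv$ with $u\in W_J$, $v\in{}^JW$ is unique.

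Substituting the formula for $x_JH_v$ into ${}^JC_w=x_J\bigl(H_w+\sum_{\substack{y\in{}^JW\\ y<w}}{}^Jc_{y,w}H_y\bigr)$ (with the convention ${}^Jc_{w,w}:=1$) and collecting terms: by uniqueness of the coset factorization the coefficient of $H_{w_Jw}$ equals ${}^Jc_{w,w}=1$, while every other $H_z$ that appears has $z<w_Jw$, and its coefficient---a finite sum of products ${}^Jc_{y,w}\,q_{u\inv w_J}$ with $y\le w$ and the conventions above---lies in $\AZ^+$, since $\AZ^+$ is an additive subgroup closed under multiplication and ${}^Jc_{y,w}\in\AZ^+$ for $y<w$. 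Thus ${}^JC_w=H_{w_Jw}+\sum_{z<w_Jw}c'_zH_z$ with $c'_z\in\AZ^+$, and ${}^JC_w$ is bar-invariant by Theorem \ref{parabolic KL basis}(1)(a). By the uniqueness statement in Theorem \ref{KL-bases}, ${}^JC_w=C_{w_Jw}$.

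I do not expect a serious obstacle here; the whole argument is driven by the length-additivity identities $\ell(uv)=\ell(u)+\ell(v)$ for $(u,v)\in W_J\times{}^JW$ and $\ell(u\inv w_J)=\ell(w_J)-\ell(u)$ for $u\in W_J$, the second of which is exactly what makes the coefficients $q_{w_J}q_u\inv$ honest polynomials lying in $\AZ^+$. The only place demanding care is the bookkeeping when several pairs $(u,v)$ produce the same standard basis element $H_z$: one must note that the contributions add within $\AZ^+$ and that the distinguished coefficient of $H_{w_Jw}$ receives exactly one contribution, namely $1$.
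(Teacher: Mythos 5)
Your argument is correct. Note, however, that the paper does not actually prove this proposition: it is quoted from Deodhar \cite{Deo87}, so there is no in-paper proof to compare against; the closest analogue is the paper's own proof of the companion statement Proposition \ref{JDw} (${}^JD_w = x_JD_w$), which expands $x_JD_w$ over the standard basis and invokes the uniqueness in Theorem \ref{parabolic KL basis}. Your proof runs the same triangularity-plus-bar-invariance scheme in the opposite direction: instead of showing that $C_{w_Jw}$ lies in $x_J\clH$ and meets the parabolic characterization (which is essentially Deodhar's route), you expand ${}^JC_w$ in the $H$-basis and check the global characterization of Theorem \ref{KL-bases}, which, as you say, spares you the a priori verification that $C_{w_Jw}\in x_J\clH$. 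The supporting facts you use are all sound: $q_{w_J}q_u\inv=q_{u\inv w_J}\in\AZ^+$ for $u\in W_J\setminus\{w_J\}$ since $q_x$ is a monomial $p^aq^b$ with $a+b=\ell(x)$; $H_uH_v=H_{uv}$ by length additivity on $W_J\times{}^JW$; $uv\le w_Jw$ for $u\in W_J$, $v\le w$ via concatenated reduced subwords, with equality only for $(u,v)=(w_J,w)$ by uniqueness of the coset factorization (which in fact means each $H_z$ receives exactly one contribution, so the ``several pairs'' bookkeeping you flag never occurs); and $\AZ^+$ is closed under addition and the relevant multiplications. Combined with bar-invariance of ${}^JC_w$ from Theorem \ref{parabolic KL basis}(1)(a) and the uniqueness in Theorem \ref{KL-bases}, the identification ${}^JC_w=C_{w_Jw}$ follows as claimed.
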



\begin{prop}\label{JDw}
Let $w \in {}^JW$. Then, ${}^JD_w = x_J D_w$.
\end{prop}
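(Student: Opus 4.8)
The plan is to show that $x_J D_w$ satisfies the two characterizing conditions of ${}^JD_w$ given in Theorem \ref{parabolic KL basis}(2), since those conditions determine ${}^JD_w$ uniquely. First I would check that $x_J D_w \in x_J\clH$, which is immediate. For the bar-invariance, I would compute $\overline{x_J D_w} = \overline{x_J}\,\overline{D_w} = x_J D_w$, using that $\overline{\ \cdot\ }$ is an algebra automorphism of $\clH$ (Lemma \ref{automorphisms on H}(1)), that $\overline{x_J} = x_J$ (Lemma \ref{property of x_J}(3)), and that $\overline{D_w} = D_w$ by the defining property of the dual Kazhdan--Lusztig basis.

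Next I would verify the triangularity condition (b): expanding $D_w = H_w + \sum_{y < w} d_{y,w} H_y$ with $d_{y,w} \in \AZ^-$, we get
\[
x_J D_w = x_J H_w + \sum_{y < w} d_{y,w}\, x_J H_y.
\]
Now I need to rewrite each $x_J H_y$ in terms of the basis $\{ x_J H_z \mid z \in {}^JW \}$ of $x_J\clH$. Here the key input is Lemma \ref{property of x_J}(1): for $j \in J$, $x_J H_j = q_{s_j}\inv x_J$, so that if $y = w_J' z$ with $z \in {}^JW$ and $w_J' \in W_J$, then $x_J H_y$ differs from $x_J H_z$ only by scalars built from $q_s\inv$ for $s \in S$. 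More precisely, writing $y = uz$ with $u \in W_J$, $z \in {}^JW$, and $\ell(y) = \ell(u) + \ell(z)$, one obtains $x_J H_y = q_u\inv x_J H_z$. Since $q_u\inv \in \Z[p\inv, q\inv] \subset \AZ^-$ (note $q_u$ is a monomial in $p,q$ with nonnegative exponents), and $\AZ^-$ is closed under multiplication, the coefficient of $x_J H_z$ coming from the $y < w$ terms lies in $\AZ^-$.

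The one subtlety I would need to address carefully is the coefficient of $x_J H_w$ itself: the leading term $x_J H_w$ contributes $1$, but additional contributions to $x_J H_w$ could in principle arise from terms $x_J H_y$ with $y < w$ whose ${}^JW$-component equals $w$. If $y = uw$ with $u \in W_J \setminus \{e\}$ and $\ell(y) = \ell(u) + \ell(w)$, then $y > w$, contradicting $y < w$; hence no such contribution occurs, and the coefficient of $x_J H_w$ in $x_J D_w$ is exactly $1$. Also I should confirm that only $z = w$ arises as the ${}^JW$-component of $w$ itself, which holds because $w \in {}^JW$. Combining, $x_J D_w = x_J\bigl(H_w + \sum_{z \in {}^JW,\, z < w} {}^Jd_{z,w} H_z\bigr)$ with ${}^Jd_{z,w} \in \AZ^-$, so $x_J D_w$ satisfies both conditions (a) and (b), and by the uniqueness in Theorem \ref{parabolic KL basis}(2), ${}^JD_w = x_J D_w$.

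The main obstacle I expect is the bookkeeping in the triangularity step: one must be sure that after rewriting $x_J H_y$ for $y < w$ into the $\{x_J H_z\}$ basis, every resulting $z$ still satisfies $z \le y < w$ (so $z < w$ unless $z = w$, which is excluded) and that the scalars stay in $\AZ^-$. Both facts follow from standard properties of parabolic cosets — in particular that $z \le y$ in the Bruhat order when $y = uz$ with $u \in W_J$, $z \in {}^JW$ a length-additive factorization — so this is routine but needs to be stated cleanly. An alternative, possibly shorter, route is to invoke Remark \ref{C-basis and D-basis} relating $D$ to $C$ via $\sgn$ together with Proposition \ref{JCw}, but the direct verification above is the most transparent.
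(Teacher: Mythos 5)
Your proposal is correct and takes essentially the same route as the paper: expand $D_w = H_w + \sum_{y<w} d_{y,w}H_y$, use the length-additive factorization $y = y_J\,{}^Jy$ together with $x_JH_j = q_{s_j}\inv x_J$ to land in $\bigoplus_{z \in {}^JW,\, z<w} \AZ^-\, x_JH_z$, and invoke the uniqueness in Theorem \ref{parabolic KL basis}(2) (the paper leaves the bar-invariance of $x_JD_w$ implicit, which you spell out). One tiny slip: $\Z[p\inv,q\inv] \not\subset \AZ^-$ since $\AZ^-$ contains no nonzero constants, but what you actually need --- that $q_u\inv d_{y,w} \in \AZ^-$ because $d_{y,w} \in \AZ^-$ and $\AZ^-$ is stable under multiplication by $p\inv$ and $q\inv$ --- is immediate, so nothing breaks.
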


\begin{proof}
For each $y \in W$, define $y_J \in W_J$ and ${}^Jy \in {}^JW$ to be the unique elements satisfying $y = y_J {}^Jy$ and $\ell(y) = \ell(y_J) + \ell({}^Jy)$. Then, we have
\begin{align}
\begin{split}
x_J D_w &= x_J \sum_{y \leq w} d_{y,w} H_y \\
&= x_J\left( H_w + \sum_{y < w} d_{y,w} H_{y_J}H_{{}^Jy} \right) \\
&= x_J\left( H_w + \sum_{y < w} q_{y_J}\inv d_{y,w} H_{{}^Jy} \right) \qu (\text{by Lemma \ref{property of x_J} $(1)$})\\
&= x_J\left( H_w + \sum_{\substack{y \in {}^JW \\ y < w}} \sum_{\substack{x \in W_J \\ xy < w}} q_x\inv d_{xy,w} H_y \right).
\end{split} \nonumber
\end{align}
This shows that $x_JD_w - x_JH_w \in \bigoplus_{\substack{y \in {}^JW \\ y < w}} \AZ^- x_JH_y$. Hence, by Theorem \ref{parabolic KL basis} $(2)$, $x_JD_w$ coincides with ${}^JD_w$.
\end{proof}

For a later use, let us consider $x_J C_y$ and $x_J D_y$ for $y \in W$.

\begin{prop}\label{from KL to parabolic KL}
Let $y \in W$. Then, we have
\begin{align}
x_J C_y = \sum_{\substack{w \in {}^JW \\ w_Jw \leq_L y}} \alpha_w {}^JC_w, \nonumber
\end{align}
for some $\alpha_w \in \AZ$.
\end{prop}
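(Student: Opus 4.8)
The plan is to play off two descriptions of the element $x_J C_y$ against each other: on one hand it lies in the left-cell filtration piece attached to $y$, and on the other hand it lies in the right ideal $x_J\clH$, whose canonical basis is $\{{}^JC_w \mid w \in {}^JW\}$.

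First I would rewrite $x_J C_y = C_{w_J} C_y$ using $x_J = C_{w_J}$ from Lemma \ref{property of x_J}(3). Next I would check that $\clH_{\leq_L y} := \bigoplus_{z \leq_L y} \AZ C_z$ is a left ideal of $\clH$. Indeed, by the definition of $\rightarrow_L$ the product $C_s C_z$ lies in $\bigoplus_{z' \leq_L z} \AZ C_{z'}$ for every $s \in S$ and $z \in W$, and by transitivity of $\leq_L$ this is contained in $\clH_{\leq_L y}$ whenever $z \leq_L y$; since each $H_s$ is an $\AZ$-linear combination of $C_s$ and $C_e = 1$, the $C_s$ ($s \in S$) together with $1$ generate $\clH$ as an $\AZ$-algebra, so $\clH_{\leq_L y}$ is stable under left multiplication by all of $\clH$. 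In particular $x_J C_y = C_{w_J} C_y \in \clH_{\leq_L y}$, so writing $x_J C_y = \sum_{z \in W} a_z C_z$ with $a_z \in \AZ$ we have $a_z = 0$ unless $z \leq_L y$.

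On the other hand $x_J C_y \in x_J\clH$, and $\{{}^JC_w \mid w \in {}^JW\}$ is an $\AZ$-basis of $x_J\clH$, so $x_J C_y = \sum_{w \in {}^JW} \alpha_w\, {}^JC_w$ for unique $\alpha_w \in \AZ$. By Proposition \ref{JCw} we have ${}^JC_w = C_{w_J w}$, and since $w \mapsto w_J w$ is injective on ${}^JW$ this exhibits $\sum_{w \in {}^JW}\alpha_w C_{w_J w}$ as a sub-sum of the Kazhdan--Lusztig expansion of $x_J C_y$. Comparing with the previous paragraph and using that $\{C_z \mid z \in W\}$ is a basis of $\clH$ forces $\alpha_w = a_{w_J w}$, hence $\alpha_w = 0$ unless $w_J w \leq_L y$, which is exactly the asserted identity.

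I do not expect a real obstacle here; the only step requiring attention is the verification that $\clH_{\leq_L y}$ is a left ideal, which is immediate once one unwinds the definition of $\rightarrow_L$ and uses transitivity of $\leq_L$ together with the fact that the $C_s$ generate $\clH$. The rest is a coefficient comparison between the basis $\{C_z\}$ of $\clH$ and the basis $\{{}^JC_w\}$ of $x_J\clH$, made possible by the identifications $x_J = C_{w_J}$ and ${}^JC_w = C_{w_J w}$.
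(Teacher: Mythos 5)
Your proof is correct and follows essentially the same route as the paper: expand $x_J C_y$ both in the basis $\{{}^JC_w = C_{w_Jw}\}$ of $x_J\clH$ and, using the left-cell preorder, as a combination of $C_z$ with $z \leq_L y$, then compare coefficients in the Kazhdan--Lusztig basis of $\clH$. The only difference is that you spell out the step the paper compresses into ``by the definition of $\leq_L$'' (namely that $\bigoplus_{z \leq_L y}\AZ C_z$ is a left ideal), which is a harmless and correct elaboration.
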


\begin{proof}
Let us write
\begin{align}
x_J C_y = \sum_{w \in {}^JW} \alpha_w {}^JC_w = \sum_{w \in {}^JW} \alpha_w C_{w_Jw} \qu \Forsome \alpha_w \in \AZ. \nonumber
\end{align}
Also, by the definition of $\leq_L$, we can write
\begin{align}
x_J C_y = \sum_{z \leq_L y} \beta_z C_z \qu \Forsome \beta_z \in \AZ. \nonumber
\end{align}
This shows $\alpha_w = 0$ unless $w_Jw \leq_L y$.
\end{proof}

\begin{lem}{\cite[Theorem 6.6 $(b)$]{L}}\label{HsDw}
Let $w \in W$ and $s \in S$ be such that $sw < w$. Then, it holds that $H_s D_w = -q_s D_w$.
\end{lem}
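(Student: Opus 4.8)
The plan is to deduce this from the corresponding multiplication rule for the Kazhdan--Lusztig basis $\{C_w\}$ by transporting it along the sign automorphism, rather than redoing an induction on $\ell(w)$. The input I would invoke is the standard fact that $H_s C_w = q_s\inv C_w$ whenever $sw < w$ (cf.\ \cite[Theorem 6.6]{L}, \cite{KL79}); this belongs to the basic theory of Kazhdan--Lusztig bases of Hecke algebras with possibly unequal parameters and is proved from the multiplication recursion for the $C$-basis, needing neither positivity nor an equal-parameter hypothesis. As a sanity check on conventions: $C_s = H_s + q_s H_e$ is the bar-invariant lift of $H_s$ (indeed $\ol{H_s} = H_s + q_s - q_s\inv$ gives $\ol{C_s} = C_s$, and $q_s \in \AZ^+$), and the quadratic relation $(H_s - q_s\inv)(H_s + q_s) = 0$ yields $H_s C_s = q_s\inv C_s$ directly, which is the case $w = s$.

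Granting the $C$-basis identity, the argument is short. By Remark \ref{C-basis and D-basis} we have $D_w = (-1)^{\ell(w)}\sgn(C_w)$, and by Lemma \ref{automorphisms on H} the map $\sgn$ is an anti-linear algebra automorphism of $\clH$ with $\sgn(H_s) = -H_s$ for every $s \in S$. Applying $\sgn$ to the identity $H_s C_w = q_s\inv C_w$ and using $\ol{q_s\inv} = q_s$ therefore gives
\begin{align}
-\,H_s\,\sgn(C_w) \;=\; \sgn(H_s)\,\sgn(C_w) \;=\; \sgn(H_s C_w) \;=\; \sgn(q_s\inv C_w) \;=\; q_s\,\sgn(C_w), \nonumber
\end{align}
so that $H_s\,\sgn(C_w) = -q_s\,\sgn(C_w)$. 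Multiplying through by $(-1)^{\ell(w)}$ yields $H_s D_w = -q_s D_w$, as claimed.

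I expect the only genuine obstacle to be the $C$-basis identity $H_s C_w = q_s\inv C_w$ for $sw < w$ itself, which is a real theorem of Kazhdan--Lusztig theory and not a leading-term manipulation (note that $H_s C_w$ is not bar-invariant, since $\ol{H_s C_w} = H_s C_w + (q_s - q_s\inv)C_w$); everything after that is a purely formal transport of structure along $\sgn$. As an alternative that sidesteps quoting it, one could prove $H_s D_w = -q_s D_w$ directly by induction on $\ell(w)$, rerunning the argument that establishes the existence and uniqueness of the $D$-basis and using that $D_s = H_s - q_s\inv H_e$ is bar-invariant together with $(H_s + q_s)(H_s - q_s\inv) = 0$; but routing through $\sgn$ is the most economical.
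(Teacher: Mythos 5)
Your argument is correct, but note that the paper itself offers no proof of this lemma: it is quoted as a citation to \cite[Theorem 6.6 (b)]{L}, so there is no internal argument to compare against. Your route --- transporting the standard eigenvector identity $H_s C_w = q_s\inv C_w$ (for $sw<w$) along $\sgn$ and invoking $D_w = (-1)^{\ell(w)}\sgn(C_w)$ from Remark \ref{C-basis and D-basis} --- is sound: by Lemma \ref{automorphisms on H}, $\sgn$ is a multiplicative, bar-semilinear automorphism, so $\sgn(H_sC_w)=-H_s\sgn(C_w)$ and $\sgn(q_s\inv C_w)=q_s\sgn(C_w)$, and the sign bookkeeping gives $H_sD_w=-q_sD_w$ exactly as you wrote; your convention checks ($C_s=H_s+q_s$, $\ol{H_s}=H_s+q_s-q_s\inv$, $H_sC_s=q_s\inv C_s$, $q_s\in\AZ^+$) are consistent with the paper's normalization $(H_s-q_s\inv)(H_s+q_s)=0$. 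The one caveat is that your sole genuine input, $H_sC_w=q_s\inv C_w$ for $sw<w$ (equivalently $C_sC_w=(q_s+q_s\inv)C_w$), is precisely the $C$-basis half of the very theorem of \cite{L} being cited, valid for unequal parameters with no positivity needed; so in effect you have replaced the paper's citation by an equivalent citation plus a formal $\sgn$-transfer, which is presumably also how the author intends the $D$-basis statement to be read off from Lusztig's $C$-basis statement. What your write-up buys is transparency: it makes explicit that the $D$-basis multiplication rule is not an independent fact but the image of the $C$-basis rule under the sign involution, in the same spirit as Remark \ref{C-basis and D-basis}; and your sketched alternative (rerunning the defining induction for the $D$-basis using $D_s=H_s-q_s\inv$ and $(H_s+q_s)(H_s-q_s\inv)=0$) would make the lemma fully self-contained if one wished to avoid the external reference altogether.
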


\begin{prop}
Let $y \in W \setminus {}^JW$. Then, $x_J D_y = 0$.
\end{prop}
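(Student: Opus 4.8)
The plan is to use the fact that $y \notin {}^JW$ forces some simple reflection indexed by $J$ to decrease $y$, and then to play the eigenvalue relation of Lemma \ref{property of x_J}$(1)$ off against that of Lemma \ref{HsDw}.

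First I would invoke the standard characterization of minimal-length coset representatives: for $w \in W$ one has $w \in {}^JW$ if and only if $s_j w > w$ for all $j \in J$. Since $y \in W \setminus {}^JW$, there is therefore some $j \in J$ with $s_j y < y$.

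Next, Lemma \ref{HsDw} applied with $s = s_j$ and $w = y$ gives $H_{s_j} D_y = -q_{s_j} D_y$, while Lemma \ref{property of x_J}$(1)$ gives $x_J H_{s_j} = q_{s_j}\inv x_J$. Combining these via associativity in $\clH$,
$$
q_{s_j}\inv\, (x_J D_y) = (x_J H_{s_j}) D_y = x_J (H_{s_j} D_y) = -q_{s_j}\, (x_J D_y),
$$
so that $(q_{s_j} + q_{s_j}\inv)\, x_J D_y = 0$ in $\clH$.

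Finally, since $q_{s_j} \in \{ p, q \}$, the element $q_{s_j} + q_{s_j}\inv$ is a nonzero element of the integral domain $\AZ$, hence a nonzerodivisor; as $\clH$ is free over $\AZ$, multiplication by it is injective, and we conclude $x_J D_y = 0$. I do not expect any genuine obstacle here: the only ingredients are the combinatorial description of ${}^JW$ and the two already-established eigenvalue lemmas, and the remainder is a one-line computation; the single point to keep in mind is that $q_{s_j} + q_{s_j}\inv \neq 0$, which is precisely where the use of unequal (independent) parameters $p, q$ enters.
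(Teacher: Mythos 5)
Your proof is correct and follows essentially the same route as the paper: pick $j \in J$ with $s_jy < y$, combine $x_JH_j = q_j\inv x_J$ with $H_jD_y = -q_jD_y$, and conclude from the resulting eigenvalue clash that $x_JD_y = 0$. The extra remark that the cancellation works because $\clH$ is free over the domain $\AZ$ is a fine (if not strictly necessary) justification of the last step.
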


\begin{proof}
Since $y \notin {}^JW$, there exists $j \in J$ such that $s_jy < y$. For such $j$, we have $x_J H_j = q_j\inv x_J$ (Lemma \ref{property of x_J} (1)) and $H_j D_y = -q_j D_y$ (Lemma \ref{HsDw}). Hence, we obtain
\begin{align}
x_J D_y = q_j x_J H_j D_y = -q_j^2 x_J D_, \nonumber
\end{align}
which implies $x_J D_y = 0$, as desired.
\end{proof}

Set $P_J := q_{w_J} \sum_{x \in W_J} q_x^{-2} \in \AZ$. Note that, by Lemma \ref{property of x_J} $(1)$, it holds that $x_J^2 = P_J x_J$. Then, for each $H,H' \in \clH$, we have
\begin{align}
\la x_JH \mid x_JH' \ra = \la x_J^2 H \mid H' \ra = P_J \la H \mid H' \ra \in P_J \AZ; \nonumber
\end{align}
here, we use Lemma \ref{property of x_J} $(2)$. Hence, we can define a $\AZ$-valued bilinear form $\la \cdot \mid \cdot \ra_J$ on $x_J \clH$ by $\la \cdot \mid \cdot \ra_J := \frac{1}{P_J} \la \cdot \mid \cdot \ra$.

\begin{prop}\label{dual of parabolic KL}
The basis $\{ {}^JC_w \mid w \in {}^JW \}$ and $\{ {}^JD_{w_Jww_0} \mid w \in {}^JW \}$ are dual to each other with respect to $\la \cdot \mid \cdot \ra_J$, that is, we have $\la {}^JC_y \mid {}^JD_{w} \ra_J = \delta_{y,w_Jww_0}$ for all $y,w \in {}^JW$.
\end{prop}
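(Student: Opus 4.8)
The plan is to deduce everything from the duality of the full Kazhdan–Lusztig bases (Proposition \ref{duality of KL-bases}), using the two identifications ${}^JC_w = C_{w_Jw}$ (Proposition \ref{JCw}) and ${}^JD_w = x_JD_w$ (Proposition \ref{JDw}), and the idempotency-up-to-scalar $x_J^2 = P_Jx_J$.

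First I would dispose of the combinatorial point implicit in the statement, namely that $w \mapsto w_Jww_0$ is a well-defined involution of ${}^JW$, so that $\{{}^JD_{w_Jww_0} \mid w \in {}^JW\}$ is genuinely the dual parabolic Kazhdan–Lusztig basis re-indexed. For $w \in {}^JW$ and $j \in J$ one has $\ell(w_Jw) = \ell(w_J) + \ell(w)$, and since $s_jw_J \in W_J$ while $w \in {}^JW$, also $\ell(s_jw_Jw) = \ell(s_jw_J) + \ell(w) = \ell(w_J) - 1 + \ell(w) < \ell(w_Jw)$. Multiplying by $w_0$ and using $\ell(xw_0) = \ell(w_0) - \ell(x)$ gives $\ell(s_j(w_Jww_0)) = \ell(w_Jww_0) + 1$, so $w_Jww_0 \in {}^JW$; involutivity is immediate from $w_J^2 = w_0^2 = e$.

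Now fix $y,w \in {}^JW$. Combining Propositions \ref{JCw} and \ref{JDw} with the definition $\la\cdot\mid\cdot\ra_J = \frac{1}{P_J}\la\cdot\mid\cdot\ra$ gives
$$\la {}^JC_y \mid {}^JD_w \ra_J = \frac{1}{P_J}\,\la C_{w_Jy} \mid x_JD_w \ra = \frac{1}{P_J}\,\la x_JC_{w_Jy} \mid D_w \ra,$$
where the second equality uses the adjunction $\la H'\mid HH''\ra = \la H^\flat H'\mid H''\ra$ together with $x_J^\flat = x_J$ (Lemma \ref{property of x_J}(2)). Next, $C_{w_Jy} = {}^JC_y$ lies in the right ideal $x_J\clH$ by Theorem \ref{parabolic KL basis}, so writing ${}^JC_y = x_Jh$ and invoking $x_J^2 = P_Jx_J$ (Lemma \ref{property of x_J}(1)) yields $x_JC_{w_Jy} = x_J^2h = P_Jx_Jh = P_JC_{w_Jy}$. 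Since $\la\cdot\mid\cdot\ra$ is $\AZ$-bilinear, the normalizing factor cancels, and
$$\la {}^JC_y \mid {}^JD_w \ra_J = \la C_{w_Jy} \mid D_w \ra = \delta_{w_Jy,\,ww_0},$$
the last equality being Proposition \ref{duality of KL-bases}. Finally $w_Jy = ww_0 \iff y = w_Jww_0$ (multiply on the left by $w_J$, using $w_J^2 = e$), so $\la {}^JC_y \mid {}^JD_w \ra_J = \delta_{y,\,w_Jww_0}$, which together with the first step gives the asserted duality.

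The argument is essentially bookkeeping, and I do not foresee a genuine obstacle. The only step that needs a little attention is recognizing that left multiplication by $x_J$ acts as the scalar $P_J$ on ${}^JC_y = C_{w_Jy}$ — precisely because this element sits in $x_J\clH$, where $x_J$ is idempotent up to the scalar $P_J$ — which is exactly what absorbs the normalization $\frac{1}{P_J}$ in $\la\cdot\mid\cdot\ra_J$ and reduces the claim to Proposition \ref{duality of KL-bases}.
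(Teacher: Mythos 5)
Your proof is correct and follows essentially the same route as the paper: rewrite ${}^JC_y = C_{w_Jy}$ and ${}^JD_w = x_JD_w$, use $x_J^\flat = x_J$ and $x_J^2 = P_Jx_J$ (equivalently, that $C_{w_Jy}$ lies in $x_J\clH$) to absorb the normalization $\frac{1}{P_J}$, and then invoke the duality $\la C_{w_Jy} \mid D_w \ra = \delta_{w_Jy,ww_0}$. The only addition is your explicit check that $w \mapsto w_Jww_0$ is an involution of ${}^JW$, which the paper leaves implicit; that is a harmless (and correct) supplement.
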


\begin{proof}
Let $y,w \in {}^JW$. We compute as follows:
\begin{align}
\begin{split}
\la {}^JC_y \mid {}^JD_w \ra_J &= \frac{1}{P_J} \la {}^JC_y \mid {}^JD_w \ra \\
&= \frac{1}{P_J} \la C_{w_Jy} \mid x_J D_w \ra \qu (\text{by Proposition \ref{JCw} and \ref{JDw}}) \\
&= \la C_{w_Jy} \mid D_w \ra \qu (\text{since $C_{w_Jy} = {}^JC_y \in x_J \clH$}) \\
&= \delta_{w_Jy,ww_0} = \delta_{y,w_Jww_0} \qu (\text{by Proposition \ref{duality of KL-bases}}).
\end{split} \nonumber
\end{align}
This proves the proposition.
\end{proof}

\section{Hecke modules and their centralizers}\label{section3}
\subsection{Fundamental properties}
We follow ideas in \cite[Chapter 9.1]{DDPW08}. Let $\pi$ be an index set. Suppose that we are given a map $\pi \rightarrow \{ J \mid J \subset \{ 0,1,\ldots,d-1 \} \}$. We denote by $I_\lambda$ the image of $\lambda \in \pi$ under this map. For each $\lambda \in \pi$, for simplicity, we will denote $W_{I_\lambda}$, $w_{I_\lambda}$, $x_{I_\lambda}$, etc. by $W_\lambda$, $w_{\lambda}$, $x_{\lambda}$, etc.

\begin{defi}\normalfont
Associated with $\pi$, we define a right $\Hc$-module $\bbT(\pi) := \bigoplus_{\lambda \in \pi} x_\lambda \Hc$, and its centralizer algebra $\bbS(\pi) := \End_\Hc(\bbT(\pi))$; we let $\bbS(\pi)$ act on $\bbT(\pi)$ from the left.
\end{defi}

It is obvious that $\bbT(\pi)$ has two bases $\{ {}^\lm C_w \mid \lm \in \pi,\ w \in {}^\lm W \}$ and $\{ {}^\lm D_w \mid \lm \in \pi,\ w \in {}^\lm W \}$; we call them the Kazhdan-Lusztig basis and dual Kazhdan-Lusztig basis, respectively.

For each $m = \sum_{\lambda \in \pi} m_\lambda \in \bbT(\pi)$ with $m_\lambda \in x_\lambda \Hc$, we define $\ol{m} \in \bbT(\pi)$ to be $\sum_{\lambda \in \pi} \ol{m_\lambda}$. Also, for each $f \in \bbS(\pi)$, define $\ol{f} \in \bbS(\pi)$ by $\ol{f}(m) = \ol{f(\ol{m})}$ for all $m \in \bbT(\pi)$. This gives anti-linear automorphisms $\ol{\ \cdot \ }$ on $\bbT(\pi)$ and $\bbS(\pi)$.

For each $\lambda \in \pi$, define $p_\lambda\index{plambda@$p_\lambda$} \in \bbS(\pi)$ to be the composite\begin{align}
p_\lambda: \bbT(\pi) \twoheadrightarrow x_\lambda \Hc \hookrightarrow \bbT(\pi) \nonumber
\end{align}
of the projection and the inclusion. Clearly, $\{ p_\lambda \mid \lambda \in \pi \}$ is a family of orthogonal idempotents with $\sum_{\lambda \in \pi} p_\lambda = \id_{\bbT(\pi)}$. Hence, we have a decomposition
\begin{align}
\bbS(\pi) = \bigoplus_{\lambda,\mu \in \pi} p_\lambda \bbS(\pi) p_\mu, \qu p_\lambda \bbS(\pi) p_\mu = \Hom_\Hc(x_\mu \Hc, x_\lambda \Hc). \nonumber
\end{align}
Take $f \in \Hom_\Hc(x_\mu \Hc, x_\lambda \Hc)$ arbitrarily. Since $x_\mu \Hc$ is generated (as a right $\Hc$-module) by $x_\mu$, the $f$ is determined by $f(x_\mu) \in x_\lambda \Hc$. Let us write
\begin{align}
f(x_\mu) = \sum_{w \in {}^\lambda W} c_{\lambda,w,\mu}(f) x_\lambda H_w, \qu \Forsome c_{\lambda,w,\mu} \in \AZ. \nonumber
\end{align}

\begin{lem}
Let $w \in {}^\lambda W$ and $j \in I_\mu$ be such that $w < ws_j$. Then, we have
\begin{align}
c_{\lambda,w,\mu}(f) = q_j c_{\lambda,ws_j,\mu}(f). \nonumber
\end{align}
Consequently, we have
\begin{align}
f(x_\mu) = \sum_{w \in {}^\lambda W^\mu} \sum_{\substack{y \in W_\mu \\ wy \in {}^\lambda W}} q_y\inv c_{\lambda,w,\mu}(f) x_\lambda H_{wy}, \nonumber
\end{align}
and hence, $f$ is determined by $(c_{\lambda,w,\mu}(f))_{w \in {}^\lambda W^\mu} \in \AZ^{{}^\lm W^\mu}$, where ${}^\lambda W^\mu := {}^\lambda W \cap ({}^\mu W)\inv$.
\end{lem}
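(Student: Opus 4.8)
The plan is to prove the key relation $c_{\lambda,w,\mu}(f) = q_j c_{\lambda,ws_j,\mu}(f)$ by exploiting the fact that $f$ is a \emph{right} $\Hc$-module homomorphism, and then derive the consequence by iterating this relation inside each double coset. First I would apply $f$ to the identity $x_\mu H_j = q_j\inv x_\mu$, which holds by Lemma \ref{property of x_J}~$(1)$ since $j \in I_\mu$. Using $\Hc$-linearity on the right, this gives $f(x_\mu)H_j = q_j\inv f(x_\mu)$. Now I would substitute the expansion $f(x_\mu) = \sum_{w \in {}^\lambda W} c_{\lambda,w,\mu}(f)\, x_\lambda H_w$ into both sides and compare coefficients of the basis elements $x_\lambda H_y$, $y \in {}^\lambda W$, of $x_\lambda \Hc$.

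The heart of the computation is understanding $x_\lambda H_w H_j$ for $w \in {}^\lambda W$ and $j \in I_\mu$. There are two cases. If $w < ws_j$, then $ws_j \in {}^\lambda W$ as well (right multiplication by a simple reflection that increases length preserves minimality of coset representatives on the left), so $H_w H_j = H_{ws_j}$ and $x_\lambda H_w H_j = x_\lambda H_{ws_j}$. If $ws_j < w$, then $H_w H_j = H_{ws_j} + (q_j\inv - q_j)H_w$, again with $ws_j \in {}^\lambda W$. Plugging these into $f(x_\mu)H_j = q_j\inv f(x_\mu)$ and reading off the coefficient of a fixed $x_\lambda H_w$ with $w < ws_j$ on the right-hand side yields $q_j\inv c_{\lambda,w,\mu}(f)$, while on the left-hand side the term $x_\lambda H_w$ arises only from the $ws_j$-summand (since $ws_j > w$ means $(ws_j)s_j = w < ws_j$, putting us in the second case for the index $ws_j$), contributing $c_{\lambda,ws_j,\mu}(f)\cdot q_j\inv$ from the diagonal part — wait, more carefully: one collects all contributions and obtains the stated relation $c_{\lambda,w,\mu}(f) = q_j c_{\lambda,ws_j,\mu}(f)$. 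I expect the bookkeeping of which summands contribute to a given coefficient to be the main (though entirely routine) obstacle; it is essentially the same manipulation as in the proof of Lemma \ref{property of x_J}~$(1)$ and the standard computation underlying parabolic Hecke modules, e.g.\ as in \cite[Chapter 9.1]{DDPW08}.

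For the ``consequently'' part, I would fix a double coset representative $w \in {}^\lambda W^\mu$ and note that every element of ${}^\lambda W$ lying in the coset $wW_\mu$ is of the form $wy$ with $y \in W_\mu$; among those, the relation just proved lets us express $c_{\lambda,wy,\mu}(f)$ in terms of $c_{\lambda,w,\mu}(f)$. Iterating along a reduced word for $y$ (each left-multiplication-by-$s_j$ step with $j \in I_\mu$ that lengthens $y$ contributes a factor $q_j$, and the product of these is exactly $q_y$ by well-definedness of $q_y$) gives $c_{\lambda,wy,\mu}(f) = q_y\inv c_{\lambda,w,\mu}(f)$ whenever $wy \in {}^\lambda W$. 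Summing the expansion of $f(x_\mu)$ over double cosets and then over $y \in W_\mu$ with $wy \in {}^\lambda W$ produces the displayed formula, and since $f$ is determined by $f(x_\mu)$, it is determined by the tuple $(c_{\lambda,w,\mu}(f))_{w \in {}^\lambda W^\mu}$. One small point worth checking is that the relation is consistent, i.e.\ independent of the chosen reduced word — this follows because $x_\lambda H_{wy}$ depends only on $wy$ and the coefficient extraction is unambiguous, so no genuine braid-relation verification is needed.
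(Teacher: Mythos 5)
Your proposal is correct and is essentially the paper's own proof: apply $f$ to the relation $x_\mu H_j = q_j^{-1}x_\mu$ (Lemma on $x_J$, part (1)), use that $f$ is a right $\mathcal{H}$-module map, expand $f(x_\mu)$ in the basis $\{x_\lambda H_y \mid y \in {}^\lambda W\}$ and compare coefficients, then iterate along a reduced word of $y \in W_\mu$ to accumulate the factor $q_y^{-1}$. The one blemish is your parenthetical claim that $w \in {}^\lambda W$ and $w < ws_j$ force $ws_j \in {}^\lambda W$ — this is false in general (e.g.\ $w = e$ with $s_j \in W_\lambda$), but in that case $ws_j = tw$ for a simple $t \in W_\lambda$ conjugate to $s_j$, so $x_\lambda H_{ws_j} = q_j^{-1}x_\lambda H_w$ and such summands only reproduce the tautology $q_j^{-1}c_{\lambda,w,\mu}(f) = q_j^{-1}c_{\lambda,w,\mu}(f)$; since the asserted relation only makes sense when $ws_j \in {}^\lambda W$ (otherwise $c_{\lambda,ws_j,\mu}(f)$ is undefined), your argument — like the paper's proof, which makes the same silent simplification — goes through.
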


\begin{proof}
We have
\begin{align}
\begin{split}
q_j\inv f(x_\mu) &= f(x_\mu H_j) \\
&= f(x_\mu)H_j \\
&= \sum_{\substack{w \in {}^\lambda W \\ ws_j < w}} c_{\lambda,w,\mu}(f) x_\lambda(H_{ws_j} + (q_j\inv - q_j) H_w) + \sum_{\substack{w \in {}^\lambda W \\ ws_j > w}} c_{\lambda,w,\mu}(f) x_\lambda H_{ws_j} \\
&= \sum_{\substack{w \in {}^\lambda W \\ ws_j < w}}(c_{\lambda,ws_j,\mu}(f) + (q_j\inv - q_j)c_{\lambda,w,\mu}(f))x_\lambda H_w + \sum_{\substack{w \in {}^\lambda W \\ ws_j > w}} c_{\lambda,ws_j,\mu}(f) x_\lambda H_w.
\end{split} \nonumber
\end{align}
Comparing the coefficients of $x_\lambda H_w$, we obtain the assertion.
\end{proof}

Conversely, given $(c_{\lambda,w,\mu})_{w \in {}^\lambda W^\mu} \in \AZ^{{}^\lm W^\mu}$, there exists a unique $g \in \Hom_\Hc(x_\mu \Hc, x_\lambda \Hc)$ such that $c_{\lambda,w,\mu}(g) = c_{\lambda,w,\mu}$ for all $w \in {}^\lambda W^\mu$. Thus, we obtain an $\AZ$-linear isomorphism between $\AZ^{{}^\lambda W^\mu}$ and $\Hom_{\Hc}(x_\mu \Hc,x_\lambda \Hc)$.

\begin{lem}[{\cite[Theorem 4.18]{DDPW08}}]\label{double coset}
Let $\lm,\mu \in \pi$. For each $x \in {}^\lm W^\mu$, there exists a unique $J_x \subset \{ 0,1,\ldots,d-1 \}$ such that the multiplication map
$$
W_\lm \times \{x\} \times {}^{J_x} W_\mu \rightarrow W_\lm x W_\mu;\ (u,x,v) \mapsto uxv
$$
is a bijection, where ${}^{J_x} W_\mu := {}^{J_x} W \cap W_\mu$. Moreover, we have $\ell(uxv) = \ell(u) + \ell(x) + \ell(v)$ for all $u \in W_\lm$ and $v \in {}^{J_x} W_\mu$.
\end{lem}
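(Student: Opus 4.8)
The plan is to prove this by a standard argument on minimal-length double coset representatives, adapted to the parabolic subgroups $W_\lm = W_{I_\lm}$ and $W_\mu = W_{I_\mu}$ of the Coxeter group $W$. First I would recall that $x \in {}^\lm W^\mu$ means $x$ is simultaneously of minimal length in $W_\lm x$ and in $x W_\mu$; equivalently $x$ is the unique element of minimal length in the double coset $W_\lm x W_\mu$. The key classical fact I would invoke is the following: for a double coset $W_\lm x W_\mu$ with $x$ its minimal-length representative, the set $x\inv W_{I_\lm} x \cap W_{I_\mu}$ is itself a standard parabolic subgroup of $W_\mu$, i.e.\ it equals $W_{K}$ for a unique $K \subseteq I_\mu$, and this subgroup is generated by the reflections it contains among the simple reflections of $W$ conjugated appropriately. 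Setting $J_x := \{ i \in \{0,\dots,d-1\} \mid s_i \in x\inv W_{I_\lm} x \cap W_{I_\mu}\} \cup (\{0,\dots,d-1\}\setminus I_\mu)$ — or more precisely, choosing $J_x$ so that ${}^{J_x}W \cap W_\mu = {}^{J_x}W_\mu$ gives exactly the minimal-length coset representatives for $(x\inv W_\lm x \cap W_\mu)\backslash W_\mu$ — one gets the desired indexing set.

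The core of the argument is then the bijectivity of $(u,x,v) \mapsto uxv$. For surjectivity: every element $w \in W_\lm x W_\mu$ can be written $w = axb$ with $a \in W_\lm$, $b \in W_\mu$; writing $b = b' v$ with $b' \in x\inv W_\lm x \cap W_\mu = W_K$ and $v \in {}^{K}W_\mu$ the minimal-length representative of $W_K b$, one has $xb' = (xb'x\inv)x$ with $xb'x\inv \in W_\lm$, so $w = (a \cdot xb'x\inv) x v$ with the first factor in $W_\lm$ and $v \in {}^{J_x}W_\mu$. Here I would identify ${}^{J_x}W_\mu$ with ${}^{K}W_\mu$ by the appropriate choice of $J_x$. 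For injectivity and the length-additivity $\ell(uxv) = \ell(u) + \ell(x) + \ell(v)$: since $x \in {}^\lm W$ we have $\ell(uxv) = \ell(u) + \ell(xv)$ for all $u \in W_\lm$ (this is the defining property of ${}^\lm W$ together with the subword property of the Bruhat order), and since $v \in {}^{J_x}W \subseteq {}^{?}$ one shows $\ell(xv) = \ell(x) + \ell(v)$ by verifying that $x$ is of minimal length in $x W_K$ — which holds because $x$ is of minimal length in the whole double coset. From this length-additivity, injectivity follows: if $u_1 x v_1 = u_2 x v_2$ then $u_2\inv u_1 x = x v_2 v_1\inv$, and comparing lengths forces the $W_\lm$-component and the ${}^{J_x}W_\mu$-component to agree (using that distinct elements of ${}^{J_x}W_\mu$ lie in distinct cosets of $W_K$).

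The main obstacle I anticipate is pinning down the correct definition of $J_x$ and proving the identification ${}^{J_x}W \cap W_\mu = {}^{J_x}W_\mu$ with $W_{J_x \cap I_\mu}$ acting as the ``intersection'' parabolic $x\inv W_\lm x \cap W_\mu$ — that is, showing that the subgroup $x\inv W_{I_\lm}x \cap W_{I_\mu}$ is \emph{standard} parabolic in $W_\mu$ (generated by a subset of $\{s_j : j \in I_\mu\}$) rather than merely parabolic. This is a known lemma of Howlett / Kilmoyer / Curtis on intersections of parabolic subgroups, and the cleanest route is probably to cite it, or to reprove it using the fact that $x$ is the minimal-length double coset representative: for any $j \in I_\mu$ with $x s_j < $ (no) — rather, for $j \in I_\mu$ such that $xs_jx\inv \in W_\lm$, one checks $xs_jx\inv$ is itself a simple reflection (because $\ell(xs_j) = \ell(x) + 1$ and $\ell(xs_j) = \ell(xs_jx\inv) + \ell(x)$ would force $\ell(xs_jx\inv) = 1$), which gives the generating set. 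Once that structural lemma is in hand, everything else is bookkeeping with lengths and the subword property. Since the paper attributes this to \cite[Theorem 4.18]{DDPW08}, I would present the proof as a reduction to that reference, spelling out only the identification of $J_x$ and the length-additivity claim in detail.

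\begin{proof}
This is \cite[Theorem 4.18]{DDPW08}; we indicate the argument. Since $x \in {}^\lm W^\mu$, it is the unique element of minimal length in the double coset $W_\lm x W_\mu$. By the Curtis--Kilmoyer--Howlett lemma on intersections of parabolic subgroups (see \cite[Chapter~4]{DDPW08}), the subgroup $x\inv W_\lm x \cap W_\mu$ is a standard parabolic subgroup of $W_\mu$; indeed, for $j \in I_\mu$ with $x s_j x\inv \in W_\lm$, the equalities $\ell(xs_j) = \ell(x)+1$ (as $x \in {}^{\mu\!}W$, hence $x\inv \in W{}^\mu$, hence $\ell(xs_j)>\ell(x)$) and $\ell(xs_j) = \ell(xs_jx\inv) + \ell(x)$ (as $x \in {}^\lm W$) force $xs_jx\inv$ to be a simple reflection of $W$ lying in $W_\lm$, and these simple reflections generate $x\inv W_\lm x \cap W_\mu$. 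Write $x\inv W_\lm x \cap W_\mu = W_K$ for the corresponding $K \subseteq I_\mu$, and let $J_x \subseteq \{0,1,\ldots,d-1\}$ be the unique subset with ${}^{J_x}W \cap W_\mu = {}^{K}W_\mu$, where ${}^{K}W_\mu$ denotes the minimal-length coset representatives for $W_K \backslash W_\mu$.

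For surjectivity of $(u,x,v)\mapsto uxv$: given $w \in W_\lm x W_\mu$, write $w = a x b$ with $a\in W_\lm$, $b\in W_\mu$, then factor $b = b' v$ with $b' \in W_K$ and $v \in {}^{K}W_\mu = {}^{J_x}W_\mu$. Since $b'\in W_K$ we have $xb'x\inv \in W_\lm$, so $w = (a\cdot xb'x\inv)\,x\,v$ with $a\cdot xb'x\inv \in W_\lm$ and $v \in {}^{J_x}W_\mu$. For the length statement: as $x \in {}^\lm W$, we have $\ell(uxv) = \ell(u) + \ell(xv)$ for all $u \in W_\lm$; and since $x$ has minimal length in $xW_K$ and $v \in {}^K W_\mu$, one checks $\ell(xv) = \ell(x) + \ell(v)$. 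Hence $\ell(uxv) = \ell(u)+\ell(x)+\ell(v)$. Finally, injectivity: if $u_1 x v_1 = u_2 x v_2$ then $u_2\inv u_1\,x = x\,v_2 v_1\inv$; since $x$ is the minimal-length double coset representative and the left side lies in $W_\lm x$ while the right lies in $xW_\mu$, comparison of lengths using the additivity just established forces $v_1 = v_2$ (distinct elements of ${}^{J_x}W_\mu$ lie in distinct right cosets of $W_K$) and then $u_1 = u_2$. This proves the claimed bijection.
\end{proof}
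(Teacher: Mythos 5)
The paper offers no argument for this lemma at all --- it is quoted verbatim from \cite[Theorem 4.18]{DDPW08} --- so there is no internal proof to compare against; what you have written is the standard proof of that theorem, and it is essentially sound. Your reduction of the structural core to the Curtis--Kilmoyer--Howlett lemma (that $x\inv W_\lm x \cap W_\mu$ is a \emph{standard} parabolic subgroup $W_K$ of $W_\mu$) is legitimate, and your computation that $\ell(xs_jx\inv)=1$ for $j \in I_\mu$ with $xs_jx\inv \in W_\lm$ is correct, as is the surjectivity argument and the identification ${}^{J_x}W \cap W_\mu = {}^{K}W_\mu$ with the canonical choice $J_x = K$. (A side remark on the statement itself: since $\ell(s_jw)>\ell(w)$ holds automatically for $w \in W_\mu$ and $j \notin I_\mu$, the set ${}^{J_x}W\cap W_\mu$ depends only on $J_x \cap I_\mu$, so the ``uniqueness'' of $J_x$ is really uniqueness of $J_x\cap I_\mu = K$; this imprecision is inherited from the statement, not introduced by you.)

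One step is justified incorrectly as written: you derive $\ell(uxv)=\ell(u)+\ell(xv)$ for all $u \in W_\lm$ ``as $x \in {}^\lm W$''. That hypothesis alone is not enough; what is needed is $xv \in {}^\lm W$, i.e.\ that $xv$ is the minimal element of $W_\lm xv$. This is true but requires a short argument, e.g.: if $\ell(s_ixv)<\ell(xv)$ for some $i \in I_\lm$, apply the strong exchange condition to the reduced word of $xv$ obtained by concatenating reduced words of $x$ and $v$ (legitimate since $\ell(xv)=\ell(x)+\ell(v)$, which does follow directly from $x \in ({}^\mu W)\inv$ and $v \in W_\mu$); deleting a letter from the $x$-part would give $\ell(s_ix)<\ell(x)$, contradicting $x \in {}^\lm W$, while deleting from the $v$-part gives $x\inv s_i x \in W_\mu \cap x\inv W_\lm x = W_K$, hence $x\inv s_ix = s_j$ for some $j \in K$ by your simple-reflection computation applied to $x\inv \in {}^\mu W^\lm$, and then $\ell(s_ixv)=\ell(xs_jv)=\ell(x)+\ell(s_jv)>\ell(xv)$ because $v \in {}^K W_\mu$ --- a contradiction. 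With this patch the length additivity is complete. Also, injectivity needs no length comparison at all: $u_2\inv u_1 x = xv_2v_1\inv$ gives $v_2v_1\inv = x\inv(u_2\inv u_1)x \in W_K$, so $v_1,v_2$ are minimal representatives of the same coset $W_Kv_1$, forcing $v_1=v_2$ and then $u_1=u_2$; your parenthetical already contains this, so simply drop the appeal to lengths there.
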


For $\lambda, \mu \in \pi$ and $x \in {}^\lambda W^\mu$, define $\xi_{\lambda,x,\mu} \in \Hom_{\Hc}(x_\mu \Hc,x_\lambda \Hc)$ to be the one corresponding to $(\delta_{x,w} q_{x'})_{w \in {}^\lambda W^\mu} \in \AZ^{{}^\lambda W^\mu}$, where $x' \in W_\mu$ is the longest element in ${}^{J_x}W_\mu$ ($J_x$ is as in Lemma \ref{double coset}). Then, the next proposition is clear.

\begin{prop}
$\{ \xi_{\lambda,x,\mu} \mid \lambda,\mu \in \pi,\ x \in {}^\lambda W^\mu \}$ forms a basis of $\bbS(\pi)$.
\end{prop}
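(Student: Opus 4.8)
The plan is to prove that $\{ \xi_{\lambda,x,\mu} \mid \lambda,\mu \in \pi,\ x \in {}^\lambda W^\mu \}$ is a basis of $\bbS(\pi)$ by exploiting the decomposition $\bbS(\pi) = \bigoplus_{\lambda,\mu \in \pi} \Hom_\Hc(x_\mu\Hc, x_\lambda\Hc)$ already established, and then working one Hom-space at a time. Fix $\lambda,\mu \in \pi$. By the isomorphism just constructed, $\Hom_\Hc(x_\mu\Hc, x_\lambda\Hc) \cong \AZ^{{}^\lambda W^\mu}$, sending $f \mapsto (c_{\lambda,w,\mu}(f))_{w \in {}^\lambda W^\mu}$. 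Since $x_\mu$ generates $x_\mu\Hc$ over $\Hc$, an element $\xi_{\lambda,x,\mu}$ is completely determined by its value on $x_\mu$, and by definition $\xi_{\lambda,x,\mu}$ corresponds to the tuple $(\delta_{x,w} q_{x'})_{w \in {}^\lambda W^\mu}$. So it suffices to observe that the family $\{ (\delta_{x,w} q_{x'})_{w} \mid x \in {}^\lambda W^\mu \}$ is an $\AZ$-basis of $\AZ^{{}^\lambda W^\mu}$.

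This is immediate: as $x$ ranges over ${}^\lambda W^\mu$, the tuple $(\delta_{x,w}q_{x'})_w$ is $q_{x'}$ times the standard coordinate vector $\mathbf{e}_x$, and since each $q_{x'}$ is a unit in $\AZ = \Z[p^{\pm1},q^{\pm1}]$ (being a monomial in $p$ and $q$, as $x' \in W_\mu$ and $q_{x'} = q_{s_{i_1}}\cdots q_{s_{i_k}}$ for a reduced expression), the collection $\{ q_{x'}\mathbf{e}_x \mid x \in {}^\lambda W^\mu \}$ is obtained from the standard basis $\{ \mathbf{e}_x \}$ by scaling each vector by a unit; hence it is again a basis. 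Transporting back through the isomorphism $\Hom_\Hc(x_\mu\Hc, x_\lambda\Hc) \cong \AZ^{{}^\lambda W^\mu}$, we conclude that $\{ \xi_{\lambda,x,\mu} \mid x \in {}^\lambda W^\mu \}$ is an $\AZ$-basis of $\Hom_\Hc(x_\mu\Hc, x_\lambda\Hc)$.

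Finally, taking the direct sum over all pairs $(\lambda,\mu) \in \pi^2$ and using $\bbS(\pi) = \bigoplus_{\lambda,\mu} \Hom_\Hc(x_\mu\Hc, x_\lambda\Hc)$ (equivalently $\xi_{\lambda,x,\mu} = p_\lambda \xi_{\lambda,x,\mu} p_\mu$), we obtain that $\{ \xi_{\lambda,x,\mu} \mid \lambda,\mu \in \pi,\ x \in {}^\lambda W^\mu \}$ is an $\AZ$-basis of $\bbS(\pi)$, which is the claim. There is essentially no obstacle here: the entire content has been front-loaded into the $\Hom$-space parametrization lemma and Lemma \ref{double coset}, which guarantees $J_x$ (hence $x'$ and $q_{x'}$) is well-defined; the only thing to check is that scaling standard basis vectors by units preserves the basis property, which is routine. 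If one wanted to be scrupulous one could instead phrase the argument via an explicit unitriangular (indeed diagonal, up to units) change-of-basis matrix between $\{\xi_{\lambda,x,\mu}\}$ and the image of the standard basis under the inverse isomorphism, but that is the same observation dressed up.
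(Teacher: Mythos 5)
Your proposal is correct and follows essentially the same route as the paper, which simply declares the proposition ``clear'' after establishing the decomposition $\bbS(\pi)=\bigoplus_{\lambda,\mu}\Hom_\Hc(x_\mu\Hc,x_\lambda\Hc)$ and the parametrization $\Hom_\Hc(x_\mu\Hc,x_\lambda\Hc)\cong\AZ^{{}^\lambda W^\mu}$. You have merely written out the one routine detail left implicit, namely that $q_{x'}$ is a unit in $\AZ$ so the tuples $(\delta_{x,w}q_{x'})_w$ still form a basis.
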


For each $\lambda,\mu \in \pi$, $x \in {}^\lambda W^\mu$, set
\begin{align}
\eta_{\lambda,x,\mu} = q_{w_\lambda xx'} \sum_{w \in W_\lambda x W_\mu} q_w\inv H_w. \nonumber
\end{align}

\begin{lem}\label{property of eta}
Let $\lambda,\mu \in \pi$, $x \in {}^\lambda W^\mu$.
\begin{enumerate}
\item $\eta_{\lambda,x,\mu}^\flat = \eta_{\mu,x\inv,\lambda}$.
\item $\xi_{\lambda,x,\mu}(x_\mu) = \eta_{\lambda,x,\mu} = \frac{1}{P_\mu}\eta_{\lambda,x,\mu} \cdot x_\mu = \frac{1}{P_\lm} x_\lm \cdot \eta_{\lm,x,\mu}$.
\item $\ol{\xi_{\lambda,e,\mu}} = \xi_{\lambda,e,\mu}$, where $e$ denotes the identity element of $W$.
\end{enumerate}
\end{lem}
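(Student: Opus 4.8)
The plan is to prove the three assertions in order, each time reducing to a short computation controlled by the double coset combinatorics of Lemma \ref{double coset} together with the elementary identities for the symmetrizers $x_J$ in Lemma \ref{property of x_J}. For $(1)$ I would apply $(\cdot)^\flat$ termwise to $\eta_{\lambda,x,\mu} = q_{w_\lambda xx'}\sum_{w\in W_\lambda xW_\mu}q_w\inv H_w$: since $H_w^\flat = H_{w\inv}$, $q_{w\inv} = q_w$, and $(W_\lambda xW_\mu)\inv = W_\mu x\inv W_\lambda$, this gives $\eta_{\lambda,x,\mu}^\flat = q_{w_\lambda xx'}\sum_{w'\in W_\mu x\inv W_\lambda}q_{w'}\inv H_{w'}$, which already has the shape of $\eta_{\mu,x\inv,\lambda}$ apart from the scalar prefactor. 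By Lemma \ref{double coset} every element of $W_\lambda xW_\mu$ is uniquely $uxv$ with $u\in W_\lambda$, $v\in{}^{J_x}W_\mu$ and $\ell(uxv)=\ell(u)+\ell(x)+\ell(v)$, so its longest element is $w_\lambda xx'$; applying the lemma to $x\inv\in{}^\mu W^\lambda$, the longest element of $W_\mu x\inv W_\lambda$ is $w_\mu x\inv x''$. These two double cosets being mutually inverse, their longest elements are inverse to each other, whence $q_{w_\lambda xx'} = q_{w_\mu x\inv x''}$ and $(1)$ follows.

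For $(2)$ I would first compute $\xi_{\lambda,x,\mu}(x_\mu)$ from the defining tuple $(\delta_{x,w}q_{x'})_{w\in{}^\lambda W^\mu}$, getting $\xi_{\lambda,x,\mu}(x_\mu) = q_{x'}\sum_y q_y\inv x_\lambda H_{xy}$ with $y$ ranging over $\{y\in W_\mu : xy\in{}^\lambda W\}$, which by Lemma \ref{double coset} equals ${}^{J_x}W_\mu$ with $\ell(xy)=\ell(x)+\ell(y)$ there. Expanding $x_\lambda H_{xy} = q_{w_\lambda}\sum_{u\in W_\lambda}q_u\inv H_{uxy}$ (valid since $\ell(uxy)=\ell(u)+\ell(xy)$) and comparing with $\eta_{\lambda,x,\mu}=q_{w_\lambda}q_xq_{x'}\sum_{u,y}q_u\inv q_x\inv q_y\inv H_{uxy}$ (reindexing $W_\lambda xW_\mu$ by the pairs $(u,y)$, so $q_{uxy}=q_uq_xq_y$) yields $\xi_{\lambda,x,\mu}(x_\mu)=\eta_{\lambda,x,\mu}$. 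The remaining two identities are then formal: applying the right $\Hc$-linear map $\xi_{\lambda,x,\mu}$ to $x_\mu^2 = P_\mu x_\mu$ gives $\eta_{\lambda,x,\mu}x_\mu = P_\mu\eta_{\lambda,x,\mu}$, and since $\eta_{\lambda,x,\mu}\in x_\lambda\Hc$ (evident from the factorization $\eta_{\lambda,x,\mu} = x_\lambda\cdot q_{x'}\sum_{y\in{}^{J_x}W_\mu}q_y\inv H_{xy}$), writing $\eta_{\lambda,x,\mu}=x_\lambda H$ and using $x_\lambda^2 = P_\lambda x_\lambda$ gives $x_\lambda\eta_{\lambda,x,\mu}=P_\lambda\eta_{\lambda,x,\mu}$.

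For $(3)$, observe that $\ol{\ \cdot\ }$ is an algebra automorphism of $\Hc$ fixing every $x_J$, hence it preserves the right ideals $x_\mu\Hc$ and $x_\lambda\Hc$ and carries right $\Hc$-homs to right $\Hc$-homs; thus $\ol{\xi_{\lambda,e,\mu}}$ is the right $\Hc$-hom determined by $\ol{\xi_{\lambda,e,\mu}}(x_\mu)=\ol{\xi_{\lambda,e,\mu}(x_\mu)}=\ol{\eta_{\lambda,e,\mu}}$, so it suffices to show $\ol{\eta_{\lambda,e,\mu}}=\eta_{\lambda,e,\mu}$. The crux is the identity $\eta_{\lambda,e,\mu}=P_K\inv\,x_\lambda x_\mu$ with $K:=I_\lambda\cap I_\mu$: from part $(2)$, $\eta_{\lambda,e,\mu}=x_\lambda z$ with $z=q_{x'}\sum_{y\in{}^{J_e}W_\mu}q_y\inv H_y$; the set ${}^{J_e}W_\mu$ is exactly ${}^K W_\mu$ (both are the minimal-length transversal of $(W_\lambda\cap W_\mu)\backslash W_\mu = W_K\backslash W_\mu$), the length-additive decomposition $W_\mu=W_K\cdot{}^K W_\mu$ gives $x_\mu=x_K z$, and $x_\lambda x_K=P_K x_\lambda$ since $K\subseteq I_\lambda$ (Lemma \ref{property of x_J} $(1)$), so $x_\lambda x_\mu = x_\lambda x_K z = P_K x_\lambda z = P_K\eta_{\lambda,e,\mu}$. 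Finally $\ol{x_\lambda x_\mu}=\ol{x_\lambda}\,\ol{x_\mu}=x_\lambda x_\mu$ and $\ol{P_K}=P_K$ (apply $\ol{\ \cdot\ }$ to $x_K^2=P_Kx_K$ and use $\ol{x_K}=x_K$), so $\ol{\eta_{\lambda,e,\mu}}=P_K\inv x_\lambda x_\mu=\eta_{\lambda,e,\mu}$.

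The main obstacle is entirely combinatorial rather than conceptual: one must extract cleanly from Lemma \ref{double coset} that $\{y\in W_\mu : xy\in{}^\lambda W\}={}^{J_x}W_\mu$ with additive lengths (used repeatedly in $(2)$), and, for $(3)$, identify $\eta_{\lambda,e,\mu}$ up to the scalar $P_K$ with $x_\lambda x_\mu$ via the nested decomposition $W_\mu=W_K\cdot{}^K W_\mu$ and the equality ${}^{J_e}W_\mu={}^K W_\mu$. Once these bookkeeping points are settled, all three parts are immediate.
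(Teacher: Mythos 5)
Your proof is correct and follows essentially the same route as the paper: part $(1)$ via termwise application of $\flat$ together with the observation that the longest elements of $W_\lambda x W_\mu$ and $W_\mu x\inv W_\lambda$ are mutually inverse, part $(2)$ via the length-additive double coset decomposition and then the formal identities $x_\mu^2 = P_\mu x_\mu$, $\eta_{\lambda,x,\mu} \in x_\lambda \clH$, and part $(3)$ by reducing to bar-invariance of $\eta_{\lambda,e,\mu} = \frac{1}{P_{I_\lambda \cap I_\mu}} x_\lambda x_\mu$. Your write-up merely makes explicit some bookkeeping (e.g.\ ${}^{J_e}W_\mu = {}^{I_\lambda \cap I_\mu}W_\mu$ and $x_\mu = x_{I_\lambda\cap I_\mu} z$) that the paper leaves implicit.
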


\begin{proof}
\begin{enumerate}
\item By the definition of $x'$, we have $y := w_\lambda x x'$ is the longest element in $W_\lm x W_\mu$. Also, it is easily checked that the map $W \rightarrow W,\ w \mapsto w\inv$ gives a bijection $W_\lambda x W_\mu \rightarrow W_\mu x\inv W_\lambda$. Since this bijection preserves the length, $y\inv$ is the longest element in $W_\mu x\inv W_\lm$. Then, we compute $\eta_{\lambda,x\mu}^\flat$ as follows:
\begin{align}
\eta_{\lambda,x\mu}^\flat = q_y \sum_{w \in W_\lambda x W_\mu} q_w\inv H_{w\inv} = q_{y} \sum_{w \in W_\mu x\inv W_\lambda} q_{w\inv}\inv H_w = q_{y\inv} \sum_{w \in W_\mu x\inv W_\lambda} q_w\inv H_w = \eta_{\mu,x\inv,\lambda}. \nonumber
\end{align}
\item By the definition of $\xi_{\lambda,x,\mu}$, we have
\begin{align}
\begin{split}
\xi_{\lambda,x,\mu}(x_\mu) &= \sum_{\substack{y \in W_\mu \\ xy \in {}^\lambda W}} q_{x'}q_y\inv x_\lambda H_{xy} \\
&=  \sum_{\substack{y \in W_\mu \\ xy \in {}^\lambda W}} \sum_{z \in W_\lambda} q_{x'}q_y\inv q_{w_\lambda} q_z\inv H_{zxy} \qu (\text{by the definition of $x_\lambda$})\\
&= \sum_{w \in W_\lambda x W_\mu} q_{w_\lambda} q_x q_{x'} q_w\inv H_w = \eta_{\lambda,x,\mu}.
\end{split} \nonumber
\end{align}
This proves the first equation. Next, we have
\begin{align}
\eta_{\lm,x,\mu} \cdot x_\mu = \xi_{\lm,x,\mu}(x_\mu^2) = P_\mu \xi_{\lm,x,\mu}(x_\mu) = P_\mu \eta_{\lm,x,\mu}, \nonumber
\end{align}
which implies the second equality. Finally, the third equality follows from the fact that $\eta_{\lm,x,\mu} = \xi_{\lm,x,\mu}(x_\mu) \in x_\lm \clH$.
\item It suffices to check that $\ol{\xi_{\lambda,e,\mu}}(x_\nu) = \xi_{\lambda,e,\mu}(x_\nu)$ for all $\nu \in \pi$. Only the non-trivial case is when $\nu = \mu$. Since we have
\begin{align}
\ol{\xi_{\lambda,e,\mu}}(x_\mu) = \ol{\xi_{\lambda,e,\mu}(x_\mu)} = \ol{\eta_{\lambda,e,\mu}}, \nonumber
\end{align}
the problem is reduced to proving that $\eta_{\lambda,e,\mu}$ is fixed under the involution $\ol{\ \cdot \ }$. One can write
\begin{align}
\eta_{\lambda,e,\mu} = \sum_{w \in W_\lambda W_\mu} q_{w_\lambda} q_{e'} q_w\inv H_w = x_\lm \sum_{y \in {}^\lm W_\mu} q_{e'} q_y\inv H_y. \nonumber
\end{align}
On the other hand, we have
$$
x_\lm x_\mu = x_\lm x_{I_\lm \cap I_\mu} \sum_{y \in {}^\lm W_\mu} q_{e'} q_y\inv H_y = P_{I_\lm \cap I_\mu} x_\lm \sum_{y \in {}^\lm W_\mu} q_{e'} q_y\inv H_y.
$$
Hence, we obtain
$$
\eta_{\lm,e,\mu} = \frac{1}{P_{I_\lm \cap I_\mu}} x_\lm x_\mu,
$$
which is invariant under $\ol{\ \cdot \ }$. Thus, the proof completes.
\end{enumerate}
\end{proof}

\begin{prop}\label{flat on bbS}
The linear map $\flat:\bbS(\pi) \rightarrow \bbS(\pi);\ \xi_{\lambda,x,\mu} \mapsto \xi_{\mu,x\inv,\lambda}$ defines an $\AZ$-algebra anti-automorphism on $\bbS(\pi)$.
\end{prop}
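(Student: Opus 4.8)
The plan is to establish that the proposed map $\flat$ is a well-defined anti-automorphism by exploiting the bilinear form $\la \cdot \mid \cdot \ra$ on $\clH$ and the flat anti-automorphism $(\cdot)^\flat$ on $\clH$ already in hand. First I would verify that $\flat$ is well-defined and $\AZ$-linear: since $\{ \xi_{\lambda,x,\mu} \mid \lambda,\mu \in \pi,\ x \in {}^\lambda W^\mu \}$ is a basis of $\bbS(\pi)$ (by the preceding Proposition) and since $x \mapsto x\inv$ carries ${}^\lambda W^\mu$ bijectively onto ${}^\mu W^\lambda$, the assignment $\xi_{\lambda,x,\mu} \mapsto \xi_{\mu,x\inv,\lambda}$ permutes a basis of $\bbS(\pi)$ and hence extends uniquely to an $\AZ$-linear automorphism of $\bbS(\pi)$; applying it twice gives the identity, so $\flat$ is an involution. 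It remains to check that it reverses products.

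For the anti-multiplicativity, the cleanest route is to identify $\flat$ intrinsically via an adjunction. Recall from Lemma \ref{property of eta} (2) that $\xi_{\lambda,x,\mu}(x_\mu) = \eta_{\lambda,x,\mu}$ and from Lemma \ref{property of eta} (1) that $\eta_{\lambda,x,\mu}^\flat = \eta_{\mu,x\inv,\lambda}$; combining these with the relation $\la H' \mid HH'' \ra = \la H^\flat H' \mid H'' \ra$ and the symmetry of $\la \cdot \mid \cdot \ra$, I would show that for $f \in \Hom_\Hc(x_\mu\clH, x_\lambda\clH)$ and $g \in \Hom_\Hc(x_\nu\clH,x_\mu\clH)$, the element $f^\flat \in \Hom_\Hc(x_\lambda\clH,x_\mu\clH)$ is characterized by the identity
\begin{align}
\la f^\flat(h) \mid h' \ra = \la h \mid f(h') \ra \qquad \Forall h \in x_\lambda\clH,\ h' \in x_\mu\clH. \nonumber
\end{align}
Granting this intrinsic description, anti-multiplicativity is immediate: for composable $f,g$ one computes $\la (fg)^\flat(h) \mid h'\ra = \la h \mid fg(h') \ra = \la f^\flat(h) \mid g(h') \ra = \la g^\flat f^\flat(h) \mid h' \ra$, and since $\la \cdot \mid \cdot \ra$ restricts to a nondegenerate pairing between the relevant summands $x_\mu\clH$ and (via $\flat$ on $\clH$) $x_\lambda\clH$, this forces $(fg)^\flat = g^\flat f^\flat$. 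The nondegeneracy I need here is essentially that established around Proposition \ref{dual of parabolic KL}, transported along the block decomposition $\bbS(\pi) = \bigoplus p_\lambda\bbS(\pi)p_\mu$.

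The main obstacle I anticipate is proving the intrinsic characterization displayed above — specifically, checking that the $\flat$ defined combinatorially on the basis $\xi_{\lambda,x,\mu}$ really does implement the adjunction with respect to $\la \cdot \mid \cdot \ra$. This reduces to evaluating both sides on the generators $x_\lambda$ and $x_\mu$ of the cyclic modules $x_\lambda\clH$ and $x_\mu\clH$: one must verify $\la \eta_{\mu,x\inv,\lambda} \mid x_\mu H_w \ra = \la x_\lambda H_v \mid \eta_{\lambda,x,\mu} \ra$ for all $v,w$, which follows by moving the $H$-factors across via $\la H'\mid HH''\ra = \la H^\flat H' \mid H''\ra$ together with $\eta_{\lambda,x,\mu}^\flat = \eta_{\mu,x\inv,\lambda}$ and the identity $x_\lambda^\flat = x_\lambda$ from Lemma \ref{property of x_J} (2). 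Once this computation is in place, everything else is formal. An alternative, if the adjunction proves awkward to nail down uniformly over all double cosets, is to check anti-multiplicativity directly on basis elements by expanding $\xi_{\lambda,x,\mu} \circ \xi_{\mu,y,\nu}$ via Lemma \ref{double coset} and the structure constants $c_{\lambda,w,\mu}$, but this is considerably more laborious and I would prefer the bilinear-form argument.
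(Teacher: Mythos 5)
Your overall plan---define $\flat$ on the basis $\{\xi_{\lambda,x,\mu}\}$ (well-definedness and involutivity are fine, since $x \mapsto x\inv$ carries ${}^\lambda W^\mu$ bijectively onto ${}^\mu W^\lambda$) and deduce anti-multiplicativity formally from an adjunction with respect to a bilinear form---is workable and genuinely different from the paper's proof, which checks $(\xi_{\lambda,x,\mu}\cdot\xi_{\mu,y,\nu})^\flat = \xi_{\nu,y\inv,\mu}\cdot\xi_{\mu,x\inv,\lambda}$ directly by applying both sides to $x_\nu$ (resp.\ $x_\lambda$) and comparing the resulting $\eta$'s via Lemma \ref{property of eta}. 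However, the displayed identity your argument rests on is false as stated: for $f = \xi_{\lambda,x,\mu}$, $h \in x_\lambda\clH$, $h' \in x_\mu\clH$, the \emph{unnormalized} form satisfies $\la f^\flat(h) \mid h' \ra = \tfrac{P_\mu}{P_\lambda}\la h \mid f(h') \ra$, not equality. Indeed, writing $h = x_\lambda H_v$, $h' = x_\mu H_w$ and moving factors across exactly as you propose, one finds $\la \eta_{\mu,x\inv,\lambda}H_v \mid x_\mu H_w \ra = \la x_\mu \eta_{\mu,x\inv,\lambda}H_v \mid H_w \ra = P_\mu \la \eta_{\mu,x\inv,\lambda}H_v \mid H_w \ra$, whereas $\la x_\lambda H_v \mid \eta_{\lambda,x,\mu}H_w \ra = \la \eta_{\mu,x\inv,\lambda}x_\lambda H_v \mid H_w \ra = P_\lambda \la \eta_{\mu,x\inv,\lambda}H_v \mid H_w \ra$, using $x_\mu^\flat = x_\mu$ and Lemma \ref{property of eta} (1),(2). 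Since $P_\lambda \neq P_\mu$ in general and the remaining pairing is nonzero for suitable $v,w$ (the form $\la\cdot\mid\cdot\ra$ is nondegenerate because $d$ is an isomorphism), the verification step you sketch would not close; the missing ingredient is precisely the renormalization the paper builds into $\la\cdot\mid\cdot\ra_J := \tfrac{1}{P_J}\la\cdot\mid\cdot\ra$.

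The gap is repairable within your strategy: state the adjunction with respect to the block-normalized form, i.e.\ $\la m \mid \xi_{\lambda,x,\mu}(n) \ra_\lambda = \la \xi_{\mu,x\inv,\lambda}(m) \mid n \ra_\mu$ --- this is exactly the proposition the paper proves immediately after the present one, and its proof uses only the definition of $\flat$ on basis elements, so there is no circularity --- or keep the raw form and observe that the stray factors $P_\mu/P_\lambda$ telescope when two adjunctions are chained, so $(fg)^\flat = g^\flat f^\flat$ still follows. Combined with the nondegeneracy of $\la\cdot\mid\cdot\ra_\pi$ on each summand $x_\lambda\clH$ (coming from the dual pair $\{{}^JC_w\}$, $\{{}^JD_{w_Jww_0}\}$ of Proposition \ref{dual of parabolic KL}), your formal computation then does yield anti-multiplicativity. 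Note also that your fallback option (a direct check on basis elements) is essentially the paper's route, except that the paper avoids the structure constants $c_{\lambda,w,\mu}$ altogether by evaluating both products at $x_\nu$ and transporting the identity through $\eta_{\lambda,x,\mu}^\flat = \eta_{\mu,x\inv,\lambda}$.
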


\begin{proof}
We have to verify that $(\xi_{\lambda,x,\mu} \cdot \xi_{\kappa,y,\nu})^{\flat} = \xi_{\nu,y\inv,\kappa} \cdot \xi_{\mu,x\inv,\lambda}$ for all $\lambda,\mu,\nu,\kappa$ and $x \in {}^\lambda W^{\mu}$, $y \in {}^\kappa W^\nu$. Since the both sides are equal to zero unless $\kappa = \mu$, we may assume that $\kappa = \mu$. Let us write
\begin{align}\label{prod of xi}
&\xi_{\lambda,x,\mu} \cdot \xi_{\mu,y,\nu} = \sum_{z \in {}^\lambda W^\nu} c_z \xi_{\lambda,z,\nu} \qu \Forsome c_z \in \AZ.
\end{align}
Applying the both sides to $x_\nu \in \bbT(\pi)$, by Lemma \ref{property of eta} $(2)$, we obtain
\begin{align}\label{prod of eta}
\frac{1}{P_\mu} \eta_{\lambda,x,\mu} \eta_{\mu,y,\nu} = \sum_{z \in {}^\lambda W^\mu} c_z \eta_{\lambda,z,\nu}.
\end{align}

To prove the assertion, we compute as follows:
\begin{align}
\begin{split}
\xi_{\nu,y\inv,\mu} \cdot \xi_{\mu,x\inv,\lambda}(x_\lambda) &= \frac{1}{P_\mu}\eta_{\nu,y\inv,\mu} \cdot \eta_{\mu, x\inv,\lambda} \qqu (\text{by Lemma \ref{property of eta} $(2)$})\\
&= (\frac{1}{P_\mu}\eta_{\lambda,x,\mu} \cdot \eta_{\mu,y,\nu})^\flat \qqu (\text{by Lemma \ref{property of eta} $(1)$})\\
&= (\sum_{z \in {}^\lambda W^\nu} c_z \eta_{\lambda,z,\nu})^\flat \qqu (\text{by equation \eqref{prod of eta}})\\
&= \sum_{z \in {}^\lambda W^\nu} c_z \eta_{\nu,z\inv,\lambda} \qqu (\text{by Lemma \ref{property of eta} $(1)$})\\
&= \sum_{z \in {}^\lambda W^\nu} c_z \xi_{\nu,z\inv,\lambda}(x_\lambda) \qqu (\text{by Lemma \ref{property of eta} $(2)$})\\
&= (\xi_{\lambda,x,\mu} \cdot \xi_{\mu,y,\nu})^\flat(x_\lambda) \qqu (\text{by equation \eqref{prod of xi}}).
\end{split} \nonumber
\end{align}
This shows that $\xi_{\nu,y\inv,\mu} \cdot \xi_{\mu,x\inv,\lambda} = (\xi_{\lambda,x,\mu} \cdot \xi_{\mu,y,\nu})^\flat$, and hence, the proof completes.
\end{proof}

Recall the bilinear form $\la \cdot \mid \cdot \ra_\lambda = \la \cdot \mid \cdot \ra_{I_\lm}$ on $x_\lambda \Hc$ defined in Section \ref{Parabolic Kazhdan-Lusztig bases}.

\begin{prop}
Let $\lambda,\mu \in \pi$, $m \in x_\lambda \Hc$, and $n \in x_\mu \Hc$. Then, for each $w \in {}^\lambda W^\mu$, we have
\begin{align}
\la m \mid \xi_{\lambda,w,\mu}(n) \ra_\lambda = \la \xi_{\lambda,w,\mu}^\flat(m) \mid n \ra_\mu. \nonumber
\end{align}
\end{prop}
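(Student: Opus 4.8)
The plan is to realize $\xi_{\lambda,w,\mu}$, viewed as a map on $x_\mu\clH$, as left multiplication by a scalar multiple of the Hecke-algebra element $\eta_{\lambda,w,\mu}$, and then to deduce the assertion from the adjunction identity $\la H' \mid HH'' \ra = \la H^\flat H' \mid H'' \ra$ on $\clH$, together with $\eta_{\lambda,w,\mu}^\flat = \eta_{\mu,w\inv,\lambda}$ (Lemma \ref{property of eta} $(1)$) and $\xi_{\lambda,w,\mu}^\flat = \xi_{\mu,w\inv,\lambda}$ (Proposition \ref{flat on bbS}).

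First I would establish the following description: for every $n \in x_\mu\clH$ one has $\xi_{\lambda,w,\mu}(n) = \frac{1}{P_\mu}\eta_{\lambda,w,\mu}\, n$, the right-hand side being computed inside $\clH$ (and lying in $x_\lambda\clH$ since $\eta_{\lambda,w,\mu}\in x_\lambda\clH$). Indeed, writing $n = x_\mu h$ with $h\in\clH$, the fact that $\xi_{\lambda,w,\mu}$ is a homomorphism of right $\clH$-modules and Lemma \ref{property of eta} $(2)$ give $\xi_{\lambda,w,\mu}(n) = \xi_{\lambda,w,\mu}(x_\mu)h = \eta_{\lambda,w,\mu}h$, while the identity $\eta_{\lambda,w,\mu}x_\mu = P_\mu\eta_{\lambda,w,\mu}$ (also Lemma \ref{property of eta} $(2)$) gives $\eta_{\lambda,w,\mu}n = \eta_{\lambda,w,\mu}x_\mu h = P_\mu\eta_{\lambda,w,\mu}h$; hence the two coincide, and in particular the expression $\frac{1}{P_\mu}\eta_{\lambda,w,\mu}n$ is independent of the chosen factorization of $n$. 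Applying the same reasoning with $(\lambda,w,\mu)$ replaced by $(\mu,w\inv,\lambda)$ yields $\xi_{\mu,w\inv,\lambda}(m) = \frac{1}{P_\lambda}\eta_{\mu,w\inv,\lambda}\, m$ for $m \in x_\lambda\clH$.

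Next I would simply compute both sides. Recalling that $\la\cdot\mid\cdot\ra_\lambda = \frac{1}{P_\lambda}\la\cdot\mid\cdot\ra$ on $x_\lambda\clH$ and $\la\cdot\mid\cdot\ra_\mu = \frac{1}{P_\mu}\la\cdot\mid\cdot\ra$ on $x_\mu\clH$, the left-hand side is
$$\la m \mid \xi_{\lambda,w,\mu}(n) \ra_\lambda = \frac{1}{P_\lambda P_\mu}\la m \mid \eta_{\lambda,w,\mu}\, n \ra = \frac{1}{P_\lambda P_\mu}\la \eta_{\lambda,w,\mu}^\flat\, m \mid n \ra = \frac{1}{P_\lambda P_\mu}\la \eta_{\mu,w\inv,\lambda}\, m \mid n \ra,$$
using the adjunction identity for $\la\cdot\mid\cdot\ra$ and Lemma \ref{property of eta} $(1)$. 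Likewise, since $\xi_{\lambda,w,\mu}^\flat = \xi_{\mu,w\inv,\lambda}$ by Proposition \ref{flat on bbS}, the right-hand side is
$$\la \xi_{\lambda,w,\mu}^\flat(m) \mid n \ra_\mu = \la \xi_{\mu,w\inv,\lambda}(m) \mid n \ra_\mu = \frac{1}{P_\mu P_\lambda}\la \eta_{\mu,w\inv,\lambda}\, m \mid n \ra.$$
The two expressions agree, which is exactly the claimed identity.

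I do not expect a genuine obstacle: the argument is a direct computation once one notices that $\xi_{\lambda,w,\mu}$ is left multiplication by $\tfrac{1}{P_\mu}\eta_{\lambda,w,\mu}$, which turns the sought-after adjointness of $\xi_{\lambda,w,\mu}$ into the built-in adjointness of multiplication with respect to $\la\cdot\mid\cdot\ra$. The only points requiring care are verifying that this description of $\xi_{\lambda,w,\mu}$ is independent of the factorization $n = x_\mu h$, and keeping track of which normalized form, $\la\cdot\mid\cdot\ra_\lambda$ or $\la\cdot\mid\cdot\ra_\mu$, occurs on each side so that the powers of $P_\lambda$ and $P_\mu$ cancel symmetrically.
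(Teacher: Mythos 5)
Your argument is correct and is essentially the paper's own proof: both reduce the statement to Lemma \ref{property of eta} $(1)$–$(2)$, the adjunction $\la H' \mid HH'' \ra = \la H^\flat H' \mid H'' \ra$, Proposition \ref{flat on bbS}, and the normalizations $\la\cdot\mid\cdot\ra_\lambda = \frac{1}{P_\lambda}\la\cdot\mid\cdot\ra$, the only cosmetic difference being that you compute the two sides separately (and spell out that $\xi_{\lambda,w,\mu}$ is left multiplication by $\tfrac{1}{P_\mu}\eta_{\lambda,w,\mu}$ on all of $x_\mu\clH$, a step the paper uses implicitly via right $\clH$-linearity) while the paper writes a single chain of equalities.
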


\begin{proof}
We compute as follows:
\begin{align}
\begin{split}
\la m \mid \xi_{\lambda,w,\mu}(n) \ra_\lambda
&= \frac{1}{P_\lambda} \la m \mid \xi_{\lambda,w,\mu}(n) \ra \\
&= \frac{1}{P_\lambda P_\mu} \la m \mid \eta_{\lambda,w,\mu} n \ra  \qqu (\text{by Lemma \ref{property of eta} $(2)$})\\
&= \frac{1}{P_\lambda P_\mu} \la \eta_{\mu,w\inv,\lambda} m \mid n \ra \qqu (\text{by Lemma \ref{property of eta} $(1)$})\\
&= \frac{1}{P_\mu} \la \xi_{\mu,w\inv,\lambda}(m) \mid n \ra \qqu (\text{by Lemma \ref{property of eta} $(2)$})\\
&= \la \xi_{\lambda,w,\mu}^\flat(m) \mid n \ra_\mu. \nonumber
\end{split}
\end{align}
This proves the proposition.
\end{proof}

Define a bilinear form $\la \cdot \mid \cdot \ra_\pi$ on $\bbT(\pi)$ by $\la m \mid n \ra_\pi := \delta_{\lambda,\mu} \la m \mid n \ra_\lambda$ for all $\lambda,\mu \in \pi$, $m \in x_\lambda \Hc$, $n \in x_\mu \Hc$.

\begin{cor}
The two bases $\{ {}^\lambda C_w \mid \lambda \in \pi,\ w \in {}^\lambda W \}$ and $\{ {}^\lambda D_{w_\lambda w w_0} \mid \lambda \in \pi,\ w \in {}^\lambda W \}$ of $\bbT(\pi)$ are dual to each other with respect to the bilinear form $\la \cdot \mid \cdot \ra_\pi$. Moreover, for all $m,n \in \bbT(\pi)$ and $x \in \bbS(\pi)$, we have $\la m \mid xn \ra_\pi = \la x^\flat m \mid n \ra_\pi$.
\end{cor}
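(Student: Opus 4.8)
The plan is to reduce both assertions to the single-parabolic statements already proved, using only the block-diagonal shape of $\la \cdot \mid \cdot \ra_\pi$ and the explicit description of $\flat$ on $\bbS(\pi)$. First I would record the elementary combinatorial fact that the map ${}^\lambda W \to {}^\lambda W$, $w \mapsto w_\lambda w w_0$, is a well-defined involution of ${}^\lambda W$; this is needed just to make the statement well-posed, since the dual parabolic Kazhdan--Lusztig basis is indexed by ${}^\lambda W$. Indeed, for $w \in {}^\lambda W$ one has $\ell(w_\lambda w w_0) = \ell(w_0) - \ell(w_\lambda) - \ell(w)$, because $\ell(w_\lambda w) = \ell(w_\lambda) + \ell(w)$ and $w_0$ is the longest element; and for $j \in I_\lambda$ one has $s_j w_\lambda \in W_\lambda$, so $\ell(s_j w_\lambda w) = \ell(s_j w_\lambda) + \ell(w) = \ell(w_\lambda) - 1 + \ell(w)$, whence $\ell(s_j(w_\lambda w w_0)) = \ell(w_\lambda w w_0) + 1$. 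Thus $w_\lambda w w_0 \in {}^\lambda W$, and the map is visibly involutive since $w_\lambda$ and $w_0$ are involutions. In particular $\{ {}^\lambda D_{w_\lambda w w_0} \mid \lambda \in \pi,\ w \in {}^\lambda W \}$ coincides, as a set, with the dual Kazhdan--Lusztig basis of $\bbT(\pi)$, hence is a basis.

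Granting this, the duality is immediate. Fix $\lambda,\mu \in \pi$, $y \in {}^\lambda W$, $v \in {}^\mu W$. Since ${}^\lambda C_y \in x_\lambda \Hc$ and ${}^\mu D_{w_\mu v w_0} \in x_\mu \Hc$, the definition of $\la \cdot \mid \cdot \ra_\pi$ gives $\la {}^\lambda C_y \mid {}^\mu D_{w_\mu v w_0} \ra_\pi = 0$ unless $\lambda = \mu$, in which case it equals $\la {}^\lambda C_y \mid {}^\lambda D_{w_\lambda v w_0} \ra_\lambda$. Applying Proposition \ref{dual of parabolic KL} with $w = w_\lambda v w_0 \in {}^\lambda W$ yields $\la {}^\lambda C_y \mid {}^\lambda D_{w_\lambda v w_0} \ra_\lambda = \delta_{y,\, w_\lambda(w_\lambda v w_0)w_0} = \delta_{y,v}$, which is the asserted orthonormality.

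For the adjointness formula, by bilinearity it suffices to take $m \in x_\lambda \Hc$, $n \in x_\nu \Hc$ and $x = \xi_{\kappa,w,\mu}$ a basis element of $\bbS(\pi)$, so that $x^\flat = \xi_{\mu,w\inv,\kappa}$ by Proposition \ref{flat on bbS}. If $\nu \neq \mu$, then $xn = \xi_{\kappa,w,\mu}(n) = 0$ while $x^\flat m \in x_\mu \Hc$ is orthogonal to $n \in x_\nu \Hc$ with respect to $\la \cdot \mid \cdot \ra_\pi$, so both sides vanish; symmetrically, if $\lambda \neq \kappa$, then $x^\flat m = \xi_{\mu,w\inv,\kappa}(m) = 0$ while $xn \in x_\kappa \Hc$ is orthogonal to $m \in x_\lambda \Hc$, so both sides vanish. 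In the remaining case $\lambda = \kappa$, $\nu = \mu$, both $xn$ and $m$ lie in $x_\lambda \Hc$ and both $x^\flat m$ and $n$ lie in $x_\mu \Hc$, so $\la m \mid xn \ra_\pi = \la m \mid \xi_{\lambda,w,\mu}(n) \ra_\lambda$ and $\la x^\flat m \mid n \ra_\pi = \la \xi_{\mu,w\inv,\lambda}(m) \mid n \ra_\mu$; these agree by the adjointness proposition proved just above, together with $\xi_{\lambda,w,\mu}^\flat = \xi_{\mu,w\inv,\lambda}$.

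The corollary is thus pure bookkeeping once the block structure of $\la \cdot \mid \cdot \ra_\pi$, Propositions \ref{flat on bbS} and \ref{dual of parabolic KL}, and the preceding adjointness proposition are at hand; the only step that is not a mechanical unravelling is the combinatorial observation that $w \mapsto w_\lambda w w_0$ stabilises ${}^\lambda W$, and even that reduces to a short length computation. I do not expect any genuine obstacle.
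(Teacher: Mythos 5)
Your proposal is correct and follows exactly the route the paper intends: the corollary is stated there without proof precisely because it reduces, block by block via the definition $\la m \mid n \ra_\pi = \delta_{\lambda,\mu}\la m \mid n \ra_\lambda$, to Proposition \ref{dual of parabolic KL}, the adjointness proposition immediately preceding it, and Proposition \ref{flat on bbS}. Your additional length computation showing that $w \mapsto w_\lambda w w_0$ is an involution of ${}^\lambda W$ is a correct (and welcome) verification of a point the paper leaves implicit.
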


\subsection{Cell representations}
Let $X \in L(W)$ and $x \in X$. Set 
\begin{align}
\begin{split}
&C_{\leq_L X}(\pi) := \bigoplus_{\lambda \in \pi} \bigoplus_{\substack{w \in {}^\lambda W \\ w_\lambda w \leq_L x}} \AZ {}^\lambda C_w,\qqu D_{\leq_L}(\pi) := \bigoplus_{\lambda \in \pi} \bigoplus_{\substack{w \in {}^\lambda W \\ w \leq_L x}} \AZ {}^\lambda D_w, \\
&C_{<_L X}(\pi) := \bigoplus_{\lambda \in \pi} \bigoplus_{\substack{w \in {}^\lambda W \\ w_\lambda w <_L x}} \AZ {}^\lambda C_w,\qqu D_{\leq_L}(\pi) := \bigoplus_{\lambda \in \pi} \bigoplus_{\substack{w \in {}^\lambda W \\ w <_L x}} \AZ {}^\lambda D_w, \\
&C^L_X(\pi) := C_{\leq_L X}(\pi)/C_{<_L X}(\pi),\qu\qqu D^L_X(\pi) := D_{\leq_L X}(\pi) / D_{<_L X}(\pi).
\end{split} \nonumber
\end{align}
Note that these objects are independent of the choice of $x \in X$. We denote the image of $m \in C_{\leq_L X}(\pi)$ (resp., $D_{\leq_L X}(\pi)$) under the quotient map $C_{\leq_L X}(\pi) \rightarrow C^L_X(\pi)$ (resp., $D_{\leq_L X}(\pi) \rightarrow D^L_X(\pi)$) by $[m]_X$ (resp., $[m]'_X$).

\begin{prop}
Let $X \in L(W)$.
\begin{enumerate}
\item $C_{\leq_L X}(\pi)$ is a $\bbS(\pi)$-submodule of $\bbT(\pi)$.
\item $C_{<_L X}(\pi)$ is a $\bbS(\pi)$-submodule of $\bbT(\pi)$.
\item $C^L_{X}(\pi)$ is a left $\bbS(\pi)$-module having a basis $\{ [{}^\lambda C_w]_X \mid \lambda \in \pi,\ w \in {}^\lambda W \cap w_\lambda X \}$. Here, $w_\lambda X := \{ w_\lambda x \mid x \in X \}$.
\end{enumerate}
\end{prop}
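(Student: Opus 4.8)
The plan is to deduce all three parts from the corresponding facts for the full Hecke algebra $\clH$ established in Section \ref{KL bases}---most importantly that $C_{\leq_L X}$ and $C_{<_L X}$ are left ideals of $\clH$---using the explicit description of the $\bbS(\pi)$-action on $\bbT(\pi)$ furnished by Lemma \ref{property of eta}. Concretely, by Lemma \ref{property of eta} (2) the operator $\xi_{\lambda,w,\mu}$ acts on $x_\mu\clH$ as left multiplication by $\frac{1}{P_\mu}\eta_{\lambda,w,\mu}$ and annihilates $x_\nu\clH$ for $\nu\neq\mu$; and by Proposition \ref{JCw} the parabolic Kazhdan--Lusztig basis elements are honest Kazhdan--Lusztig basis elements, ${}^\nu C_v=C_{w_\nu v}$.

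I would prove (1) and (2) together. Since $\{\xi_{\lambda,x,\mu}\}$ is an $\AZ$-basis of $\bbS(\pi)$ and $\xi_{\lambda,w,\mu}$ is $\AZ$-linear and kills $x_\nu\clH$ for $\nu\neq\mu$, it suffices to show that $\xi_{\lambda,w,\mu}$ sends each basis vector ${}^\mu C_u$ ($u\in{}^\mu W$) lying in $C_{\leq_L X}(\pi)$ back into $C_{\leq_L X}(\pi)$, and likewise for $C_{<_L X}(\pi)$. Fix such a ${}^\mu C_u$, so $w_\mu u\leq_L x$; in particular ${}^\mu C_u=C_{w_\mu u}\in C_{\leq_L X}=\bigoplus_{y\leq_L x}\AZ C_y$. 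By the formula above, $\xi_{\lambda,w,\mu}({}^\mu C_u)=\frac{1}{P_\mu}\eta_{\lambda,w,\mu}\cdot{}^\mu C_u$, and $\eta_{\lambda,w,\mu}\cdot{}^\mu C_u\in C_{\leq_L X}$ because $C_{\leq_L X}$ is a left ideal of $\clH$; hence $\xi_{\lambda,w,\mu}({}^\mu C_u)$ lies in the span of $\{C_y\mid y\leq_L x\}$ over the fraction field of $\AZ$. On the other hand $\xi_{\lambda,w,\mu}({}^\mu C_u)\in x_\lambda\clH$, so in the $\AZ$-basis $\{{}^\lambda C_v\mid v\in{}^\lambda W\}=\{C_{w_\lambda v}\mid v\in{}^\lambda W\}$ its coefficients are actually in $\AZ$; comparing the two expansions and using the linear independence of the Kazhdan--Lusztig basis of $\clH$, the coefficient of ${}^\lambda C_v$ vanishes unless $w_\lambda v\leq_L x$. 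Therefore $\xi_{\lambda,w,\mu}({}^\mu C_u)\in C_{\leq_L X}(\pi)$, which is (1). Replacing $\leq_L$ by $<_L$ everywhere gives (2).

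For (3), granting (1) and (2) the quotient $C^L_X(\pi)$ is a left $\bbS(\pi)$-module. Now $C_{\leq_L X}(\pi)$ and $C_{<_L X}(\pi)$ are free over $\AZ$ on $\{{}^\lambda C_w\mid \lambda\in\pi,\ w\in{}^\lambda W,\ w_\lambda w\leq_L x\}$ and $\{{}^\lambda C_w\mid \lambda\in\pi,\ w\in{}^\lambda W,\ w_\lambda w<_L x\}$ respectively, the latter a subset of the former; hence the images $[{}^\lambda C_w]_X$ of the complementary basis vectors form an $\AZ$-basis of $C^L_X(\pi)$. A basis element ${}^\lambda C_w$ of $C_{\leq_L X}(\pi)$ is complementary exactly when $w_\lambda w\leq_L x$ but not $w_\lambda w<_L x$, i.e.\ $w_\lambda w\simL x$, i.e.\ $w_\lambda w\in X$. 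Since $w_\lambda$ is an involution, for $w\in{}^\lambda W$ the condition $w_\lambda w\in X$ is equivalent to $w\in w_\lambda X$; thus the index set is exactly $\{(\lambda,w)\mid\lambda\in\pi,\ w\in{}^\lambda W\cap w_\lambda X\}$, as claimed.

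The only genuinely delicate point is the step in (1) upgrading ``$\xi_{\lambda,w,\mu}({}^\mu C_u)$ lies in the left ideal $C_{\leq_L X}$ of $\clH$'' to ``$\xi_{\lambda,w,\mu}({}^\mu C_u)$ lies in $C_{\leq_L X}(\pi)$'': this requires tracking the parabolic constraint (membership in $x_\lambda\clH$) simultaneously with the cell constraint, and leaning on the identification $C_{w_\lambda v}={}^\lambda C_v$ together with integrality of the expansion in the parabolic Kazhdan--Lusztig basis. Once the action of $\xi_{\lambda,w,\mu}$ has been rewritten via $\eta_{\lambda,w,\mu}$, everything else is bookkeeping.
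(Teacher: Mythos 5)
Your proposal is correct and follows essentially the same route as the paper: rewrite the action of $\xi_{\lambda,w,\mu}$ on $x_\mu\clH$ as multiplication by $\tfrac{1}{P_\mu}\eta_{\lambda,w,\mu}$ (Lemma \ref{property of eta}), use that $C_{\leq_L X}$ is a left ideal of $\clH$ together with ${}^\lambda C_v=C_{w_\lambda v}$, and then use integrality of the expansion in the parabolic Kazhdan--Lusztig basis of $x_\lambda\clH$ to land back in $C_{\leq_L X}(\pi)$, with (2) identical and (3) by the evident bookkeeping with $w_\lambda w\in X\Leftrightarrow w\in w_\lambda X$. The only cosmetic difference is that where the paper invokes Proposition \ref{from KL to parabolic KL} to track the cell constraint after multiplying by $x_\lambda$, you compare the two expansions directly in the full Kazhdan--Lusztig basis; this is the same argument inlined.
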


\begin{proof}
We will prove only $(1)$ since the proof of $(2)$ is similar to that of $(1)$, and $(3)$ follows from $(1)$ and $(2)$. Fix $x \in X$. In order to show that $C_{\leq_L X}(\pi)$ is a $\bbS(\pi)$-submodule, it suffices to verify that $\xi_{\lambda,y,\mu} {}^\mu C_w \in C_{\leq_L X}(\pi)$ for all $\lambda,\mu \in \pi$, $y \in {}^\lambda W^\mu$, and $w \in {}^\mu W$ such that $w_\mu w \leq_L x$. By Proposition \ref{JCw} and Lemma \ref{property of eta} (2), we have
\begin{align}
\xi_{\lambda,y,\mu} {}^\mu C_w = \xi_{\lambda,y,\mu} C_{w_\mu w} = \frac{1}{P_\mu} \eta_{\lambda,y,\mu} C_{w_\mu w}. \nonumber
\end{align}
Also, by Lemma \ref{property of eta} (2), we have $\eta_{\lambda,y,\mu} = \xi_{\lambda,y,\mu}(x_\mu) \in x_\lambda \Hc$; one can write $\eta_{\lambda,y,\mu} = x_\lambda H$ for some $H \in \Hc$. Then, $H C_{w_\mu w}$ is a linear combination of $C_{w'}$, $w' \leq_L w_\mu w (\leq_L x)$. Hence, by Proposition \ref{from KL to parabolic KL}, $x_\lambda H C_{w_\mu w}$ is a linear combination of ${}^\lambda C_{w''}$ for $w'' \in {}^\lambda W$ with $w_\lambda w'' \leq_L w' (\leq_L x)$. Therefore, we have $\xi_{\lambda,y,\mu} {}^\mu C_w = \frac{1}{P_\mu} \eta_{\lambda,y,\mu} C_{w_\mu w} \in \frac{1}{P_\mu} C_{\leq_L X}(\pi)$. However, since $\xi_{\lambda,y,\mu} {}^\mu C_w \in x_\lambda \Hc = \bigoplus_{z \in {}^\lambda W} \AZ {}^\lambda C_z$, we conclude that $\xi_{\lambda,y,\mu} {}^\mu C_w \in C_{\leq_L X}(\pi)$. This completes the proof.
\end{proof}

Similarly, one can prove the following: $D_{\leq_L X}(\pi)$ and $D_{<_L X}(\pi)$ are $\bbS(\pi)$-submodules, and $D^L_X(\pi)$ is a left $\bbS(\pi)$-module having a basis $\{ [{}^\lambda D_w]'_X \mid \lambda \in \pi,\ w \in {}^\lambda W \cap X \}$.


\section{Global $\jmath$-crystal bases for the irreducible $\Uj$-modules}\label{existence of global jcry bases}
\subsection{Surjection $\xi:\Uj \rightarrow \bfS(\pij)$}
Let $\pij = \{ \lambda = (\lambda_0,\ldots,\lambda_r) \in \Z_{\geq 0}^{r+1} \mid \sum_{i=0}^r \lambda_i = d \}$. For $\lambda \in \pij$, set $I_\lambda = \{ 0,1,\ldots,d-1 \} \setminus \{ \lambda_0,\lambda_{0,1},\ldots,\lambda_{0,r-1} \}$, where $\lambda_{0,k} = \sum_{i=0}^k \lambda_i$.

Let $\bfV = \bigoplus_{i = -r}^r \Q(p,q) v_i$ be the vector representation of $\U$ with $v_{-r}$ a highest weight vector. Then, $\bfV^{\otimes d}$ has a basis $\{ v_{i_1,\ldots,i_d} := v_{i_1} \otimes \cdots \otimes v_{i_d} \mid -r \leq i_1,\ldots,i_d \leq r \}$. $\bfH := \Q(p,q) \otimes_{\AZ} \clH$ acts on $\bfV^{\otimes d}$ by
\begin{align}
\begin{split}
v_{i_1,\ldots,i_d} H_0 &= \begin{cases}
v_{-i_1,i_2,\ldots,i_d} \qu & \IF i_1 > 0, \\
p\inv v_{i_1,\ldots,i_d} \qu & \IF i_1 = 0, \\
v_{-i_1,i_2,\ldots,i_d} + (p\inv - p)v_{i_1,\ldots,i_d} \qu & \IF i_1 < 0,
\end{cases} \\
v_{i_1,\ldots,i_d} H_j &= \begin{cases}
v_{\ldots,i_{j+1},i_j,\ldots} \qu & \IF i_j < i_{j+1}, \\
q\inv v_{i_1,\ldots,i_d} \qu & \IF i_j = i_{j+1}, \\
v_{\ldots,i_{j+1},i_j,\ldots} + (q\inv - q)v_{i_1,\ldots,i_d} \qu & \IF i_j > i_{j+1}.
\end{cases}
\end{split} \nonumber
\end{align}
Then, it is easily seen that $\bfV^{\otimes d}$ is isomorphic to $\bfT(\pij) := \Q(p,q) \otimes_{\AZ} \bbT(\pij)$ as a right $\bfH$-module. Setting $\bfS(\pij) := \Q(p,q) \otimes_{\AZ} \bbS(\pij)$, $\bfV^{\otimes d}$ becomes a left $\bfS(\pij)$-module. By the double centralizer property between $\Uj$ and $\bfH$ on $\bfV^{\otimes d}$ (\cite{BW13}, \cite{BWW16}), there exists a surjective algebra homomorphism $\xi:\Uj \rightarrow \bfS(\pij)$. In particular, every $\bfS(\pij)$-modules are regarded as $\Uj$-modules via $\xi$. In \cite{W17}, it is proved that for each $\bflm \in \Pj$, the irreducible highest weight module $L(\bflm)$ is isomorphic to $\bfC^L_X(\pij) := \Q(p,q) \otimes_{\AZ} C^L_X(\pij)$ for some $X \in L(W_d)$, where $d = |\bflm|$.

For $i \in \Ij$, we define two maps $\etil_i,\ftil_i: \pij \rightarrow \pij \sqcup \{ 0 \}$, where $0$ denotes a formal symbol, as follows. Let $\lm = (\lm_0,\ldots,\lm_r) \in \pij$. Then, we set
$$
\etil_i \lm = \begin{cases}
(\lm_0,\ldots,\lm_{i-2},\lm_{i-1}+1,\lm_i-1,\lm_{i+1},\ldots,\lm_r) \qu & \IF \lm_i > 0, \\
0 \qu & \IF \lm_i = 0,
\end{cases}
$$
and
$$
\ftil_i \lm = \begin{cases}
(\lm_0,\ldots,\lm_{i-2},\lm_{i-1}-1,\lm_i+1,\lm_{i+1},\ldots,\lm_r) \qu & \IF \lm_{i-1} > 0, \\
0 \qu & \IF \lm_{i-1} = 0.
\end{cases}
$$
By convention, we set $\xi_{\lm,x,\mu} = 0$ if $\lm = 0$ or $\mu = 0$.

\begin{prop}
For $i \in \Ij$, we have
\begin{align}
\begin{split}
&\xi(e_i) = \sum_{\lambda \in \pij} \xi_{\etil_i(\lambda),e,\lambda}, \\
&\xi(f_i) = \sum_{\lambda \in \pij} \xi_{\ftil_i(\lambda),e,\lambda}.
\end{split} \nonumber
\end{align}
\end{prop}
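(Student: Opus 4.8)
The plan is to compute $\xi(e_i)$ and $\xi(f_i)$ directly as elements of $\bfS(\pij)=\End_\bfH(\bfV^{\otimes d})$, using an explicit form of the isomorphism $\bfV^{\otimes d}\cong\bfT(\pij)$ of right $\bfH$-modules. First I would record that, under this isomorphism, the parabolic generator $x_\lambda$ corresponds to the monomial $v_\lambda:=v_0^{\otimes\lambda_0}\otimes v_1^{\otimes\lambda_1}\otimes\cdots\otimes v_r^{\otimes\lambda_r}$: a direct check against the displayed $\bfH$-action gives $v_\lambda H_j=q_{s_j}\inv v_\lambda$ for all $j\in I_\lambda$, so $x_\lambda\mapsto v_\lambda$ extends to a well-defined right-$\bfH$-linear map $x_\lambda\bfH\to\bfV^{\otimes d}$ whose image is the span of the monomials $v_{i_1,\dots,i_d}$ of ``type'' $\lambda$, meaning $\#\{l:|i_l|=k\}=\lambda_k$ for $0\le k\le r$. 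Since the type-$\lambda$ subspace has dimension $2^{d-\lambda_0}\binom{d}{\lambda_0,\dots,\lambda_r}=\rk(x_\lambda\bfH)$ and these subspaces exhaust $\bfV^{\otimes d}$, the map is an isomorphism onto it, and summing over $\lambda$ yields $\bfT(\pij)\xrightarrow{\ \sim\ }\bfV^{\otimes d}$ with $x_\lambda\mapsto v_\lambda$.

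Second, I would observe that $e_i=E_{\ul i}+p^{-\delta_{i,1}}F_{-\ul i}K_{\ul i}\inv$ acts on $\bfV^{\otimes d}$ (via the iterated coproduct) by operators that change one tensor factor from $v_i$ to $v_{i-1}$ (the $E_{\ul i}$ part) or from $v_{-i}$ to $v_{-(i-1)}$ (the $F_{-\ul i}$ part), rescaling the other factors; hence $\xi(e_i)$ maps the type-$\lambda$ subspace into the type-$\etil_i(\lambda)$ subspace, which is $0$ when $\lambda_i=0$. Thus $\xi(e_i)p_\lambda\in\Hom_\bfH(x_\lambda\bfH,x_{\etil_i(\lambda)}\bfH)$, and since such a homomorphism is determined by its value on the cyclic generator $x_\lambda$, the asserted identity $\xi(e_i)=\sum_\lambda\xi_{\etil_i(\lambda),e,\lambda}$ reduces to the family of equalities $e_i\cdot v_\lambda=\eta_{\etil_i(\lambda),e,\lambda}=\xi_{\etil_i(\lambda),e,\lambda}(x_\lambda)$, read inside $\bfV^{\otimes d}$ (using Lemma \ref{property of eta}(2)); concretely, $\xi_{\mu,e,\lambda}(x_\lambda)=q_{e'}\sum_{y\in W_\lambda\cap{}^\mu W}q_y\inv x_\mu H_y$, where $e'$ is the longest of the double-coset representatives. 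The statement for $f_i$ reduces in the same way, with $\etil_i$ replaced by $\ftil_i$.

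Finally, I would verify these equalities by an explicit coproduct computation. Since $v_\lambda$ involves only the $v_k$ with $k\ge0$, the summand $F_{-\ul i}K_{\ul i}\inv$ of $e_i$ annihilates $v_\lambda$, so $e_i v_\lambda=\Delta^{(d-1)}(E_{\ul i})v_\lambda=\sum_l q^{m_l}v_{[l\to i-1]}$, the sum over the positions $l$ of the $v_i$-block of $v_\lambda$, with $m_l$ read off from the $K_{\ul i}\inv$-factors to the right of $l$; each $v_{[l\to i-1]}$ equals $v_{\etil_i(\lambda)}H_{w_l}$ for an explicit $w_l$ lying in the parabolic $W_\lambda$ (so every term sits in the identity double coset $W_{\etil_i(\lambda)}W_\lambda$), and comparing the resulting $q$-powers with those of $\xi_{\etil_i(\lambda),e,\lambda}(x_\lambda)$ — in particular matching the $H_e$-coefficient with $q_{e'}$ — gives the equality. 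For $f_i$ with $i\ge2$ the computation is identical, now with $E_{-\ul i}$ annihilating $v_\lambda$ and $K_{-\ul i}\inv F_{\ul i}$ doing the work. The one genuine complication is $f_1=E_{-\ul1}+p\,K_{-\ul1}\inv F_{\ul1}$: here both summands act nontrivially on the $v_0$-strands, so $f_1 v_\lambda$ has terms $v_{[l\to-1]}$ and $p\,q^{(\cdot)}v_{[l\to1]}$, all of type $\ftil_1(\lambda)$, and one must check that these reorganize into $v_{\ftil_1(\lambda)}H_w$ with $w$ in the rank-$\lambda_0$ type-$B$ parabolic inside $W_\lambda$, and that the lone power of $p$ matches the parameter $q_{e'}$ of the longest such coset representative, which involves the reflection $s_0$ exactly once. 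This bookkeeping — tracking the $q$-powers (and, for $f_1$, the single power of $p$) produced by the $K$-factors of the coproduct against those appearing in $x_\lambda$ and in $q_{e'}$ — is where essentially all of the work lies; the rest of the argument is formal.
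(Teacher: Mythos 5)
Your proposal is correct and follows essentially the same route as the paper: identify $\bfV^{\otimes d}$ with $\bfT(\pij)$ via $x_\lambda \mapsto v_\lambda = v_{0^{\lambda_0},1^{\lambda_1},\ldots,r^{\lambda_r}}$, reduce to evaluating on the cyclic generators by $\bfH$-linearity, and verify $f_i v_\lambda = \xi_{\ftil_i(\lambda),e,\lambda}(x_\lambda)$ (and likewise for $e_i$) by an explicit coproduct computation matched against $\eta_{\ftil_i(\lambda),e,\lambda}$ via Lemma \ref{property of eta}. The paper carries out exactly the $f_1$ case you single out as the delicate one (the term $pq^{2(\lambda_0-1)}$ there is your $q_{e'}$, with $s_0$ occurring once), declaring the remaining cases similar, while you supply slightly more detail on the identification and the reduction step.
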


\begin{proof}
We prove only the statement for $f_1$; the proofs for $f_i$, $i \neq 1$ and for $e_i$ are similar.
Recall the comultiplication $\Delta$ of $\bfU$; we have
\begin{align}
\Delta^{(d-1)}(E_i) = \sum_{k = 1}^d 1^{\otimes k-1} \otimes E_i \otimes (K_i\inv)^{\otimes d-k},\ \Delta^{(d-1)}(F_i) = \sum_{k = 1}^d K_i^{\otimes d-k} \otimes F_i \otimes 1^{\otimes k-1}. \nonumber
\end{align}
Then, we compute as
\begin{align}
\begin{split}
f_1v_\lambda &= pq\inv q^{\lambda_0} \sum_{k=1}^{\lambda_0} q^{\lambda_0-k} v_{0^{\lambda_0-k},1,0^{k-1},1^{\lambda_1},\ldots,r^{\lambda_r}} \\
&\qu + \sum_{k=1}^{\lambda_0} q^{\lambda_0-k} v_{0^{k-1},-1,0^{\lambda_0-k},1^{\lambda_1},\ldots,r^{\lambda_r}} \\
&= pq^{\lambda_0-1}\sum_{k=1}^{\lambda_0} q^{\lambda_0-k} v_{\ftil_1(\lambda)} H_{\lambda_0-1} \cdots H_{\lambda_0-(k-1)} \\
&\qu + \sum_{k=1}^{\lambda_0} q^{\lambda_0-k} v_{\ftil_1(\lambda)} H_{\lambda_0-1} \cdots H_1 H_0 H_1 \cdots H_{k-1} \\
&= pq^{2(\lambda_0-1)} \sum_{k=1}^{\lambda_0} q^{-k+1} v_{\ftil_1(\lambda)} H_{\lambda_0-1} \cdots H_{\lambda_0-(k-1)} \\
&\qu + pq^{2(\lambda_0-1)}\sum p\inv q^{-(\lambda_0+k-2)} v_{\ftil_1(\lambda)} H_{\lambda_0-1} \cdots H_1 H_0 H_1 \cdots H_{k-1} \\
&= \xi_{\ftil_1(\lambda),e,\lambda}(v_{\lambda}).
\end{split} \nonumber
\end{align}
This proves the assertion.
\end{proof}

Here are immediate consequences.

\begin{cor}
Let $x \in \Uj$. Then, $\xi(\sigmaj(x)) = \xi(x)^\flat$, $\xi(\psij(x)) = \ol{\xi(x)}$.
\end{cor}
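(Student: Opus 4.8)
The plan is to reduce both identities to the algebra generators $e_i,f_i,k_i$ ($i\in\Ij$) of $\Uj$. Indeed, $\xi\circ\sigmaj$ and $(\cdot)^\flat\circ\xi$ are both $\Q(p,q)$-linear algebra anti-homomorphisms $\Uj\to\bfS(\pij)$ (for the first, $\sigmaj$ is an anti-automorphism and $\xi$ a homomorphism; for the second, $\xi$ is a homomorphism and $(\cdot)^\flat$ an anti-automorphism by Proposition~\ref{flat on bbS}), so it suffices to check $\xi(\sigmaj(x))=\xi(x)^\flat$ on generators. Likewise $\xi\circ\psij$ and $\ol{\ \cdot \ }\circ\xi$ are both anti-linear algebra homomorphisms $\Uj\to\bfS(\pij)$ (here $\psij$ is anti-linear because it inverts $p$ and $q$, while $\ol{\ \cdot \ }$ on $\bfS(\pij)$ is an anti-linear algebra automorphism), so $\xi(\psij(x))=\ol{\xi(x)}$ also needs only be verified on generators.

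For $e_i$ and $f_i$ I would use the formulas $\xi(e_i)=\sum_{\lambda\in\pij}\xi_{\etil_i(\lambda),e,\lambda}$ and $\xi(f_i)=\sum_{\lambda\in\pij}\xi_{\ftil_i(\lambda),e,\lambda}$ of the preceding proposition. Applying $\flat$ termwise and using $\xi_{\lambda,x,\mu}^\flat=\xi_{\mu,x\inv,\lambda}$ gives $\xi(e_i)^\flat=\sum_{\lambda\in\pij}\xi_{\lambda,e,\etil_i(\lambda)}$; substituting $\mu=\etil_i(\lambda)$ and using that $\etil_i,\ftil_i$ restrict to mutually inverse bijections between $\{\lambda\in\pij\mid\lambda_i>0\}$ and $\{\mu\in\pij\mid\mu_{i-1}>0\}$ (the remaining terms vanishing by the convention $\xi_{0,x,\mu}=\xi_{\lambda,x,0}=0$), this equals $\sum_{\mu\in\pij}\xi_{\ftil_i(\mu),e,\mu}=\xi(f_i)=\xi(\sigmaj(e_i))$. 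The computation for $f_i$ is symmetric. For the $\psij$-statement, $\psij$ fixes $e_i$ and $f_i$, so I only need $\ol{\xi(e_i)}=\xi(e_i)$ and $\ol{\xi(f_i)}=\xi(f_i)$; since $\ol{\ \cdot \ }$ is additive and $\ol{\xi_{\lambda,e,\mu}}=\xi_{\lambda,e,\mu}$ by Lemma~\ref{property of eta}(3), both are immediate.

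For $k_i$ I would note that $k_i$ acts on the weight subspace $x_\lambda\bfH\subset\bfT(\pij)\cong\bfV^{\otimes d}$ by a scalar that is an integral power of $q$, read off from the $K$-eigenvalue of $v_\lambda$ (no $p$ occurring), say $q^{n_{i,\lambda}}$. Identifying the idempotent $p_\lambda$ with $\xi_{\lambda,e,\lambda}$ (here $e$ is the identity of $W$, and indeed $\xi_{\lambda,e,\lambda}(x_\lambda)=x_\lambda$), one has $\xi(k_i)=\sum_{\lambda\in\pij}q^{n_{i,\lambda}}p_\lambda$. Since $\sigmaj(k_i)=k_i$ and $p_\lambda^\flat=\xi_{\lambda,e\inv,\lambda}=p_\lambda$, we get $\xi(k_i)^\flat=\xi(k_i)$; since $\psij(k_i)=k_i\inv$, $\ol{p_\lambda}=p_\lambda$ (Lemma~\ref{property of eta}(3)), and $\ol{\ \cdot \ }$ inverts $q$, we get $\ol{\xi(k_i)}=\sum_{\lambda\in\pij}q^{-n_{i,\lambda}}p_\lambda=\xi(k_i\inv)=\xi(\psij(k_i))$.

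The argument is essentially formal; the only points needing a little care are the reindexing $\mu=\etil_i(\lambda)$ together with the bijectivity of $\etil_i,\ftil_i$ and the boundary conventions, and the identification of $\xi(k_i)$ as a $q$-diagonal operator. I do not expect a substantial obstacle.
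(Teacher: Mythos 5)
Your proposal is correct and follows what the paper treats as the (unwritten) intended argument: the corollary is stated as an immediate consequence of the formulas for $\xi(e_i)$, $\xi(f_i)$, and your generator-by-generator check via $\xi_{\lm,x,\mu}^\flat=\xi_{\mu,x\inv,\lm}$, $\ol{\xi_{\lm,e,\mu}}=\xi_{\lm,e,\mu}$, and the reindexing $\mu=\etil_i(\lm)$ is exactly that. The only point worth making explicit is the $k_i$ step: the eigenvalue of $k_i$ on a monomial $v_{i_1,\ldots,i_d}$ depends only on the multiplicities of the $|i_j|$'s (since $\beta_i=\alpha_{\ul{i}}-\alpha_{-\ul{i}}$ is symmetric under $\epsilon_j\leftrightarrow\epsilon_{-j}$, giving the scalar $q^{\lm_{i-1}-\lm_i}$ on $x_\lm\bfH$, with $k_i=K_{\ul{i}}K_{-\ul{i}}\inv$ as the defining relations and weight conventions require), which is the one-line computation justifying your claim that $\xi(k_i)=\sum_{\lm}q^{n_{i,\lm}}p_\lm$.
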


\begin{cor}
The bilinear form $\la \cdot \mid \cdot \ra_{\pij}$ of $\bfT(\pij)$ satisfies
\begin{align}
\la xm \mid n \ra_{\pij} = \la m \mid \sigmaj(x)n \ra_{\pij} \nonumber
\end{align}
for all $x \in \Uj$, $m,n \in \bfT(\pij)$.
\end{cor}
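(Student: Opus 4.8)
The plan is to deduce the identity directly from the two immediately preceding results, using nothing beyond the fact that $\xi \colon \Uj \to \bfS(\pij)$ is an algebra homomorphism. Recall that the Corollary just above states $\la m \mid xn \ra_\pi = \la x^\flat m \mid n \ra_\pi$ for all $m,n \in \bbT(\pi)$ and $x \in \bbS(\pi)$, and that the $\flat$ on $\bbS(\pi)$ is an involution (by Proposition \ref{flat on bbS}, or directly from $(\xi_{\lambda,x,\mu})^{\flat\flat} = \xi_{\lambda,x,\mu}$). Substituting $y^\flat$ for $x$ and using $\flat^2 = \id$ converts this into the equivalent adjunction $\la ym \mid n \ra_\pi = \la m \mid y^\flat n \ra_\pi$. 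Since both the bilinear form $\la \cdot \mid \cdot \ra_\pi$ and the map $\flat$ are $\AZ$-(bi)linear, this adjunction extends verbatim, by $\Q(p,q)$-bilinearity, to $y \in \bfS(\pij)$ and $m,n \in \bfT(\pij)$. Likewise the preceding Corollary gives $\xi(\sigmaj(x)) = \xi(x)^\flat$ for all $x \in \Uj$.

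Granting these, for $x \in \Uj$ and $m,n \in \bfT(\pij)$ I would simply compute
\begin{align*}
\la xm \mid n \ra_{\pij}
= \la \xi(x)m \mid n \ra_{\pij}
= \la m \mid \xi(x)^\flat n \ra_{\pij}
= \la m \mid \xi(\sigmaj(x))n \ra_{\pij}
= \la m \mid \sigmaj(x)n \ra_{\pij},
\end{align*}
where the first and last equalities are the definition of the $\Uj$-action on $\bfT(\pij)$ through $\xi$, the second is the extended $\flat$-adjunction applied to $y = \xi(x)$, and the third is the identity $\xi(\sigmaj(x)) = \xi(x)^\flat$. This proves the claim.

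The argument is entirely formal, so I do not expect any genuine obstacle. The only points worth stating carefully are (i) that the $\flat$-adjunction and the involutivity of $\flat$, which are proved at the level of $\bbS(\pi)$ over $\AZ$, persist after the base change $\bbS(\pi) \rightsquigarrow \bfS(\pij)$ and $\bbT(\pi) \rightsquigarrow \bfT(\pij)$ — this is immediate since those statements are $\AZ$-(bi)linear — and (ii) that the surjectivity of $\xi$ plays no role here; all that is used is that $\xi$ is a ring homomorphism intertwining $\sigmaj$ with $\flat$. One could equivalently avoid rewriting the adjunction by invoking the symmetry of $\la \cdot \mid \cdot \ra_\pi$ (inherited from the symmetry of $\la \cdot \mid \cdot \ra$ on $\clH$), but the route above is shorter.
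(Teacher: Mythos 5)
Your argument is correct and is exactly the route the paper intends: the paper states this as an immediate consequence of the adjunction $\la m \mid xn \ra_{\pi} = \la x^\flat m \mid n \ra_{\pi}$ on $\bbT(\pi)$ together with the preceding corollary $\xi(\sigmaj(x)) = \xi(x)^\flat$, which is precisely the chain of equalities you write (the base change to $\Q(p,q)$ and the involutivity of $\flat$ being formalities, as you note).
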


\subsection{Global $\jmath$-crystal basis of irreducible $\Uj$-module}
Let $X \in L(W)$. Then, $\bfC^L_X(\pij) \simeq L(\bflm)$ for some $\bflm \in \Pj$. Since $L(\bflm)$ is a highest weight module, there exists a unique $\lambda \in \pij$ and $w \in {}^\lambda W$ such that $[{}^\lambda C_w]_X \in \bfC^L_X(\pij)$ is a highest weight vector.

Recall the isomorphism $D^L_{Xw_0} \simeq C^L_X$ of left $\clH$-modules. Set $\bfC^L_X := \Q(p,q) \otimes_{\AZ} C^L_X$, and define $\bfD^L_{Xw_0}$ and $\bfD^L_{Xw_0}(\pij)$ similarly. Then, we have
$$
\bfD^L_{Xw_0}(\pij) \simeq \bfT(\pij) \otimes_{\bfH} \bfD^L_{Xw_0} \simeq \bfT(\pij) \otimes_{\bfH} \bfC^L_X \simeq \bfC^L_X(\pij)
$$
as left $\Uj$-modules. Hence, $[{}^\lambda D_{w_\lambda w w_0}]'_{Xw_0} \in \bfD^L_{Xw_0}(\pij)$ is also a highest weight vector. Thus, we obtain two isomorphisms
\begin{align}
\begin{split}
&\vphi_C: L(\bflm) \rightarrow \bfC^L_X(\pij);\ v_\bflm \mapsto [{}^\lm C_w]_X, \\
&\vphi_D: L(\bflm) \rightarrow \bfD^L_{Xw_0}(\pij);\ v_\bflm \mapsto [{}^\lm D_{w_\lm ww_0}]'_{Xw_0}
\end{split} \nonumber
\end{align}
of $\Uj$-modules, where $v_\bflm \in L(\bflm)$ is a fixed highest weight vector.

\begin{defi}\normalfont
Let $\bflm \in \Pj$ and $v_\bflm \in L(\bflm)$ be a highest weight vector. Define the bilinear form $(\cdot,\cdot)_1$ on $L(\bflm)$ by $(v_\bflm,v_\bflm)_1 = 1$ and $(xm,n)_1 = (n,\sigmaj(x)n)_1$ for all $x \in \Uj$, $m,n \in L(\bflm)$.
\end{defi}

\begin{prop}\label{(,)_1 is nondegenerate}
Let $\bflm \in \Pj$. Then, the bilinear form $(\cdot,\cdot)_1$ is nondegenerate.
\end{prop}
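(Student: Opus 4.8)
The plan is to argue exactly as in the classical proof that a contravariant (Shapovalov-type) bilinear form on an irreducible module is nondegenerate. Let $R$ be the radical of $(\cdot,\cdot)_1$, i.e. the set of $m\in L(\bflm)$ with $(m,n)_1=0$ for all $n\in L(\bflm)$. The first step is to observe that $R$ is a $\Uj$-submodule: if $m\in R$, then for every $x\in\Uj$ and $n\in L(\bflm)$ the defining contravariance gives $(xm,n)_1=(m,\sigmaj(x)n)_1=0$, so $xm\in R$. Since $L(\bflm)$ is irreducible (by the classification of the irreducible objects of $\Oj$ recalled from \cite{W17}), we have $R=0$ or $R=L(\bflm)$, and the latter is impossible because $(v_\bflm,v_\bflm)_1=1\neq 0$ shows $v_\bflm\notin R$. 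Hence $R=0$; as $L(\bflm)$ is finite dimensional this already yields nondegeneracy (alternatively one repeats the argument verbatim for the right radical, which is again a $\Uj$-submodule because $\sigmaj^2=\id$, cf. Proposition \ref{automorphisms}(2)). The only step that is not literally immediate here is that $(\cdot,\cdot)_1$ is well defined, i.e. that the contravariance relations are consistent, but this is presupposed by the Definition preceding the statement, so there is no genuine obstacle.

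I would in fact prefer a second, more structural proof, since it simultaneously prepares Theorem \ref{main}(4). Using the isomorphisms $\vphi_C:L(\bflm)\to\bfC^L_X(\pij)$ and $\vphi_D:L(\bflm)\to\bfD^L_{Xw_0}(\pij)$ constructed just above the statement, one transports the form $\la\cdot\mid\cdot\ra_{\pij}$ of $\bfT(\pij)$ to $L(\bflm)$ as follows. First I would check that $\la\cdot\mid\cdot\ra_{\pij}$ descends to a nondegenerate pairing between $\bfC^L_X(\pij)$ and $\bfD^L_{Xw_0}(\pij)$; this is the multi-parabolic analogue of the $C^L_X\times D^L_{Xw_0}$ duality already proved for $\clH$, and it follows from the duality of the Kazhdan-Lusztig and dual Kazhdan-Lusztig bases of $\bfT(\pij)$ together with the compatibility of $w_0$ with left cells (Lemma \ref{left cell and w_0}). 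Then define $B(m,n)$ to be the value of that descended pairing on $\vphi_C(m)$ and $\vphi_D(n)$; by construction $B$ is nondegenerate. By the Corollary asserting $\la xm\mid n\ra_{\pij}=\la m\mid\sigmaj(x)n\ra_{\pij}$, the form $B$ is $\sigmaj$-contravariant, and $B(v_\bflm,v_\bflm)=\la[{}^\lm C_w]_X\mid[{}^\lm D_{w_\lm ww_0}]'_{Xw_0}\ra_{\pij}=1$ by that same base duality. Since a $\sigmaj$-contravariant form on $L(\bflm)$ is determined by its value on the one-dimensional highest weight space (once more the radical-plus-irreducibility observation), we get $B=(\cdot,\cdot)_1$, hence nondegeneracy; as a bonus, $\vphi_D\inv(\{[{}^\lm D_{w_\lm ww_0}]'_{Xw_0}\})$ is the $(\cdot,\cdot)_1$-dual basis of $\vphi_C\inv(\{[{}^\lm C_w]_X\})$.

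If one follows this second route, the main obstacle is exactly the first step of the previous paragraph: showing that $\la\cdot\mid\cdot\ra_{\pij}$ induces a nondegenerate pairing of the two cell modules inside the multi-parabolic module $\bfT(\pij)$; the rest is routine bookkeeping with the two bases. For the statement as posed, though, the radical argument of the first paragraph is the shortest and carries no real difficulty, since irreducibility of $L(\bflm)$ is already available.
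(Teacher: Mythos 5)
Your second, ``structural'' argument is essentially the paper's own proof: the paper sets $(m,n):=\la \vphi_C(m)\mid\vphi_D(n)\ra_{\pij}$, checks $\sigmaj$-contravariance and $(v_\bflm,v_\bflm)=1$ via the duality of the bases $\{{}^\lambda C_w\}$ and $\{{}^\lambda D_{w_\lambda ww_0}\}$ of $\bfT(\pij)$, identifies this form with $(\cdot,\cdot)_1$, and concludes that $\{\vphi_C\inv([{}^\mu C_y]_X)\}$ and $\{\vphi_D\inv([{}^\mu D_{w_\mu yw_0}]'_{Xw_0})\}$ are dual bases; the point you single out as the only real work, namely that $\la\cdot\mid\cdot\ra_{\pij}$ descends to a nondegenerate pairing of the two cell subquotients, is indeed the multi-parabolic analogue of the lemma on $C^L_X\times D^L_{Xw_0}$ and is treated as immediate in the paper as well. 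Your first argument (the radical of $(\cdot,\cdot)_1$ is a $\Uj$-submodule by contravariance, irreducibility plus $(v_\bflm,v_\bflm)_1=1$ forces it to vanish, and finite-dimensionality of $L(\bflm)$ -- or the same argument for the right radical, using $\sigmaj^2=\id$ -- upgrades this to nondegeneracy) is correct and genuinely shorter, but it buys strictly less: the explicit construction is what later supplies the dual pair $\Gjlow(\bflm)$, $\Gjup(\bflm)$, and both item (3) of the existence theorem in Section \ref{existence of global jcry bases} and Theorem \ref{main}(4) cite precisely this proof, so the extra work is not incidental. Finally, note that both routes lean on $(\cdot,\cdot)_1$ being well defined: your radical argument assumes existence (as you acknowledge, this is built into the Definition), while the identification $(\cdot,\cdot)=(\cdot,\cdot)_1$ uses uniqueness of a $\sigmaj$-contravariant form normalized on the highest weight vector; the construction via $\la\cdot\mid\cdot\ra_{\pij}$ is in fact what settles existence, so the paper's route (your second one) is the self-contained version.
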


\begin{proof}
For $m,n \in L(\bflm)$, set $(m,n) := \la \vphi_C(m) \mid \vphi_D(n) \ra_{\pij}$. Then, we have
$$
(v_\bflm,v_\bflm) = \la [{}^\lm C_w]_X \mid [{}^\lm D_{w_\lm ww_0}]'_{Xw_0} \ra_{\pij} = 1,
$$
and
$$
(xm,n) = \la x \vphi_C(m) \mid \vphi_D(m) \ra_{\pij} = \la \vphi_C(m) \mid \sigmaj(x) \vphi_D(n) \ra_{\pij} = (m,\sigmaj(x) n).
$$
Hence, we have $(\cdot,\cdot) = (\cdot,\cdot)_1$. Then, it is clear that $\{ \vphi_C\inv([{}^\mu C_y]_X) \mid \mu \in \pij,\ y \in {}^\mu W \cap w_\mu X \}$ and $\{ \vphi_D\inv([{}^\mu D_{w_\mu yw_0}]'_{Xw_0}) \mid \mu \in \pij,\ y \in {}^\mu W \cap Xw_0 \}$ form bases which are dual to each other with respect to $(\cdot,\cdot)_1$. This proves the proposition.
\end{proof}

Recall that the set $\{ (\mu,y) \mid \mu \in \pij,\ y \in {}^\mu W \cap w_\mu X \}$ is identical to $\clB(\bflm)$. For each $b \in \clB(\bflm)$, set
$$
\Gjlow(b) := \vphi_C\inv([{}^\mu C_y]_X), \qu \Gjup(b) := \vphi_D\inv([{}^\mu D_{w_\mu yw_0}]'_{Xw_0}),
$$
where $(\mu,y)$ is the pair corresponding to $b$. Then, $\Gjlow(\bflm) := \{ \Gjlow(b) \mid b \in \clB(\bflm) \}$ and $\Gjup(\bflm) := \{ \Gjup(b) \mid b \in \clB(\bflm) \}$ are bases of $L(\bflm)$.

\begin{defi}\normalfont
Let $\bflm \in \Pj(d)$, and $v_{\bflm} \in L(\bflm)$ be a highest weight vector. Define a bilinear form $(\cdot,\cdot)_2$ on $L(\bflm)$, and a $\psij$-involution $\psij_{\bflm}$ on $L(\bflm)$ by
\begin{align}
\begin{split}
&(v_{\bflm},v_{\bflm})_2 = 1,\ (xm,n)_2 = (m,\tauj(x)n)_2 \qu \Forall x \in \Uj,\ m,n \in L(\bflm), \\
&\psij_{\bflm}(v_{\bflm}) = v_{\bflm}.
\end{split} \nonumber
\end{align}
\end{defi}

Let $(\clL(\bflm),\clB(\bflm))$ be the unique $\jmath$-crystal basis of $L(\bflm)$ such that $v_\bflm + q\clL(\bflm) \in \clB(\bflm)$.

\begin{theo}
Let $\bflm \in \Pj(d)$. Then, the following hold.
\begin{enumerate}
\item $\psij_\bflm(\Gjlow(b)) = \Gjlow(b)$ for all $b \in \clB(\bflm)$.
\item $\psij_\bflm(\Gjup(b)) = \Gjup(b)$ for all $b \in \clB(\bflm)$.
\item $\Gjlow(\bflm)$ and $\Gjup(\bflm)$ are dual bases with respect to $(\cdot,\cdot)_1$.
\item $\clL(\bflm) = \{ m \in L(\bflm) \mid (m,m)_2 \in \Ao \}$. Consequently, $(\cdot,\cdot)_2$ induces the bilinear form $(\cdot,\cdot)_0$ on $\clL(\bflm)/q\clL(\bflm)$ defined by $(m+q\clL(\bflm),n+q\clL(\bflm))_0 := \lim_{q \rightarrow 0}(\lim_{p \rightarrow 0}(m,n)_2)$.
\item $\{ \Gjlow(b) \mid b \in \clB(\bflm) \}$ forms an almost orthonormal basis with respect to $(\cdot,\cdot)_2$, i.e., we have $(\Gjlow(b),\Gjlow(b'))_2 \in \delta_{b,b'} + q\Ao$ for all $b,b' \in \clB(\bflm)$.
\item $(b,b')_0 = \delta_{b,b'}$ for all $b,b' \in \clB(\bflm)$.
\item Let $L(\bflm)_\A$ be the $\A$-span of $\Gjlow(\bflm)$. Then, $(\clL(\bflm),L(\bflm)_\A,\psij_\bflm(\clL(\bflm)))$ is balanced. Moreover, the global basis associated to $\clB(\bflm)$ is $\{ \Gjlow(b) \mid b \in \clB(\bflm) \}$. In particular, $L(\bflm)$ has a global $\jmath$-crystal basis.
\end{enumerate}
\end{theo}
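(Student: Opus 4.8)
The plan is to deduce the seven assertions from standard facts about Kazhdan--Lusztig bases by transporting them through the $\Uj$-module isomorphisms $\vphi_C\colon L(\bflm)\to\bfC^L_X(\pij)$ and $\vphi_D\colon L(\bflm)\to\bfD^L_{Xw_0}(\pij)$. For (1) and (2): $\vphi_C$ sends $v_\bflm$ to the class of $C_{w_\lm w}={}^\lm C_w$, which is bar-invariant, and the bar involution on $\bfC^L_X(\pij)$ satisfies $\ol{xm}=\ol{\xi(x)}\,\ol m=\xi(\psij(x))\ol m$ by the corollary following the computation of $\xi(e_i),\xi(f_i)$; hence $\vphi_C^{-1}\circ\ol{\ \cdot\ }\circ\vphi_C$ is a $\psij$-involution fixing $v_\bflm$, so by uniqueness it equals $\psij_\bflm$, and similarly for $\vphi_D$. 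Since $C_{w_\mu y}={}^\mu C_y$ and $x_\mu D_{w_\mu y w_0}={}^\mu D_{w_\mu y w_0}$ are bar-invariant (Theorem \ref{KL-bases}, its dual version, and Lemma \ref{property of x_J}(3)), so are their classes in the cell modules, which gives $\psij_\bflm(\Gjlow(b))=\Gjlow(b)$ and $\psij_\bflm(\Gjup(b))=\Gjup(b)$. For (3) I would use the identity $(\cdot,\cdot)_1=\la\vphi_C(\cdot)\mid\vphi_D(\cdot)\ra_{\pij}$ established inside the proof of Proposition \ref{(,)_1 is nondegenerate}, together with the cell-module version of Proposition \ref{duality of KL-bases} (the corollary at the end of Section \ref{section3}, passed to the subquotients), which says exactly that the Kazhdan--Lusztig and dual Kazhdan--Lusztig bases of the cell modules are dual under $\la\cdot\mid\cdot\ra_{\pij}$.

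For (4)--(6) the idea is that $(\cdot,\cdot)_2$ is a polarization compatible with the $\jmath$-crystal structure. First I would establish the rank-one estimates: for $i\in\IIj\setminus\{1\}$ the relevant subalgebra is of type $A_1$ and Kashiwara's polarization lemmas apply verbatim, while for $i=1$ I would compute $(f_1^{(n)}v,f_1^{(m)}v)_2$ directly in the explicit $(\bflm_0-\bflm_{-1}+1)$-dimensional $\Uj_1$-module of the Example, using the coideal relations, and check that it lies in $\delta_{n,m}+q\Ao$. An induction on the weight---moving a Kashiwara operator across $(\cdot,\cdot)_2$ by means of $\tauj$ and the rank-one string decomposition and applying these estimates---then shows that $\clB(\bflm)$ is orthonormal modulo $q$, which is (6), and that $(\clL(\bflm),\clL(\bflm))_2\subseteq\Ao$. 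The inclusion $\clL(\bflm)\subseteq\{m:(m,m)_2\in\Ao\}$ of (4) is then immediate, and the reverse follows since $\Ao$ is a valuation ring: writing $m=\sum_b c_b\Gjlow(b)$, if some $c_b\notin\Ao$ then the minimal-valuation part of $(m,m)_2$ is a nonzero sum of squares in $\Q$, so $(m,m)_2\notin\Ao$. For (5): $\Gjlow(b)\in\clL(\bflm)$ because it is obtained from $v_\bflm$ by applying divided powers of $\Uj$ (cyclicity of the cell module and Lemma \ref{Uja in UA}) and $\clL(\bflm)$ is stable under $\UjA$; hence $(\Gjlow(b),\Gjlow(b'))_2\in\Ao$ by (4) and bilinearity, and its reduction equals $(b,b')_0=\delta_{b,b'}$ once $\Gjlow(b)\equiv b$ modulo $q\clL(\bflm)$.

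It remains to prove $\Gjlow(b)\equiv b\pmod{q\clL(\bflm)}$ under the identification of $\{(\mu,y)\}$ with $\clB(\bflm)$, from which (7) is formal. Since ${}^\mu C_y=C_{w_\mu y}$ differs from $x_\mu H_y$ by a combination of lower terms with coefficients in $\AZ^+\subseteq q\Ao$, the class $[{}^\mu C_y]_X$ reduces modulo $q\clL(\bflm)$ to the class of $[x_\mu H_y]_X$; tracking the $\jmath$-crystal structure through the realization $L(\bflm)\cong\bfC^L_X(\pij)$ and $\bfT(\pij)\cong\bfV^{\otimes d}$ identifies this class with the crystal basis element $b$. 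Thus $\Gjlow(\bflm)$ is an $\Ao$-basis of $\clL(\bflm)$ reducing to $\clB(\bflm)$; by (1) it is fixed by $\psij_\bflm$, hence an $\Ai$-basis of $\psij_\bflm(\clL(\bflm))$, and an $\A$-basis of $L(\bflm)_\A$ by definition. Therefore $\clL(\bflm)\cap L(\bflm)_\A\cap\psij_\bflm(\clL(\bflm))$ is the $\Q$-span of $\Gjlow(\bflm)$, which maps isomorphically onto $\clL(\bflm)/q\clL(\bflm)$, so the triple is balanced with global basis $\Gjlow(\bflm)$.

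The step I expect to be hardest is this last compatibility between the Kazhdan--Lusztig cellular structure and the $\jmath$-crystal structure---the matching $\Gjlow(b)\equiv b\pmod{q\clL(\bflm)}$---together with the rank-one polarization estimate at the exceptional node $1$, where the coideal relation replaces the usual $\mathfrak{sl}_2$ relations. Everything else is either bookkeeping with bar-invariance or a standard valuation/Nakayama argument.
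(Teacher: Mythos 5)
Your treatment of (1)--(3) matches the paper's: transport of the bar involution through $\vphi_C,\vphi_D$ plus the duality of the Kazhdan--Lusztig and dual Kazhdan--Lusztig bases of the cell modules under $\la\cdot\mid\cdot\ra_{\pij}$, which is exactly how the paper disposes of these items. The problem is in (4)--(6), where the paper does not re-derive anything: it realizes $L(\bflm)\simeq\bfC^L_X(\pij)$ as a subquotient of $\bfV^{\otimes d}$ cut out by Kazhdan--Lusztig basis elements and quotes the proof of \cite[Proposition 7.4.4]{W17}, which is precisely where the compatibility between that realization and the intrinsic $\jmath$-crystal lattice is established. In your plan this compatibility --- $\Gjlow(b)\in\clL(\bflm)$ and $\Gjlow(b)+q\clL(\bflm)=b$ --- carries all the weight, and it is not actually proved. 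The justification you give in (5) is false: $\clL(\bflm)$ is \emph{not} stable under $\UjA$; crystal lattices are preserved by the Kashiwara operators, not by divided powers (already in the rank-one module of the paper's Example, $f_1\cdot f_1^{(n)}v=[n+1]f_1^{(n+1)}v\notin\clL(\bflm)$ since $[n+1]\notin\Ao$). The alternative argument in your final paragraph begs the question: from ${}^\mu C_y\equiv x_\mu H_y$ modulo $\AZ^+$-combinations of lower standard elements you may conclude $[{}^\mu C_y]_X\equiv[x_\mu H_y]_X\pmod{q\clL(\bflm)}$ only if you already know that the classes $[x_\mu H_{y'}]_X$ lie in $\clL(\bflm)$, i.e.\ that the image in the subquotient of the natural $\Ao$-lattice of $\bfV^{\otimes d}$ agrees with the lattice generated from $v_\bflm$ by the $\ftil_i$, $i\in\IIj$; and identifying the reduction of $[x_\mu H_y]_X$ with the specific element $b\in\clB(\bflm)$ requires the combinatorial matching of cell data with semistandard bitableaux. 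Both points, as well as the rank-one polarization estimate at the exceptional node $1$ (where there is no $\frsl_2$-triple and $\tauj$ involves $p$), are the content of the cited result of \cite{W17}; the phrase ``tracking the $\jmath$-crystal structure through the realization'' conceals the entire substance of (4)--(6), so as written the proposal has a genuine gap there rather than an alternative proof.

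A smaller but real omission concerns (7): in the paper's definition, a global $\jmath$-crystal basis requires the middle lattice to be a $\UjA$-submodule, and the paper's proof of (7) consists precisely of checking that $L(\bflm)_\A$ is one, via $\UjA\subset\UA$ (Lemma \ref{Uja in UA}) and the fact that the $\A$-span of the Kazhdan--Lusztig basis of $\bfV^{\otimes d}$ is a $\UA$-module. Your argument establishes balancedness formally (granting the compatibility above, and using $\Ao\cap\A\cap\Ai=\Q$), but never verifies this $\UjA$-stability; it cannot be sidestepped by appealing to $\clL(\bflm)$, which, as noted, is not $\UjA$-stable, so the tensor-space realization is needed here too.
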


\begin{proof}
Items $(1)$ and $(2)$ are obvious from the definition of $\Gjlow(b)$ and $\Gjup(b)$. Item $(3)$ follows from the proof of Proposition \ref{(,)_1 is nondegenerate}.

To prove the rest, observe that $L(\bflm)$ is realized as a subquotient of $\bfV^{\otimes d}$ by using Kazhdan-Lusztig basis elements. To be precise, let $X \in \Pj$ be such that $L(\bflm) \simeq \bfC^L_X(\pij)$ and $x \in X$. Then,
$$
\bfC^L_X(\pij) = \frac{\Span_{\Q(p,q)}\{ {}^\lm C_w \mid \lm \in \pij,\ w \in {}^\lm W,\ w_\lm w \leq_L x \}}{\Span_{\Q(p,q)}\{ {}^\mu C_y \mid \mu \in \pij,\ y \in {}^\mu W,\ w_\mu y <_L x \}}.
$$
Then, items $(4)$-$(6)$ follows from the proof of \cite[Proposition 7.4.4]{W17}. To prove item $(7)$, it suffices to show that $L(\bflm)_\A$ is a $\UjA$-module. It follows from the fact that the $\A$-submodule of $\bfV^{\otimes d}$ spanned by the Kazhdan-Lusztig basis is a $\UA$-module, and that $\UjA \subset \UA$.
\end{proof}

\section{Basic properties of global crystal bases}\label{basic properties}
\subsection{Global crystal bases}
In this subsection, we exposite some basic properties concerning global crystal bases of $\U$-modules in $\Oint$. Let $M \in \Oint$, $(\clL,\clB)$ a crystal basis of $M$, $\psi_M$ a $\psi$-involution, and $M_\A$ a $\UA$-submodule of $M$. Suppose that $M$ has a global basis $G(\clB)$ with the associated balanced triple $(\clL,M_\A,\psi_M(\clL))$.

\begin{prop}[{\cite{K93}}]\label{fund prop for global crystal basis}
Let $i \in \bbI$, $b \in B$ and $m \in \Z_{\geq 0}$. Then, we have the following.
\begin{enumerate}
\item $\sum_{n \geq m} F_i^{(n)}M_{\A} = \bigoplus_{\substack{b' \in \clB \\ \vep_i(b') \geq m}} \A \Gj(b')$.
\item $\sum_{n \geq m} E_i^{(n)}M_{\A} = \bigoplus_{\substack{b' \in \clB \\ \vphi_i(b') \geq m}} \A \Gj(b')$.
\item $F_i \Gj(b) = [\vep_i(b)+1]\Gj(\Ftil_i b) + \sum_{\substack{b' \in \clB \\ \vep_i(b') > \vep_i(b)+1}} \vphi^{(i)}_{b',b} \Gj(b')$ for some $\vphi^{(i)}_{b',b} \in q^{2-\vep_i(b')}\Q[q]$.
\item $E_i \Gj(b) = [\vphi_i(b)+1]\Gj(\Etil_i b) + \sum_{\substack{b' \in \clB \\ \vphi_i(b') > \vphi_i(b)+1}} \vep^{(i)}_{b',b} \Gj(b')$ for some $\vep^{(i)}_{b',b} \in q^{2-\vphi_i(b')}\Q[q]$.
\end{enumerate}
\end{prop}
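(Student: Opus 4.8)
The plan is to reduce all four statements to the rank-one subalgebra and argue along $i$-strings, in the spirit of Kashiwara's original treatment. Fix $i\in\bbI$ and let $\U_i\subseteq\U$ be the subalgebra generated by $E_i,F_i,K_i^{\pm1}$, a copy of $U_q(\mathfrak{sl}_2)$; since the statements involve only the $\U_i$-action I may regard $M$ as a $\U_i$-module. I will use freely that $M_\A$ is stable under $E_i^{(n)}$ and $F_i^{(n)}$ (it is a $\UA$-module) and under $\psi_M$ (it is the $\A$-span of the global basis vectors $G(b)$, which are $\psi_M$-fixed), that $\clL$ is stable under $\Etil_i$ and $\Ftil_i$, that $\psi$ fixes $E_i$ and $F_i$ so that $E_iG(b)$ and $F_iG(b)$ are again $\psi_M$-fixed, and that $G(b)\equiv b\bmod q\clL$. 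Recall also that $\clB$ decomposes into $i$-strings, $b$ sitting at depth $\vep_i(b)$ below the top $b_\top:=\Etil_i^{\vep_i(b)}b$ of its string.

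\emph{Rank-one irreducible case.} For the irreducible $U_q(\mathfrak{sl}_2)$-module $V(l)$ of highest weight $l$ with canonical generator $v$, one has $\clL=\bigoplus_k\Ao F^{(k)}v$, $M_\A=\bigoplus_k\A F^{(k)}v$, and $G$ of the $k$-th crystal vertex equals $F^{(k)}v$. The identities $F\cdot F^{(k)}=[k+1]F^{(k+1)}$ and $E\cdot F^{(k)}v=[l-k+1]F^{(k-1)}v$, together with their divided-power refinements, give all four statements by direct computation; the error sums are empty because there is a single $i$-string.

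\emph{Rank-one reducible case --- the crux.} Over $\U_i$ a general $M\in\Oint$ is a direct sum of modules of the form $V(l)$, but the given global basis need not respect any such decomposition, and this discrepancy is exactly the source of the error terms in $(3)$ and $(4)$. I would prove $(3)$ (and $(4)$ by the symmetric argument, swapping the roles of $E_i,\vphi_i$ and $F_i,\vep_i$) by induction along each $i$-string. Writing $F_iG(b)=\sum_{b'}c_{b',b}G(b')$ with $c_{b',b}\in\A$, the sum running over $b'$ of weight $\wt(b)-\alpha_i$, $\psi_M$-invariance of $F_iG(b)$ forces every $c_{b',b}$ to be bar-invariant; the $\mathfrak{sl}_2$-crystal combinatorics of $(\clL,\clB)$ then identify $c_{\Ftil_ib,b}$ with $[\vep_i(b)+1]$ and confine the remaining support to $\{b':\vep_i(b')\ge\vep_i(b)+1\}$. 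Ruling out the extra contribution at $\vep_i(b')=\vep_i(b)+1$ and pushing the other $c_{b',b}$ into $q^{2-\vep_i(b')}\Q[q]$ is the delicate step: one plays the bar-invariance of $F_iG(b)$ against the inductively known action of the divided powers $E_i^{(n)}$ on the lower global basis vectors (via the commutation of $E_i^{(n)}$ with $F_i$). Obtaining coefficients merely in $q\Ao\cap\A$ is immediate, but the sharp $q$-exponent requires this local $\mathfrak{sl}_2$-analysis, which I expect to be the main obstacle.

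\emph{Deducing the lattice statements.} Statements $(1)$ and $(2)$ then follow formally. Iterating $(3)$ gives $F_i^{(n)}G(b)={\vep_i(b)+n\brack n}G(\Ftil_i^n b)+(\text{terms }G(b'')\text{ with }\vep_i(b'')>\vep_i(b)+n)$, so $\sum_{n\ge m}F_i^{(n)}M_\A\subseteq\bigoplus_{\vep_i(b')\ge m}\A G(b')$; for the reverse inclusion, apply this with $n=\vep_i(b')$ to $b=\Etil_i^{\vep_i(b')}b'$, the top of the $i$-string of a given $b'$, where the leading coefficient ${\vep_i(b')\brack\vep_i(b')}$ equals $1$, to obtain $G(b')$ modulo $\A$-combinations of $G(b'')$ with strictly larger $\vep_i$, and conclude by a finite downward induction on $\vep_i(b')$. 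This proves $(1)$, and $(2)$ is the mirror statement with $E_i,\vphi_i$ in place of $F_i,\vep_i$.
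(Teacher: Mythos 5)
The proposition you are trying to prove is not proved in this paper at all --- it is quoted from \cite{K93} --- so the natural benchmark is Kashiwara's argument, which is also the template for this paper's own proof of the $\jmath$-analogue at $i=1$ in Section \ref{subsec: global basis}. Measured against either, your sketch has a genuine gap: it reduces everything to parts $(3)$--$(4)$ and then does not prove them. After writing $F_iG(b)=\sum_{b'}c_{b',b}G(b')$ you assert that bar-invariance plus ``$\mathfrak{sl}_2$-crystal combinatorics'' identify the leading coefficient $[\vep_i(b)+1]$ and confine the support to $\vep_i(b')\geq \vep_i(b)+1$, but no mechanism is given: $F_i$ does not preserve $\clL$, so reduction modulo $q\clL$ says nothing directly about $F_iG(b)$, and bar-invariance constrains the symmetry of the $c_{b',b}$, not their support. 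In the standard treatment the support statement is a \emph{consequence} of $(1)$: once $G(b)\in\sum_{n\geq\vep_i(b)}F_i^{(n)}M_\A$ is known, the identity $F_iF_i^{(n)}=[n+1]F_i^{(n+1)}$ gives $F_iG(b)\in\sum_{n\geq\vep_i(b)+1}F_i^{(n)}M_\A$, which by $(1)$ is exactly the claimed support; and $(1)$ is the substantive statement, proved by induction on $m$ via the modified operators $\Ftil_i^{(n)}$ of \cite{K02} (correct $F_i^{(m)}G(\Etil_i^{m}b')$ by a bar-symmetric combination of higher terms to get a $\psi_M$-fixed element of $M_\A\cap\clL$ congruent to $b'$, hence equal to $G(b')$), exactly as this paper does for $f_1$ in the $\jmath$-setting. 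Your proposed substitute --- playing bar-invariance against ``the inductively known action of $E_i^{(n)}$ on lower global basis vectors'' --- is circular, since that action is precisely the content of $(2)$/$(4)$, which you have not established either. So you run the implication $(3)\Rightarrow(1)$, while the only route you actually indicate for $(3)$ needs $(1)$; your deduction of $(1)$--$(2)$ from $(3)$--$(4)$ is correct but is the easy half.

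A secondary but telling slip: you call it ``immediate'' that the error coefficients lie in $q\Ao\cap\A$ and treat $q^{2-\vep_i(b')}\Q[q]$ as a sharpening. For $\vep_i(b')\geq 2$ the containment goes the other way: $q^{2-\vep_i(b')}\Q[q]$ admits constant and negative powers of $q$, and the natural estimate produces exactly such terms --- writing $G(b)=\sum_{k\geq\vep_i(b)}F_i^{(k)}u_k$ with $u_k\in\Ker E_i$ and $F_i^{(k)}u_k\in q\clL$ for $k>\vep_i(b)$, the error of $F_iG(b)$ is $\sum_{k>\vep_i(b)}[k+1]F_i^{(k+1)}u_k$, whose factors $[k+1]$ have poles of order $k=\vep_i(b')-1$, tamed only to order $\vep_i(b')-2$ by the extra $q$ coming from $F_i^{(k)}u_k\in q\clL$. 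So the $q\Ao$ bound is neither immediate nor the right target; the exponent $2-\vep_i(b')$ is obtained from this string decomposition inside $\clL$ together with $(1)$, which is the step your proposal leaves open.
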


For $\lm \in P(M)$, set $I_\lm(M)$ to be the sum of submodules of $M$ isomorphic to $L(\lm)$. Also, we set
\begin{align}
\begin{split}
&W_{\succeq \lm}(M) := \sum_{\mu \succeq \lm} I_\mu(M), \\
&W_{\succ \lm}(M) := \sum_{\mu \succ \lm} I_\mu(M), \\
&W_\lm(M) := W_{\succeq \lm}(M)/W_{\succ \lm}(M).
\end{split} \nonumber
\end{align}

\begin{theo}[{\cite{K93}, \cite{L94}}]
Let $M,\clL,\clB,M_\A$ be as above. Then, for each $\lm \in P(M)$, the following hold:
\begin{enumerate}
\item $W_{\succeq \lm}(M)$ has a global crystal basis $W_{\succeq \lm}(G(\clB)) := \{ G(b) \mid I(b) \succeq \lm \}$ with the associated balanced triple $(W_{\succeq \lm}(\clL),W_{\succeq \lm}(M_{\A}),W_{\succeq \lm}(\psi_M(\clL)))$, where $W_{\succeq \lm}(\clL) := W_{\succeq \lm}(M) \cap \clL$, and so on.
\item $W_{\succ \lm}(M)$ has a global crystal basis $W_{\succ \lm}(G(\clB)) := \{ G(b) \mid I(b) \succ \lm \}$ with the associated balanced triple $(W_{\succ \lm}(\clL),W_{\succ \lm}(M_{\A}),W_{\succ \lm}(\psi_M(\clL)))$, where $W_{\succ \lm}(\clL) := W_{\succ \lm}(M) \cap \clL$, and so on.
\item $W_{\lm}(M)$ has a global crystal basis $W_{\lm}(G(\clB)) := \{ G(b) + W_{\succ \lm}(M) \mid I(b) = \lm \}$ with the associated balanced triple $(W_{\lm}(\clL),W_{\lm}(M_{\A}),W_{\lm}(\psi_M(\clL)))$, where $W_{\lm}(\clL) := W_{\succeq \lm}(\clL)/W_{\succ \lm}(\clL)$, and so on.
\item There exists a $\U$-module isomorphism $\xi: L(\lm)^{\oplus m_\lm} \rightarrow W_\lm(M)$ which induces an isomorphism
$$
(\clL(\lm)^{\oplus m_\lm}, (L(\lm)_{\A})^{\oplus m_\lm}, \psi_\lm(\clL(\lm))^{\oplus m_\lm}) \simeq (W_{\lm}(\clL),W_{\lm}(M_{\A}),W_{\lm}(\psi_M(\clL))),
$$
where $m_\lm := \dim \Hom_{\U}(L(\lm),M)$ denotes the multiplicity of $L(\lm)$ in $M$.
\end{enumerate}
\end{theo}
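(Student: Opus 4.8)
The plan is to deduce (3) and (4) from (1) and (2), and to prove (1), (2) by peeling off the isotypic component attached to a maximal element of $P(M)$. Observe first that (1) and (2) are equivalent to the two identities
\[
W_{\succeq\lm}(M)=\bigoplus_{I(b)\succeq\lm}\Q(p,q)\,G(b),\qquad W_{\succ\lm}(M)=\bigoplus_{I(b)\succ\lm}\Q(p,q)\,G(b).
\]
Indeed, since $M_\A=\bigoplus_{b\in\clB}\A G(b)$, $\clL=\bigoplus_{b\in\clB}\Ao G(b)$ and $\psi_M(\clL)=\bigoplus_{b\in\clB}\Ai G(b)$, intersecting these with the submodules on the left isolates exactly the $\A_x$‑spans of the corresponding subsets of $G(\clB)$, so the asserted balanced triples are precisely those furnished by Lemma \ref{sub and quotient of bt}(1) from $(\clL,M_\A,\psi_M(\clL))$ and the subsets $\{b:I(b)\succeq\lm\}$, $\{b:I(b)\succ\lm\}$; moreover $\{b:I(b)\succeq\lm\}$ is a union of connected components of $\clB$, so $(\clL,\clB)$ restricts to a crystal basis of $W_{\succeq\lm}(M)$. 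Statement (3) is then Lemma \ref{sub and quotient of bt}(2) for the triple of $W_{\succeq\lm}(M)$ and the subset $\{b:I(b)\succ\lm\}$, and $W_\lm(M_\A)$ remains a $\UA$‑module because $W_{\succeq\lm}(M_\A)$ and $W_{\succ\lm}(M_\A)$ are intersections of the $\UA$‑module $M_\A$ with $\U$‑submodules.

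For (1), (2) I would induct on the number of isotypic components of $M$. If $M\cong L(\lm)^{\oplus m_\lm}$ the first identity is trivial. Otherwise choose $\lm_0$ maximal in $(P(M),\preceq)$ and put $N:=\bigoplus_{I(b)=\lm_0}\Q(p,q)\,G(b)$. All weights of a crystal component of highest weight $\lm_0$ share the size $|\lm_0|$ and lie in $\lm_0-Q_+$; since dominance and the root order agree on dominant weights of a fixed size, maximality of $\lm_0$ makes it a maximal weight of $M$, so the global basis vectors of weight $\lm_0$ are highest weight vectors $u_1,\dots,u_{m_{\lm_0}}$ with $\sum_k\U u_k=I_{\lm_0}(M)$. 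The crucial point is that $N$ is a $\U$‑submodule: by Proposition \ref{fund prop for global crystal basis}(3),(4) the basis elements occurring in $F_iG(b)$, $E_iG(b)$ are $\Ftil_i b$ (resp.\ $\Etil_i b$), lying in $C(b)$, together with certain $b''$ with $\vep_i(b'')>\vep_i(b)+1$ (resp.\ $\vphi_i(b'')>\vphi_i(b)+1$), and by the Key Lemma below each such $b''$ satisfies $I(b'')\succeq I(b)=\lm_0$, hence $I(b'')=\lm_0$. Granting this, the crystal of $N$ is $\{b:I(b)=\lm_0\}\cong\clB(\lm_0)^{\oplus m_{\lm_0}}$, so $N\cong L(\lm_0)^{\oplus m_{\lm_0}}$; as $N\subseteq I_{\lm_0}(M)$ and the dimensions agree, $N=I_{\lm_0}(M)=W_{\succeq\lm_0}(M)=W_{\lm_0}(M)$. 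By Lemma \ref{sub and quotient of bt}(2), $M/N$ carries a global crystal basis with strictly fewer isotypic components, so the inductive hypothesis applies; pulling back along $M\twoheadrightarrow M/N$—distinguishing, for $\mu\neq\lm_0$, the case $\lm_0\succeq\mu$ (where $W_{\succeq\mu}(M)$ is the preimage of $W_{\succeq\mu}(M/N)$) from $\lm_0\not\succeq\mu$ (where $W_{\succeq\mu}(M)\xrightarrow{\ \sim\ }W_{\succeq\mu}(M/N)$)—yields the first identity for all $\mu$, and the second follows from $W_{\succ\lm}(M)=\sum_{\mu\succ\lm}W_{\succeq\mu}(M)$.

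The remaining input is the \emph{Key Lemma}: if $G(b'')$ occurs with nonzero coefficient in $F_iG(b)$ or $E_iG(b)$, then $I(b'')\succeq I(b)$ (the equality of sizes being automatic). This is exactly the compatibility of the lower global crystal basis with the dominance filtration, and it is the only step that is not formal: Proposition \ref{fund prop for global crystal basis} pins the matrix coefficients down via $\vep_i,\vphi_i$ and weights, which do not by themselves detect $I(\cdot)$. It is the classical content of \cite{K93} and \cite{L94}; concretely one can reduce it to $M=\V^{\otimes d}$—using that, by the rigidity of crystal bases, the choices in a based structure affect $G(\clB)$ only by permuting isotypic components and by signs, so an arbitrary $M\in\Oint$ is a based summand of a finite direct sum of tensor powers of $\V$—and there invoke the cellular structure of the $q$‑Schur algebra, whose cells are indexed by partitions ordered by dominance. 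I expect this lemma to be the main obstacle.

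Finally, for (4): by (1)–(3), $W_\lm(M)$ is a $\U$‑module with global crystal basis and a single isotypic type, so $W_\lm(M)\cong L(\lm)^{\oplus m_\lm}$; let $u_1,\dots,u_{m_\lm}$ be its global crystal basis vectors of weight $\lm$—bar‑invariant highest weight vectors in $W_\lm(\clL)\cap W_\lm(M_\A)\cap W_\lm(\psi_M(\clL))$—and define $\xi\colon L(\lm)^{\oplus m_\lm}\to W_\lm(M)$ by sending the $k$‑th standard highest weight vector to $u_k$. As the $u_k$ span the highest weight space, $\xi$ is a $\U$‑module isomorphism; being $\U$‑linear it commutes with the Kashiwara operators and with the bar involutions (both fix the $u_k$). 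Hence $\xi(\clL(\lm)^{\oplus m_\lm})\subseteq W_\lm(\clL)$, while conversely $W_\lm(\clL)\subseteq\xi(\clL(\lm)^{\oplus m_\lm})+qW_\lm(\clL)$ since each $G(b)+W_{\succ\lm}(M)$ reduces mod $q$ to the class of some $\Ftil_{j_1}\cdots\Ftil_{j_l}u_k$; Nakayama over $\Ao$, weight space by weight space, gives equality, and similarly $\xi(\psi_\lm(\clL(\lm))^{\oplus m_\lm})=W_\lm(\psi_M(\clL))$. Moreover $\xi$ carries $\clB(\lm)^{\oplus m_\lm}$ onto $\{b:I(b)=\lm\}$, using that the only crystal automorphism of the connected crystal $\clB(\lm)$ fixing its highest weight element is the identity. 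It remains to match the $\A$‑lattices: $\xi((L(\lm)_\A)^{\oplus m_\lm})=\sum_k\UA u_k\subseteq W_\lm(M_\A)$, and for $c$ in the crystal the difference of the two candidate global basis elements over $c$ is bar‑invariant, lies in $W_\lm(\clL)\cap W_\lm(M_\A)\cap W_\lm(\psi_M(\clL))$ and vanishes mod $q$, hence is $0$; therefore $\xi$ matches the global bases, so $\xi((L(\lm)_\A)^{\oplus m_\lm})=W_\lm(M_\A)$, and $\xi$ is the desired isomorphism of balanced triples.
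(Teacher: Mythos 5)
Your formal reductions are sound: deducing the balanced-triple assertions from the span identities via Lemma \ref{sub and quotient of bt}, peeling off the isotypic component of a maximal $\lm_0\in P(M)$, and deriving (4) from (1)--(3) all work, modulo routine points you leave implicit (that each $G(b)$ is $\psi_M$-fixed, which you need both for the induced involution on $M/N$ and for $\xi$ to intertwine the involutions; and the normalization built into the definition of $P(M)$, which is what lets you pass from maximality of $\lm_0$ in $(P(M),\preceq)$ to maximality of the corresponding weight of $M$). The genuine gap is exactly where you flag it: your ``Key Lemma'' --- that only $G(b'')$ with $I(b'')\succeq I(b)$ occur in $E_iG(b)$ and $F_iG(b)$ --- is not an auxiliary fact but is, together with your maximality argument, essentially equivalent to item (1); it is the entire non-formal content of the theorem. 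The sketch you offer does not close it: the claim that an arbitrary $M\in\Oint$ with a global crystal basis is a based summand of a direct sum of tensor powers of $\V$ ``by rigidity of based structures'' is unproved and of the same depth as the statement itself --- in \cite{L94} the cell theory of based modules is precisely what yields both such rigidity statements and the dominance filtration, so as sketched this route is circular --- and the appeal to the cellular structure of the $q$-Schur algebra is only named, not carried out.

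For comparison, the paper offers no proof of this statement at all: it is quoted from \cite{K93} and \cite{L94}. Deferring your Key Lemma to those references would therefore be consistent with the paper's treatment, but then what you have written is a (correct) reduction of the quoted theorem to its own essential step plus a citation, not an independent proof; as a standalone argument it is incomplete at its central point. If you want it self-contained, the realistic option is to reproduce the argument of \cite{L94} (based modules): one shows directly that the span of $\{G(b)\mid I(b)\succeq\lm\}$ is a $\U$-submodule by exploiting the $\psi_M$-invariance of the $G(b)$ and the triangularity of the canonical basis with respect to the filtration by highest weights, rather than attempting the reduction to $\V^{\otimes d}$.
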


\begin{rem}\label{filtration for frp}\normalfont
By replacing $P(M)$ with $\Pj(M)$ and $\preceq$ with $\trilefteq$, the same result holds for integrable modules over $\bfU(\frl)$ with global crystal bases.
\end{rem}

\subsection{$\jmath$-canonical bases}
Let $M \in \Oint$ be a based $\U$-module with a crystal basis $(\clL,\clB)$, a global crystal basis $G(\clB)$, a $\psi$-involution $\psi_M$, and a balanced triple $(\clL,M_{\A},\psi_M(\clL))$. Set $\psij_M := \Upsilon \circ \psi_M$. We denote by $\Gj(\clB)$ the associated $\jmath$-canonical basis. Recall that $\psij_M$ is a $\psij$-involution on $M$, and $(\clL,M_\A,\psij_M(\clL))$ is a balanced triple with the associated global basis $\Gj(\clB)$.

\begin{lem}\label{expansion of jCB in CB}
Let $b \in \clB$. Let us write as
$$
\Gj(b) = G(b) + \sum_{\substack{b' \in \clB \\ \wtj(b') = \wtj(b) \AND \wt(b') < \wt(b)}} c_{b',b} G(b')
$$
for some $c_{b',b} \in q\Ao \cap \A$. Then, we have $c_{b',b} = 0$ unless
\begin{align}\label{jcanonical condition}
I^\jmath(b) \trilefteq I^\jmath(b') \OR |I^\jmath(b')^-| < |I^\jmath(b)^-|.
\end{align}
\end{lem}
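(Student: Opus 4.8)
The plan is to exploit the fact that the intertwiner $\Upsilon$ lies in $\U^-$, hence its action does not increase (and typically strictly decreases) $\wt$, together with the already-recorded compatibility of global crystal bases with the $\trilefteq$-filtration for $\bfU(\frl)$-modules (Remark \ref{filtration for frp}). First I would recall that $\psij_M = \Upsilon \circ \psi_M$ and that, for $b \in \clB$, the $\jmath$-canonical basis element $\Gj(b)$ is the unique $\psij_M$-fixed vector congruent to $G(b)$ modulo $q\clL$ with the triangularity of Theorem \ref{j-canonical}(2); in particular $\wtj(b') = \wtj(b)$ and $\wt(b') < \wt(b)$ for any $b'$ occurring with nonzero coefficient $c_{b',b}$. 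The content of the lemma is to upgrade this weight bound to a statement about the bipartition invariants $I^\jmath(\cdot)$, namely \eqref{jcanonical condition}.

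The key step is to track which irreducible $\bfU(\frl)$-constituents can be connected by $\Upsilon$. Since $\Upsilon = \sum_{\nu \in Q_+} \Upsilon_\nu$ with $\Upsilon_\nu \in \U^-_{-\nu}$, for $G(b')$ to appear in $\Upsilon \psi_M(G(b))$ we need $\wt(b) - \wt(b') \in Q_+$; moreover the generators $F_i$ for $i \in \bbI \setminus \{\ul 1\}$ lie in $\bfU(\frl)$, so applying those does not change the $\bfU(\frl)$-isotypic component and hence preserves $I^\jmath$ in the sense relevant here. Thus the only way $I^\jmath(b')$ can differ from $I^\jmath(b)$ is through the appearance of the root $\alpha_{\ul 1}$ in $\nu$, i.e. through the "black-node direction" that $\Uj$ folds. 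I would analyze the $\U(\frl)$-module structure: $\Upsilon$ intertwines the $\psij$- and $\psi$-actions, and restricting attention to $\bfU(\frl)$ one sees that $\psij_M$ differs from $\psi_M$ only by an operator lowering weight by elements of $Q_+$; combined with Remark \ref{filtration for frp} applied to $M$ as a $\bfU(\frl)$-module, the coefficient $c_{b',b}$ can only be nonzero if the $\bfU(\frl)$-type of $b'$ is $\trilefteq$-above or equal to that of $b$ after accounting for the shift by multiples of $\alpha_{\ul 1}$. Unwinding what a shift by $k\alpha_{\ul 1}$ does to the pair of partitions $(I^\jmath(b)^-; I^\jmath(b)^+)$ — it moves $k$ boxes between the "$-$" side and the "$+$" side — yields exactly the dichotomy in \eqref{jcanonical condition}: either $k = 0$ and then $I^\jmath(b) \trilefteq I^\jmath(b')$ by the $\bfU(\frl)$ result, or $k > 0$ and then $|I^\jmath(b')^-| < |I^\jmath(b)^-|$ (or the reverse, but the weight inequality $\wt(b') < \wt(b)$ pins down the sign).

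The main obstacle I expect is the bookkeeping in the last step: translating the condition "$\wt(b) - \wt(b') \in Q_+$ and the $\bfU(\frl)$-types are ordered by $\trilefteq$ up to a shift by $\mathbb{Z}_{\geq 0}\alpha_{\ul 1}$" into the precise combinatorial statement \eqref{jcanonical condition} about sizes and dominance of the bipartitions. This requires a careful comparison of the two partial orders $\trilefteq$ (on $\Pj$, via dominance on each component) and the way $I^\jmath$ is defined from the $\bfU(\frl)$-decomposition together with the balancing convention $|\lm_i| - |\lm_j| < 2r+1$; in particular one must check that the only obstruction to $I^\jmath(b) \trilefteq I^\jmath(b')$ is the transfer of boxes across the middle, and that when such a transfer occurs the "$-$" side strictly loses boxes. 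I would handle this by a direct weight computation: express $\wt(b) - \wt(b')$ in terms of the $\gamma_i$ and the coefficient of $\gamma_1$ (equivalently of $\alpha_{\ul 1}$) controls $|I^\jmath(b')^-| - |I^\jmath(b)^-|$, while the remaining coordinates are governed by the $\bfU(\frl)$-filtration, giving the two cases. $\hfill\square$
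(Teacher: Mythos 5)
Your proposal is correct and follows essentially the same route as the paper: reduce to showing that $\psij_M(G(b)) = \Upsilon G(b)$ with $\Upsilon \in \U^-$ has support in the allowed set, use Remark \ref{filtration for frp} for the part of $\Upsilon$ lying in $\bfU(\frl)$ (which only moves $I^\jmath$ upward in $\trilefteq$ and fixes $|I^\jmath(\cdot)^-|$ --- note it preserves the filtration, not the isotypic label itself, which is the statement you in fact invoke), and observe that any occurrence of $\alpha_{\ul{1}}$ in the weight drop strictly decreases $|I^\jmath(\cdot)^-|$. The only difference is organizational: the paper runs an induction on the length of the monomials $F_{i_l}\cdots F_{i_1}$, whereas you grade $\Upsilon = \sum_{\nu}\Upsilon_\nu$ by weight and split according to the $\alpha_{\ul{1}}$-coefficient of $\nu$, which amounts to the same computation.
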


\begin{proof}
By the construction of $\Gj(b)$, it suffices to show that $\psij_M(G(b))$ is a linear combination of $G(b')$ with $b'$ satisfying \eqref{jcanonical condition}. Since $\psij_M(G(b)) = \Upsilon G(b) \in \bfU^- G(b)$, it suffices to show that for each $l \in \Z_{\geq 0}$ and $i_1,\ldots,i_l \in \bbI$, we have
$$
F_{i_l} \cdots F_{i_1} G(b) \in \Span_{\Q(p,q)} \{ G(b') \mid \text{$b'$ satisfies condition \eqref{jcanonical condition}} \}.
$$
We prove it by induction on $l$. When $l = 0$, there are nothing to prove. So, assume that $l > 0$ and that $F_{i_{l-1}} \cdots F_{i_1} G(b) \in \Span_{\Q(p,q)} \{ G(b') \mid \text{$b'$ satisfies condition \eqref{jcanonical condition}} \}$ for all $i_1,\ldots,i_{l-1} \in \bbI$. If $i_l \neq \ul{1}$, then, by Remark \ref{filtration for frp}, we have
$$
F_{i_l} G(b') \in \Span_{\Q(p,q)} \{ G(b'') \mid I^\jmath(b') \trilefteq I^\jmath(b'') \}
$$
for all $b'$ satisfying condition \eqref{jcanonical condition}. Since $|I^\jmath(b'')^-| = |I^\jmath(b')^-|$ for all $b''$ with $I^\jmath(b') \trilefteq I^\jmath(b'')$, $b''$ satisfies condition \eqref{jcanonical condition}.

If $i_l = \ul{1}$, then $\wt(F_{i_l} G(b')) = \wt(G(b')) - \alpha_{\ul{1}}$. This immediately implies that $F_{i_l} G(b') \in \Span_{\Q(p,q)} \{ G(b'') \mid |I^\jmath(b'')^-| < |I^\jmath(b')^-| \}$. Therefore, $F_{i_l} \cdots F_{i_1} G(b)$ is a linear combination of $G(b')$ with $|I^\jmath(b')^-| < |I^\jmath(b)^-|$. Thus, the proof completes.
\end{proof}

\begin{prop}\label{str const for Gj-basis}
Let $b \in \clB$ and $i \in \Ij \setminus \{1\}$. Then, we have
\begin{align}
\begin{split}
e_i \Gj(b) &= [\vphi_{\ul{i}}(b) + 1] \Gj(\Etil_{\ul{i}}b) + \sum_{\substack{b' \in \clB \setminus \{ \Etil_{\ul{i}}b \} \\ \wtj(b') = \wtj(b) + \gamma_i \AND \wt(b') \leq \wt(b) + \alpha_{\ul{i}}}} e_{b',b}^{(i)} \Gj(b'), \\
f_i \Gj(b) &= [\vphi_{-\ul{i}}(b) + 1] \Gj(\Etil_{-\ul{i}}b) + \sum_{\substack{b' \in \clB \setminus \{ \Etil_{-\ul{i}}b \} \\ \wtj(b') = \wtj(b) - \gamma_i \AND \wt(b') \leq \wt(b) + \alpha_{-\ul{i}}}} f_{b',b}^{(i)} \Gj(b')
\end{split} \nonumber
\end{align}
for some $e_{b',b}^{(i)}, f_{b',b}^{(i)} \in \A$. Moreover, $e_{b',b}^{(i)} = f_{b',b}^{(i)} = 0$ unless $I^\jmath(b) \trilefteq I^\jmath(b')$ or $|I^\jmath(b')^-| < |I^\jmath(b)^-|$.
\end{prop}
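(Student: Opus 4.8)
The plan is to reduce the statement to the structure of the ordinary global crystal basis $G(\clB)$ — available under the running assumptions of this subsection — by exploiting the explicit expressions $e_i = E_{\ul i} + F_{-\ul i}K_{\ul i}\inv$ and $f_i = E_{-\ul i} + K_{-\ul i}\inv F_{\ul i}$, which hold precisely because $i \neq 1$, together with Lemma~\ref{expansion of jCB in CB}; the decisive point enabled by $i \in \Ij\setminus\{1\}$ is that $e_i,f_i \in \bfU(\frl)$. I treat $e_i$ throughout, the case of $f_i$ being obtained by interchanging $\ul i$ and $-\ul i$. For integrality: since $e_i \in \UjA \subseteq \UA$ (Lemma~\ref{Uja in UA}) and $M_\A$ is a $\UA$-submodule with $M_\A = \Span_\A\Gj(\clB)$, we get $e_i\Gj(b) \in M_\A$, which forces $e^{(i)}_{b',b} \in \A$ in the expansion $e_i\Gj(b) = \sum_{b'} e^{(i)}_{b',b}\Gj(b')$.

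Next come the two weight constraints. From $k_j e_i k_j\inv = q^{(\beta_j,\alpha_{\ul i})}e_i$ and $(\beta_j,J)=0$, the operator $e_i$ sends the $\jmath$-weight space of weight $\lambda$ into that of weight $\lambda + \gamma_i$; hence $e^{(i)}_{b',b} = 0$ unless $\wtj(b') = \wtj(b) + \gamma_i$. For the $\U$-weight bound, recall from Theorem~\ref{j-canonical}(2) that $\Gj(b) \in \bigoplus_{\nu \le \wt(b)}M_\nu$. The summand $E_{\ul i}$ raises $\U$-weight by $\alpha_{\ul i}$, while $F_{-\ul i}K_{\ul i}\inv$ lowers it by $\alpha_{-\ul i}$; since $\alpha_{\ul i}$ and $\alpha_{-\ul i}$ are simple roots, hence lie in $Q_+$, we have $\wt(b) - \alpha_{-\ul i} = \wt(b) + \alpha_{\ul i} - (\alpha_{\ul i} + \alpha_{-\ul i}) \le \wt(b) + \alpha_{\ul i}$, so $e_i\Gj(b) \in \bigoplus_{\nu \le \wt(b)+\alpha_{\ul i}}M_\nu$ and thus $e^{(i)}_{b',b} = 0$ unless also $\wt(b') \le \wt(b) + \alpha_{\ul i}$.

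For the leading term, observe that the component of $e_i\Gj(b)$ in the extreme weight space $M_{\wt(b)+\alpha_{\ul i}}$ comes only from $E_{\ul i}$ applied to the leading $G$-term $G(b)$ of $\Gj(b)$: applying $E_{\ul i}$ to a lower-weight term of $\Gj(b)$, or $F_{-\ul i}K_{\ul i}\inv$ to any term, lands in a strictly lower weight space. By Proposition~\ref{fund prop for global crystal basis}(4), $E_{\ul i}G(b) = [\vphi_{\ul i}(b)+1]G(\Etil_{\ul i}b) + \sum_{\vphi_{\ul i}(b')>\vphi_{\ul i}(b)+1}\vep^{(\ul i)}_{b',b}G(b')$, all of whose terms have weight $\wt(b)+\alpha_{\ul i}$. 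Since $G(\clB) \to \Gj(\clB)$ is unitriangular for the $\U$-weight (Theorem~\ref{j-canonical}(2)), the $M_{\wt(b)+\alpha_{\ul i}}$-component of $e_i\Gj(b)$ has the same coordinates in $\Gj(\clB)$ as in $G(\clB)$; in particular the coefficient of $\Gj(\Etil_{\ul i}b)$ is exactly $[\vphi_{\ul i}(b)+1]$. This gives the asserted formula (and, with $\ul i$ replaced by $-\ul i$, the one for $f_i$).

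Finally, the dominance-filtration claim. For $b',b \in \clB$ write $b' \sqsupseteq b$ if $I^\jmath(b) \trilefteq I^\jmath(b')$ or $|I^\jmath(b')^-| < |I^\jmath(b)^-|$; since $\trilefteq$ preserves the size of the negative part, $\sqsupseteq$ is transitive. By Lemma~\ref{expansion of jCB in CB}, $\Gj(b) \in \Span_{\Q(p,q)}\{G(b'') \mid b'' \sqsupseteq b\}$; conversely, inverting the $\U$-weight-unitriangular change of basis and using Lemma~\ref{expansion of jCB in CB} once more together with transitivity of $\sqsupseteq$, one gets $G(b') \in \Span_{\Q(p,q)}\{\Gj(b'') \mid b'' \sqsupseteq b'\}$. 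Since $e_i \in \bfU(\frl)$, Remark~\ref{filtration for frp}, applied to $M$ regarded as a $\bfU(\frl)$-module with global crystal basis $G(\clB)$, gives $e_iG(b'') \in \Span_{\Q(p,q)}\{G(b') \mid I^\jmath(b'') \trilefteq I^\jmath(b')\} \subseteq \Span_{\Q(p,q)}\{G(b') \mid b' \sqsupseteq b''\}$. Chaining these three facts yields $e_i\Gj(b) \in \Span_{\Q(p,q)}\{\Gj(b') \mid b' \sqsupseteq b\}$, i.e.\ $e^{(i)}_{b',b} = 0$ unless $b' \sqsupseteq b$; the same reasoning handles $f_i$. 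I expect this last step to be the main obstacle, since it requires transporting the dominance filtration through the $G$-to-$\Gj$ change of basis — which needs both the transitivity of $\sqsupseteq$ and the $\bfU(\frl)$-equivariance of the filtration (Remark~\ref{filtration for frp}) — whereas the earlier steps are routine bookkeeping with the $\U$- and $\jmath$-weight gradings.
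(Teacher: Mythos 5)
Your proposal is correct and follows essentially the same route as the paper's proof: expand $\Gj(b)$ in the $G$-basis via Lemma~\ref{expansion of jCB in CB}, use $e_i,f_i \in \bfU(\frl)$ together with Remark~\ref{filtration for frp} to control the dominance condition, and extract the leading coefficient $[\vphi_{\ul{i}}(b)+1]$ from the extreme $\U$-weight component using Proposition~\ref{fund prop for global crystal basis}(4). The only difference is that you spell out details the paper leaves implicit (integrality via $\UjA \subset \UA$, the two weight constraints, and the transitivity needed to transport the dominance condition back from the $G$-basis to the $\Gj$-basis), which is fine.
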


\begin{proof}
We prove the assertion only for $e_i$; the proof for $f_i$ is similar. By Lemma \ref{expansion of jCB in CB}, we can write
$$
\Gj(b) = G(b) + \sum_{b' \in \clB \setminus \{ b \}} c_{b',b} G(b')
$$
for some $c_{b',b} \in \A$ such that $c_{b',b} = 0$ unless $I^\jmath(b) \trilefteq I^\jmath(b')$ or $|I^\jmath(b')^-| < |I^\jmath(b)^-|$. Since $e_i \in U_q(\frl)$, it holds that
$$
e_i \Gj(b) \in \Span_\A \{ G(b'') \mid I^\jmath(b) \trilefteq I^\jmath(b') \OR |I^\jmath(b')^-| < |I^\jmath(b)^-| \}.
$$
Hence, it suffices to show that $[e_i \Gj(b) : \Gj(\Etil_{\ul{i}}b)] = [\vphi_{\ul{i}}(b) + 1]$. By the definitions of $e_i$ and $\Gj(b)$, $e_i \Gj(b)$ is the sum of $E_{\ul{i}} G(b)$ and a linear combination of weight vectors of $M$ of weight lower than $\wt(b) + \alpha_{\ul{i}}$. We know from Proposition \ref{fund prop for global crystal basis} $(4)$ that $[E_{\ul{i}} \Gj(b) : G(\Etil_{\ul{i}}b)] = [\vphi_{\ul{i}}(b)+1]$. Hence, we have $[e_i \Gj(b) : \Gj(\Etil_{\ul{i}}b)] = [\vphi_{\ul{i}}(b) + 1]$. This proves the assertion.
\end{proof}

\subsection{Global $\jmath$-crystal bases}\label{subsec: global basis}
Let $M \in \Oj$, $(\clL,\clB)$ a $\jmath$-crystal basis of $M$, $\psij_M$ a $\psij$-involution, and $M_{\A}$ a $\UjA$-submodule of $M$. Suppose that $M$ has a $\jmath$-global basis $\Gj(\clB)$ with the associated balanced triple $(\clL,M_{\A},\psij_M(\clL))$.


Following \cite{K02}, let us introduce modified Kashiwara operators:
\begin{defi}
For $n \in \Z$, set
\begin{align}
\begin{split}
\ftil_i^{(n)} &:= \sum_{t \geq 0,-n} f_i^{(n+t)} e_i^{(t)} A_n(t;k_i), \\
\ftil_1^{(n)} &:= \sum_{t \geq 0,-n} f_1^{(n+t)} e_1^{(t)} a_n(t;k_1),
\end{split} \nonumber
\end{align}
where
\begin{align}
\begin{split}
A_n(t;x) &:= (-1)^t q^{t(1-n)} x^t \prod_{s=0}^{t-1}(1-q^{n+2s}), \\
a_n(t;x) &:= (-1)^t p^t q^{t(1-n)} x^t \prod_{s=0}^{t-1} q^s(1-q^{n+2s}).
\end{split} \nonumber
\end{align}
\end{defi}

\begin{lem}\label{modified ftil preserves lattice}
Let $M \in \Oj$ with the $\jmath$-crystal basis $(\clL,\clB)$. For $n \in \Z$, we have $\ftil_i^{(n)} \clL \subset \clL$, and $\ftil_i^{(n)} \clL = \ftil_i^n \clL$ modulo $q\clL$.
\end{lem}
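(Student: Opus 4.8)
The plan is to reduce the statement to a rank-one computation: for $i \neq 1$ it coincides with Kashiwara's corresponding lemma for $U_q(\frsl_2)$, and for $i = 1$ it requires a direct computation inside the rank-one coideal subalgebra $\Uj_1$, for which the extra factors $p^t$ and $\prod_s q^s$ appearing in $a_n(t;x)$ are tailored. Concretely, I would first reduce to the case of an irreducible module $M = L(\bflm)$: $\Oj$ is semisimple, and by the structure theory of $\jmath$-crystal bases in \cite{W17} a $\jmath$-crystal basis of $M$ decomposes into those of its irreducible constituents, so both assertions pass to and from the constituents. Fix $i \in \Ij$ and let $\bfU^{(i)} \subseteq \Uj$ be the subalgebra generated by $e_i, f_i, k_i$. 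From the defining relations one reads off that $\bfU^{(i)} \cong U_q(\frsl_2)$ when $i \neq 1$ (here $\alpha_{\ul i}$ and $\alpha_{-\ul i}$ are orthogonal) and $\bfU^{(i)} \cong \Uj_1$ when $i = 1$ (they are not orthogonal, whence the deformed Serre relations). The operators $\etil_i, \ftil_i$ and $\ftil_i^{(n)}$ involve only $e_i, f_i, k_i$; moreover, again by \cite{W17}, on $L(\bflm)$ they agree with the corresponding operators attached to $\bfU^{(i)}$, and the lattice $\clL(\bflm)$, viewed over $\bfU^{(i)}$, is a direct sum of rank-one (crystal or $\jmath$-crystal) lattices. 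Since $L(\bflm)$ is finite-dimensional (it is a subquotient of $\bfV^{\otimes d}$), this restriction is integrable. We are thus reduced to proving, for every rank-one irreducible $\bfU^{(i)}$-module $N$ with its crystal lattice $\clL_N$ and crystal basis, that $\ftil_i^{(n)}\clL_N \subseteq \clL_N$ and that $\ftil_i^{(n)}$ induces on $\clL_N/q\clL_N$ the same map as $(\ftil_i)^n$ (understood as $(\etil_i)^{-n}$ for $n < 0$).

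When $i \neq 1$, $N$ is the $(l+1)$-dimensional irreducible $U_q(\frsl_2)$-module with basis $u_m = f_i^{(m)}u_0$ ($0 \le m \le l$), crystal lattice $\bigoplus_m \Ao u_m$, and $\ftil_i u_m = u_{m+1}$, $\etil_i u_m = u_{m-1}$. Substituting the standard $\frsl_2$-formulas for $e_i^{(t)}u_m$ and $f_i^{(n+t)}u_{m-t}$, together with $k_i u_m = q^{l-2m}u_m$, into the defining expression for $\ftil_i^{(n)}$, one obtains $\ftil_i^{(n)}u_m = (\text{scalar})\cdot u_{m+n}$, which is zero unless $0 \le m+n \le l$; the scalar is a terminating $q$-hypergeometric sum which, thanks to the precise form of $A_n(t;x)$, collapses by the $q$-Vandermonde identity to an element of $\Ao$ congruent to $1$ modulo $q$ exactly when $0 \le m+n \le l$, and to an element of $q\Ao$ otherwise. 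This is Kashiwara's result \cite{K02}, which may be cited.

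When $i = 1$ I would carry out the parallel computation over $\Uj_1$. By the Example above, $N = L(\bfnu)$ ($\bfnu \in \Pj_1$) has basis $w_m := f_1^{(m)}v$ ($0 \le m \le h$, $h := \bfnu_0 - \bfnu_{-1}$), crystal lattice $\bigoplus_m \Ao w_m$, with $\ftil_1 w_m = w_{m+1}$ for $m < h$ and $\ftil_1 w_h = 0$. The first step is to make explicit the action of $e_1$ on the $w_m$ — a coefficient in $\A$, depending on $p$ and $q$, times $w_{m-1}$ — from the defining relations of $\Uj_1$ and $k_1 v = q^{h}v$; the term $pqk_1 + p\inv q\inv k_1\inv$ in the deformed Serre relations is precisely what forces $p$ to occur here. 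Plugging this into $\ftil_1^{(n)}w_m = \sum_t f_1^{(n+t)}e_1^{(t)}a_n(t;k_1)w_m$ again produces a scalar multiple of $w_{m+n}$; the prefactor $p^t$ and the product $\prod_s q^s$ distinguishing $a_n(t;x)$ from $A_n(t;x)$ are exactly what cancels the $p$-powers and extra $q$-powers coming from the $\Uj_1$-module structure, so that after an appropriate $q$-binomial summation the scalar once more lies in $\Ao$ and reduces to $1$ modulo $q$ when $0 \le m+n \le h$, and to $q\Ao$ otherwise. The cases $n \le 0$ are entirely analogous (with $e_1$ and $f_1$ exchanging roles), and $n = 0$ gives the identity operator modulo $q$.

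The main obstacle is this last step. One has to pin down the exact matrix coefficients of $e_1$ on each $L(\bfnu)$, which is not completely routine because of the deformed Serre relations, and then establish the $q$-hypergeometric identity governing $\sum_t f_1^{(n+t)}e_1^{(t)}a_n(t;k_1)$. The delicate point throughout is membership in $\Ao$: since $\Ao$ is defined by the iterated limit $\lim_{q \to 0}\lim_{p \to 0}$, it is not enough to control the $q$-order of the coefficients — their behaviour as $p \to 0$ must also be controlled, and it is exactly here that the $p^t$ prefactor built into $a_n(t;x)$ is indispensable. I expect this computation to occupy most of the proof.
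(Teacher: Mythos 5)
Your plan follows the same route as the paper: for $i \neq 1$ the modified operator is Kashiwara's and the statement is quoted from \cite[Proposition 6.1]{K02}, while for $i=1$ one reduces to a rank-one string computation in which the $p$-powers built into $a_n(t;x)$ do the work. Two points of comparison, one of which is a real inaccuracy in your set-up. First, the paper never passes through irreducible $\Uj$-modules; it reduces directly to vectors $u \in \clL$ with $e_1 u = 0$, $k_1 u = q^a u$ and $e_1 f_1 u = [b]\{a-b-1\}u$, where $a \in \Z$ and $b \in \Z_{\geq 0}$ are \emph{independent} parameters. Your rank-one model postulates $k_1 v = q^h v$ with $h$ the string length, i.e.\ $a=b$; this is false in general (for $L(\bfnu)$, $\bfnu \in \Pj_1$, the $k_1$-eigenvalue on the highest weight vector is $q^{2\bfnu_0-\bfnu_1-\bfnu_{-1}}$ while the string length is $\bfnu_0-\bfnu_{-1}$), and the restriction of a module in $\Oj$ to the subalgebra generated by $e_1,f_1,k_1$ produces strings with arbitrary pairs $(a,b)$. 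The difference $a-b$ enters the $e_1$-matrix coefficients, and the regime where $a-b-m$ is very negative is exactly where the $p$-powers must compensate negative $q$-powers, i.e.\ precisely where your concern about the iterated limit defining $\Ao$ is located; so the computation must carry both parameters. Second, the paper sidesteps the new $q$-hypergeometric summation you expect to be the bulk of the work: in the coefficient of $f_1^{(m+n)}u$ in $\ftil_1^{(n)}f_1^{(m)}u$, each summand $a_n(t;q^{a-3m}){m+n \brack m-t}{b-m+t \brack t}\prod_{s=0}^{t-1}\{a-b-m+s\}$ is rewritten as $B_t\prod_{s=0}^{t-1}\bigl(1+p^2q^{2(a-b-m+s)}\bigr) = B_t(1+p^2 g_t)$, where $B_t = A_n(t;q^{b-2m}){m+n \brack m-t}{b-m+t \brack t}$ is exactly the summand in Kashiwara's proof and $g_t \in \Z[p,q,q\inv]$; hence the total coefficient is Kashiwara's sum, which lies in $1+q\Z[q]$ by the proof of \cite[Proposition 6.1]{K02}, plus an element of $p^2\Z[p,q,q\inv] \subset q\Ao \cap \A$. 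This factorization is the precise realization of your remark that the $p^t$ prefactor is tailored to the deformed relation, but it lets one quote Kashiwara's identity instead of reproving a $q$-Vandermonde-type evaluation. With the two-parameter strings restored, your argument goes through and is essentially the paper's proof.
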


\begin{proof}
If $i \neq 1$, then the statement follows from \cite[Proposition 6.1]{K02}. Hence, we prove the case when $i=1$. It suffices to prove the following: For each $u \in \clL$ such that $e_1u = 0$, $k_1u = q^au$, $e_1f_1u = [b]\{ a-b-1 \}u$ with $a \in \Z$ and $b \in \Z_{\geq 0}$, we have $\ftil_1^{(n)} f_1^{(m)} u = cf_1^{(m+n)}$ for some $c \in 1 + q\A_0 \cap \A$. First of all, we have
\begin{align}
\ftil_1^{(n)} f_1^{(m)} u = \sum_{t \geq 0,-n} a_n(t;q^{a-3m}) {m+n \brack m-t} {b-m+t \brack t} \prod_{s=0}^{t-1} \{ a-b-m+s \} f_1^{(m+n)} u. \nonumber
\end{align}
We compute the coefficient, say $A$, of the right-hand side as follows.
\begin{align}
\begin{split}
A &= \sum_{t \geq 0,-n} A_n(t;q^{b-2m}) {m+n \brack m-t} {b-m+t \brack t} \prod_{s=0}^{t-1}(1 + p^2q^{2(a-b-m+s)}) \\
&= \sum_{t \geq 0,-n} B_t + p^2\sum_{t \geq 0,-n} B_t g_t,
\end{split} \nonumber
\end{align}
where $B_t := A_n(t;q^{b-2m}) {m+n \brack m-t} {b-m+t \brack t}$, and $g_t \in \Z[p,q,q\inv]$ with $\prod_{s=0}^{t-1}(1 + p^2q^{2(a-b-m+s)}) = 1 + p^2g_t$. By the proof of \cite[Proposition 6.1]{K02}, we have $B_t \in 1 + q\Z[q]$. Also, it is clear that $p^2\sum_{t \geq 0,-n} B_t g_t \in p^2\Z[p,q,q\inv]$. Thus, we have $\ftil_1^{(n)} f_1^{(m)} u \in \clL$ and $\ftil_1^{(n)} f_1^{(m)} u = f_1^{(m+n)} u = \ftil_1^n f_1^{(m)} u$ modulo $q\clL$. This proves the lemma.
\end{proof}

\begin{prop}
Let $i \in \Ij$, $b \in \clB$ and $m \in \Z_{\geq 0}$. Then, we have the following.
\begin{enumerate}
\item $\sum_{n \geq m} f_i^{(n)}M_{\A} = \bigoplus_{\substack{b' \in \clB \\ \vep_i(b') \geq m}} \A \Gj(b')$.
\item $\sum_{n \geq m} e_i^{(n)}M_{\A} = \bigoplus_{\substack{b' \in \clB \\ \vphi_i(b') \geq m}} \A \Gj(b')$ if $i \neq 1$.
\item $f_i \Gj(b) = [\vep_i(b)+1]\Gj(\ftil_i b) + \sum_{\substack{b' \in \clB \\ \vep_i(b') > \vep_i(b)+1}} \vphi^{(i)}_{b',b} \Gj(b')$ for some $\vphi^{(i)}_{b',b} \in q^{2-\vep_i(b')}\Q[q]$.
\item $e_i \Gj(b) = [\vphi_i(b)+1]\Gj(\etil_i b) + \sum_{\substack{b' \in \clB \\ \vphi_i(b') > \vphi_i(b)+1}} \vep^{(i)}_{b',b} \Gj(b')$ for some $\vep^{(i)}_{b',b} \in q^{2-\vphi_i(b')}\Q[q]$ if $i \neq 1$.
\end{enumerate}
\end{prop}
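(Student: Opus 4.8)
The plan is to follow the proof of Proposition~\ref{fund prop for global crystal basis} (and, for the subtle directions, Kashiwara's argument in \cite{K02}) almost verbatim, the new ingredient being the modified divided operators $\ftil_i^{(n)}$ and $\ftil_1^{(n)}$ of Lemma~\ref{modified ftil preserves lattice}. First I would isolate the structural constraints on the matrix coefficients that hold for every $i\in\Ij$. Writing $f_i^{(n)}\Gj(b)=\sum_{b'}a_{b',b}^{(n)}\Gj(b')$ and $e_i^{(n)}\Gj(b)=\sum_{b'}e_{b',b}^{(n)}\Gj(b')$, the following hold. Since $f_i^{(n)},e_i^{(n)}\in\UjA$ and $M_{\A}$ is a $\UjA$-module with $\A$-basis $\Gj(\clB)$, all coefficients lie in $\A$; since $\psij_M$ fixes every $\Gj(b)$ (by construction of the global basis) and satisfies $\psij_M(xm)=\psij(x)\psij_M(m)$ with $\psij(e_i)=e_i$, $\psij(f_i)=f_i$, the coefficients are bar-invariant; and whenever the relevant Kashiwara operator preserves $\clL$ they also lie in $\Ao$. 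The remaining quantitative data — the precise leading coefficients and the $q$-degree bounds on the error terms — are then read off modulo $q\clL$ from the divided-power identities and the modified operators, exactly as in the rank-one computation underlying Proposition~\ref{fund prop for global crystal basis}.

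For $i\in\Ij\setminus\{1\}$ this is immediate: $e_i,f_i,k_i^{\pm1}$ generate a subalgebra isomorphic to $U_q(\mathfrak{sl}_2)$ (the bracket relation $e_if_i-f_ie_i=(k_i-k_i\inv)/(q-q\inv)$ being available since $(i,i)\neq(1,1)$), and with respect to it $(\clL,\clB)$ is a crystal basis with the same $\etil_i,\ftil_i$ and invariants $\vep_i,\vphi_i$ (built into the construction of the $\jmath$-crystal basis in \cite{W17}), $M_\A$ is a module over the associated Lusztig $\A$-form since $e_i^{(n)},f_i^{(n)},k_i^{\pm1}\in\UjA$, the triple $(\clL,M_\A,\psij_M(\clL))$ remains balanced, and $\psij_M$ restricts to the bar-involution. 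Hence $\Gj(\clB)$ is a global crystal basis of $M$ as a $U_q(\mathfrak{sl}_2)$-module, and (1)--(4) for such $i$ are the rank-one case of Proposition~\ref{fund prop for global crystal basis} applied inside $M$; in particular (2) and (4) are fully proved, and it only remains to treat (1) and (3) for $i=1$.

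For $i=1$ I would restrict $M$ to the rank-one coideal subalgebra $\Uj_1=\la e_1,f_1,k_1^{\pm1}\ra$; then $M$ becomes a semisimple $\Uj_1$-module whose irreducible summands are copies of the modules $L(\bflm)$, $\bflm\in\Pj_1$, each $(\bflm_0-\bflm_{-1}+1)$-dimensional with basis $\{f_1^{(n)}v\}$ and global $\jmath$-crystal basis as exhibited earlier (the Example in Section~\ref{global bases}). Mimicking \cite{K02}, I would argue by induction along the $\ftil_1$-strings. The operators $\ftil_1^{(n)}$ preserve $\clL$ and induce $\ftil_1^n$ on $\clL/q\clL$ (Lemma~\ref{modified ftil preserves lattice}), and they lie in $\UjA$, so $\ftil_1^{(n)}\Gj(b)\in M_\A\cap\clL$ with $\ftil_1^{(n)}\Gj(b)\equiv\ftil_1^n b\pmod{q\clL}$; feeding in the explicit formula for $\ftil_1^{(n)}f_1^{(m)}u$ computed in the proof of that lemma, together with the $\A$-integrality and the module structure constants, one pins $\ftil_1^{(n)}\Gj(b)$ down to $\Gj(\ftil_1^n b)$ for $b$ at the top of its $\ftil_1$-string, so that $\Gj(b),\Gj(\ftil_1 b),\dots$ span the $\Uj_1$-summand through $\Gj(b)$ and coincide with $v,f_1 v,f_1^{(2)}v,\dots$. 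Given this, (1) and (3) for $i=1$ follow from $f_1\cdot f_1^{(n)}v=[n+1]f_1^{(n+1)}v$ on $L(\bflm)$ together with the constraints of the first paragraph, which force the error coefficients into $q^{2-\vep_1(b')}\Q[q]$ exactly as in the rank-one type-$A$ case.

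The main obstacle is precisely this $i=1$ analysis. In contrast with $i\neq1$, the algebra $\Uj_1$ is not a quantum $\mathfrak{sl}_2$ — its Serre-type relation for $e_1$ is quadratic with an extra $k_1$-term — so Kashiwara's $\mathfrak{sl}_2$-machinery cannot be quoted, $\clL$ is not visibly stable under $f_1^{(n)}$, and the modified operators $\ftil_1^{(n)}$ of Lemma~\ref{modified ftil preserves lattice} are the indispensable substitute, with the quantitative input (leading term and $q$-degree bounds) having to be extracted from that lemma's computation. A further difficulty is that $\Gj(\clB)$ is a priori adapted only to $\Uj$, so one must genuinely prove — not assume — that it refines the $\Uj_1$-isotypic decomposition of $M$, which is what the inductive argument above accomplishes.
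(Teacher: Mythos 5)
Your treatment of $i\neq 1$ is fine and is exactly the paper's route: $(e_i,k_i,f_i)$ is an $\mathfrak{sl}_2$-triple, so all four statements for such $i$ are the rank-one case of Proposition \ref{fund prop for global crystal basis}, and indeed only (1) and (3) for $i=1$ remain. The problem is the core of your $i=1$ argument. You claim that the modified operators pin $\ftil_1^{(n)}\Gj(b)$ down to $\Gj(\ftil_1^n b)$ for $b$ at the top of its string, so that $\Gj(b),\Gj(\ftil_1 b),\dots$ ``coincide with $v,f_1v,f_1^{(2)}v,\dots$'' of a $\Uj_1$-irreducible summand, i.e.\ that $\Gj(\clB)$ refines the $\Uj_1$-isotypic decomposition of $M$. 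This is false in general, and it would contradict the very statement you are proving: if $f_1^{(n)}\Gj(b)=\Gj(\ftil_1^n b)$ held along strings, every error coefficient $\vphi^{(1)}_{b',b}$ in (3) would vanish, whereas these terms are generically nonzero (already for the ordinary canonical basis of $U_q(\mathfrak{sl}_2)$ acting on $\bfV\otimes\bfV$ the element $G(b)$ with $\vep(b)=0$ in the one-dimensional crystal component satisfies $FG(b)\neq 0=G(\Ftil b)$). The operator $\ftil_1^{(n)}$ is not bar-invariant and there is no reason for $\ftil_1^{(n)}\Gj(b)$ to be $\psij_M$-fixed, so it cannot be a global basis element; Lemma \ref{modified ftil preserves lattice} only tells you it lies in $\clL$ (and in $M_\A$) with the right image in $\clL/q\clL$.

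What the paper actually does with $u:=\ftil_1^{(m)}\Gj(\etil_1^m b')$ is weaker and is the missing idea in your plan: by descending induction on $m$ in (1), one has $u-f_1^{(m)}\Gj(\etil_1^m b')\in\sum_{n>m}f_1^{(n)}M_\A=\bigoplus_{\vep_1(b'')>m}\A\,\Gj(b'')$, and one then \emph{corrects} $u$ by subtracting a suitable $q\Q[q]$-combination of these $\Gj(b'')$ so that the result is simultaneously in $\clL\cap M_\A$, $\psij_M$-fixed, and congruent to $b'$ mod $q\clL$; only then does the balanced-triple characterization force it to equal $\Gj(b')$, giving one inclusion of (1). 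The reverse inclusion of (1) needs a separate weight-space induction (expanding $(M_\A)_\lm$ in terms of $\Gj(b)$ with $\vep_1(b)=0$ plus $\sum_{n\geq 1}f_1^{(n)}(M_\A)_{\lm+n\gamma_1}$), and (3) needs a further descending induction on $\vep_1(b)$ together with writing $\Gj(b)=\sum_{k\geq m}f_1^{(k)}u_k$ with $e_1u_k=0$ to extract the bound $\vphi^{(1)}_{b',b}\in q^{2-\vep_1(b')}\Q[q]$. None of these steps is supplied by, or compatible with, your assumed string-compatibility, so as written the $i=1$ half of the proof has a genuine gap.
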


\begin{proof}
Since $(e_i,k_i,f_i)$, $i \neq 1$ forms an $\frsl_2$-triple, most of the assertions follows from Proposition \ref{fund prop for global crystal basis}. What we have to prove are assertions $(1)$ and $(3)$ for $i = 1$. First, we prove part $(1)$ by induction on $m$. When $m = 0$, the both sides of the equation to be proved are $0$. Assume that assertion $(1)$ holds for all $m' > m$. Let $b' \in \clB$ be such that $\vep_1(b') = m$. Set $b'_0 := \etil_1^mb$, and consider $u := \ftil_1^{(m)} \Gj(b'_0)$. By the definition of $\ftil_1^{(m)}$ and Lemma \ref{modified ftil preserves lattice}, we have
$$
u - f_1^{(m)} \Gj(b'_0) \in \sum_{n > m} f_1^{(n)} M_\A \AND u + q\clL = b'.
$$
By our inductive hypothesis, we can write
$$
u - f_1^{(m)}\Gj(b'_0) = \sum_{\substack{b'' \in \clB \\ \vep_1(b'') > m}} a_{b''} \Gj(b'')
$$
for some $a_{b''} \in \A$. Then, we can take $a'_{b''} \in q\Q[q]$ in a way such that $a_{b''} - \ol{a_{b''}} = a'_{b''} - \ol{a'_{b'''}}$. Set $v := u - \sum_{b''} a'_{b''} \Gj(b'') = f_1^{(m)}\Gj(b'_0) + \sum_{b''} (a_{b'''} - a'_{b''}) \Gj(b'')$. Then, we have $v \in M_\A \cap \clL$, $\psij_M(v) = v$, and $v + q\clL = u + q\clL = b'$. These implies that $v = \Gj(b')$, and therefore, $\Gj(b') \in \sum_{n \geq m} f_1^{(n)} M_\A$. Hence, we obtain $\sum_{n \geq m} f_i^{(n)}M_{\A} \supset \bigoplus_{\substack{b' \in \clB \\ \vep_i(b') \geq m}} \A \Gj(b')$.

We prove the opposite inclusion. For each $\lm \in \Lmj$, we have
\begin{align}
\begin{split}
(M_\A)_\lm &\subset \sum_{b \in \clB_\lm} \A \Gj(b) \\
&= \sum_{\substack{b \in \clB_\lm \\ \vep_1(b) = 0}} \A \Gj(b) + \sum_{\substack{b' \in \clB_\lm \\ \vep_1(b') \geq 1}} \A \Gj(b') \\
&\subset \sum_{\substack{b \in \clB_\lm \\ \vep_1(b) = 0}} \A \Gj(b) + \sum_{n \geq 1} f_1^{(n)} (M_\A)_{\lm + n\gamma_1}.
\end{split} \nonumber
\end{align}
Hence, we obtain
\begin{align}
\begin{split}
f_1^{(m)} (M_\A)_\lm & \subset \sum_{\substack{b \in \clB_\lm \\ \vep_1(b) = 0}} \A f_1^{(m)}\Gj(b) + \sum_{n \geq 1} f_1^{(m)}f_1^{(n)} (M_\A)_{\lm + n\gamma_1} \\
&\subset \sum_{\substack{b \in \clB_\lm \\ \vep_1(b) = 0}} \A f_1^{(m)}\Gj(b) + \sum_{n \geq 1} f_1^{(m+n)} (M_\A)_{\lm + n\gamma_1} \\
&= \sum_{\substack{b \in \clB_\lm \\ \vep_1(b) = 0}} \A f_1^{(m)}\Gj(b) + \sum_{\substack{b' \in \clB_\lm \\ \vep_1(b') > m}} \A \Gj(b') \qu (\text{by induction hypothesis}).
\end{split} \nonumber
\end{align}
Also, by the argument above, $f_1^{(m)} \Gj(b)$ with $\vep_1(b) = 0$ is contained in $\sum_{\vep(b') \geq m} \A \Gj(b')$. This completes the proof of part $(1)$.

Next, we turn to prove assertion $(3)$ for $i = 1$ by descending induction on $m := \vep_1(b)$. When $m$ is maximum among $\{ \vep_1(b') \mid b' \in \clB \}$, we have by $(1)$ that
$$
f_1 \Gj(b) \in \sum_{n > m} f_1^{(n)} M_\A = \sum_{\vep_1(b') > m} \A \Gj(b') = 0,
$$
and the equation in $(3)$ holds. Assume that $(3)$ is true for all $m' > m$. As in the proof of $(1)$, let us write
\begin{align}
\begin{split}
\Gj(b) &= f_1^{(m)} \Gj(\etil_1^m b) + \sum_{\substack{b' \in \clB \\ \vep_1(b') > m}} c_{b'} \Gj(b'), \\
\Gj(\ftil_1b) &= f_1^{(m+1)} \Gj(\etil_1^m b) + \sum_{\substack{b'' \in \clB \\ \vep_1(b'') > m+1}} d_{b''} \Gj(b'')
\end{split} \nonumber
\end{align}
for some $c_{b'},d_{b''} \in \A$. Then, we have
\begin{align}
\begin{split}
f_1 \Gj(b) &= [m+1] f_1^{(m+1)} \Gj(\etil_1^m b) + \sum_{\vep_1(b') > m} c_{b'} f_1 \Gj(b') \\
&=  [m+1] f_1^{(m+1)} \Gj(\etil_1^m b) + \sum_{\vep_1(b') > m} c_{b'} ([\vep_1(b')+1] \Gj(\ftil_1b') + \sum_{\vep_1(b'') > \vep_1(b')+1} \vphi_{b'',b'}^{(1)} \Gj(b'')) \\
&= [m+1] \Gj(\ftil_1b) + \sum_{\vep_1(b') > m} c_{b'} ([\vep_1(b')+1] \Gj(\ftil_1b') + \sum_{\vep_1(b'') > \vep_1(b')+1} \vphi_{b'',b'}^{(1)} \Gj(b'')) \\
&\qu \qu - \sum_{\substack{b'' \in \clB \\ \vep_1(b'') > m+1}} [m+1] d_{b''} \Gj(b'').
\end{split} \nonumber
\end{align}
Thus, we obtain that $f_i \Gj(b) = [\vep_i(b)+1]\Gj(\ftil_i b) + \sum_{\substack{b' \in \clB \\ \vep_i(b') > \vep_i(b)+1}} \vphi^{(i)}_{b',b} \Gj(b')$ for some $\vphi^{(i)}_{b',b} \in \A$. It remains to prove that $\vphi^{(i)}_{b',b} \in q^{2-\vep_1(b')} \Q[q]$. Let us write
$$
\Gj(b) = \sum_{k \geq m} f_1^{(k)} u_k
$$
for some $u_k \in \clL_{\wtj(b) + k\gamma_1}$ such that $e_1u_k = 0$. Note that $\Gj(b) + q\clL = u_m + q\clL$. Then, we have
$$
f_1 \Gj(b) = [m+1] f_1^{(m+1)}u_m + \sum_{k > m} [k+1] f_1^{(k+1)} u_k,
$$
and that $f_1^{(m+1)} u_m \in \clL$, $f_1^{(m+1)}u_m + q\clL = \ftil_1 b$. Hence, we have $f_1 \Gj(b) = [m+1] \Gj(\ftil_1b) + \sum_{k > m} [k+1] f_1^{(k+1)} u_k$ modulo $q^{2-m}\clL$. Then, rewriting $f_1^{(k+1)}u_k$ as a sum of $\Gj(b')$, $\vep_1(b') \leq k+1$ with coefficients in $q\Ao$, we conclude that the coefficient of $\Gj(b')$ in $f_1 \Gj(b)$ lies in $q^{2-\vep_1(b')}\Ao \cap \A = q^{2-\vep_1(b')} \Q[q]$. This completes the proof.
\end{proof}

For a bipartition $\bflm \in \Pj(M)$, define $I_\bflm(M)$, $W_{\succeq \bflm}(M)$, $W_{\succ \bflm}(M)$, and $W_\bflm(M)$ in a similar way as $I_\lm$, $W_{\succeq \lm}$, $W_{\succ \lm}$, and $W_\lm$, respectively.

\begin{defi}\normalfont
We say that $M$ has the property $(\ast)$ if there exists a poset $(S,\leq)$ and a map $s: \clB \rightarrow S$ satisfying the following:
\begin{enumerate}
\item The abelian group $Q := \sum_{i \in \bbI} \Z \alpha_i$ acts on $S$ freely; the action is written additively.
\item $\sigma \leq \sigma + \lm$ for all $\lm \in Q_+$, $\sigma \in S$.
\item $\sigma + \lm \leq \sigma' + \lm$ for all $\lm \in Q$, $\sigma \leq \sigma' \in S$.
\item $s(b) = s(b')$ only if $\wt(b) = \wt(b')$ for all $b,b' \in \clB$.
\item For $b \in \clB$ and $i \in \Ij \setminus \{1\}$, $s(\Etil_{\ul{i}} b) = s(b) + \alpha_{\ul{i}}$ if $\Etil_{\ul{i}}b \neq 0$.
\item For $i \in \Ij \setminus \{1\}$,
\begin{align}
\begin{split}
e_i \Gj(b) &= [\vphi_{\ul{i}}(b) + 1] \Gj(\Etil_{\ul{i}}b) + \sum_{\substack{b' \in \clB \setminus \{ \Etil_{\ul{i}}b \} \\ \wtj(b') = \wtj(b) + \gamma_i \AND s(b') \leq s(b) + \alpha_{\ul{i}}}} e_{b',b}^{(i)} \Gj(b'), \\
f_i \Gj(b) &= [\vphi_{-\ul{i}}(b) + 1] \Gj(\Etil_{-\ul{i}}b) + \sum_{\substack{b' \in \clB \setminus \{ \Etil_{-\ul{i}}b \} \\ \wtj(b') = \wtj(b) - \gamma_i \AND s(b') \leq s(b) + \alpha_{-\ul{i}}}} f_{b',b}^{(i)} \Gj(b')
\end{split} \nonumber
\end{align}
for some $e_{b',b}^{(i)}, f_{b',b}^{(i)} \in \A$.
\end{enumerate}
\end{defi}

\begin{lem}\label{condition ast}
Let $M \in \Oj$, and $\clL,\clB,\psij_M,M_\A$ as above.
\begin{enumerate}
\item If $r = 1$, then $M$ has the property $(\ast)$.
\item If $M \in \Oint$ and the global $\jmath$-crystal basis is the $\jmath$-canonical basis, then $M$ has the property $(\ast)$.
\end{enumerate}
\end{lem}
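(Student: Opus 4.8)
The plan is to reduce part (2) to the structure‑constant formula already established in Proposition \ref{str const for Gj-basis}, and to observe that part (1) reduces to a purely formal construction because the relevant index set $\Ij \setminus \{1\}$ is empty when $r = 1$.

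For part (2), I would take $S := \Lm$ with the partial order inherited from $\Lm$ (that is, $\mu \le \nu$ iff $\nu - \mu \in Q_+$), let $Q$ act on $S = \Lm$ by translation, and define $s : \clB \to S$ by $s(b) := \wt(b)$, the honest $\bfU$-weight of $b$ (available since $M \in \Oint$). Conditions (1)--(3) are then immediate: translation by the root lattice on the weight lattice is free and preserves the order, which is by definition ``addition of an element of $Q_+$ moves up''. Condition (4) is trivial once $s = \wt$, and condition (5) is the standard fact $\wt(\Etil_{\ul{i}}b) = \wt(b) + \alpha_{\ul{i}}$ about Kashiwara operators. Finally, condition (6) is exactly Proposition \ref{str const for Gj-basis}: under the identification $s = \wt$, the constraints $\wt(b') \le \wt(b) + \alpha_{\ul{i}}$ and $\wt(b') \le \wt(b) + \alpha_{-\ul{i}}$ appearing there become $s(b') \le s(b) + \alpha_{\ul{i}}$ and $s(b') \le s(b) + \alpha_{-\ul{i}}$.

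For part (1), when $r = 1$ we have $\Ij = \{1\}$, hence $\Ij \setminus \{1\} = \emptyset$, so conditions (5) and (6) hold vacuously for any choice of $s$. It thus suffices to produce a poset $(S,\le)$ carrying a free $Q$-action together with a map $s : \clB \to S$ satisfying conditions (1)--(4). I would take $S := Q \times \Lmj$ as a set, let $Q$ act by $\mu \cdot (\nu,\lm) := (\mu + \nu,\lm)$, and order it by declaring $(\nu_1,\lm_1) \le (\nu_2,\lm_2)$ if and only if $\lm_1 = \lm_2$ and $\nu_2 - \nu_1 \in Q_+$; then set $s(b) := (0,\wtj(b))$. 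Condition (1) holds since the action is translation on the first coordinate; conditions (2) and (3) hold because the order only compares elements with the same second coordinate and uses the $Q_+$-order on the first; and condition (4) holds because $s(b) = s(b')$ forces $\wtj(b) = \wtj(b')$. Alternatively one may take $S := Q \times \clB$ with $s(b) := (0,b)$, which is injective, so that (4) is automatic.

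I do not expect a genuine obstacle here: essentially all the content of part (2) is already contained in Proposition \ref{str const for Gj-basis} (which in turn rests on Lemma \ref{expansion of jCB in CB} and the $\bfU(\frl)$-version of the global-crystal-basis filtration, Remark \ref{filtration for frp}), while part (1) is just a formal construction. The fiddliest point is the bookkeeping in matching condition (6) to Proposition \ref{str const for Gj-basis}: one must check that the combined constraints $\wtj(b') = \wtj(b) \pm \gamma_i$ and $s(b') \le s(b) + \alpha_{\pm \ul{i}}$ are precisely those supplied by that proposition, which they are once $S$ and $s$ are fixed as above.
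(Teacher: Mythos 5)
Your part (2) is exactly the paper's argument: the paper proves the lemma in one line by taking $S = \Lm$ and $s = \wt$, so that conditions (1)--(5) are immediate and condition (6) is precisely Proposition \ref{str const for Gj-basis}; your reduction and bookkeeping match this. For part (1) you deviate: the paper again just uses $S = \Lm$, $s = \wt$ (with $\wt(b)$ the $\Lm$-valued weight read off from $T^\jmath_b$), which makes condition (4) tautological and, as you correctly observe, renders (5)--(6) vacuous because $\Ij \setminus \{1\} = \emptyset$ when $r=1$. Your vacuousness observation is the right point, but be careful with your first auxiliary construction $S = Q \times \Lmj$, $s(b) = (0,\wtj(b))$: condition (4) in the definition of $(\ast)$ requires $s(b)=s(b')$ to force $\wt(b)=\wt(b')$, not merely $\wtj(b)=\wtj(b')$, and for a decomposable $M \in \Oj$ two crystal elements lying in components of different sizes can share the same $\jmath$-weight while having different $\wt$ (different tableau contents); this equality of $\wt$ is actually used later in the proof of Theorem \ref{filtration} to compare tableau entries, so (4) is not a condition one can weaken. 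Your fallback choice $S = Q \times \clB$ with the injective map $s(b) = (0,b)$ does satisfy (1)--(4) and hence completes part (1), but the paper's uniform choice $s = \wt$ is simpler and is what you should quote for both parts.
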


\begin{proof}
Setting $S$ and $s$ to be $\Lm$ and $\wt$, respectively, part $(1)$ is obvious, and part $(2)$ follows from Proposition \ref{str const for Gj-basis}.
\end{proof}

The main result in this paper is the following:
\begin{theo}\label{filtration}
Suppose that $M$ has the property $(\ast)$. Then, for each $\bflm \in \Pj(M)$, the following hold:
\begin{enumerate}
\item $W_{\succeq \bflm}(M)$ has a global $\jmath$-crystal basis $W_{\succeq \bflm}(\Gj(\clB)) := \{ \Gj(b) \mid I(b) \succeq \bflm \}$ with the associated balanced triple $(W_{\succeq \bflm}(\clL),W_{\succeq \bflm}(M_{\A}),W_{\succeq \bflm}(\psij_M(\clL)))$, where $W_{\succeq \bflm}(\clL) := W_{\succeq \bflm}(M) \cap \clL$, and so on.
\item $W_{\succ \bflm}(M)$ has a global $\jmath$-crystal basis $W_{\succ \bflm}(\Gj(\clB)) := \{ \Gj(b) \mid I(b) \succ \bflm \}$ with the associated balanced triple $(W_{\succ \bflm}(\clL),W_{\succ \bflm}(M_{\A}),W_{\succ \bflm}(\psij_M(\clL)))$, where $W_{\succ \bflm}(\clL) := W_{\succ \bflm}(M) \cap \clL$, and so on.
\item $W_{\bflm}(M)$ has a global $\jmath$-crystal basis $W_{\bflm}(\Gj(\clB)) := \{ \Gj(b) + W_{\succ \bflm}(M) \mid I(b) = \bflm \}$ with the associated balanced triple $(W_{\bflm}(\clL),W_{\bflm}(M_{\A}),W_{\bflm}(\psij_M(\clL)))$, where $W_{\bflm}(\clL) := W_{\succeq \bflm}(\clL)/W_{\succ \bflm}(\clL)$, and so on.
\item There exists a $\Uj$-module isomorphism $\xi: L(\bflm)^{\oplus m_{\bflm}} \rightarrow W_\bflm(M)$ which induces an isomorphism
$$
(\clL(\bflm)^{\oplus m_\bflm}, (L(\bflm)_{\A})^{\oplus m_\bflm}, \psij_{\bflm}(\clL(\bflm))^{\oplus m_\bflm}) \simeq (W_{\bflm}(\clL),W_{\bflm}(M_{\A}),W_{\bflm}(\psij_M(\clL))),
$$
where $m_\bflm := \dim \Hom_{\Uj}(L(\bflm),M)$ denotes the multiplicity of $L(\bflm)$ in $M$.
\end{enumerate}
\end{theo}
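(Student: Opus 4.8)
The plan is to derive all four parts from one claim: for every $\bfmu \in \Pj(M)$ the $\Q(p,q)$-subspace $V_{\succeq\bfmu} := \Span_{\Q(p,q)}\{\Gj(b) \mid I(b)\succeq\bfmu\}$, and likewise $V_{\succ\bfmu}$, is a $\Uj$-submodule of $M$. Granting this, I argue as follows. Because $\Gj(\clB)$ is simultaneously an $\Ao$-basis of $\clL$, an $\A$-basis of $M_\A$, and an $\Ai$-basis of $\psij_M(\clL)$, the subset $\{\Gj(b) \mid I(b)\succeq\bfmu\}$ spans $V_{\succeq\bfmu}\cap\clL$, $V_{\succeq\bfmu}\cap M_\A$, $V_{\succeq\bfmu}\cap\psij_M(\clL)$ over $\Ao$, $\A$, $\Ai$ respectively; the middle one is a $\UjA$-submodule since $V_{\succeq\bfmu}$ is a $\Uj$-submodule and $M_\A$ a $\UjA$-submodule. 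Lemma \ref{sub and quotient of bt}(1), applied to the balanced triple $(\clL,M_\A,\psij_M(\clL))$ and the subset $\{b \mid I(b)\succeq\bfmu\}$ of $\clB$, then shows that $(V_{\succeq\bfmu}\cap\clL,\ V_{\succeq\bfmu}\cap M_\A,\ V_{\succeq\bfmu}\cap\psij_M(\clL))$ is balanced with global basis $\{\Gj(b) \mid I(b)\succeq\bfmu\}$. Since $\{b \mid I(b)\succeq\bfmu\}$ is a union of connected components of the $\jmath$-crystal $\clB$, the constituents of $V_{\succeq\bfmu}$ are exactly the $L(\bfnu)$, $\bfnu\succeq\bfmu$, with their multiplicities in $M$, so semisimplicity of $\Oj$ together with a dimension count identifies $V_{\succeq\bfmu}$ with $W_{\succeq\bfmu}(M)$ (and $V_{\succ\bfmu}$ with $W_{\succ\bfmu}(M)$); this is (1) and (2). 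Applying Lemma \ref{sub and quotient of bt}(2) inside $V_{\succeq\bflm}$ to the subset $\{b \mid I(b)\succ\bflm\} \subseteq \{b \mid I(b)\succeq\bflm\}$, whose complement is precisely $\{b \mid I(b)=\bflm\}$, gives (3). For (4), semisimplicity of $\Oj$ lets me choose a $\Uj$-isomorphism $\xi \colon L(\bflm)^{\oplus m_\bflm} \rightarrow W_\bflm(M)$ carrying the $k$-th highest weight generator to $\Gj(b_k)+W_{\succ\bflm}(M)$, where $b_1,\dots,b_{m_\bflm}$ are the highest weight vertices of the connected components of the $\jmath$-crystal of $W_\bflm(M)$; that $\xi$ matches $\clL(\bflm)^{\oplus m_\bflm}$ and $\psij_\bflm(\clL(\bflm))^{\oplus m_\bflm}$ with $W_\bflm(\clL)$ and $W_\bflm(\psij_M(\clL))$ follows from the uniqueness of the $\jmath$-crystal lattice and of a $\psij$-involution with prescribed highest weight vector, while compatibility of the $\A$-lattices reduces to the fact that $L(\bflm)_\A$ and $W_\bflm(M_\A)$ are generated over $\UjA$ by their highest weight vectors, which comes from the structure-constant results discussed next.

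It remains to prove the submodule claim. As $V_{\succeq\bfmu}$ is a sum of $\wtj$-weight spaces, the $k_i^{\pm1}$ preserve it, and the point is to show $e_i\Gj(b),\ f_i\Gj(b) \in V_{\succeq I(b)}$ for all $i\in\Ij$ and $b\in\clB$; equivalently, that every $b'$ occurring in $e_i\Gj(b)$ or $f_i\Gj(b)$ has $I(b')\succeq I(b)$. For $f_i$ with any $i\in\Ij$ this is governed by the structure-constant proposition of Subsection \ref{subsec: global basis}: $f_i\Gj(b) = [\vep_i(b)+1]\Gj(\ftil_i b) + \sum_{\vep_i(b')>\vep_i(b)+1}\vphi^{(i)}_{b',b}\Gj(b')$, with $\ftil_i b$ in the same connected component as $b$, hence $I(\ftil_i b)=I(b)$, so one needs $I(b')\succeq I(b)$ for the remaining terms. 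For $e_i$ with $i\neq 1$ one has the analogous expansion from the same proposition — and from property $(\ast)$(6), which moreover locates the non-principal terms $b'$ by $\wtj(b')=\wtj(b)+\gamma_i$ and $s(b')\leq s(b)+\alpha_{\ul i}$ — with principal term $\Gj(\Etil_{\ul i}b)$, again of the same $I$-label as $b$. The generator $e_1$ is not covered by those propositions and requires a separate triangularity statement for $e_1\Gj(b)$, obtained either by the same modified-Kashiwara-operator technique used for $f_1$ in Lemma \ref{modified ftil preserves lattice} together with the contravariant form $(\cdot,\cdot)_2$, or, in the two situations of Lemma \ref{condition ast}, from Lemma \ref{expansion of jCB in CB} and the $\U$-level filtration theorem when the global basis is the $\jmath$-canonical basis, and from the explicit rank-one description when $r=1$.

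The step I expect to be the main obstacle is exactly this triangularity — controlling the $I$-labels of the non-principal terms of $e_i\Gj(b)$ and $f_i\Gj(b)$ with respect to the dominance order $\preceq$ on $\Pj(M)$. For the Levi generators $i\neq 1$ the delicate part is to translate the abstract data of property $(\ast)$ — the poset $(S,\leq)$, the free order-compatible $Q$-action, and the map $s$ — into the combinatorics of bipartitions: one has to show that $\wtj(b')=\wtj(b)+\gamma_i$ together with $s(b')\leq s(b)+\alpha_{\ul i}$ forces $I(b')\trilefteq I(b)$, hence $I(b')\succeq I(b)$ (recall $\trilefteq$ implies $\preceq$), by analyzing how the $\jmath$-crystal component of a vertex determines $I(b)=(I(b)^-;I(b)^+)$ and how $s$ refines the weight along the Levi directions; this parallels the $\U$-level statement (the theorem just before Remark \ref{filtration for frp}, together with that remark), with $(\ast)$ substituting for the genuine Levi crystal structure. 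For $e_1$ the delicate part is that $e_1$ transfers boxes between $I(b)^-$ and $I(b)^+$, so no single filtration by a poset handles it and the argument must be carried out directly with weights and with the operators $\etil_1,\ftil_1$, the auxiliary invariant $|I(b)^-|$ (as in Lemma \ref{expansion of jCB in CB}) keeping the bookkeeping under control. Once this triangularity is established, parts (1)--(3) are formal consequences of Lemma \ref{sub and quotient of bt} and semisimplicity of $\Oj$, and part (4) is the standard highest-weight-vector matching argument.
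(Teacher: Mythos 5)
You have correctly reorganized the statement: parts (1)--(4) would indeed follow, via Lemma \ref{sub and quotient of bt} and semisimplicity of $\Oj$, once one knows that $\Span_{\Q(p,q)}\{ \Gj(b) \mid I^\jmath(b) \succeq \bfmu \}$ is a $\Uj$-submodule, i.e.\ that every $\Gj(b')$ occurring in $e_i\Gj(b)$ or $f_i\Gj(b)$ satisfies $I^\jmath(b') \succeq I^\jmath(b)$. But this triangularity \emph{is} the theorem (it is literally equivalent to part (1)), and your proposal does not prove it; the routes you sketch would not work. For the Levi generators you assert that $\wtj(b')=\wtj(b)+\gamma_i$ together with $s(b')\leq s(b)+\alpha_{\ul{i}}$ ``forces $I(b')\trilefteq I(b)$, hence $I(b')\succeq I(b)$'': first, the direction is inconsistent ($\trilefteq$ implies $\preceq$ in the same direction, so $I(b')\trilefteq I(b)$ gives $I(b')\preceq I(b)$, the opposite of what you need); second, even with the corrected direction this is not a consequence of property $(\ast)$ --- the conditions involve only $\wtj$ and the abstract poset datum $s$, which cannot see the dominance relation between component labels, and two components with $\trilefteq$-incomparable labels can perfectly well contain vertices with the prescribed weights. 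The vanishing of such coefficients is an algebraic fact, not a combinatorial one. Similarly, the available structure-constant results do not give what you need even in the $\jmath$-canonical case: Lemma \ref{expansion of jCB in CB} and Proposition \ref{str const for Gj-basis} allow the alternative $|I^\jmath(b')^-| < |I^\jmath(b)^-|$, which is incompatible with $I^\jmath(b')\succeq I^\jmath(b)$ (condition (b) of the dominance order at $j=r$ forces $|I^\jmath(b)^-|\leq |I^\jmath(b')^-|$), so citing them plus the $\U$-level filtration theorem is not enough; and for the non-principal terms of $f_i$, and for $e_1$ altogether, you offer only a pointer to Lemma \ref{modified ftil preserves lattice} and $(\cdot,\cdot)_2$ with no argument.

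For comparison, the paper does not attempt a generator-by-generator triangularity at all. It argues by descending induction on $\bflm$ (reducing to $\bflm$ maximal in $\Pj(M)$ after quotienting by $W_{\succ\bflm}(M)$), chooses highest weight vectors $u_t\in\clL$ normalized by $[u_t:\Gj(b_s)]=\delta_{t,s}$, and lets $\xi_t:L(\bflm)\to M$ send $v_\bflm\mapsto u_t$; the whole point is then to prove $\xi_t(\Gjlow(\Tj_b))=\Gj(b)$ for every $b$ in the relevant components. The unwanted coefficients are killed not by a weight/poset implication but by showing they are simultaneously $\psij_M$-invariant, in $q\Ao$, and in $\A$ (hence zero), which in turn requires applying a whole range of divided powers $f_i^{(t)}$ and the $q$-binomial gcd Lemma \ref{lemma}; the $r\geq 2$ case further needs induction on $r$ and the ``weak'' and ``strong'' lemmas, which is exactly where property $(\ast)$ (the function $s$) and the operators $\etil_{j'}$ enter. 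This machinery is what your proposal is missing, and without it the central claim remains unproven.
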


The proof will be given in Section \ref{sec:proof of main theorem}.

\begin{cor}
Let $\bflm \in \Pj$. Then, $\Gjlow(\bflm)$ is a unique global $\jmath$-crystal basis of $L(\bflm)$ satisfying the property $(*)$.
\end{cor}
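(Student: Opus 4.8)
The plan is to derive the corollary from Theorem~\ref{filtration} applied to $M=L(\bflm)$, after first checking that $(L(\bflm),\Gjlow(\bflm))$ itself enjoys the property $(*)$. For the latter I would argue as follows. By the main theorem of Section~\ref{existence of global jcry bases}, $\Gjlow(\bflm)$ is a global $\jmath$-crystal basis of $L(\bflm)$, with balanced triple $(\clL(\bflm),L(\bflm)_\A,\psij_\bflm(\clL(\bflm)))$. Put $d=|\bflm|$; then $L(\bflm)$ is a direct summand of the subquotient $W_\bflm(\bfV^{\otimes d})$ of the $\U$-module $\bfV^{\otimes d}\in\Oint$. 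Now $\bfV^{\otimes d}$ carries a global crystal basis, hence a $\jmath$-canonical basis, which by Lemma~\ref{condition ast} (2) is a global $\jmath$-crystal basis with the property $(*)$; and this $\jmath$-canonical basis is the one coming from the Kazhdan--Lusztig basis of $\clH$ under the Schur duality, so by Theorem~\ref{filtration} (4) (applied to $\bfV^{\otimes d}$) the global $\jmath$-crystal basis it induces on the summand $L(\bflm)$ of $W_\bflm(\bfV^{\otimes d})$ is precisely $\Gjlow(\bflm)$. Since the poset $(S,\le)$ and the map $s$ witnessing $(*)$ for $\bfV^{\otimes d}$ restrict to the subcrystal cutting out that summand, and conditions (1)--(6) of $(*)$ are inherited by the subquotients occurring in Theorem~\ref{filtration} (a point one checks along the way), $(L(\bflm),\Gjlow(\bflm))$ satisfies $(*)$.

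For uniqueness, I would take any global $\jmath$-crystal basis $\{\Gj(b)\mid b\in\clB(\bflm)\}$ of $L(\bflm)$ with the property $(*)$, and let $M_\A$ be its $\A$-lattice. Using uniqueness of the $\jmath$-crystal lattice of $L(\bflm)$ and (after the standard normalization of the highest weight vector $v_\bflm$) of the $\psij$-involution, the associated balanced triple is $(\clL(\bflm),M_\A,\psij_\bflm(\clL(\bflm)))$. Apply Theorem~\ref{filtration} to $M=L(\bflm)$: because $L(\bflm)$ is irreducible, $\Pj(M)$ consists of a single bipartition, $m_\bflm=1$, $W_{\succ\bflm}(M)=0$ and $W_{\succeq\bflm}(M)=W_\bflm(M)=L(\bflm)$, so part (4) produces a $\Uj$-module isomorphism $\xi\colon L(\bflm)\to L(\bflm)$ inducing an isomorphism of balanced triples $(\clL(\bflm),L(\bflm)_\A,\psij_\bflm(\clL(\bflm)))\simeq(\clL(\bflm),M_\A,\psij_\bflm(\clL(\bflm)))$. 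By Schur's lemma $\xi=c\cdot\id$ for some $c\in\Q(p,q)^\times$; then $c\,\clL(\bflm)=\clL(\bflm)$ forces $c\in\Ao^\times$ and $c\,\psij_\bflm(\clL(\bflm))=\psij_\bflm(\clL(\bflm))$ forces $c\in\Ai^\times$. Since $\Ao^\times\cap\Ai^\times=\Q^\times$, we get $c\in\Q^\times$, whence $M_\A=c\,L(\bflm)_\A=L(\bflm)_\A$; the two balanced triples therefore coincide, and so do the global bases attached to them and to the common basis $\clB(\bflm)$. Thus $\Gj(b)=\Gjlow(b)$ for every $b\in\clB(\bflm)$.

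The hard part will be the verification that $(L(\bflm),\Gjlow(\bflm))$ has the property $(*)$ for general $r$: Lemma~\ref{condition ast} covers only $r=1$ and $\U$-modules with their $\jmath$-canonical bases directly, so one really needs the realization of $L(\bflm)$ inside $W_\bflm(\bfV^{\otimes d})$, the identification of the $\jmath$-canonical basis of $\bfV^{\otimes d}$ with the Kazhdan--Lusztig basis, and the stability of the data $(S,s)$ and of conditions (1)--(6) under passing to the subquotients in Theorem~\ref{filtration}. Once that is in place, the uniqueness half is essentially formal, the only arithmetic input being the routine identity $\Ao^\times\cap\Ai^\times=\Q^\times$.
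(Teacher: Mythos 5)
Your route for the uniqueness half is the one the paper intends: the corollary is meant as an immediate consequence of Theorem \ref{filtration} applied to $M=L(\bflm)$ equipped with a global $\jmath$-crystal basis satisfying $(\ast)$, the identification $\Gj(b)=\xi(\Gjlow(\Tj_b))$ being exactly what the proof of part (4) (Lemmas \ref{lemma 1}--\ref{lemma 3}) produces. However, your normalization step rests on a false identity: $\Ao^\times\cap\Ai^\times\neq\Q^\times$. For instance $u=(1+q)/(1+2q)$ and its inverse lie in both $\Ao$ and $\Ai$, yet $u\notin\Q$; the correct statement is $\Ao\cap\A\cap\Ai=\Q$, and nothing in your argument shows the Schur scalar $c$ lies in $\A$, so $c\in\Q^\times$ does not follow. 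Moreover the obstruction is real, not just a defect of your proof: with the paper's literal definition, $\{u\,\Gjlow(b)\mid b\in\clB(\bflm)\}$ is again a global $\jmath$-crystal basis of $L(\bflm)$, attached to the same $\clL(\bflm)$ and $\psij_\bflm$ and to the $\UjA$-lattice $u\,L(\bflm)_\A$ (the triple stays balanced because $u-1\in q\Ao$ and $\Ao\cap u\A\cap\Ai=u\Q$), and it has the same structure constants, hence satisfies $(\ast)$ whenever $\Gjlow(\bflm)$ does. So uniqueness of the basis as a subset of $L(\bflm)$ can only hold after a normalization, e.g.\ requiring the highest-weight global element to be the fixed $v_\bflm$ (this is how the proof of Theorem \ref{filtration} normalizes, via $[u_t:\Gj(b_u)]=\delta_{t,u}$); and your appeal to ``uniqueness of the $\psij$-involution after normalization'' is itself not automatic, since a $\psij$-involution may send $v_\bflm$ to $a\,v_\bflm$ for any $a$ with $a\ol{a}=1$.

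For the existence half you correctly locate the missing point --- Lemma \ref{condition ast} covers only $r=1$ and $\jmath$-canonical bases, so $(\ast)$ must be transported from $\bfV^{\otimes d}$ to the subquotient realizing $L(\bflm)$ --- but your sketch leaves precisely that transport unproved (``a point one checks along the way''): you neither define the map $s$ on the summand nor verify condition (6) there. This can be done, taking $s$ to be the $\Lm$-weight in the Kazhdan--Lusztig realization of $L(\bflm)$ inside $\bfV^{\otimes d}$ and using Proposition \ref{str const for Gj-basis} together with the fact that the relevant subquotient is spanned by a subset of the $\jmath$-canonical basis, but as written this is a genuine gap, and it is the only substantive verification the corollary needs beyond Theorem \ref{filtration}.
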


\subsection{Operators $\tilde{e}_{i^+}$ and $\tilde{f}_{i^+}$}
The definitions of $\etil_{i'}$ and $\ftil_{i'}$ given in \cite{W17} are artificial, namely, they are defined by means of a distinguished basis $\Gjlow(\bflm)$, $\bflm \in \Pj$ (in \cite{W17}, it is denoted by $\{ b_T \mid T \in \clB(\bflm) \}$). Here, we define new operators $\etil_{i^+}$ and $\ftil_{i^+}$ for $i \in \Ij \setminus \{1\}$, and then, explain that the operators $\etil_{i'}$ and $\ftil_{i'}$ on $\jmath$-crystal bases are in fact intrinsic.

\begin{lem}
Let $r \geq 2$, $\bflm \in \Pj$, and consider the irreducible highest weight module $L(\bflm)$. As a $\Uj_{r-1}$-module, $L(\bflm)$ is multiplicity-free.
\end{lem}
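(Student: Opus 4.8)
The plan is to convert the statement into a count of highest-weight elements of a $\jmath$-crystal, and then into explicit tableau combinatorics.

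First, recall that $\Uj_{r-1}$ is the subalgebra of $\Uj=\Uj_r$ generated by $e_i,f_i,k_i^{\pm1}$, $1\le i\le r-1$, and that its $\jmath$-crystal Kashiwara operators are the operators of $\Uj_r$ indexed by the subset $\IIj_{r-1}:=\{1,\ldots,r-1\}\sqcup\{2',\ldots,(r-1)'\}$ of $\IIj$; dropping the remaining operators $r$ and $r'$ means, on tableaux, that one may no longer modify the entries $r$ in $T^+$ or $-r$ in $T^-$. Exactly as in Theorem \ref{restriction of crystal} (using \cite[Corollary 7.7.4]{W17} for the Levi directions, together with the defining properties of $\etil_1,\ftil_1$), one checks that the $\jmath_r$-crystal basis $(\clL(\bflm),\clB(\bflm))$ of $L(\bflm)$, with $\clB(\bflm)=\SST(\bflm)$, is a $\jmath_{r-1}$-crystal basis of $L(\bflm)$ regarded as a $\Uj_{r-1}$-module. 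Since the category $\Oj$ for $\Uj_{r-1}$ is semisimple with irreducibles $L(\bfnu)$, $\bfnu\in\Pj_{r-1}$, the multiplicity of $L(\bfnu)$ in $L(\bflm)|_{\Uj_{r-1}}$ equals the number of $\IIj_{r-1}$-highest-weight elements $\bfT$ of $\SST(\bflm)$ with $L(I^\jmath_{r-1}(\bfT))\cong L(\bfnu)$. Hence it suffices to prove that $\bfT\mapsto I^\jmath_{r-1}(\bfT)$ is injective on the set of $\IIj_{r-1}$-highest-weight elements of $\SST(\bflm)$.

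Next I would describe those highest-weight elements. If $\bfT$ is $\IIj_{r-1}$-highest weight, then the subtableau $\bfT\downarrow_{r-1}$ obtained by deleting the boxes with entries $\pm r$ is $\jmath_{r-1}$-highest weight in $\SST(\bfnu)$ for $\bfnu:=\on{sh}(\bfT\downarrow_{r-1})$, hence equals $\bfT_\bfnu$; this $\bfnu$ records $I^\jmath_{r-1}(\bfT)$, and $\bflm^\pm/\bfnu^\pm$ are horizontal strips (removing the maximal letter of a column-strict tableau removes a horizontal strip). The boxes carrying $r$ then occupy the bottom row of $\bflm^+$ and the right ends of the rows of $\bflm^+$ longer than $\bfnu^+$, and likewise for $-r$, $\bflm^-$, $\bfnu^-$; the remaining constraint, that $\etil_1$ and $\etil_{r-1},\etil_{(r-1)'}$ do not raise $\bfT$, fixes the interlacing between $\bfnu^-$ and $\bfnu^+$ (equivalently the central shift). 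The upshot I expect is that, once $I^\jmath_{r-1}(\bfT)$ is prescribed, the shape $\bfnu$ and the whole filling of $\bfT$ are uniquely determined; so the map is injective and every multiplicity is $0$ or $1$.

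The hardest part is exactly the explicit analysis in the previous paragraph: unwinding the $\jmath_r$-crystal operators $\etil_1,\etil_i,\etil_{i'}$ on $\SST(\bflm)$ from \cite{W17} precisely enough to identify the $\IIj_{r-1}$-highest-weight tableaux and to see that each is recovered from its isomorphism type. An alternative that sidesteps part of this is to work with characters: $\ch_{\Uj_{r-1}}L(\bflm)=\sum_{\bfT\in\SST(\bflm)}q^{\wtj_{r-1}(\bfT)}$ is parameter-independent and, by semisimplicity, determines the branching multiplicities; grouping the tableaux by the $\jmath_{r-1}$-weight of the highest-weight element of their component should realize the branching as a Gelfand-Tsetlin-type interlacing rule, all of whose coefficients are $0$ or $1$. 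A third, even quicker option would be to specialize $(p,q)$ to a classical limit and match the branching with that of the underlying symmetric pair, but that invokes structure not set up here; the crystal argument is the most self-contained.
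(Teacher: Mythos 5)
Your route is essentially the paper's route: restrict the $\jmath$-crystal $(\clL(\bflm),\clB(\bflm))=\SST(\bflm)$ to $\Uj_{r-1}$, show that a $\Uj_{r-1}$-highest-weight tableau $\bfT$ satisfies $\bfT\downarrow_{r-1}=\bfT_{\bfmu}$ for some $\bfmu$ with $\bflm/\bfmu$ a horizontal strip (the deleted entries are all $\pm r$, and a column cannot contain two equal entries), and conversely that for each such $\bfmu$ the filling is forced ($\bfT_{\bfmu}$ inside $\bfmu$, the letters $\pm r$ on $\bflm/\bfmu$), so each shape $\bfmu$ labels exactly one component. That is literally the paper's two-line proof, and what you call the hardest part (verifying which tableaux are killed by the relevant Kashiwara operators) is exactly the assertion the paper also makes without detail, so on that score you are no worse off.

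There is, however, a genuine gap in your reduction, worth naming because it is also present, silently, in the paper's argument. You assert that the multiplicity of $L(\bfnu)$ equals the number of highest-weight $\bfT$ with $L(I^{\jmath}_{r-1}(\bfT))\cong L(\bfnu)$, and that it therefore suffices to prove injectivity of $\bfT\mapsto I^{\jmath}_{r-1}(\bfT)$. Injectivity (equivalently, uniqueness of the tableau with a given restricted shape) bounds the number of components per label $\bfmu$, not per isomorphism class: by part (3) of the classification theorem recalled in Section \ref{RT for Uj}, $L(\bfnu)\cong L(\bfnu')$ whenever $\bfnu'_i-\bfnu_i$ is constant, and distinct horizontal-strip labels inside one and the same $\bflm$ can differ by a constant. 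For instance, with $r=2$ and $\bflm=((3,1,0);(1,0))$, both $\bfnu=((1,0);(0))$ and $\bfnu'=((2,1);(1))$ have horizontal-strip complement in $\bflm$ (the corresponding tableaux put $-2$'s and $2$'s on the skew boxes), and $\bfnu'_i-\bfnu_i=1$ for $i=-1,0,1$, so the two components they label are isomorphic as $\Uj_1$-modules. Hence "each label occurs at most once" does not by itself give multiplicity-freeness in the module-theoretic sense; your "hence it suffices" is a non sequitur, and the character/interlacing variant you sketch has the same defect, since it too groups components by the finer labels rather than by isomorphism classes. To close the gap you would have to exclude constant-shift coincidences among the occurring labels (which the example above shows is not possible in general), or else settle for the statement that the decomposition is multiplicity-free with respect to the labels $\bfmu\in\Pj_{r-1}$ with $\bflm/\bfmu$ a horizontal strip, which is what both your argument and the paper's actually establish and what the later construction of $\etil_{r^+}$ really uses.
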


\begin{proof}
Let $b \in \clB(\bflm)$ be a $\Uj_{r-1}$-highest weight vector with highest weight, say, $\bfmu \in \Pj_{r-1}$. If we identify $\clB(\bflm)$ with $\SST(\bflm)$, we have $T^\jmath_b \downarrow_{r-1} = T_{\bfmu}$. Since the entries of the boxes of $T^\jmath_{b'}$ corresponding to $\bflm/\bfmu$ are either $-r$ or $r$, it must hold that $\bflm/\bfmu$ is a horizontal strip. Conversely, given $\bfmu \in \Pj_{r-1}$ such that $\bflm/\bfmu$ is a horizontal strip, there exists a unique $b \in \clB(\bflm)$ which is a $\Uj_{r-1}$-highest weight vector with highest weight $\bfmu$. This proves the lemma.
\end{proof}

\begin{lem}\label{e_rprime}
Let $r \geq 2$, $\bflm \in \Pj$. Let $b \in \clB(\bflm)$ be such that $\etil_{r'} b \neq 0$. Then, there exist unique $b' \in \clB(\bflm)$ and $j \in \Ij \setminus \{1\}$ satisfying the following:
\begin{itemize}
\item $b'$ is a $\Uj_{r-1}$-highest weight vector.
\item There exist unique $\vep_i \in \{ \emptyset, \prime \}$ for each $j \leq i \leq r-1$ such that $b = \ftil_{r'} \ftil_{(r-1)^{\vep_{r-1}}} \cdots \ftil_{j^{\vep_j}} b'$.
\end{itemize}
\end{lem}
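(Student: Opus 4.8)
The plan is to reduce the statement, via the crystal identity $b=\ftil_{r'}\etil_{r'}b$, to a combinatorial assertion about the $\Uj_{r-1}$-crystal structure on $\clB(\bflm)=\SST(\bflm)$, and then to extract the factorization from the explicit action of the Kashiwara operators on semistandard bitableaux given in \cite{W17}.

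Since $\etil_{r'}b\neq 0$, the vertex $b_0:=\etil_{r'}b$ of $\clB(\bflm)$ is well defined and $\ftil_{r'}b_0=b$; so it suffices to show that $b_0$ has a \emph{unique} expression $b_0=\ftil_{(r-1)^{\vep_{r-1}}}\cdots\ftil_{j^{\vep_j}}b'$ with $b'\in\clB(\bflm)$ a $\Uj_{r-1}$-highest weight vector, $j\in\Ij\setminus\{1\}$, and $\vep_i\in\{\emptyset,\prime\}$ for $j\le i\le r-1$. Uniqueness of $b'$ is formal: any such expression places $b'$ in the $\Uj_{r-1}$-connected component $\Cj_{r-1}(b_0)$ of $b_0$, and by the preceding lemma $L(\bflm)$ is multiplicity-free over $\Uj_{r-1}$, so each $\Uj_{r-1}$-connected component of $\clB(\bflm)$ contains exactly one highest weight vector. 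Writing $\bfmu\in\Pj_{r-1}$ for the $\Uj_{r-1}$-highest weight of this component, the preceding lemma also gives that $\bflm/\bfmu$ is a horizontal strip, that $\Tj_{b'}\downarrow_{r-1}=\bfT_\bfmu$, and that the crystal isomorphism $\Cj_{r-1}(b_0)\cong\clB(\bfmu)$ identifies $b_0$ with $\Tj_{b_0}\downarrow_{r-1}\in\SST(\bfmu)$.

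The combinatorial heart is then to show that, inside $\clB(\bfmu)=\SST(\bfmu)$, the vertex $b_0$ is obtained from the highest-weight vertex $b'$ (i.e.\ the bitableau $\bfT_\bfmu$) by a chain of lowering operators whose indices form a contiguous decreasing block $r-1,r-2,\dots,j$, each used exactly once and possibly primed, and that $j$ together with the primes $\vep_i$ is thereby uniquely determined. I would deduce this from the explicit tableau formula for $\etil_{r'}$ in \cite{W17}: the passage from $b$ to $b_0$ disturbs $\Tj_b$ only in its rows of high index, so that $\Tj_{b_0}\downarrow_{r-1}$ deviates from $\bfT_\bfmu$ along a single connected chain of cells running from row $j-1$ up to row $r-1$, crossing between the two components of the bitableau wherever a prime occurs. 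Undoing this chain row by row, from $i=r-1$ down to $i=j$, is realized precisely by $\ftil_{(r-1)^{\vep_{r-1}}},\dots,\ftil_{j^{\vep_j}}$, where $\vep_i=\prime$ exactly when the cell corrected in row $i$ lies in the first component of the bitableau; since the chain meets each row once and the corrected cells are distinct, the data $(j;\vep_j,\dots,\vep_{r-1})$ is recovered unambiguously from $\Tj_{b_0}\downarrow_{r-1}$. Finally $j\ge 2$ because this chain never reaches rows $0$ or $1$: the node $1$ is exceptional, its Serre-type relations being deformed, and the branching $\Uj_r\downarrow\Uj_{r-1}$ never produces a defect there.

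The main obstacle is exactly this last step, namely the combinatorial bookkeeping with the Kashiwara operators of \cite{W17}: one must verify that $\etil_{r'}$ genuinely creates a single connected chain of cells of the stated shape, that the primes $\vep_i$ and the index $j$ are read off without ambiguity, and that $j\ge 2$. This is where the interaction between the ``new'' level-$r$ operator $\etil_{r'}$ and the level-$(r-1)$ operators $\etil_i,\etil_{i'}$ has to be pinned down; the rest is formal, resting on the multiplicity-freeness and the description of $\Uj_{r-1}$-highest weight vectors established just above.
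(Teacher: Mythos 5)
Your reduction to $b_0=\etil_{r'}b$ and the formal uniqueness of $b'$ inside its $\Uj_{r-1}$-connected component are fine, but the combinatorial core of the lemma is exactly what you leave unproven, and you say so yourself (``the main obstacle is exactly this last step''). The paper's proof rests on two facts you never establish: first, by the very definition of $\etil_{r'}$ in \cite{W17}, the hypothesis $\etil_{r'}b\neq 0$ already forces $b$ itself to be a $\Uj_{r-1}$-highest weight vector, of some weight $\bfmu\in\Pj_{r-1}$, with $(\Tj_b)^-=\bfT_\bflm^-$; second, $\Tj_{\etil_{r'}b}\downarrow_{r-1}$ is obtained from $\bfT_\bfmu$ by adjoining a \emph{single} box with entry $r-1$ at the end of the $(j-1)$-th row, for a uniquely determined $j$. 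Since that box carries a positive entry it sits in the positive component, whose rows are indexed by $1,\ldots,r$; this is the actual reason $j\geq 2$, not an appeal to the node $1$ being ``exceptional'' or to deformed Serre relations, which proves nothing. With these two facts the factorization comes from a short induction: at each stage exactly one of $\etil_{i}$, $\etil_{i'}$ is nonzero on $b_i$ (this exclusive alternative is what defines $\vep_i$ and gives its uniqueness), and applying it decrements the entry of that single box, until after the step $i=j$ one reaches $\bfT_{\bfmu'}$, where $\bfmu'\in\Pj_{r-1}$ is $\bfmu$ with one box added to its $(j-1)$-th row; that tableau is the $\Uj_{r-1}$-highest weight vector $b'$, and reversing the raising operators yields $b=\ftil_{r'}\ftil_{(r-1)^{\vep_{r-1}}}\cdots\ftil_{j^{\vep_j}}b'$.

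Your sketched picture --- a ``single connected chain of cells running from row $j-1$ up to row $r-1$, crossing between the two components wherever a prime occurs'' --- does not match this mechanism (the deviation from $\bfT_{\bfmu'}$ is concentrated in one cell of row $j-1$, whose entry changes as the operators are applied), and nothing in your proposal verifies it, nor do you use the crucial fact that $b$ itself is already a $\Uj_{r-1}$-highest weight vector with full negative component. Since the existence of the factorization, the uniqueness of $j$ and of the $\vep_i$, and the bound $j\geq 2$ are precisely the content of the lemma, what you have is a plan whose decisive step --- the explicit bookkeeping with the operators of \cite{W17} --- is deferred rather than carried out; as it stands the proof has a genuine gap.
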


\begin{proof}
By the definition of $\etil_{r'}$, $b$ is a $\Uj_{r-1}$-highest weight vector with highest weight, say, $\bfmu \in \Pj_{r-1}$ such that $(T^\jmath_b)^- = T_\bflm^-$. Then, $T^\jmath_{\etil_{r'} b} \downarrow_{r-1}$ is obtained from $T_\bfmu$ by adding a box $\fbox{$r-1$}$ to the $(j-1)$-th row for some uniquely determined $j \in \Ij \setminus \{1\}$. Set $b_{r-1} := \etil_{r'} b$. Now, we have exactly one of the following; $\etil_{r-1} b_{r-1} \neq 0$ or $\etil_{(r-1)'} b_{r-1} \neq 0$. Choose a unique $\vep_{r-1} \in \{ \emptyset,\prime \}$ in a way such that $b_{r-2} := \etil_{(r-1)^{\vep_{r-1}}} b_{r-1} \neq 0$. Then, $T^\jmath_{b_{r-2}} \downarrow_{r-1}$ is obtained from $T_\bfmu$ by adding a box $\fbox{$r-2$}$ to the $(j-1)$-th row. Repeating this procedure, we obtain $\vep_{i} \in \{ \emptyset,\prime \}$ and $b_{i-1} \in \clB(\bflm)$ for $j \leq i \leq r-1$. By the construction, $T^\jmath_{b_{j-1}} \downarrow_{r-1}$ is obtained from $T_\bfmu$ by adding a box $\fbox{$j-1$}$ to the $(j-1)$-th row, which turned out to be $T_{\bfmu'}$, where $\bfmu' \in \Pj_{r-1}$ such that $\bfmu'_k = \bfmu_k + \delta_{k,j-1}$, $k \in \{ -(r-1),\ldots,r-1 \}$. Hence, $b_{j-1}$ is a $\Uj_{r-1}$-highest weight vector, and we have $b = \ftil_{r'} \ftil_{(r-1)^{\vep_{r-1}}} \cdots \ftil_{j^{\vep_j}} b_{j-1}$. This proves the assertion.
\end{proof}

Set $E_r(\bflm) := \{ \bfmu \in \Pj_{r-1} \mid \bfmu^- = \bflm^- \downarrow_{r-1} \AND \bflm^+/\bfmu^+ \text{ is a horizontal strip} \}$. Then, the assignment
$$
\{ b \in \clB(\bflm) \mid \etil_{r'}b \neq 0 \} \rightarrow E_r(\bflm);\ b \mapsto I^\jmath_{r-1}(b)
$$
is bijective. To each $\bfmu \in E_r(\bflm)$, we associate $b,b' \in \clB(\bflm)$, $j \in \Ij \setminus \{ 1 \}$, and $\vep_i \in \{ \emptyset,\prime \}$, $j \leq i \leq r-1$ as in Lemma \ref{e_rprime}.

Let $r \geq 2$. We define operators $\etil_{l^+}$ and $\ftil_{l^+}$ on every $\Uj$-modules in $\Oj$ inductively for all $2 \leq l < r$. Let $\bflm \in \Pj$. We define the linear operator $\etil_{r^+}$ on $L(\bflm)$ by
$$
\etil_{r^+} := \bigoplus_{\bfmu \in E_r(\bflm)}p_2(\bfmu) \circ \frac{1}{[\vphi_{\ul{r}}(b_\bfmu)+1]}e_r \circ p_1(\bfmu),
$$
where $b_\bfmu \in \clB(\bflm)$ is the corresponding element to $\bfmu \in E_r(\bflm)$, $p_1(\bfmu)$ is the projection from $L(\bflm)$ to the one-dimensional subspace $L(\bfmu)_{\wtj(\bfmu)}$;
$$
L(\bfmu)_{\wtj(\bfmu)} \subset L(\bfmu) \underset{\text{multiplicity free}}{\hookrightarrow} L(\bflm),
$$
and $p_2(\bfmu)$ is the projection from $L(\bflm)$ to the one-dimensional subspace $\ftil_{(r-1)^{\delta_{r-1}}} \cdots \ftil_{j^{\delta_j}} L(\bfmu')_{\wtj(\bfmu')}$;
$$
\ftil_{(r-1)^{\delta_{r-1}}} \cdots \ftil_{j^{\delta_j}} L(\bfmu')_{\wtj(\bfmu')} \subset L(\bfmu') \underset{\text{multiplicity free}}{\hookrightarrow} L(\bflm),
$$
where $\delta_l = \emptyset$ if $\vep_l = \emptyset$, and $\delta_l = +$ if $\vep_l = \prime$ for $l = j,\ldots,r-1$. Also, we define $\ftil_{r^+}$ by
$$
\ftil_{r^+} = \bigoplus_{\bfmu \in E_r(\bflm)} \etil_{r^+}\inv \circ p_2(\bfmu),
$$
where $\etil_{r^+}\inv$ is the inverse of the linear isomorphism $\etil_{r^+} : L(\bfmu)_{\wtj(\bfmu)} \rightarrow \ftil_{(r-1)^{\delta_{r-1}}} \cdots \ftil_{j^{\delta_j}} L(\bfmu')_{\wtj(\bfmu')}$.
Finally, we extend the definitions of $\etil_{r^+}$ and $\ftil_{r^+}$ to a general $\Uj$-module $M \in \Oj$ by the complete reducibility of $M$.

\begin{prop}
Let $\bflm \in \Pj$ and $v \in L(\bflm)$ a highest weight vector. Then, we have
\begin{align}
\begin{split}
\clL(\bflm) &= \Span_{\Ao} \{ \ftil_{i_1} \cdots \ftil_{i,l}v \mid l \in \Z_{\geq 0},\ i_1,\ldots,i_l \in \Ij \sqcup \{ 2^+,\ldots,r^+ \} \}, \\
\clB(\bflm) &= \{ \ftil_{i_1} \cdots \ftil_{i,l}v + q\clL(\bflm) \mid l \in \Z_{\geq 0},\ i_1,\ldots,i_l \in \Ij \sqcup \{ 2^+,\ldots,r^+ \} \} \setminus \{0\}.
\end{split} \nonumber
\end{align}
Moreover, on $\clB(\bflm)$, we have $\etil_{i'} = \etil_{i^+}$ and $\ftil_{i'} = \ftil_{i^+}$ for all $i \in \Ij \setminus \{1\}$.
\end{prop}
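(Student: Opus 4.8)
The plan is to deduce the two displayed identities from the known description of $\clL(\bflm)$ and $\clB(\bflm)$ via the Kashiwara operators $\ftil_i$, $i\in\IIj=\Ij\sqcup\{2',\ldots,r'\}$ (\cite[Theorem 7.7.3]{W17}), once the final assertion is proved. Concretely, suppose it is known that each $\etil_{i^+}$ and $\ftil_{i^+}$ ($i\in\Ij\setminus\{1\}$) is $\Q(p,q)$-linear, preserves $\clL(\bflm)$, and induces on $\clB(\bflm)=\clL(\bflm)/q\clL(\bflm)$ the operator $\etil_{i'}$ resp.\ $\ftil_{i'}$. Then an induction on word length shows that, modulo $q\clL(\bflm)$, a vector $\ftil_{i_1^+}\cdots\ftil_{i_l^+}v$ (with $i_k\in\Ij\sqcup\{2^+,\ldots,r^+\}$, a letter in $\Ij$ being read as the genuine Kashiwara operator) coincides with $\ftil_{i_1'}\cdots\ftil_{i_l'}v$. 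Choosing for each $b\in\clB(\bflm)$ one word with $\ftil_{i_1'}\cdots\ftil_{i_l'}v+q\clL(\bflm)=b$, the resulting finite family $\ftil_{i_1^+}\cdots\ftil_{i_l^+}v$ is related to the $\Ao$-basis $\{\Gjlow(b)\mid b\in\clB(\bflm)\}$ of $\clL(\bflm)$ by a square matrix of the form $\mathrm{Id}+qN$ with $N$ having entries in $\Ao$; its determinant lies in $1+q\Ao$, hence is a unit of $\Ao$, so this family is again an $\Ao$-basis of $\clL(\bflm)$. This gives the first identity, and reduction modulo $q\clL(\bflm)$ gives the second.

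Thus everything comes down to the comparison $\etil_{i'}=\etil_{i^+}$, $\ftil_{i'}=\ftil_{i^+}$ on $\clB(\bflm)$, together with lattice preservation, which I would prove by induction on $r$. For $r=1$ there is nothing to prove. For $r\ge 2$, $L(\bflm)$ restricted to $\Uj_{r-1}$ is multiplicity free (multiplicity-free restriction lemma at the start of this subsection), and the decomposition $L(\bflm)=\bigoplus_\bfmu L(\bfmu)$ is compatible with the $\jmath$-crystal basis, i.e., $\clL(\bflm)=\bigoplus_\bfmu\clL(\bfmu)$ and $\clB(\bflm)=\bigsqcup_\bfmu\clB(\bfmu)$ as $\Uj_{r-1}$-$\jmath$-crystals; this follows from \cite[Corollary 7.7.4]{W17} and the uniqueness of $\jmath$-crystal bases. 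Applying the inductive hypothesis to each $L(\bfmu)$ settles the cases $2\le i\le r-1$, so it remains to treat $i=r$.

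For $i=r$, I would trace the composition $\etil_{r^+}=\bigoplus_{\bfmu\in E_r(\bflm)}p_2(\bfmu)\circ\frac{1}{[\vphi_{\ul r}(b_\bfmu)+1]}e_r\circ p_1(\bfmu)$ term by term. Since its image and kernel are each $\Q(p,q)$-spanned by subsets of the lifted crystal basis (by the compatibility above, applied to $\Uj_{r-1}$-components and weight spaces), $p_1(\bfmu)$ preserves $\clL(\bflm)$ and, modulo $q\clL(\bflm)$, is the projection sending $b_\bfmu$ to itself and every other crystal basis element to $0$, with $b_\bfmu$ the unique crystal basis element of $L(\bfmu)_{\wtj(\bfmu)}=\Q(p,q)\Gjlow(b_\bfmu)$. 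Next, by the definition of $\etil_{r'}$ in \cite{W17} via the distinguished basis (cf.\ the discussion around Lemma \ref{e_rprime}), the coefficient of $\Gjlow(\etil_{r'}b_\bfmu)=\Gjlow(b_{r-1})$ in the $\Gjlow$-expansion of $e_r\Gjlow(b_\bfmu)$ is precisely $[\vphi_{\ul r}(b_\bfmu)+1]$, and every remaining term is an $\A$-multiple of some $\Gjlow(b')$ with $b'\ne b_{r-1}$; hence $\frac{1}{[\vphi_{\ul r}(b_\bfmu)+1]}e_r$ carries $\clL(\bflm)\cap\Q(p,q)\Gjlow(b_\bfmu)$ into $\clL(\bflm)$ with leading term $\Gjlow(b_{r-1})$. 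Finally $p_2(\bfmu)$ is the projection onto the line spanned by $w:=\ftil_{(r-1)^{\delta_{r-1}}}\cdots\ftil_{j^{\delta_j}}\Gjlow(b_{\bfmu'})$, with $j$, $\bfmu'$, $\delta_l$ as in Lemma \ref{e_rprime}; by the inductive hypothesis $w\in\clL(\bflm)$ and $w\equiv\ftil_{(r-1)^{\vep_{r-1}}}\cdots\ftil_{j^{\vep_j}}b_{\bfmu'}=b_{r-1}\pmod{q\clL(\bflm)}$, so $p_2(\bfmu)$ preserves $\clL(\bflm)$ and, modulo $q\clL(\bflm)$, is the projection onto $b_{r-1}$. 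Composing, $\etil_{r^+}$ preserves $\clL(\bflm)$, and on $\clB(\bflm)$ it sends $b_\bfmu\mapsto b_{r-1}=\etil_{r'}b_\bfmu$ and annihilates every other crystal basis element, which is exactly the behaviour of $\etil_{r'}$ (supported on $\{b_\bfmu\mid\bfmu\in E_r(\bflm)\}$ by the bijection displayed after Lemma \ref{e_rprime}). Since $\etil_{r^+}$ restricts to an isomorphism $\Q(p,q)\Gjlow(b_\bfmu)\to\Q(p,q)w$ carrying the $\Ao$-generator $\Gjlow(b_\bfmu)$ to a unit multiple of the $\Ao$-generator $w$, its inverse $\ftil_{r^+}$ also preserves $\clL(\bflm)$ and coincides with $\ftil_{r'}$ on $\clB(\bflm)$.

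The main obstacle is the claim used in the third paragraph that $[\vphi_{\ul r}(b_\bfmu)+1]$ is exactly the $\Gjlow(\etil_{r'}b_\bfmu)$-coefficient of $e_r\Gjlow(b_\bfmu)$ and that no other term of $e_r\Gjlow(b_\bfmu)$ survives $p_2(\bfmu)$; establishing this forces one to unwind the basis-dependent definition of $\etil_{r'}$, $\ftil_{r'}$ from \cite{W17} and to match it with the combinatorial data ($j$ and the $\vep_l$) produced by Lemma \ref{e_rprime}. A lesser but still necessary point is the compatibility of the global $\jmath$-crystal basis with the (multiplicity-free) $\Uj_{r-1}$-decomposition, which is what allows $p_1(\bfmu)$ and $p_2(\bfmu)$ to act on $\clL(\bflm)$; this should follow from \cite[Corollary 7.7.4]{W17} and the uniqueness of $\jmath$-crystal bases.
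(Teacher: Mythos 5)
Your overall route coincides with the paper's: induction on $r$, settle $2\le i\le r-1$ through the multiplicity-free $\Uj_{r-1}$-decomposition, analyze the composite defining $\etil_{r^+}$ for $i=r$, and deduce the two displayed identities from lattice preservation plus the operator identity on $\clB(\bflm)$ (your $\mathrm{Id}+qN$ reduction in the first paragraph is fine, and more explicit than the paper's ``clear from the definition''). However, the $i=r$ step has a genuine gap, which you flag yourself: the claim that $[\vphi_{\ul{r}}(b_\bfmu)+1]$ is exactly the coefficient of $\Gjlow(\etil_{r'}b_\bfmu)$ and that nothing else survives $p_2(\bfmu)$ is never established, and it is the crux of the whole argument. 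The paper does not obtain it by unwinding the basis-dependent definition of $\etil_{r'}$ from \cite{W17}; it gets the leading term from the observation $\etil_{r'}b_\bfmu=\Etil_{\ul{r}}b_\bfmu$ combined with the already-proved expansion of $e_r$ on a global $\jmath$-crystal basis, and it controls what survives $p_2(\bfmu)$ by the induction hypothesis: $u:=\ftil_{(r-1)^{\delta_{r-1}}}\cdots\ftil_{j^{\delta_j}}(v_{\bfmu'})$ lies in $\clL(\bflm)$ and equals $\Gjlow(\etil_{r'}b_\bfmu)$ plus a $q\Ao$-combination of other $\Gjlow(b'')$, whence $\etil_{r^+}(v_\bfmu)\in\Gjlow(\etil_{r'}b_\bfmu)+q\clL(\bflm)$.

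Two of your intermediate assertions are also incorrect or unjustified as stated. First, $L(\bfmu)_{\wtj(\bfmu)}\neq\Q(p,q)\Gjlow(b_\bfmu)$ in general: the $\Uj_{r-1}$-highest weight vector of the copy of $L(\bfmu)$ inside $L(\bflm)$ is the canonical lift $v_\bfmu=\Gjlow(b_\bfmu)+\sum_{b'}a_{b'}\Gjlow(b')$ with $a_{b'}\in q\Ao$ typically nonzero, and likewise the line defining $p_2(\bfmu)$ is spanned by $\ftil_{(r-1)^{\delta_{r-1}}}\cdots\ftil_{j^{\delta_j}}(v_{\bfmu'})$, not by the same word applied to a global basis element; the paper's proof is organized precisely around these lifts (supplied by uniqueness and complete reducibility of $\jmath$-crystal bases for $\Uj_{r-1}$) and their $q\Ao$-expansions, so the conflation is not harmless. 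Second, the claim that $\frac{1}{[\vphi_{\ul{r}}(b_\bfmu)+1]}e_r$ already carries the relevant lattice line into $\clL(\bflm)$ is not justified: dividing the lower-order terms of $e_r$ acting on a global basis vector by $[\vphi_{\ul{r}}(b_\bfmu)+1]$ only yields coefficients in $\A$ (negative powers of $q$ can appear), which is all the paper asserts at this stage; membership in the lattice is obtained only for the full composite, after applying $p_2(\bfmu)$ and using the description of $u$ above. So the proposal is the right strategy, but the decisive coefficient identity and the lattice argument need to be repaired along the lines just indicated, working with $v_\bfmu,v_{\bfmu'}$ rather than $\Gjlow(b_\bfmu),\Gjlow(b')$.
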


\begin{proof}
We proceed by induction on $r$. Assume that the assertion holds for all $2 \leq l < r$ (we assume nothing when $r=2$). Let $\bfmu \in E_r(\bflm)$ and $b_\bfmu$, $b'$, $\bfmu'$ be as above. By the uniqueness of the $\jmath$-crystal bases for $\Uj_{r-1}$-modules, there exists a unique $v_\bfmu \in \clL(\bflm)$ such that $\Uj_{r-1}v_\bfmu = L(\bfmu)$, $v_\bfmu + q\clL(\bflm) = b_\bfmu$. Then, we can write
$$
v_\bfmu = \Gjlow(b_\bfmu) + \sum_{b' \in \clB(\bflm) \setminus \{ b_\bfmu \}} a_{b'} \Gjlow(b')
$$
for some $a_{b'} \in q\Ao$. Note that this equation implies that $\etil_{r'} (v_\bfmu) \in \Gjlow(\etil_{r'} b_\bfmu) + q\clL(\bflm)$. Also, we have
$$
\frac{1}{[\vphi_{\ul{r}}(b_\bfmu)+1]} e_r v_\bfmu = \Gjlow(\etil_{r'}b_\bfmu) + \sum_{b' \in \clB(\bflm)} c_{b'} \Gjlow(b') \qu (\text{since $\etil_{r'} b_\bfmu = \Etil_{\ul{r}} b_\bfmu$.})
$$
for some $c_{b'} \in \A$. Again, by the complete reducibility of the $\Uj_{r-1}$-crystal bases, there exists a unique $v_{\bfmu'} \in \clL(\bflm)$ such that $\Uj_{r-1} v_{\bfmu'} = L(\bfmu')$, $v_{\bfmu'} + q\clL(\bflm) = b'$. By our induction hypothesis, we have $u := \ftil_{(r-1)^{\delta_{r-1}}} \cdots \ftil_{j^{\delta_j}}(v_{\bfmu'}) \in \clL(\bflm) \cap \Uj_{r-1} v_{\bfmu'}$ and $u + q\clL(\bflm) = \etil_{r'}b_\bfmu$. Then, we can write
$$
u = \Gjlow(\etil_{r'}b_\bfmu) + \sum_{b' \in \clB(\bflm)} d_{b'} \Gjlow(b')
$$
for some $d_{b'} \in q\Ao$. Hence, we have
$$
\etil_{r^+}(v_\bfmu) \in \Gjlow(\etil_{r'}b_\bfmu) + q\clL(\bflm).
$$
Since we took $\bfmu \in E_r(\bflm)$ arbitrarily, this equation ensures that $\etil_{r^+}$ preserves $\clL(\bflm)$ and $\clB(\bflm) \sqcup \{0\}$, and that $\etil_{r^+} = \etil_{r'}$ on $\clB(\bflm)$. By the definition of $\ftil_{r^+}$, it also preserves $\clL(\bflm)$ and $\clB(\bflm) \sqcup \{0\}$, and coincides with $\ftil_{r'}$ on $\clB(\bflm)$. Now, the assertions are clear by the definition of $(\clL(\bflm),\clB(\bflm))$.
\end{proof}

\begin{cor}
Let $M \in \Oj$ be a $\Uj$-module with a $\jmath$-crystal basis $(\clL,\clB)$. Then $\etil_{i'} = \etil_{i^+}$ and $\ftil_{i'} = \ftil_{i^+}$ on $\clB$ for all $i \in \Ij \setminus \{1\}$.
\end{cor}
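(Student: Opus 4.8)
The plan is to reduce the statement to the irreducible case, where it is precisely the preceding Proposition, by exploiting the semisimplicity of $\Oj$.

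First I would decompose $M$. Since $\Oj$ is semisimple, $M \simeq \bigoplus_k L(\bflm_k)$ for suitable $\bflm_k \in \Pj$. By the $\jmath$-crystal basis formalism of \cite{W17} (the analogue of Kashiwara's decomposition of a crystal into connected components), the given $\jmath$-crystal basis $(\clL,\clB)$ is compatible with such a decomposition: $\clB$ is the disjoint union of its connected components $\Cj(b)$, each isomorphic as a $\jmath$-crystal to $\clB(I^{\jmath}(b))$, and $\clL = \bigoplus_k \clL_k$, where $(\clL_k,\Cj(b_k))$ is the $\jmath$-crystal basis of the corresponding summand $M_k \simeq L(\bflm_k)$. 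For each $k$ I would fix a $\Uj$-module isomorphism $M_k \xrightarrow{\sim} L(\bflm_k)$ carrying the canonical highest weight vector of $(\clL_k,\Cj(b_k))$ to $v_{\bflm_k}$; by uniqueness of $\jmath$-crystal bases this transports $(\clL_k,\Cj(b_k))$ onto $(\clL(\bflm_k),\clB(\bflm_k))$.

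Next I would observe that both pairs of operators act diagonally along this decomposition and are natural with respect to $\Uj$-module isomorphisms. For $\etil_{i^+}$ and $\ftil_{i^+}$ this is immediate from their construction: they are defined on a general object of $\Oj$ by decomposing it into irreducibles and applying the definition on each summand, and on $L(\bflm)$ the definition (through the multiplicity-free $\Uj_{r-1}$-decomposition and the projections $p_1(\bfmu)$, $p_2(\bfmu)$) is intrinsic, hence commutes with isomorphisms. For $\etil_{i'}$ and $\ftil_{i'}$ the same holds because, by their construction in \cite{W17}, they act on $\clB$ purely along its connected components, each identified with some $\SST(\bflm)$. Consequently, for every $b \in \clB$, transporting its connected component to the corresponding $\clB(\bflm)$ and applying the Proposition gives $\etil_{i'} b = \etil_{i^+} b$ and $\ftil_{i'} b = \ftil_{i^+} b$ for all $i \in \Ij \setminus \{1\}$; assembling over all connected components proves the corollary.

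The one point that needs care — more a bookkeeping matter than a genuine obstacle — is the compatibility of the given $\jmath$-crystal basis $(\clL,\clB)$ with the isotypic decomposition of $M$, i.e. that $\clL$ itself (and not merely the crystal $\clB$) splits as a direct sum along the irreducible summands; this is supplied by the $\jmath$-crystal basis theory of \cite{W17} and is what legitimizes the reduction to $L(\bflm)$.
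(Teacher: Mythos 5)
Your argument is correct and is exactly the (implicit) route the paper intends: the corollary is stated without proof because it follows from the preceding Proposition by decomposing $M$ into irreducible summands via the semisimplicity of $\Oj$, using the uniqueness of $\jmath$-crystal bases from \cite{W17} to match $(\clL,\clB)$ componentwise with the $(\clL(\bflm),\clB(\bflm))$, and noting that both $\etil_{i'},\ftil_{i'}$ and $\etil_{i^+},\ftil_{i^+}$ act along these components (the latter by their very definition through complete reducibility). Your closing remark about the compatibility of the lattice $\clL$ with the isotypic decomposition is indeed the only point requiring the $\jmath$-crystal basis theory of \cite{W17}, and it is available there.
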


\section{Proof of Theorem \ref{filtration}}\label{sec:proof of main theorem}
For a $\Uj$-module $M$ with a global $\jmath$-crystal basis $\Gj(\clB)$, and for $m \in M$, $b \in \clB$, let $[m:\Gj(b)]$ denote the coefficient of $\Gj(b)$ in $m$.

\subsection{The $r=1$ case}
In this subsection, we prove Theorem \ref{filtration} for $r=1$.

\begin{proof of filtration}
We proceed by descending induction on $\bflm$ with respect to $\preceq$. Assume that the statement holds for all $\bflm' \succ \bflm$. Replacing $M$ with $M/W_{\succ \bflm}(M)$, we may assume that $\bflm$ is maximal among $\Pj(M)$. Let $b_1,\ldots,b_{m_\bflm} \in \clB$ and $u_1,\ldots,u_{m_\bflm} \in \clL$ be distinct highest weight vectors of type $\bflm$ with $u_i + q\clL = b_i$, $i = 1,\ldots,m_\bflm$. By retaking the $u_i$'s if necessary, we may assume that $[u_i:\Gj(b_j)] = \delta_{i,j}$ for all $i,j$. Fix $i$ arbitrarily, and set $b := b_i$, $u := u_i$. Then, we can write
$$
u = \Gj(b) + \sum_{\substack{b' \\ I^\jmath(b') \not\succeq \bflm}} c_{b'}\Gj(b'), \qu c_{b'} \in q\Ao.
$$
We first prove that $c_{b'} = 0$ for all $b'$ with $\vep_1(b') = 0$. Assume contrary, and take $b' \in \clB \setminus \{ b \}$ such that $c_{b'} \neq 0$, $\vep_1(b') = 0$, and $\vphi_1(b')$ is minimal among $\{ \vphi_1(b'') \mid c_{b''} \neq 0,\ \vep_1(b'') = 0 \}$. Set $\bfmu := I^\jmath(b')$. Then, we have $\wtj(\bfmu) = \wtj(\bflm)$, in particular, $\bfmu_0 = \bflm_0$. Since $\bfmu \not\succeq \bflm$, we have $\vphi_1(b') = \bfmu_0-\bfmu_{-1} > \bflm_0-\bflm_{-1} = \vphi_1(b)$. Hence, we have
\begin{align}
\begin{split}
-f_1^{(\vphi_1(b)+1)}\Gj(b) &= c_{b'} \bigl( \Gj(\ftil_1^{\vphi_1(b)+1}b') + \sum_{\substack{b'' \\ \vep_1(b'') > \vphi_1(b)+1}} d_{b'',b'} \Gj(b'') \bigr) \\
&\qu + \sum_{b''' \neq b'} \sum_{\substack{b'''' \\ \vep_1(b'''') \geq \vep_1(b''') + \vphi_1(b)+1}} d_{b'''',b'''} \Gj(b''''),
\end{split} \nonumber
\end{align}
for some $d_{b_1,b_2} \in \A$. By our assumption, the coefficient of $\Gj(\ftil_1^{\vphi_1(b)+1}b')$ in the right-hand side is equal to $c_{b'}$. On the other hand, the left-hand side is fixed by $\psij_M$, and it belongs to $M_{\A}$. Therefore, we have $c_{b'} \in q\A_0 \cap \A$ and $\ol{c_{b'}} = c_{b'}$, which implies $c_{b'} = 0$.

Next, we prove that $c_{b'} = 0$ for all $b'$ with $\vep_1(b') > 0$. Assume contrary that $c_{b'} \neq 0$ for some such $b'$. Set $\bfmu := I^{\jmath}(b')$. Since $\bflm$ is maximal, we have $\bfmu_0 + \bfmu_{-1} < \bflm_0 + \bflm_{-1}$. Substituting $\bfmu_0 = \bflm_0 + \vep_1(b')$, $\bfmu_{-1} = \bflm_0 - \vphi_1(b')$, and $\bflm_0 - \bflm_{-1} = \vphi_1(b)$, we obtain $\vphi_1(b') > \vphi_1(b) + \vep_1(b')$. We may assume that $(\vep_1(b'),\vphi_1(b'))$ is minimal (with respect to the lexigographical order) among such $b'$'s. Then, for all $t = 1,\ldots, \vep_1(b')+1$, we have
\begin{align}
-f_1^{(\vphi_1(b)+t)}\Gj(b) = c_{b'}{\vep_1(b')+\vphi_1(b)+t \brack \vep_1(b')} \Gj(\ftil_1^{\vphi_1(b)+t} b') + \otherterms. \nonumber
\end{align}
This implies that $c_{b'} \in q\A_0$, $\ol{c_{b'}} = c_{b'}$, and $c_{b'}{\vep_1(b')+\vphi_1(b)+t \brack \vep_1(b')} \in \A$ for all $t = 1,\ldots, \vep_1(b')+1$. Now, it suffices to show that $c_{b'} \in \A$, which follows from next lemma.

This far, we have proved that $\Gj(b) = u$, and hence, we have $e_1 \Gj(b) = 0$ and $\Uj_1 \Gj(b) \simeq L(\bflm)$. Then, for all $n = 1,\ldots, \bflm_0-\bflm_{-1}$, we have
\begin{align}
f_1^{(n)} \Gj(b) = f_1^{(n)} u = \ftil_1^n u. \nonumber
\end{align}
The left-hand side belongs to $M_{\A}$, while the right-hand side belongs to $\clL$. Moreover, we have $\psij_M(f_1^{(n)} \Gj(b)) = f_1^{(n)} \Gj(b)$, and $\ftil_1^n u + q\clL = \ftil_1^n b$. This implies that $f_1^{(n)} \Gj(b) = \Gj(\ftil_1^n b)$. Thus, the proof completes.
\end{proof of filtration}

\begin{lem}\label{lemma}
Let $A \in \Q(p,q)$, $m \geq n \in \Z_{\geq 0}$. Suppose that $A {m+t \brack n} \in \A$ for all $t = 1,\ldots, n+1$. Then, we have $A \in \A$.
\end{lem}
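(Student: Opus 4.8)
The plan is to reduce the statement to a purely combinatorial fact about the quantum binomials, namely that $1$ lies in the $\Z[q,q\inv]$-span of $\left\{ {m+t \brack n} \mid t = 1,\ldots,n+1 \right\}$. Granting this, write $1 = \sum_{t=1}^{n+1} g_t {m+t \brack n}$ with $g_t \in \Z[q,q\inv] \subseteq \A$; then
$$
A = A \cdot 1 = \sum_{t=1}^{n+1} g_t \left( A {m+t \brack n} \right),
$$
and since each $A {m+t \brack n}$ lies in $\A$ by hypothesis and $\A$ is a ring, we conclude $A \in \A$. Note that the variable $p$ never enters the argument; it is merely along for the ride.

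To prove the combinatorial claim I would induct on $n$, simultaneously for all $m \geq n \geq 0$. The base case $n = 0$ is immediate since ${m+1 \brack 0} = 1$, so one may take $g_1 = 1$. For the inductive step the key tool is the $q$-Pascal identity ${N \brack k} = q^{k}{N-1 \brack k} + q^{-(N-k)}{N-1 \brack k-1}$ for the symmetric quantum binomials (the ones appearing in Lemma \ref{q-binomial formula}), which I rewrite as
$$
{N-1 \brack k-1} = q^{N-k}\left( {N \brack k} - q^{k}{N-1 \brack k} \right).
$$
Taking $N = m+s+1$ and $k = n$, this exhibits ${m+s \brack n-1}$, for each $s = 1,\ldots,n$, as a $\Z[q,q\inv]$-linear combination of ${m+s+1 \brack n}$ and ${m+s \brack n}$; as $s$ ranges over $1,\ldots,n$ these involve exactly the binomials ${m+t \brack n}$ with $t = 1,\ldots,n+1$. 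Since $m \geq n-1$, the induction hypothesis gives $1$ as a $\Z[q,q\inv]$-linear combination of $\left\{ {m+s \brack n-1} \mid s = 1,\ldots,n \right\}$; substituting the above expressions yields $1$ as a $\Z[q,q\inv]$-linear combination of $\left\{ {m+t \brack n} \mid t = 1,\ldots,n+1 \right\}$, which closes the induction.

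There is no serious obstacle here: the content is simply the elementary observation that the Gaussian binomials in a window of length $n+1$ satisfy a Bezout-type relation with Laurent-polynomial coefficients, and the lemma then follows by multiplying that relation by $A$. The one point to watch in the write-up is the index bookkeeping in the inductive step — one must check that the $q$-Pascal step converts the $n$ binomials ${m+s \brack n-1}$ ($s = 1,\ldots,n$) into combinations of precisely the $n+1$ binomials ${m+t \brack n}$ ($t = 1,\ldots,n+1$) that appear in the hypothesis, so that the induction closes on the nose.
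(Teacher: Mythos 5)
Your proof is correct, and it takes a genuinely different route from the paper's. You establish the stronger, effective statement that the $n+1$ binomials ${m+t \brack n}$, $t=1,\ldots,n+1$, generate the unit ideal of $\Z[q,q\inv]$: the Pascal identity you use is the right one for the balanced binomials of Lemma \ref{q-binomial formula}, the extracted coefficient $q^{m+s+1-n}$ is a unit, and the index bookkeeping closes (the hypothesis for $(m,n-1)$ involves the window $s=1,\ldots,n$, which the Pascal step converts into exactly the window $t=1,\ldots,n+1$, with $m\geq n-1$ automatic); multiplying the relation $1=\sum_{t} g_t {m+t \brack n}$ by $A$ then gives $A\in\A$ at once, with $p$ indeed playing no role. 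The paper argues differently: it writes $A=B/C$ with $B,C$ coprime, notes that $C$ must divide every ${m+t \brack n}$, and then computes, via the factorization $[l]=q^{-l}\prod_{1\neq d\mid l}\Phi_d(q^2)$ and a count of multiples of $d$ in windows of consecutive integers, that $\gcd_{1\leq t\leq n+1}\bigl([m+t][m+t-1]\cdots[m+t-n+1]\bigr)=[n]!$, i.e.\ the binomials have no common non-unit divisor, so $C$ is a unit. Your approach is more elementary and self-contained: it needs no cyclotomic bookkeeping and no appeal to unique factorization or to a coprime presentation of $A$ (a step that requires some care in the paper, since $A\in\Q(p,q)$ while the divisibility is analyzed in $\Z[q]$), and it produces explicit coefficients $g_t\in\Z[q,q\inv]$. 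Note also that your Bezout relation is strictly stronger than the gcd statement, since $\Z[q,q\inv]$ is not a principal ideal domain; it is therefore essential that you prove it directly by induction, as you do, rather than deducing it from coprimality. Conversely, the paper's computation yields the finer arithmetic fact that the common divisor of the window products is exactly $[n]!$, which your argument does not provide but which the lemma does not need.
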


\begin{proof}
Let us write $A = B/C$ for some $B,C \in \A_0 \cap \A$ that are coprime. By the hypothesis, $C$ is a common devisor of ${m+t \brack n}$, $t = 1,\ldots,n+1$. Hence, it suffices to show that the greatest common divisor of them in $\Z[q]$ is equal to $1$. This is equivalent to say that the greatest common divisor of $a_t := [m+t][m+t-1] \cdots [m+t-n+1]$, $t = 1,\ldots,n+1$ is equal to $[n]!$. Since $[l] = q^{-l} \prod_{1 \neq d | l} \Phi_d$, where $\Phi_d = \Phi_d(q^2)$ denotes the $d$-th cyclotomic polynomial in variable $q^2$, we have
$$
b_t := q^{n(m+t)-\frac{n(n-1)}{2}}a_t = \prod_{l=0}^{n-1} \prod_{1 \neq d | (m+t-l)} \Phi_d,
$$
which is the irreducible decomposition of $b_t$ in $\Z[q^2]$. Then, we have
$$
b_t = \prod_{d \geq 2} \Phi_d^{m_{d,t}}, \text{ where } m_{d,t} := |\{ 0 \leq l \leq n-1 \mid d | (m+t-l) \}|,
$$
and hence,
$$
\gcd_{1 \leq t \leq n+1}(b_t) = \prod_{d \geq 2} \Phi_d^{\min_{1 \leq t \leq n+1}(m_{d,t})}.
$$
We prove that $\min_{1 \leq t \leq n+1}(m_{d,t}) = \lfloor \frac{n}{d} \rfloor$ for all $d$. It is clear that $m_{d,t} \geq \lfloor \frac{n}{d} \rfloor$ for all $t$ since $\{ m+t,m+t-1,\ldots,m+t-(\lfloor \frac{n}{d} \rfloor d-1) \}$ contains exactly $\lfloor \frac{n}{d} \rfloor$ integers divisible by $d$. If $\min_{1 \leq t \leq n+1}(m_{d,t}) > \lfloor \frac{n}{d} \rfloor$, then $\{ m+t-\lfloor \frac{n}{d} \rfloor d, m+t-(\lfloor \frac{n}{d} \rfloor d+1), \ldots, m+t-(n-1) \}$ contains at least one multiple of $d$ for all $t$. Then, for $t=1$, there exists $l_1 \in \{ \lfloor \frac{n}{d} \rfloor d, \lfloor \frac{n}{d} \rfloor d+1,\ldots,n-1 \}$ such that $m+1-(\lfloor \frac{n}{d} \rfloor d + l_1) \in d\Z$. Set $t' := n-l_1 + 1$, and consider the integers
$$
m+t'-\lfloor \frac{n}{d} \rfloor d, m+t'-(\lfloor \frac{n}{d} \rfloor d+1), \ldots, m+t'-(n-1) = (m+1-l_1) + 1.
$$
These are $(n-\lfloor \frac{n}{d} \rfloor d)$ consecutive integers with $(m+1-l_1) + 1 = 1$ modulo $d$. Since $n-\lfloor \frac{n}{d} \rfloor d < d$, they have no multiples of $d$. Hence, we have $\min_{1 \leq t \leq n+1}(m_{d,t}) = \lfloor \frac{n}{d} \rfloor$ for all $d \geq 2$. Thus, we obtain
$$
\gcd_{1 \leq t \leq n+1}(b_t) = \prod_{d \geq 2} \Phi_d^{\lfloor \frac{n}{d} \rfloor} = \prod_{d=2}^n \Phi_d^{\lfloor \frac{n}{d} \rfloor} = \prod_{l=2}^n \left(\prod_{1 \neq d' | l} \Phi_{d'}\right) = \prod_{l=2}^n [l] = [n]!.
$$
This proves the lemma.
\end{proof}

\subsection{The $r \geq 2$ case}
Now, we are ready to prove Theorem \ref{filtration} by induction on $r$.

When $r=1$, we have already completed the proof. Let $r \geq 2$ and assume that the assertions hold for all $r' < r$.

\begin{lem}\label{weak lemma}
Let $\bflm \in \Pj(M)$ be a maximal element, $b \in \clB$ such that $I^\jmath(b) = \bflm$ and $\etil_i b = 0$ for all $i \in \Ij$. Suppose the following:
\begin{enumerate}
\item There exists a homomorphism $\xi : L(\bflm) \rightarrow M$ of $\Uj$-modules such that $\xi(\Gjlow(\Tj_{b'})) = \Gj(b')$ for all $b' \in \Cj(b)$ which is strongly connected to some $b'' \in \Cj(b)$ with $\wtj(b) <^\jmath \wtj(b'')$.
\item $\xi$ commutes with the $\psij$-involutions on $L(\bflm)$ and $M$.
\item $[\xi(\Gjlow(\Tj_b)) : \Gj(b)] = 1$.
\end{enumerate}
Then, we have
$$
\xi(\Gjlow(\Tj_b)) = \Gj(b) + \sum_{\substack{b' \in \clB \setminus \{ b \} \\ \Tj_{b'} = \Tj_b}} c_{b'} \Gj(b') + \sum_{\substack{b'' \in \Cj(b'),\ c_{b' \neq 0} \\ s(b'') < s(b')}} c_{b''} \Gj(b'').
$$
for some $c_{b'},c_{b''} \in \Ao$.
\end{lem}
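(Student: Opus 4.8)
The plan is to combine three ingredients: the integrality of $\xi$ with respect to the crystal lattices, its compatibility with the $\psij$-involutions supplied by hypothesis (2), and a downward induction along the $\jmath$-crystal $\Cj(b)$ fed by hypothesis (1) and the structure-constant formulas collected in Section \ref{subsec: global basis}.

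First I would record the integrality and pin down the leading term. Since $\Oj$ is semisimple \cite{W17}, the nonzero homomorphism $\xi$ identifies $L(\bflm)$ with a direct summand of $M$ whose $\jmath$-crystal lattice is $\xi(\clL(\bflm))$; hence $\xi(\clL(\bflm)) \subseteq \clL$, and by hypothesis (2) the vector $\xi(\Gjlow(\Tj_b))$ is $\psij_M$-invariant and $\jmath$-weight homogeneous of $\jmath$-weight $\wtj(b)$. Writing $\xi(\Gjlow(\Tj_b)) = \sum_{b' \in \clB} c_{b'} \Gj(b')$, membership in $\clL$ forces $c_{b'} \in \Ao$ for all $b'$. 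Reducing modulo $q\clL$ and using that $\Tj_b$ is reached from $\bfT_\bflm$ by Kashiwara operators, together with the fact that $\xi$ induces a morphism of $\jmath$-crystal lattices, one sees that every $b'$ for which $c_{b'}$ is not in the maximal ideal of $\Ao$ satisfies $\Tj_{b'} = \Tj_b$; and by hypothesis (3), $c_b = 1$. So the remaining content is to show that $c_{b'} = 0$ unless $\Tj_{b'} = \Tj_b$, or $b' \in \Cj(\hat b)$ with $s(b') < s(\hat b)$ for some $\hat b$ satisfying $\Tj_{\hat b} = \Tj_b$ and $c_{\hat b} \neq 0$.

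I would prove this by induction with respect to $s$, processing the elements of larger $s$-value first. If some Kashiwara operator raises $b$ inside $\Cj(b)$, then by Lemma \ref{e_rprime} (and the analogous statements for the remaining indices) there is a strictly higher $\hat b \in \Cj(b)$, reached from $b$ by an explicit sequence of operators $\etil_i, \etil_{i'}$, with $\wtj(b) <^\jmath \wtj(\hat b)$; hypothesis (1) then gives $\xi(\Gjlow(\Tj_{\hat b})) = \Gj(\hat b)$. Inverting the corresponding structure-constant relations in $L(\bflm)$ expresses $\Gjlow(\Tj_b)$ via the $\Uj$-action applied to $\Gjlow(\Tj_{\hat b})$ plus a combination of $\Gjlow(\Tj_{b'''})$ with $b''' \in \Cj(b)$ and $s(b''') > s(b)$; applying $\xi$, the first piece becomes the same expression applied to $\Gj(\hat b)$, which by the formulas of Section \ref{subsec: global basis} and property $(\ast)$ equals $\Gj(b)$ plus terms $\Gj(b'')$ with $b'' \in \Cj(b)$ and $s(b'') < s(\hat b)$, while the contributions of the $\xi(\Gjlow(\Tj_{b'''}))$ are absorbed using the inductive hypothesis (whose own hypotheses are inherited from (1)--(3) because ``higher than $b'''$'' is contained in ``higher than $b$''). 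Collecting terms gives the asserted form. The base case is when no Kashiwara operator raises $b$, so $b$ is a highest weight element, $\Tj_b = \bfT_\bflm$ and $\Gjlow(\Tj_b) = v_\bflm$; here $\xi(v_\bflm)$ is a highest weight vector of $\jmath$-weight $\wtj(\bflm)$, and since $\bflm$ is maximal in $\Pj(M)$ one must check that every such vector lying in $\clL \cap \psij_M(\clL)$ is a combination of the $\Gj(b')$ with $\Tj_{b'} = \bfT_\bflm$ corrected by terms of $\Cj(b')$ strictly below them, after which integrality and hypothesis (3) force the coefficient of $\Gj(b)$ to be $1$.

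The main obstacle, I expect, is twofold. First, the base case needs a precise description of the span of highest weight vectors of type $\bflm$ inside $\clL \cap \psij_M(\clL)$ and its match with the asserted shape; this is delicate because the $\Gj(b')$ need not themselves be highest weight vectors, so one must produce auxiliary highest weight vectors adapted to the $\jmath$-crystal and invoke the uniqueness of $\jmath$-crystal bases of irreducible modules (\cite{W17}). Second, in the inductive step one must verify that the error terms produced when the $\Uj$-action is applied to $\Gj(\hat b)$ really stay in the components $\Cj(b')$ of the diagonal terms and strictly below them in $s$, with nothing escaping to an unrelated component of $\jmath$-weight $\wtj(\bflm)$; this is exactly where conditions (2)--(6) of property $(\ast)$ and the maximality of $\bflm$ in $\Pj(M)$ must be used together, and where the precise meaning of ``strongly connected'' has to be matched against the support of $\xi(\Gjlow(\Tj_{\hat b}))$ guaranteed by hypothesis (1).
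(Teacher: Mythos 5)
There is a genuine gap at the heart of your argument. Your opening reduction is fine and matches the start of the paper's proof: $\xi$ preserves $\jmath$-crystal lattices, so $\xi(\Gjlow(\Tj_b)) = \Gj(b) + \sum_{b' \neq b} c_{b'}\Gj(b')$ with $c_{b'} \in q\Ao$ (using hypothesis (3) for the leading term), and hypothesis (2) gives $\ol{c_{b'}} = c_{b'}$. But the actual content of the lemma is to force $c_{b'} = 0$ except for $b'$ with $\Tj_{b'} = \Tj_b$ and lower-$s$ corrections inside those components, and your proposed induction on $s$ does not achieve this. First, under the hypotheses $\etil_i b = 0$ for all $i \in \Ij$, only the primed operators can raise $b$, and these are not given by elements of $\Uj$: $\etil_{r^+},\ftil_{r^+}$ are built from projections onto one-dimensional subspaces of the multiplicity-free $\Uj_{r-1}$-restriction, so ``inverting the structure-constant relations'' and pushing them through $\xi$ into $M$, then re-expanding in $\Gj(\clB)$, is exactly where terms can escape into other connected components of the same $\jmath$-weight. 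Property $(\ast)$ only bounds such terms by $\jmath$-weight and $s$-value, not by component, so your inductive step (and even your base case) begs the question you set out to answer --- as you yourself acknowledge in your list of obstacles. Second, nothing in your sketch produces the integrality constraints needed to kill a bar-invariant element of $q\Ao$: bar-invariance plus $c_{b'} \in q\Ao$ alone does not imply $c_{b'}=0$; one must additionally force $c_{b'} \in \A$ (or a multiple of it by $q$-binomials) before the standard argument applies.

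For comparison, the paper proceeds quite differently: it isolates the $b'$ with $c_{b'}\neq 0$ and $s(b')$ maximal, and proves row by row (induction on $i$) that $\Tj_{b'}(-i) \geq \bflm_{-i}$. The base rows come from the already established $r=1$ case of Theorem \ref{filtration}; the inductive step applies the divided powers $f_i^{(t)}$ for a consecutive range of $t$ of length $\vphi_{-\ul{i}}(b'')+1$ (choosing $b''$ with minimal $\vphi_{-\ul{i}}$ among maximal-$s$ elements), uses hypothesis (1) to identify $\xi(\Gjlow(\Tj_{\ftil_i^t b}))$ with $\Gj(\ftil_i^t b)$ up to controlled terms, compares coefficients of $\Gj(\ftil_i^t b'')$ in vectors known to lie in $M_\A$, and thereby obtains $c_{b''}{t \brack \vphi_{-\ul{i}}(b'')} \in \A$ for all those $t$; the arithmetic Lemma \ref{lemma} on the gcd of consecutive $q$-binomial coefficients then yields $c_{b''}=0$, a contradiction. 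Maximality of $\bflm$ in $\Pj(M)$ finally converts $\Tj_{b'}(-i) \geq \bflm_{-i}$ for all $i$ into $\Tj_{b'} = \Tj_b$. Without this (or some substitute) mechanism for generating integrality constraints and for anchoring the row-wise induction in the $r=1$ case, your proposal cannot close, so as it stands it is incomplete at the decisive step.
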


\begin{proof}
Since $\Uj$-module homomorphisms preserve $\jmath$-crystal lattices, we have $\xi(\Gjlow(\Tj_b)) \in \clL$, and $\xi(\Gjlow(\Tj_b)) + q\clL = b$. Let us write
$$
\xi(\Gjlow(\Tj_b)) = \Gj(b) + \sum_{b' \in \clB \setminus \{b\}} c_{b'} \Gj(b')
$$
for some $c_{b'} \in q\Ao$. Also, since $\xi$ commutes with $\psij$-involutions, we have $\ol{c_b} = c_b$, $\ol{c_{b'}} = c_{b'}$. We claim the following: if $b' \in \clB \setminus \{b\}$ satisfies
$$
(\dagger) \qu \text{$c_{b'} \neq 0$ and $s(b')$ is maximal among $\{ s(b'') \mid b'' \in \clB \setminus \{b\} \AND c_{b''} \neq 0 \}$},
$$
then $\Tj_{b'}(-i) \geq \bflm_{-i}$ for all $i = 0,1,\ldots,r$. By the case $r = 1$, we have $I^\jmath_1(b') \succeq I^\jmath_1(b)$, which implies $\Tj_{b'}(0) = \Tj_{b}(0) = \bflm_0$, and $\Tj_{b'}(-1) \geq \Tj_b(-1) = \bflm_{-1}$. We proceed by induction on $i$. Assume that $i \geq 2$, and that $\Tj_{b'}(-(i-1)) \geq \bflm_{-(i-1)}$ for all $b'$ satisfying $(\dagger)$. Suppose that there exists $b'$ satisfying $(\dagger)$ such that $\Tj_{b'}(-i) < \bflm_{-i}$. Let $b'' \in \clB \setminus \{b\}$ be such that $s(b'') = s(b')$ and $\vphi_{-\ul{b''}}$ is minimal among such elements. Recall that $s(b'') = s(b')$ implies $\wt(b'') = \wt(b')$, and hence, $\Tj_{b''}(-i) = \Tj_{b'}(-i) < \bflm_{-i}$. Then, we have
$$
\vep_{-\ul{i}}(b'') = \vphi_{-\ul{i}}(b'') + \Tj_{b''}(-(i-1)) - \Tj_{b''}(-i) > \Tj_{b''}(-(i-1)) - \bflm_{-i} + \vphi_{-\ul{i}}(b'').
$$
By the minimality of $\vphi_{-\ul{i}}(b'')$, it holds that
$$
[f_i^{(t)} \sum_{b' \in \clB \setminus \{b\}} c_{b''} \Gj(b') : \Gj(\ftil_i^t b'')] = c_{b'} {t \brack \vphi_{-\ul{i}}(b'')} \neq 0
$$
for all $\Tj_{b''}(-(i-1)) - \bflm_{-i} +1 \leq t \leq \Tj_{b''}(-(i-1)) - \bflm_{-i} + \vphi_{-\ul{i}}(b'') + 1$. On the other hand, $f_i^{(t)} \Gjlow(\Tj_b)$ is the sum of $\Gjlow(\Tj_{\ftil_i^t b})$ and an $\A$-linear combination of $\Gjlow(\Tj_{\bhat})$ such that $\bhat \in \Cj(b_t)$ is strongly connected to $b''' \in \Cj(b_t)$ with $\wtj(b) <^\jmath \wtj(b''')$. Hence, we have
\begin{align}
\begin{split}
\xi(\Gjlow(\Tj_{\ftil_i^t b})) &= f_i^{(t)} \xi(\Gjlow(\Tj_b)) + \sum_{\bhat} a_{\bhat} \Gj(\bhat) \\
&= f_i^{(t)} \Gj(b) + f_i^{(t)} \sum_{b' \in \clB \setminus \{b\}} c_{b'} \Gj(b') + \sum_{\bhat} a_{\bhat} \Gj(\bhat)
\end{split} \nonumber
\end{align}
for some $a_{\bhat} \in \A$. Here, note that we have $\etil_j \ftil_i^tb = 0$ for all $j = 1,\ldots,i-1$, $[\xi(\Gjlow(\Tj_{\ftil_i^t b})) : \Gj(\ftil_i^t b)] = 1$, and $s(\ftil_i^t b'')$ is maximal among $\{ s(b''') \mid b''' \neq \ftil_i^tb \AND [\xi(\Gjlow(\Tj_{\ftil_i^t b})) : \Gj(b''')] \neq 0 \}$. Then, by our induction hypothesis on $i$, we obtain that $\Tj_{\ftil_i^t b''}(-(i-1)) \geq \Tj_{\ftil_i^t b}(-(i-1)) = \bflm_{-i}$, which is a contradiction since $t \geq \Tj_{b''}(-(i-1)) - \bflm_{-i} + 1$. Hence we must have $[\xi(\Gjlow(\Tj_{\ftil_i^t b})) : \Gj(\ftil_i^t b'')] = 0$. Since
$$
[\xi(\Gjlow(\Tj_{\ftil_i^t b})) : \Gj(\ftil_i^t b'')] = c_{b''} {t \brack \vphi_{-\ul{i}}(b'')} + [f_i^{(t)} \Gj(b) : \Gj(\ftil_i^t b'')] + a_{\ftil_i^t b''},
$$
and the second and the third term of the right-hand side lies in $\A$, we obtain
$$
c_{b''} {t \brack \vphi_{-\ul{i}}(b'')} \in \A
$$
for all $\Tj_{b''}(-(i-1)) - \bflm_{-i} + 1 \leq t \leq \Tj_{b''}(-(i-1)) - \bflm_{-i} + \vphi_{-\ul{i}}(b'') + 1$. By Lemma \ref{lemma}, this implies $c_{b''} = 0$.

This far, we have proved that if $b' \in \clB \setminus \{b\}$ satisfies $(\dagger)$, then we have $\Tj_{b'}(-i) \geq \bflm_{-i}$ for all $i \in \{ 0,1,\ldots,r \}$. In particular, we have $\Ij(b') = \bflm$ for such $b'$ (since $\bflm$ is maximal in $\Pj(M)$). In this case, the condition $\Tj_{b'}(-i) \geq \bflm_{-i}$ for all $i$ forces $b'$ to satisfy that $\Tj_{b'} = \Tj_b$. Hence, we have
$$
\xi(\Gjlow(\Tj_b)) = \Gj(b) + \sum_{\substack{b' \in \clB \setminus \{ b \} \\ \Tj_{b'} = \Tj_b}} c_{b'} \Gj(b') + \sum_{\substack{b'' \in \Cj(b'),\ c_{b' \neq 0} \\ s(b'') < s(b')}} c_{b''} \Gj(b''),
$$
as desired.
\end{proof}

\begin{lem}\label{strong lemma}
Let $\bflm \in \Pj(M)$ be a maximal element, $j \in \Ij \setminus \{ 1 \}$, $b \in \clB$ such that $I^\jmath(b) = \bflm$, $\etil_i b = 0$ for all $i \in \Ij$, and $\etil_{j'}(b) \neq 0$. Suppose the following:
\begin{enumerate}
\item There exists a homomorphism $\xi : L(\bflm) \rightarrow M$ of $\Uj$-modules such that $\xi(\Gjlow(\Tj_{b'})) = \Gj(b')$ for all $b' \in \Cj(b)$ which is strongly connected to some $b'' \in \Cj(b)$ with $\wtj(b) <^\jmath \wtj(b'')$.
\item $\xi$ commutes with the $\psij$-involutions on $L(\bflm)$ and $M$.
\end{enumerate}
Then, we have
$$
\xi(\Gjlow(\Tj_b)) = \Gj(b) + \sum_{\substack{b' \in \clB \setminus \{ b \} \\ \Tj_{b'} = \Tj_b}} c_{b'} \Gj(b') + \sum_{\substack{b'' \in \Cj(b'),\ c_{b' \neq 0} \\ s(b'') < s(b')}} c_{b''} \Gj(b'').
$$
for some $c_{b'},c_{b''} \in \Ao$.
\end{lem}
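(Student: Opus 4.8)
The plan is to deduce the statement from Lemma~\ref{weak lemma}: the two lemmas have the same conclusion, so it suffices to verify hypothesis~$(3)$ of Lemma~\ref{weak lemma}, namely $[\xi(\Gjlow(\Tj_b)):\Gj(b)] = 1$, and then invoke that lemma. Everything therefore reduces to extracting this one normalization from the new hypothesis $\etil_{j'}(b) \neq 0$. Note that, since $\etil_i b = 0$ for all $i \in \Ij$ is assumed, the condition $\etil_{j'}(b) \neq 0$ for some $j \in \Ij\setminus\{1\}$ holds precisely when $b$ is not the highest weight vector $b_0$ of its $\jmath$-component $\Cj(b)$.

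First I would use that $b \neq b_0$ implies $\wtj(b) <^\jmath \wtj(b')$ for every $b' \in \Cj(b)$ lying strictly above $b$ in the $\jmath$-weight order. Taking $b'' = b'$ in hypothesis~$(1)$, that hypothesis already determines $\xi$ on every such $b'$: one has $\xi(\Gjlow(\Tj_{b'})) = \Gj(b')$ exactly, with no error term. In particular this applies to $\hat b := \etil_{j'}(b)$, giving $\xi(\Gjlow(\Tj_{\hat b})) = \Gj(\hat b)$, and, identifying $v_\bflm = \Gjlow(\bfT_\bflm)$ with $\Gjlow(\Tj_{b_0})$, to the highest weight vector, giving $\xi(v_\bflm) = \Gj(b_0) \in M_\A$.

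Next I would transport this normalization the single primed step down from $\hat b$ to $b$. Here I invoke the identification $\etil_{j'} = \etil_{j^+}$, $\ftil_{j'} = \ftil_{j^+}$ on $\jmath$-crystal bases proved in Section~\ref{basic properties}, together with the fact that $\etil_{j^+}$ and $\ftil_{j^+}$ are $\Uj$-functorial operators built from the Chevalley operators $e_i$ ($i \in \Ij$), weight-space projections, and $q$-integer normalizing factors; consequently $\xi$ commutes with $\etil_{j^+}$, $\ftil_{j^+}$ and with every $e_i^{(n)},f_i^{(n)}$. Writing $\xi(\Gjlow(\Tj_b)) = c_b\Gj(b) + \sum_{b'\neq b} c_{b'}\Gj(b')$ with $c_b \in 1+q\Ao$, $c_{b'}\in q\Ao$, and all coefficients $\psij_M$-fixed by hypothesis~$(2)$, I would apply $\etil_{j^+}$ to both sides: one side becomes $\xi(\etil_{j^+}\Gjlow(\Tj_b)) = \xi\bigl(\Gjlow(\Tj_{\hat b}) + (\text{term in } q\clL(\bflm))\bigr) \in \Gj(\hat b) + q\clL$, while the other side, $\etil_{j^+}(c_b\Gj(b)) + \sum_{b'\neq b} c_{b'}\etil_{j^+}\Gj(b')$, is evaluated through the structure-constant formulas for the $e_i$-action on a global $\jmath$-crystal basis established in Section~\ref{subsec: global basis} (the analogue of Propositions~\ref{fund prop for global crystal basis} and \ref{str const for Gj-basis}). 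Comparing the coefficient of $\Gj(\hat b)$ on the two sides isolates $c_b$.

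The main obstacle is this last comparison. One must ensure that no error term $\Gj(b')$ with $b'\neq b$ — whether produced by $\etil_{j^+}$ acting on the global-basis expansion of $\xi(\Gjlow(\Tj_b))$ or by $\xi$ acting on the $q\clL(\bflm)$-correction — contributes to the coefficient of $\Gj(\hat b)$, and one must then see that the surviving $c_b$ lies in $\A$, is fixed by $\psij_M$, and lies in $1+q\Ao$, hence equals $1$. This is precisely the bookkeeping already developed for Lemma~\ref{weak lemma} and for the $r=1$ case of Theorem~\ref{filtration}: it is organized by the $s$-filtration and weight bounds of property~$(\ast)$, by the maximality of $\bflm$ in $\Pj(M)$ (which forces any competing $b'$ to satisfy $I^\jmath(b') = \bflm$ and hence $\Tj_{b'} = \Tj_b$), and, for the $q$-binomial coefficients that arise, by the cyclotomic-polynomial gcd computation of Lemma~\ref{lemma}. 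Once $c_b = 1$ is in hand, hypothesis~$(3)$ of Lemma~\ref{weak lemma} holds and that lemma yields the asserted expansion.
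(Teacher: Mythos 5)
Your top-level reduction coincides with the paper's: everything comes down to proving $c_b := [\xi(\Gjlow(\Tj_b)) : \Gj(b)] = 1$ and then citing Lemma~\ref{weak lemma}. The gap is in the mechanism you propose for extracting $c_b$. You want to apply $\etil_{j^+}(=\etil_{j'})$ to both sides and read off the coefficient of $\Gj(\etil_{j'}b)$ ``through the structure-constant formulas for the $e_i$-action,'' but $\etil_{j^+}$ is not $e_j$: it is $e_j$ sandwiched between projections onto one-dimensional subspaces singled out by the $\Uj_{r-1}$-isotypic decomposition of $M$. Neither property $(\ast)$ nor anything else in the paper controls the matrix coefficients of $\etil_{j^+}$ on the global basis $\Gj(\clB)$ of $M$; those projections need not preserve $M_\A$, and their compatibility with $\Gj(\clB)$ is essentially the kind of statement Theorem~\ref{filtration} is in the business of proving, so invoking it here is circular. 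Concretely, the three-legged argument you rely on (bar-invariance, membership in $\A$, membership in $q\Ao$ resp.\ $1+q\Ao$) loses its integrality leg when $e_j$ is replaced by $\etil_{j^+}$. The paper's proof applies the honest generator $e_j$, whose action on $\Gj(\clB)$ is exactly what property $(\ast)(6)$ controls, and compares the coefficient of $\Gj(\etil_{j'}b)$: on one side it is $c_b[\vphi_{\ul{j}}(b)+1]$, on the other it is $[\vphi_{\ul{j}}(b)+1]$ coming from $\xi(e_j\Gjlow(\Tj_b))$ via hypothesis $(1)$, whence $c_b=1$.

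There is a second, related gap: you defer the elimination of competing terms to ``the bookkeeping already developed for Lemma~\ref{weak lemma},'' but that bookkeeping cannot be reused verbatim precisely because $c_b$ is now unknown. In the weak lemma the coefficient of $\Gj(b)$ is $1\in\A$, so the cross term $[f_i^{(t)}\Gj(b):\Gj(\ftil_i^{t}b'')]$ is harmlessly in $\A$ and one concludes that $c_{b''}$ times the relevant $q$-binomial lies in $\A$, feeding Lemma~\ref{lemma}. With $c_b$ only known to lie in $\Ao$, that cross term is multiplied by $c_b$ and can no longer be absorbed. The paper's fix is to run the $f_i^{(t)}$-argument only for $b''$ with $s(b'')>s(b)$, where property $(\ast)$ forces $[f_i^{(t)}\Gj(b):\Gj(\ftil_i^{t}b'')]=0$ so the unknown $c_b$ drops out; this gives $c_{b''}=0$ for all such $b''$, hence $s(b)$ is maximal among the surviving terms, which is exactly what makes the subsequent $e_j$-coefficient comparison isolate $c_b$. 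Your sketch identifies the right extra datum ($\etil_{j'}b\neq 0$) to use for the normalization, but without this preliminary maximality step, and with $\etil_{j^+}$ in place of $e_j$, the coefficient comparison you propose is not justified.
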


\begin{proof}
If we can prove that $c_b := [\xi(\Gjlow(\Tj_b)) : \Gj(b)] = 1$, then the assertion follows from the previous lemma.  Hence, we aim to show $c_b = 1$.

By the same argument as before, we have
$$
[\xi(\Gjlow(\Tj_{\ftil_i^t b})) : \Gj(\ftil_i^t b'')] = c_{b''} {t \brack \vphi_{-\ul{i}}(b'')} + c_b[f_i^{(t)} \Gj(b) : \Gj(\ftil_i^t b'')] + a_{\ftil_i^t b''},
$$
for all $b'' \in \clB \setminus \{b\}$ satisfying $(\dagger)$. Here, let us assume further that $s(b') > s(b)$. Then, we have $[f_i^{(t)} \Gj(b) : \Gj(\ftil_i^t b'')] = 0$ since $f_i^{(t)} \Gj(b)$ is a linear combination of $\Gj(\btil)$ with $s(\btil) \leq s(b) + t\alpha_{-\ul{i}} < s(b'') + t \alpha_{-\ul{i}} = s(\ftil_i^t b'')$. Hence, we have $c_{b''}{t \brack \vphi_{-\ul{i}}(b'')} \in \A$, and therefore, $c_{b''} = 0$ by Lemma \ref{lemma}. In particular, we obtain that $s(b)$ is maximal. Then, we have
$$
[e_j(c_b \Gj(b) + \sum c_{b'} \Gj(b')) : \Gj(\etil_{j'}b)] = c_b[\vphi_{\ul{j}}(b) + 1].
$$
On the other hand, since $[e_j \Gjlow(\Tj_b) : \Gjlow(\etil_{j'}b)] = [\vphi_{\ul{j}}(b) + 1]$, we have
$$
[e_j(c_b \Gj(b) + \sum c_{b'} \Gj(b')) : \Gj(\etil_{j'}b)] = [\vphi_{\ul{j}}(b) + 1],
$$
and hence, $c_{b} = 1$, as desired.
\end{proof}

We prove Theorem \ref{filtration} by descending induction (with respect to $\preceq$) on $\bflm$. As in the $r=1$ case, we may assume that $\bflm$ is maximal among $P^\jmath(M)$. Then, in order to complete the proof, we have to show the following:
\begin{enumerate}
\item $I_\bflm(M)$ has a basis $\{ \Gj(b) \mid I^\jmath(b) = \bflm \}$.
\item There exists an isomorphism $\xi:L(\bflm)^{\oplus m_\bflm} \rightarrow I_\bflm(M)$ of $\Uj$-modules which sends the $\jmath$-global basis elements of $L(\bflm)^{\oplus m_\bflm}$ to those of $I_\bflm(M)$, where $m_\bflm$ denotes the multiplicity of $L(\bflm)$ in $M$.
\end{enumerate}

Let $b_1,\ldots,b_{m_\bflm} \in \clB$ and $u_1,\ldots,u_{m_\bflm} \in \clL$ be distinct highest weight vectors of type $\bflm$ with $u_t + q\clL = b_t$, $t = 1,\ldots,m_\bflm$. By retaking the $u_t$'s if necessary, we may assume that $[u_t:\Gj(b_u)] = \delta_{t,u}$ for all $t,u$. Let $\xi_t : L(\bflm) \rightarrow M$ be the $\Uj$-homomorphism which sends $v_\bflm$ to $u_t$.

\begin{lem}\label{lemma 1}
We have $\xi_t(\Gjlow(\Tj_{b_t})) = \Gj(b_t)$ for all $t = 1,\ldots,m_\bflm$.
\end{lem}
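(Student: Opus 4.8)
The plan is to reduce the assertion to the single identity $u_t=\Gj(b_t)$ and then to prove that identity by the method of the $r=1$ case of Theorem~\ref{filtration}, supplemented by the induction on $r$ applied to $M$ viewed as a $\Uj_{r-1}$-module. For the reduction: since $b_t$ is a $\Uj$-highest weight vector of type $\bflm$, the homomorphism $\xi_t\colon L(\bflm)\to M$ with $v_\bflm\mapsto u_t$ is injective, so $\Uj u_t\simeq L(\bflm)$ and $b_t$ is the highest weight element of its connected $\jmath$-crystal component, which is isomorphic to $\clB(\bflm)=\SST(\bflm)$ with $b_t\leftrightarrow\bfT_\bflm$; hence $\Tj_{b_t}=\bfT_\bflm$ and $\Gjlow(\Tj_{b_t})=\Gjlow(\bfT_\bflm)=v_\bflm$, so $\xi_t(\Gjlow(\Tj_{b_t}))=\xi_t(v_\bflm)=u_t$ and the lemma is equivalent to $u_t=\Gj(b_t)$.

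Next I would set up as follows. Put $u:=u_t$, $b:=b_t$ and write $u=\Gj(b)+\sum_{b'}c_{b'}\Gj(b')$. Every $b'$ occurring lies in $\clB_{\wtj(\bflm)}$; by the normalization $[u:\Gj(b_s)]=\delta_{t,s}$ the terms with $c_{b'}\neq0$ have $b'\neq b_s$ for all $s$, and since (replacing $M$ by $M/W_{\succ\bflm}(M)$, exactly as in the $r=1$ case) we may assume $\bflm$ is maximal in $\Pj(M)$, they satisfy $I^\jmath(b')\not\succeq\bflm$. From $u\in\clL$ and $u+q\clL=b=\Gj(b)+q\clL$ we get $c_{b'}\in q\Ao$. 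Because $\A\cap q\Ao\cap\overline{q\Ao}=\{0\}$ (a Laurent polynomial divisible by $q$ in $\Ao$ and by $q^{-1}$ in $\Ai$ must vanish), it then suffices to prove that each $c_{b'}$ is \emph{both} bar-invariant \emph{and} an element of $\A$.

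The mechanism to do this is that $\Gj(b)\in M_\A$ with $\psij_M(\Gj(b))=\Gj(b)$, and $\psij(f_i)=f_i$ by Proposition~\ref{automorphisms}, so $f_i^{(N)}\Gj(b)\in M_\A$ is $\psij_M$-fixed for all $i\in\Ij$, $N\geq0$; hence all of its coefficients against the global $\jmath$-crystal basis are bar-invariant and lie in $\A$. On the other hand $\Uj u\simeq L(\bflm)\in\Oj$, so each $f_i$ acts locally nilpotently on $u$, $f_i^{(N)}u=0$ for $N\gg0$, and therefore $f_i^{(N)}\Gj(b)=-\sum_{b'}c_{b'}\,f_i^{(N)}\Gj(b')$. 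Choosing a ``bad'' $b'$ (one with $c_{b'}\ne0$) for which $s(b')$ is maximal with respect to the poset in property~$(\ast)$ and the relevant $\varphi$ is minimal, the structure-constant formulas for the action of $f_i$ on a global $\jmath$-crystal basis, together with clause~$(6)$ of property~$(\ast)$, identify the coefficient of $\Gj(\ftil_i^{\,N}b')$ in $-f_i^{(N)}\Gj(b)$ with $c_{b'}$ times a $q$-binomial coefficient, for a range of $N$; Lemma~\ref{lemma} strips that factor to give $c_{b'}\in\A$, and with bar-invariance $c_{b'}=0$. The directions $i\in\Ij\setminus\{1\}$ would be handled through Theorem~\ref{filtration} for $\Uj_{r-1}$ --- legitimate by the induction on $r$, since the restriction of $M$ to $\Uj_{r-1}$ still carries a global $\jmath$-crystal basis (the same $\clL$, $M_\A$, $\psij_M$) and still satisfies property~$(\ast)$ for the same $(S,s)$, each clause of $(\ast)$ restricting to the smaller index set --- while $i=1$ is treated by the plain $r=1$ computation, using that the subalgebra generated by $e_1,f_1,k_1$ acting on $u$ produces a highest weight module, so $f_1^{(N)}u=0$ once $N$ passes the length of that string.

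The hard part will be the bookkeeping, which is the exact analogue of the two-step ($\varepsilon_1=0$ versus $\varepsilon_1>0$) argument in the $r=1$ case: one must check that the coefficient of $\Gj(\ftil_i^{\,N}b')$ in $f_i^{(N)}\Gj(b)$ truly isolates $c_{b'}$, with no interference from the other contributions $c_{b''}\Gj(\ftil_i^{\,k}b'')$ --- this is where the maximality of $s(b')$ and the minimality of the auxiliary $\varphi$ are needed --- and that interleaving the $\Uj_{r-1}$-reduction with the $\Uj_1$-string argument actually sweeps out every $b'\in\clB_{\wtj(\bflm)}$ with $c_{b'}\neq0$. By contrast, the remaining ingredients --- the $q$-binomial gcd computation of Lemma~\ref{lemma}, the identity $\A\cap q\Ao\cap\overline{q\Ao}=\{0\}$, and the descent of property~$(\ast)$ to $\Uj_{r-1}$ --- I expect to be routine.
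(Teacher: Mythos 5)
Your reduction to the single identity $u_t=\Gj(b_t)$ is correct, and so is the observation that it suffices to show each error coefficient $c_{b'}\in q\Ao$ is bar-invariant and lies in $\A$. But the paper does not re-run an $r=1$-style computation here: its proof of Lemma \ref{lemma 1} is a one-line application of Lemma \ref{weak lemma}, whose hypotheses hold in this situation (hypothesis $(1)$ is vacuous because $b_t$ is the top of its component, $(3)$ is the normalization $[u_t:\Gj(b_s)]=\delta_{t,s}$, and $(2)$ follows from that same normalization, since the error terms have $I^\jmath(b')\not\succeq\bflm$ and comparing $\Gj(b_s)$-coefficients forces $\psij_M(u_t)=u_t$); the normalization then kills the surviving terms $\Tj_{b'}=\Tj_{b_t}$ in the weak lemma's conclusion. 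What you propose is to bypass the weak lemma and kill the $c_{b'}$ directly, and that is where the gap lies.

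The gap is that your two tools cannot see error terms whose defect sits in the rows $\pm r$. If $\bfmu$ is obtained from $\bflm$ by moving boxes from row $-r$ to row $r$, then $\wtj(\bfmu)=\wtj(\bflm)$ and $\bfmu\prec\bflm$, and there are $b'$ with $I^\jmath(b')=\bfmu$ whose $\Uj_{r-1}$-crystal data (and a fortiori $\Uj_1$-data) agree with those of $b_t$; such $\Gj(b')$ satisfy every constraint obtainable from Theorem \ref{filtration} for $\Uj_{r-1}$ and from the rank-one argument, because $e_r,f_r\notin\Uj_{r-1}$, so nothing in your plan forces $c_{b'}=0$. Nor does your "kill $u$ and isolate" mechanism rescue this for $i\geq 2$: in rank one it works precisely because $\bfmu\not\succeq\bflm$ forces the bad $b'$ to have a strictly longer $1$-string ($\vphi_1(b')>\vphi_1(b)+\vep_1(b')$), whereas for $i\geq 2$ a defect hidden in other rows need not lengthen the $i$-string at all, so $f_i^{(N)}\Gj(b')$ for $N$ large enough to annihilate $u$ need not expose $c_{b'}$ against a $q$-binomial. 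The paper's Lemma \ref{weak lemma} is built exactly around this obstruction: it uses moderate powers $f_i^{(t)}$ (with $f_i^{(t)}u\neq 0$), compares with the expansions of the other images $\xi_t(\Gjlow(\Tj_{\ftil_i^t b}))$, and runs an induction on the row index using the $s$-maximality from property $(\ast)$ together with Lemma \ref{lemma}; that row-by-row induction inside rank $r$ — not restriction to $\Uj_{r-1}$ — is what controls the $\pm r$ rows, and it is precisely the "bookkeeping" your proposal defers as routine.
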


\begin{proof}
By the setting above, we can write
$$
\xi_t(\Gjlow(\Tj_{b_t})) = u_t = \Gj(b_t) + \sum_{\substack{b' \\ I^\jmath(b') \not\succeq \bflm}} c_{b'}\Gj(b'), \qu c_{b'} \in q\Ao.
$$
Then, we can apply Lemma \ref{weak lemma} to obtain $\xi_t(\Gjlow(\Tj_{b_t})) = \Gj(b_t)$ as desired.
\end{proof}

In order to complete the proof, it suffices to prove the following: For each $t = 1,\ldots,m_\bflm$ and $b \in \Cj(b_t)$, we have $\xi_t(\Gjlow(\Tj_b)) = \Gj(b)$. We prove this statement by descending induction on $\wtj(b)$ and $I^{\jmath}_{r-1}(b)$. When $\wtj(b)$ is maximal, it must hold that $b = b_t$, and in this case, we have already shown that $\xi_t(\Gjlow(\Tj_{b_t})) = \Gj(b_t)$. Suppose that $\wtj(b) <^\jmath \wtj(b_t)$, and the statement holds for all $b' \in \bigsqcup_{t=1}^{m_\bflm} \Cj(b_t)$ such that $\wtj(b') {}^\jmath > \wtj(b)$ or $\wtj(b') = \wtj(b)$ and $I^\jmath_{r-1}(b') \succ I^\jmath_{r-1}(b)$. In this case, since $b$ is not a $\Uj$-highest weight vector, the exists $i \in \IIj$ such that $\etil_i b \neq 0$.

\begin{lem}
Suppose there exists $i \in \Ij$ such that $\etil_i b \neq 0$. Then, the statement holds.
\end{lem}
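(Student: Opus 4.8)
The plan is to push the descending induction one step by going \emph{up} the $i$-string and then coming back down. Fix $i\in\Ij$ with $\etil_i b\neq 0$ and set $b':=\etil_i b$; then $\wtj(b')=\wtj(b)+\gamma_i>^{\jmath}\wtj(b)$, so by the inductive hypothesis $\xi_t(\Gjlow(\Tj_{b'}))=\Gj(b')$, and likewise for every element of the $i$-string strictly above $b$. Throughout I use that $\xi_t$ preserves $\jmath$-crystal lattices, that it sends $\Gjlow(\bfT_\bflm)=v_\bflm$ to $\Gj(b_t)$ (Lemma~\ref{lemma 1}) and hence intertwines the $\psij$-involutions on $L(\bflm)$ and $M$, and that $\xi_t(\Gjlow(\Tj_b))\in\clL$ is congruent to $b$ modulo $q\clL$; so, writing $\xi_t(\Gjlow(\Tj_b))=\Gj(b)+\sum_{b''\neq b}c_{b''}\Gj(b'')$, one automatically has $c_{b''}\in q\Ao$ and $\ol{c_{b''}}=c_{b''}$, and the remaining task is to check that the leading coefficient of $\Gj(b)$ is $1$ and that every $c_{b''}$ lies in $\A$ (which then forces $c_{b''}=0$).

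When $i\neq 1$ the triple $(e_i,k_i,f_i)$ is an $\frsl_2$-triple and $\etil_i=\Etil_{\ul i}$. I would apply $e_i$ to $\xi_t(\Gjlow(\Tj_b))$: on one side $e_i\xi_t(\Gjlow(\Tj_b))=\xi_t(e_i\Gjlow(\Tj_b))=[\vphi_{\ul i}(b)+1]\,\Gj(b')+(\text{terms }\Gjlow\text{ of strictly higher }\wtj)$, which by the inductive hypothesis lies in $M_\A$; comparing with the $e_i$-image of $\Gj(b)+\sum c_{b''}\Gj(b'')$, using item~(4) of the Proposition in Subsection~\ref{subsec: global basis} together with a bound on the $s$-values of the $b''$ with $c_{b''}\neq 0$ (as in the proof of Lemma~\ref{strong lemma}), one reads off that the leading coefficient is $1$. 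With this in hand Lemma~\ref{weak lemma} applies, reducing the problem to the corrections coming from $b''$ with $\Tj_{b''}=\Tj_b$ or with $s(b'')$ strictly below the top value; applying $f_i^{(m)}$ to the highest element $b_0:=\etil_i^{\vep_i(b)}b$ of the string (for which $\xi_t(\Gjlow(\Tj_{b_0}))=\Gj(b_0)$ by the $\wtj$-induction), expanding $f_i^{(m)}\Gjlow(\Tj_{b_0})$ by item~(3) of that Proposition, and invoking the $q$-binomial divisibility Lemma~\ref{lemma} exactly as in the $r=1$ case, one forces the remaining $c_{b''}$ into $\A$, hence to vanish. Thus $\xi_t(\Gjlow(\Tj_b))=\Gj(b)$.

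When $i=1$ the $\frsl_2$-theory is replaced by the modified Kashiwara operator $\ftil_1^{(m)}$ of Lemma~\ref{modified ftil preserves lattice}: with $b_0:=\etil_1^{\vep_1(b)}b$ and $u:=\ftil_1^{(\vep_1(b))}\Gj(b_0)$ one has $u\in\clL$, $u+q\clL=b$, and $u-f_1^{(\vep_1(b))}\Gj(b_0)\in\sum_{m>\vep_1(b)}f_1^{(m)}M_\A$, after which the bookkeeping runs as in the $r=1$ proof (including the appeal to Lemma~\ref{lemma}) to conclude $\xi_t(\Gjlow(\Tj_b))=\Gj(b)$. The main obstacle, in either case, is the control of the correction terms $\Gj(b'')$ of the \emph{same} $\wtj$-weight as $b$: the $\wtj$-induction gives nothing there, and one must fall back on the inner induction on $I^{\jmath}_{r-1}$ together with Lemma~\ref{lemma}, which is precisely why the refined Lemmas~\ref{weak lemma} and \ref{strong lemma} were established in advance.
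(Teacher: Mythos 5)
Your opening idea---go up the $i$-string and come back down---is the same germ as the paper's argument, but the way you come back down has a genuine gap. The decisive misstep is the appeal to Lemma \ref{weak lemma} (and to the $s$-value control taken from the proof of Lemma \ref{strong lemma}): both of those lemmas assume $\etil_j b = 0$ for \emph{all} $j \in \Ij$, and the case treated here is precisely the one where this hypothesis fails. Their proofs use that hypothesis essentially (the reduction to the $r=1$ case, the minimality of $\vphi_{-\ul{i}}(b'')$, the shape of $f_i^{(t)}\Gjlow(\Tj_b)$), so you can neither quote them nor transplant their conclusions to a $b$ with $\etil_i b \neq 0$. The same circularity infects your first step: to read off the leading coefficient from $e_i\,\xi_t(\Gjlow(\Tj_b))$ you must first know that the same-$\wtj$ correction terms $\Gj(b'')$ cannot contribute to the coefficient of $\Gj(\etil_i b)$, and the only tool you offer for that (the $s$-maximality argument ``as in Lemma \ref{strong lemma}'', fed by Lemma \ref{lemma}) is exactly the machinery that is unavailable here; moreover the hypotheses of Lemma \ref{lemma} (integrality of $c_{b''}$ times a whole range of $q$-binomial coefficients) are never established in your setting. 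So the $i\neq 1$ half of your argument does not close.

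The intended proof is much shorter and needs none of this. Set $b_0 := \etil_i^{\vep_i(b)}b$; by the $\wtj$-induction, $\xi_t(\Gjlow(\Tj_{b_0})) = \Gj(b_0)$. By the proposition in Subsection \ref{subsec: global basis} (parts (1) and (3), which hold for $i=1$ as well thanks to the modified operators of Lemma \ref{modified ftil preserves lattice}), $\Gj(b)$ equals $\ftil_i^{(\vep_i(b))}\Gj(b_0)$ plus a $q\Q[q]$-linear combination of $\Gj(b'')$ with $\wtj(b'')=\wtj(b)$ and $\vep_i(b'')>\vep_i(b)$, and the identical expansion holds for $\Gjlow(\Tj_b)$ inside $L(\bflm)$. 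Applying $\xi_t$ and running a secondary descending induction on $\vep_i$ --- the correction terms $b''$ again satisfy $\etil_i b'' \neq 0$, so they are covered by this very lemma --- one finds that $\Gj(b) - \xi_t(\Gjlow(\Tj_b))$ is a $q\Q[q]$-linear combination of global basis elements; since it is also $\psij_M$-invariant ($\xi_t$ intertwines the $\psij$-involutions because $\xi_t(v_\bflm)=\Gj(b_t)$ by Lemma \ref{lemma 1}), it must vanish. Your $i=1$ paragraph, built on $u=\ftil_1^{(\vep_1(b))}\Gj(b_0)$, is essentially this argument; if you run it uniformly for all $i\in\Ij$ and replace the detour through Lemmas \ref{weak lemma}, \ref{strong lemma} and \ref{lemma} by the bar-invariance argument just described, the proof closes.
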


\begin{proof}
Set $b' := \etil_i^{\vep_i(b)} b$. We prove the lemma by descending induction on $\vep_i(b')$. Since $\wtj(b') > \wtj(b)$, we have $\Gj(b') = \xi_t(\Gjlow(\Tj_{b'})) \in \Uj \Gj(b_i)$. We know that $\Gj(b)$ (resp., $\Gjlow(\Tj_b)$) is the sum of $\ftil_i^{(\vep_i(b))} \Gj(b')$ (resp., $\ftil_i^{(\vep_i(b))} \Gjlow(\Tj_{b'})$) and a $q\Q[q]$-linear combination of $\Gj(b'')$ (resp., $\Gjlow(\Tj_{b''})$) with $\wtj(b'') = \wtj(b)$ and $\vep_i(b'') > \vep_i(b)$. By our induction hypothesis, $\Gj(b) - \xi_t(\Gjlow(b))$ is a $q\Q[q]$-linear combination of $\Gj(b'')$'s, and is $\psij_M$-invariant. Such a vector must be zero, and hence, we obtain $\Gj(b) = \xi_t(\Gjlow(b))$.
\end{proof}

\begin{lem}\label{lemma 3}
Suppose there exists $j \in \Ij \setminus \{ 1 \}$ such that $\etil_{j'} b \neq 0$ and $\etil_i b = 0$ for all $i \in \Ij$. Then, the statement holds.
\end{lem}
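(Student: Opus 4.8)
The plan is to apply Lemma \ref{strong lemma} to $\xi = \xi_t$ and the given $j \in \Ij \setminus \{1\}$, and then to eliminate the residual terms it leaves. First note that $I^\jmath(b) = I^\jmath(b_t) = \bflm$, since $I^\jmath$ is constant on connected components of the $\jmath$-crystal, and recall that $\bflm$ is maximal in $\Pj(M)$. The second hypothesis of Lemma \ref{strong lemma} holds because Lemma \ref{lemma 1} gives $\xi_t(v_\bflm) = u_t = \Gj(b_t)$, which is $\psij_M$-fixed; since $\xi_t$ is determined by the image of the $\psij_\bflm$-fixed generator $v_\bflm$, it intertwines the $\psij$-involutions on $L(\bflm)$ and $M$. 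Its first hypothesis, that $\xi_t(\Gjlow(\Tj_{b'})) = \Gj(b')$ for every $b' \in \Cj(b_t)$ strongly connected to some $b''$ with $\wtj(b) <^\jmath \wtj(b'')$, follows from the descending induction hypothesis of Theorem \ref{filtration} (the induction on $\wtj$ refined by $I^\jmath_{r-1}$), as every such $b'$ lies strictly above $b$ in that order. Hence Lemma \ref{strong lemma} yields
\begin{align}
\xi_t(\Gjlow(\Tj_b)) = \Gj(b) + \sum_{\substack{b' \in \clB \setminus \{ b \} \\ \Tj_{b'} = \Tj_b}} c_{b'} \Gj(b') + \sum_{\substack{b'' \in \Cj(b'),\ c_{b'} \neq 0 \\ s(b'') < s(b')}} c_{b''} \Gj(b'') \nonumber
\end{align}
for some $c_{b'}, c_{b''} \in q\Ao$, all bar-invariant because $\xi_t$ commutes with $\psij$ and each $\Gj(\cdot)$ is $\psij_M$-fixed. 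It then suffices to show $c_{b'} = 0$ for every $b' \ne b$ with $\Tj_{b'} = \Tj_b$; the second sum vanishes along with the first.

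To kill the $c_{b'}$, I would access the primed direction through the honest generator $e_j$: because $\etil_i b = 0$ for all $i \in \Ij$, applying $e_j$ to $\Gjlow(\Tj_b)$ produces $\Gjlow(\Tj_{\etil_{j'}b})$ with the invertible coefficient $[\vphi_{\ul j}(b)+1]$, together with a combination of $\Gjlow(\Tj_{\bhat})$ with $\wtj(\bhat) = \wtj(b) + \gamma_j >^\jmath \wtj(b)$ (and likewise with $b_t$ replaced by any copy $b_u$). Apply $e_j$ to the displayed equality. On the left, $\xi_t(e_j \Gjlow(\Tj_b)) = e_j \xi_t(\Gjlow(\Tj_b))$; since each $\Gjlow(\Tj_{\bhat})$ above has strictly larger $\wtj$, the induction hypothesis identifies $\xi_t(\Gjlow(\Tj_{\bhat}))$ with the matching global basis element of $M$, so $\xi_t(e_j \Gjlow(\Tj_b)) \in M_\A$. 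On the right, $e_j \Gj(b)$, $e_j \Gj(b')$ and $e_j \Gj(b'')$ all lie in $M_\A$ since $e_j \in \UjA$. Comparing the coefficients of $\Gj(\etil_{j'}b')$ — which, by Proposition \ref{str const for Gj-basis} and property $(\ast)$, equals $c_{b'}[\vphi_{\ul j}(b')+1]$, no other term of the $e_j$-expansions reaching that position because of the control on $s$ — we get $c_{b'}[\vphi_{\ul j}(b')+1] \in \A$, and, iterating with the divided powers $e_j^{(n)}$ exactly as in the proof of Lemma \ref{weak lemma}, $c_{b'}$ times a range of $q$-binomial coefficients lies in $\A$. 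Lemma \ref{lemma} then gives $c_{b'} \in \A$, and a bar-invariant element of $q\Ao \cap \A$ is $0$, so $c_{b'} = 0$. This holds for all such $b'$, so the right-hand side collapses to $\Gj(b)$, proving $\xi_t(\Gjlow(\Tj_b)) = \Gj(b)$.

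The main obstacle is the coefficient bookkeeping in the second step: one must ensure that the $e_j$-images of the global basis elements of the various copies, and their lower-$s$ descendants $\Gj(b'')$, do not overlap, so that the divisibility hypothesis of Lemma \ref{lemma} applies to each $c_{b'}$ individually. This is where property $(\ast)$ enters — the structure constants of $e_j$ relative to $\Gj(\clB)$ are controlled by the poset $(S,\le)$ and the map $s$ — together with the identity $s(\etil_{j'}b') = s(b') + \alpha_{\ul j}$, valid because $\etil_j b' = 0$, which places $\Gj(\etil_{j'}b')$ at the top of its $s$-layer and rules out contributions from terms with $s(b'') < s(b')$. In effect this is a one-step version of the induction on $i$ performed inside the proof of Lemma \ref{weak lemma}, now carried out in the single new primed direction; the induction on $r$ is used only indirectly, through the earlier cases of the $\wtj$/$I^\jmath_{r-1}$ induction needed to place $\xi_t(e_j\Gjlow(\Tj_b))$ inside $M_\A$.
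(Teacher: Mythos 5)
Your first stage is precisely the paper's entire proof: the paper disposes of this lemma with the single sentence ``Apply Lemma \ref{strong lemma}.'', so your verification of its two hypotheses (the $\psij$-equivariance of $\xi_t$ and the identification $\xi_t(\Gjlow(\Tj_{b'}))=\Gj(b')$ for the relevant higher elements via the induction on $\wtj$ and $I^\jmath_{r-1}$) matches the intended argument. Where you go beyond the paper is the second stage: you observe, correctly, that the conclusion of Lemma \ref{strong lemma} still carries the residual terms $c_{b'}\Gj(b')$ with $\Tj_{b'}=\Tj_b$, $b'\neq b$ (these occur exactly when $m_\bflm>1$, coming from the other components $\Cj(b_u)$), and that bar-invariance together with $c_{b'}\in q\Ao$ alone does not force $c_{b'}=0$; the paper leaves this point unaddressed, whereas in Lemma \ref{lemma 1} it disappears only because of the normalization $[u_t:\Gj(b_u)]=\delta_{t,u}$. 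Your elimination scheme (apply $e_j$, use the induction hypothesis to place $\xi_t(e_j\Gjlow(\Tj_b))$ in $M_\A$, extract the coefficient of $\Gj(\etil_{j'}b')$, iterate to get a range of $q$-binomials, and finish with Lemma \ref{lemma} and bar-invariance) is exactly in the style of the proofs of Lemmas \ref{weak lemma} and \ref{strong lemma}, so it is a reasonable completion rather than a different route.

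Two soft spots in your extra step, both of the same kind as what the paper itself glosses over: (i) property $(\ast)(6)$ only constrains the non-leading terms of $e_j\Gj(b''')$ by $s(\btil)\le s(b''')+\alpha_{\ul{j}}$, so when two residual parents $b'$ and $b'''$ have the same $s$-value the contribution of $c_{b'''}e_j\Gj(b''')$ to the coefficient of $\Gj(\etil_{j'}b')$ is not ruled out by the $s$-control alone; you need to work at a maximal $s$-layer and handle equal-$s$ parents together (or run the $f_i^{(t)}$-argument of Lemma \ref{weak lemma} instead of a single $e_j$). (ii) The divided-power expansions $e_j^{(n)}\Gj(b')$ are not part of property $(\ast)$ and must be imported from the $\frsl_2$-theory for $j\neq 1$ (the proposition in Section \ref{subsec: global basis}). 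Neither issue invalidates your plan, but they are the places where your sketch still needs the same care the paper omits.
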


\begin{proof}
Apply Lemma \ref{strong lemma}.
\end{proof}

Now, one can complete the proof by combining Lemma \ref{lemma 1}-\ref{lemma 3} since each $b \in \clB$ with $I^\jmath(b) = \bflm$ is connected to $b_t$ for some $t = 1,\ldots,m_\bflm$.


\begin{thebibliography}{99}

\bibitem[BW13]{BW13} H. Bao and W. Wang, A new approach to Kazhdan-Lusztig theory of type $B$ via quantum symmetric pairs, Ast\'{e}risque 402 (2018), 134pp., arXiv:1310.0103v2.

\bibitem[BW18]{BW18} H. Bao and W. Wang, Canonical bases arising from quantum symmetric pairs, Invent. Math. 213 (2018), no. 3, 1099--1177. 

\bibitem[BWW16]{BWW16} H. Bao, W. Wang, and H. Watanabe, Multiparameter quantum Schur duality of type $B$, Proc. Amer. Math. Soc. 146 (2018), 3203--3216.

\bibitem[BWW18]{BWW18} H. Bao, W. Wang, and H. Watanabe, Addendum to ``Canonical bases arising from quantum symmetric pairs'', arXiv:1808.09388v2.

\bibitem[BI03]{BI03} C. Bonnaf\'{e} and L. Iancu, Left cells in type $B_n$ with unequal parameters, Represent. Theory 7 (2003), 587--609.

\bibitem[DDPW08]{DDPW08} B. Deng, J. Du, B. Parshall, and J. Wang, Finite Dimensional Algebras and Quantum Groups, Mathematical Surveys and Monographs, 150, American Mathematical Society, Providence, RI, 2008, xxvi+759 pp.

\bibitem[Deo87]{Deo87} V. V. Deodhar, On some geometric aspects of Bruhat orderings. II. The parabolic analogue of Kazhdan-Lusztig polynomials, J. Algebra 111 (1987), no. 2, 483--506.





\bibitem[HK02]{HK02} J. Hong and S.-J. Kang, Introduction to Quantum Groups and Crystal Bases, Graduate Studies in Mathematics, 42. American Mathematical Society, Providence, RI, 2002. xviii+307 pp.



\bibitem[K90]{K90} M. Kashiwara, Crystalizing the 
$q$-analogue of universal enveloping algebras, Comm. Math. Phys. 133 (1990), no. 2, 249--260. 

\bibitem[K93]{K93} M. Kashiwara, Global crystal bases of quantum groups, Duke Math. J. 69 (1993), no. 2, 455--485. 

\bibitem[K02]{K02} M. Kashiwara, On level-zero representations of quantized affine algebras, Duke Math. J. 112 (2002), no. 1, 117--175. 

\bibitem[KL79]{KL79} D. Kazhdan and G. Lusztig, Representations of Coxeter groups and Hecke algebras, Invent. Math. 53 (1979), no. 2, 165--184. 


\bibitem[Le99]{Le99} G. Letzter, Symmetric pairs for quantized enveloping algebras, J. Algebra 220 (1999), no. 2, 729--767. 


\bibitem[L90a]{L90a} G. Lusztig, Canonical bases arising from quantized enveloping algebras, J. Amer. Math. Soc. 3 (1990), no. 2, 447--498. 

\bibitem[L90b]{L90} G. Lusztig, Quantum groups at roots of $1$, Geom. Dedicata, 33 (1990), 89--113.

\bibitem[L93]{L94} G. Lusztig, Introduction to Quantum Groups, Reprint of the 1994 edition, Modern Birkh\"{a}user Classics, Birkh\"{a}user/Springer, New York, 2010.

\bibitem[L03]{L} G. Lusztig, Hecke algebras with unequal parameters, CRM Monograph Series, 18. American Mathematical Society, Providence, RI, 2003. vi+136 pp.


\bibitem[W17]{W17} H. Watanabe, Crystal basis theory for a quantum symmetric pair $(\U,\Uj)$, to appear in Int. Math. Res. Not., arXiv:1704.01277.

\bibitem[X94]{X94} N. Xi, Representations of Affine Hecke Algebras, Lecture Notes in Mathematics, 1587. Springer-Verlag, Berlin, 1994. viii+137 pp.
\end{thebibliography}
\end{document}